\newtheorem{definition}{Definition}[section]
\newtheorem{theorem}{Theorem}[section]
\newtheorem{corollary}[theorem]{Corollary}
\newtheorem{lemma}[theorem]{Lemma}
\newtheorem{remark}[theorem]{Remark}
\newtheorem{example}[theorem]{Example}
\newtheorem{proposition}[theorem]{Proposition}
\newtheorem{conjecture}[theorem]{Conjecture}
\newtheorem{problem}[theorem]{Problem}
\numberwithin{equation}{section}
\def\ca{\mathcal{ A}}
\def\ce{\mathcal{ E}}
\def\gl{{\frak L}}
\def\bb{{\mathbb B}}
\def\bd{{\mathbb D}}
\def\bk{{\mathbb K}}
\def\bn{{\mathbb N}}
\def\br{{\mathbb R}}
\def\a{\alpha}
\def\b{\beta}
\def\g{\gamma}  
\def\d{\delta}  \def\D{\Delta}
\def\e{\epsilon}
\def\l{\lambda} 
\def\m{\mu}
 \def\O{\Omega}
\begin{document}

\title[Elliptic Quadratic Operator Equations]{Elliptic Quadratic Operator Equations}

\author{Rasul Ganikhodjaev}
\address{Rasul Ganikhodjaev\\
Faculty of Mechanics \& Mathematics, National University Uzbekistan\\
Tashkent\\ Uzbekistan} \email{{\tt rganikhodzhaev@gmail.com}}

\author{Farrukh Mukhamedov}
\address{Farrukh Mukhamedov\\
Department of Mathematical Sciences\\
College of Science, The United Arab Emirates University\\
P.O. Box, 15551, Al Ain\\
Abu Dhabi, UAE} \email{{\tt far75m@gmail.com} {\tt
farrukh.m@uaeu.ac.ae}}

\author{Mansoor Saburov}
\address{Mansoor Saburov\\
Department of Computational \& Theoretical Sciences \\
Faculty of Science, International Islamic University Malaysia\\
P.O. Box, 25200, Kuantan\\
Pahang, Malaysia} \email{{\tt msaburov@gmail.com}}

\begin{abstract}
In the present paper is devoted to the study of elliptic quadratic
operator equations over the finite dimensional Euclidean space. We
provide necessary and sufficient conditions for the existence of
solutions of elliptic quadratic operator equations. The iterative
Newton-Kantorovich method is also presented for stable solutions.

\vskip 0.3cm \noindent {\it Mathematics Subject Classification 2010}:
47H60, 47J05, 52Axx, 52Bxx.\\
{\it Key words}: Elliptic operator; quadratic operator; number of
solutions; rank of elliptic operator, stable solution,
Newton-Kantorovich method.

\end{abstract}

\maketitle


\tableofcontents


\section{Introduction}


\subsection{Hammerstein integral equations}

A nonlinear Hammerstein integral equation appeared
\begin{equation}\label{integequat}
x(t)=\int\limits_{\O}\int\limits_{\O} K_1(t,s,u)x(s)x(u)dsdu+\int\limits_{\O}K_2(t,s)x(s)ds+f(t)
\end{equation}
in several problems of astrophysics, mechanics, and biology, where  $K_1:\O\times\O\times\O\to\br,$ $K_2:\O\times\O\to\br,$ and $f:\O\to\br$ are given functions and $x:\O\to\br$ is an unknown function. Generally, in order to solve the nonlinear Hammerstein integral equation \eqref{integequat} over some functions space,  one should impose some constrains for the functions $K_1(\cdot,\cdot,\cdot), K_2(\cdot,\cdot),$ and $f(\cdot)$. For instance, by using contraction methods, some sufficient conditions were obtained for the existence of solutions of the integral equation \eqref{integequat} over the space $C(\O)$ of continuous functions (see \cite{Atkinson}, \cite{Golberg1979,Golberg1990}, \cite{Krasnoselskii}, \cite{Some}). It is worth of noting that, unlike a linear integral equation (i.e. $K_1(t,s,u)\equiv0$), in general, the nonlinear Hammerstein integral equation \eqref{integequat} may have many solutions.

Particularly, if $K_1$ and $K_2$ are Goursat's degenerate kernels, i.e.
\begin{eqnarray}
K_1(t,s,u)=\sum\limits_{i,j,k=1}^na_i(s)b_j(u)c_k(t),\\
K_2(t,s)=\sum\limits_{i,j=1}^nd_i(s)e_j(t),
\end{eqnarray}
where $a_i(\cdot), \ b_i(\cdot), \ c_i(\cdot), \ d_i(\cdot), \ e_i(\cdot)$ are given functions then we have that
\begin{equation*}
x(t)=\sum\limits_{i,j,k=1}^n\left(\int\limits_\O a_i(s)x(s)ds\right)\left(\int\limits_\O b_j(u)x(u)du\right) c_k(t)+\sum\limits_{i,j=1}^n\left(\int\limits_\O d_i(s)x(s)ds\right)e_j(t)+f(t).
\end{equation*}

Let
\begin{eqnarray*}
\int\limits_\O a_i(s)x(s)ds=x_i,\quad \int\limits_\O b_j(s)x(s)ds=x_{n+j},\quad \int\limits_\O d_k(s)x(s)ds=x_{2n+k},\ \ i,j,k=\overline{1,n}.
\end{eqnarray*}
In this setting, the solution of the nonlinear Hammerstein integral equation \eqref{integequat} takes the following form
$$x(t)=\sum_{i,j,k=1}^nx_ix_{n+j}c_k(t)+\sum\limits_{i,j=1}^nx_{2n+i}e_{j}(t)+f(t),$$
where $x=(x_1,\cdots,x_n,x_{n+1},\cdots,x_{2n},x_{2n+1},\cdots,x_{3n})\in\br^{3n}$ is a solution of the following quadratic operator equation
\begin{eqnarray}\label{QOE}
\sum\limits_{i,j=1}^{3n}A_{ij,k}x_ix_j+\sum\limits_{i=1}^{3n}B_{ik}x_{i}+C_{k}=0,\quad \quad \forall \ k=\overline{1,3n}.
\end{eqnarray}
for suitable $(A_{ij,k})_{i,j,k=1}^{3n},$ $(B_{ik})_{i,k=1}^{3n},$ and $(C_k)_{k=1}^{3n}$.

Consequently, in order to find solutions of the nonlinear Hammerstein integral equation \eqref{integequat} with Goursat's degenerate kernels, we have to solve the quadratic operator equation \eqref{QOE} over $\mathbb{R}^{3m}$.

Let $Q:\br^n\to\br^m$ be a quadratic operator
\begin{equation*}
Q(x)=\bigg(\sum_{i,j=1}^na_{ij,1}x_ix_j,\sum_{i,j=1}^na_{ij,2}x_ix_j,\cdots,\sum_{i,j=1}^na_{ij,m}x_ix_j\bigg),
\end{equation*}
where $a_{ij,k}\in \br$ are structural coefficients and $x=(x_1,\cdots,x_n)\in \br^n$. Without loss of generality, one can assume that $a_{ij,k}=a_{ji,k}$ for any $i,j=\overline{1,n}$ and $k=\overline{1,m}$. Let $A_k=(a_{ij,k})_{i,j=1}^n$ be a symmetric matrix for $k=\overline{1,m}$. In this case, the quadratic operator can be written in the following form
\begin{equation*}
Q(x)=((A_1x,x),(A_2x,x),\cdots,(A_mx,x))
\end{equation*}

Let $H_{n,m}(Q)$ be a real linear span of symmetric matrices $A_1,\cdots, A_m$.
We say that $H_{n,m}(Q)$ is \textit{positive definite} (resp. \textit{positive semidefinite}) if there exists a positive definite (resp. a nonzero positive semidefinite but not positive definite) matrix in it. We say that $H_{n,m}(Q)$ is \textit{indefinite} if every nonzero matrix in it is indefinite. Let $R_{n,m}(Q)=\{Q(x): x\in\br^n\}$ and $W_{n,m}(Q)=\{Q(x): \|x\|_2=1\}$ be the images of $\br^n$ and $\mathbb{B}(0)=\{x\in\br^n: \|x\|_2=1\}$, respectively, under the quadratic operator. Let $Ker_{n,m}(Q)=\{x\in\br^n: Q(x)=0\}$ be a kernel of the quadratic operator.

The study of convexity of the sets  $R_{n,m}(Q)$, $W_{n,m}(Q)$ and on the relationship between the sets $H_{n,m}(Q)$ and $Ker_{n,m}(Q)$ are traced back to O. Toeplitz \cite{Toeplitz}, F. Hausdorff \cite{Hausdorff}, P. Halmos \cite{Halmos}, C.A. Berger \cite{Berger}, R. Westwick \cite{Westwick}, P. Finsler \cite{Finsler1936a,Finsler1936b}, G.A. Bliss \cite{Uhlig}, W.T. Reid \cite{Reid}, A.A. Albert \cite{Albert}, E.J. McShane \cite{McShane}, M. Hestenes \cite{HestenesMcShane,Hestenes1951,Hestenes1968}, F. John \cite{John}, L. Dines \cite{Dines1941,Dines1942,Dines1943}, and many others (see also \cite{Au-Yeung1969a}-\cite{Au-Yeung1975b}).

Let us consider the quadratic operator equation
\begin{eqnarray}\label{GeneralQOE}
Q(x)+Ax+b=0, \quad x\in\br^n
\end{eqnarray}
where $Q:\br^n\to\br^m$ is a quadratic operator and $A:\br^n\to\br^m$ is a linear operator, and $b\in\br^m$ is a vector.

\subsection{Summary of the main results}
The main goal is to study the structure of the set
$$X_{n,m}(Q,A,b)=\{x\in\br^n: Q(x)+Ax+b=0\}$$ of solutions of quadratic operator equation.

\begin{definition} A quadratic operator $Q:\br^n\to \br^m$ is called
\begin{enumerate}
\item[(i)] {\it elliptic} (in short EQO) if there exists a linear continuous functional $f:\br^m\to\br$
such that  $f(Q(x))$ is a positive definite quadratic form;
\item[(ii)] {\it parabolic} (in short PQO) if there exists
a nonzero  linear continuous functional $f:\br^m\to\br$ such that
$f(Q(x))$ is a positive semidefinite but no positive definite quadratic form;
\item[(iii)] {\it hyperbolic} (in short HQO) if for any nonzero  linear continuous functional $f:\br^m\to\br$ the quadratic form $f(Q(x))$ is indefinite.
\end{enumerate}
\end{definition}

\begin{remark}
It is clear that the quadratic operator $Q:\br^n\to \br^m$ is elliptic (resp. parabolic, hyperbolic) if and only if $H_{n,m}(Q)$ is positive definite (resp. positive semidefinite, indefinite).
\end{remark}

Consequently, by means of Dines's result, we can fully describe all elliptic, parabolic, hyperbolic quadratic operators. Namely, we have the following result.

\begin{theorem}\cite{Dines1941,Dines1942,Dines1943}
Let $Q:\br^n\to \br^m$ be a quadratic operator. The following statements hold true:
\begin{itemize}
\item[(i)] $Q$ is elliptic if and only if $tr(SA_i)=0$ with $S=S^{T}$ for all $i=\overline{1,m}$ implies that $S$ is indefinite.

\item[(ii)] $Q$ is parabolic if and only if there exists positive semidefinite $S=S^T$ with $tr(SA_i)=0$ for all $i=\overline{1,m}$, but no such positive definite $S$.

\item[(iii)] $Q$ is hyperbolic if and only if there exists positive definite $S=S^T$ with $tr(SA_i)=0$ for all $i=\overline{1,m}$.
\end{itemize}
\end{theorem}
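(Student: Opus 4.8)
The plan is to reduce the three equivalences to a single separating–hyperplane argument in the Euclidean space $\mathrm{Sym}_n$ of real symmetric $n\times n$ matrices, equipped with the inner product $\langle S,T\rangle=tr(ST)$. Write $H=H_{n,m}(Q)$ and
\[
H^{\perp}=\{S\in\mathrm{Sym}_n:\ tr(SA_i)=0,\ i=\overline{1,m}\}=\{S\in\mathrm{Sym}_n:\ \langle S,A\rangle=0\ \text{for all}\ A\in H\}.
\]
By the Remark preceding the Theorem, "$Q$ elliptic", "$Q$ parabolic", "$Q$ hyperbolic" mean, respectively, that $H$ contains a positive definite matrix, that $H$ contains a nonzero positive semidefinite matrix but no positive definite one, and that every nonzero matrix of $H$ is indefinite; the conditions in (i)--(iii) are conditions on $H^{\perp}$ (in (i) understood for nonzero $S$, since $0\in H^{\perp}$ is never indefinite), so the Theorem is a duality statement relating $H$ to $H^{\perp}$. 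I will use two standard facts about the cone $\mathrm{PSD}_n$ of positive semidefinite matrices: it is a closed convex cone whose interior is the set $\mathrm{PD}_n$ of positive definite matrices (so $\mathrm{PD}_n$ is nonempty, open, and dense in $\mathrm{PSD}_n$), and it is self-dual, $\{S:\langle S,T\rangle\ge0\ \text{for all}\ T\in\mathrm{PSD}_n\}=\mathrm{PSD}_n$; concretely, if $S\in\mathrm{PSD}_n\setminus\{0\}$ and $P\in\mathrm{PD}_n$ then $\langle S,P\rangle=tr\big(S^{1/2}PS^{1/2}\big)>0$.

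First I would prove (i). ($\Rightarrow$) If $Q$ is elliptic, fix $P\in H\cap\mathrm{PD}_n$. For nonzero $S\in H^{\perp}$ we have $\langle S,P\rangle=0$; since $\langle S',P\rangle>0$ for every $S'\in\mathrm{PSD}_n\setminus\{0\}$, neither $S$ nor $-S$ can lie in $\mathrm{PSD}_n$, i.e. $S$ is neither positive nor negative semidefinite, hence indefinite. ($\Leftarrow$) Suppose every nonzero $S\in H^{\perp}$ is indefinite; equivalently, $H^{\perp}$ contains no nonzero positive semidefinite matrix. Assume, for contradiction, that $H\cap\mathrm{PD}_n=\emptyset$. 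Then the subspace $H$ is disjoint from the nonempty open convex set $\mathrm{PD}_n$, so by the separation theorem there are a nonzero $S\in\mathrm{Sym}_n$ and a scalar $c$ with $\langle S,A\rangle\le c\le\langle S,B\rangle$ for all $A\in H$, $B\in\mathrm{PD}_n$. Since $H$ is a subspace (and $0\in H$), the first inequality forces $\langle S,A\rangle=0$ for all $A\in H$, so $S\in H^{\perp}\setminus\{0\}$ and $c\ge0$; since $\mathrm{PD}_n$ is dense in $\mathrm{PSD}_n$, the second inequality yields $\langle S,B\rangle\ge c\ge0$ for all $B\in\mathrm{PSD}_n$, whence $S\in\mathrm{PSD}_n$ by self-duality. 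This is the desired contradiction, so $H\cap\mathrm{PD}_n\ne\emptyset$ and $Q$ is elliptic.

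Then (iii) follows by applying the equivalence of (i) to $H^{\perp}$ in place of $H$ and using $(H^{\perp})^{\perp}=H$: it reads "$H^{\perp}$ contains a positive definite matrix if and only if every nonzero matrix of $H$ is indefinite", and the right-hand side is exactly "$Q$ hyperbolic" by the Remark, while the left-hand side is the condition in (iii). Finally, (ii) is obtained by elimination: the condition in (ii) says $H^{\perp}$ contains a nonzero positive semidefinite matrix but no positive definite one; by (i) the first half is the negation of "$Q$ elliptic", and by (iii) the second half is the negation of "$Q$ hyperbolic", so (ii) holds if and only if $Q$ is neither elliptic nor hyperbolic — that is, exactly in the parabolic case of the trichotomy.

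The one place that needs care is the separation step in the converse of (i): it is essential that $H$ is a linear subspace (so that the separating functional is forced to vanish on $H$, i.e. to belong to $H^{\perp}$) and that $\mathrm{PD}_n$ is open and nonempty (so that proper separation from $H$ is available). The remaining ingredients — density of $\mathrm{PD}_n$ in $\mathrm{PSD}_n$ and self-duality of the positive semidefinite cone — are routine, and the translation into the language of elliptic, parabolic, and hyperbolic operators is supplied by the Remark, so no genuinely new idea beyond convex duality is required.
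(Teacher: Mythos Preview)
Your argument is correct. The key ideas --- working in $(\mathrm{Sym}_n,\langle\cdot,\cdot\rangle)$, separating the subspace $H$ from the open cone $\mathrm{PD}_n$, invoking self-duality of $\mathrm{PSD}_n$, and then obtaining (iii) and (ii) from (i) by the symmetry $H\leftrightarrow H^{\perp}$ and by elimination --- are exactly the right ones, and the care you take about the nonzero-$S$ convention and about why the separating functional is forced into $H^{\perp}$ is appropriate.

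As for comparison with the paper: there is nothing to compare. The theorem is stated in the introduction with a citation to Dines's 1941--1943 papers and is not proved anywhere in the manuscript; it is invoked only as a classical classification result. So your self-contained convex-duality proof actually supplies something the paper does not, and it is in the spirit of Dines's original approach (linear combinations of quadratic forms viewed through the trace pairing). One stylistic remark: you might make explicit that the trichotomy elliptic/parabolic/hyperbolic you use in deriving (ii) is being read as ``contains a PD matrix'' / ``contains a nonzero PSD matrix but no PD matrix'' / ``contains no nonzero PSD matrix'', since the paper's prose definition of parabolic is slightly ambiguous on whether the elliptic case is excluded; your reading is the only one that makes the three cases disjoint and exhaustive, and it is clearly the intended one.
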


There is a strong relation between the convexity of the sets $R_{n,m}(Q)$, $W_{n,m}(Q)$ and the uniqueness of the set $Ker_{n,m}(Q)$ whenever $Q:\br^n\to \br^m$ is the elliptic quadratic operator and $n\geq m$ (see \cite{Afriat,Agrachev}, \cite{Arutyunov1991}-\cite{ArutyunovRozova}, \cite{BaccariSamet}-\cite{Barvinok2014}, \cite{Hiriart-UrutyTorki2002,Hiriart-Uruty2007,Hiriart-Uruty2009}, \cite{Matveev1996}-\cite{Polyak2003}, \cite{Sheriff},\cite{Vershik},\cite{Yakubovich}). In this survey paper, we are going to describe the set $X_{n,m}(Q,L,b)$ whenever $Q:\br^n\to \br^m$ is the elliptic quadratic operator and $n=m$.

Let $Q:\br^n\to \br^n$ be the elliptic quadratic operator.
We know that, the quadratic form $f(Q(x))$ is
positive definite in $\br^n$ if and only if there exists a positive number $\alpha >0$ such that
\begin{equation*}
f(Q(x))\geq \alpha\cdot \|x\|_2^2,
\end{equation*}
for any $x\in \br^n$. Thus, $Q:\br^n\to \br^n$ is the elliptic quadratic operator if and only if there exist a continuous linear functional $f:\br^n\to\br$ and a number $\alpha >0$ such that
\begin{equation*}
f(Q(x))\geq \alpha\cdot \|x\|_2^2, \quad \forall \ x\in \br^n.
\end{equation*}
Let $K'_Q$ be a set of all continuous linear functionals $f:\br^n\to\br$ such that the quadratic form $f(Q(x))$ is positive defined, i.e.,
\begin{equation*}
K'_Q=\left\{f: \ \exists \ \alpha_f>0 ,\  f(Q(x))\geq \alpha_f\|x\|_2^2, \ \forall \ x\in \br^n\right\}.
\end{equation*}

It is clear that $K'_Q\ne \emptyset$.

\begin{proposition}
If $Q:\br^n\to \br^n$  is the elliptic quadratic operator then $K'_Q$ is an open convex cone. Moreover, for any given minihedral cone $K\subset \br^{n}$
there exists the elliptic quadratic operator $Q:\br^n\to \br^n$ such that ${\overline {K'_Q}}=K.$
\end{proposition}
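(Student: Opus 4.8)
The plan is to prove the two assertions separately.

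\emph{First assertion.} Identify a continuous linear functional $f$ on $\br^n$ with its coefficient vector $f=(f_1,\dots,f_n)$; then $f(Q(x))=(S_fx,x)$, where $S_f:=\sum_{k=1}^nf_kA_k$ is symmetric. As already recalled above, $f(Q(x))\ge\alpha_f\|x\|_2^2$ for some $\alpha_f>0$ if and only if $S_f$ is positive definite, so $K'_Q=\{f:\ S_f \text{ is positive definite}\}$. Now $f\mapsto S_f$ is linear, hence continuous, and the positive definite matrices form an open subset of the space of symmetric matrices --- equivalently $f\mapsto\lambda_{\min}(S_f)$ is continuous and $K'_Q=\{f:\ \lambda_{\min}(S_f)>0\}$ --- so $K'_Q$ is open; and from $S_{\lambda f}=\lambda S_f$, $S_{f+g}=S_f+S_g$ together with the fact that positive definiteness is preserved under multiplication by a positive scalar and under addition, $K'_Q$ is a convex cone. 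It is nonempty precisely because $Q$ is elliptic. This settles the first part.

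\emph{Second assertion.} Since $K'_Q$ is a nonempty open convex cone, its closure $\overline{K'_Q}$ is solid; hence the asserted equality forces $K$ to be a solid minihedral cone, and for such a cone the classical structure theorem applies (a solid minihedral cone in $\br^n$ turns $\br^n$ into an $n$-dimensional Archimedean vector lattice, so it is linearly isomorphic to the coordinatewise one): there is an invertible linear $T:\br^n\to\br^n$ with $K=T(\br^n_+)$, where $\br^n_+=\{x\in\br^n:\ x_i\ge0,\ i=\overline{1,n}\}$. The strategy is to realize the standard orthant first and then transport it by a linear change of the target space. Take $Q_0(x)=(x_1^2,\dots,x_n^2)$, i.e. $A_k=e_ke_k^{T}$; then $S_f=\mathrm{diag}(f_1,\dots,f_n)$, so $K'_{Q_0}=\mathrm{int}\,\br^n_+$ and $\overline{K'_{Q_0}}=\br^n_+$, while $Q_0$ is elliptic since $f=(1,\dots,1)$ gives $f(Q_0(x))=\|x\|_2^2$.

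\emph{Transport step.} For a general $K=T(\br^n_+)$ put $Q:=M\circ Q_0$ with $M:=(T^{T})^{-1}$; this is again a quadratic operator $\br^n\to\br^n$ with symmetric structural matrices $\widetilde A_l=\sum_kM_{lk}A_k=\mathrm{diag}(M_{l1},\dots,M_{ln})$. The identity $g(Q(x))=(M^{T}g)(Q_0(x))$, valid for every functional $g$, yields $K'_Q=M^{-T}(K'_{Q_0})=\mathrm{int}(M^{-T}\br^n_+)=\mathrm{int}(T\br^n_+)=\mathrm{int}\,K$, so $\overline{K'_Q}=K$; and $Q$ is elliptic because for $g=T(1,\dots,1)$ one has $M^{T}g=(1,\dots,1)$ and hence $g(Q(x))=\|x\|_2^2$. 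The one non-routine ingredient here is the appeal to the classification of (solid) minihedral cones as simplicial cones, so that is the step I would state and reference with care; everything else reduces to the two identities $S_{f+g}=S_f+S_g$ and $g((M\circ Q_0)(x))=(M^{T}g)(Q_0(x))$.
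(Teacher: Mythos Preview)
Your proof is correct and follows essentially the same strategy as the paper: for the first part you both establish openness, convexity, and the cone property directly from the positive-definiteness characterization (you via the matrix $S_f$ and continuity of $\lambda_{\min}$, the paper via an explicit $\varepsilon$-estimate using $\|Q(x)\|\le M\|x\|^2$); for the second part both arguments take $Q_0(x)=(x_1^2,\dots,x_n^2)$ for the standard orthant and then use a linear change to reach a general minihedral cone. The only real difference is that the paper passes to the dual cone and invokes a ``without loss of generality'' coordinate change, whereas you make the transport explicit via the post-composition $Q=(T^T)^{-1}\circ Q_0$ and the identity $g(MQ_0(x))=(M^Tg)(Q_0(x))$; your version is arguably cleaner on this point.
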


For every $f\in K'_Q$, we define an {\it ellipsoid}
\begin{equation*}
\ce_f=\{x\in\br^n: f(Q(x)+Ax+b)\leq 0\},
\end{equation*}
corresponding to $f\in K'_Q.$ We define the following set  $$\ce_n(Q,A,b)\equiv\bigcap\limits_{f\in K'_Q}\ce_f.$$

\begin{theorem}
If the equation \eqref{GeneralQOE} is solvable then
$\ce_n(Q,A,b)\neq \emptyset$.
\end{theorem}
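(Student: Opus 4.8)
Suppose equation \eqref{GeneralQOE} is solvable, say $x_0\in\br^n$ with $Q(x_0)+Ax_0+b=0$. I claim $x_0\in\ce_n(Q,A,b)$, which immediately gives $\ce_n(Q,A,b)\neq\emptyset$. Indeed, fix any $f\in K'_Q$. By definition $\ce_f=\{x\in\br^n: f(Q(x)+Ax+b)\leq 0\}$, and since $f$ is linear and $Q(x_0)+Ax_0+b=0$ (the zero vector of $\br^n$), we get $f(Q(x_0)+Ax_0+b)=f(0)=0\leq 0$, so $x_0\in\ce_f$. As $f\in K'_Q$ was arbitrary, $x_0\in\bigcap_{f\in K'_Q}\ce_f=\ce_n(Q,A,b)$.

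**Why this is essentially all there is to it.** The content of the theorem is really just the observation that a genuine solution of the equation lies in every ellipsoid $\ce_f$ — each $\ce_f$ is a \emph{relaxation} of the solution set obtained by applying the functional $f$, so the true solution set $X_{n,m}(Q,A,b)$ is contained in $\ce_n(Q,A,b)$, and non-emptiness of the former forces non-emptiness of the latter. No compactness, convexity, or ellipticity hypothesis beyond $K'_Q\neq\emptyset$ (which holds by hypothesis, as noted just before the Proposition) is needed for this direction. One could phrase the whole argument in one line: $X_{n,m}(Q,A,b)\subseteq \ce_n(Q,A,b)$.

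**Where the real difficulty lies (and why it is not here).** The interesting and hard direction — presumably addressed later in the paper — is the converse: showing that $\ce_n(Q,A,b)\neq\emptyset$ (or the stronger statement that it contains an interior point, or is a single point, etc.) is \emph{sufficient} for solvability. That requires genuinely using the elliptic structure, the convexity results of Dines, and likely a topological degree or fixed-point argument on the ellipsoid $\ce_n(Q,A,b)$. For the present statement, though, there is no obstacle at all; the only thing to be slightly careful about is confirming that $\ce_n(Q,A,b)$ is a well-defined (intersection over a nonempty index set) object, which is guaranteed because $K'_Q\neq\emptyset$ for an elliptic $Q$. So I would simply present the two-line argument above.
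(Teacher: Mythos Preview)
Your argument is correct and proves exactly the stated theorem: any solution $x_0$ satisfies $f(Q(x_0)+Ax_0+b)=f(0)=0$ for every $f\in K'_Q$, so $x_0\in\ce_n(Q,A,b)$.

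However, the paper takes a genuinely different and far more elaborate route. It never uses the solvability hypothesis in the proof at all. Instead, under the standing assumption (declared just before the theorem) that each individual ellipsoid $\ce_f$ is nonempty, the paper shows via a sequence of lemmas that any two ellipsoids $\ce_{f_0},\ce_{f_1}$ must intersect (a connectedness argument along the segment $f_\lambda=\lambda f_1+(1-\lambda)f_0$), then bootstraps this through a separating-hyperplane induction to conclude that any finite family of ellipsoids has nonempty intersection, and finally invokes compactness to get $\bigcap_{f\in K'_Q}\ce_f\neq\emptyset$. The point is that the paper is really proving the stronger statement recorded immediately afterward as Remark~\ref{efimplye}: \emph{if $\ce_f\neq\emptyset$ for every $f\in K'_Q$, then $\ce_n(Q,A,b)\neq\emptyset$}. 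That strengthening is not a curiosity; it is used essentially later (for instance in the proof of Theorem~\ref{twostablesolution}), where one verifies only that each $\ce_f$ is nonempty, not that the equation is solvable, and then invokes Remark~\ref{efimplye} to conclude $\ce_n(Q,A,b)\neq\emptyset$.

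So: your two-line proof is perfectly valid for the theorem as literally stated, and is the natural way to read that statement. What you give up is the stronger conclusion the paper actually extracts from its proof and relies on downstream. If you present only your argument, you would need a separate proof of Remark~\ref{efimplye} later.
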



The following theorem gives more accurate description of the set $X_n(Q,A,b).$

\begin{theorem}
If the equation \eqref{GeneralQOE} is solvable  then $X_n(Q,A,b) \subset {\textbf{\textup{Extr}}} (\ce_n(Q,A,b))$.
\end{theorem}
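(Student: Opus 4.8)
The plan is to exploit the strict convexity of the scalar functions $g_f(x):=f\big(Q(x)+Ax+b\big)$ for $f\in K'_Q$. Since $f\in K'_Q$ means that $f(Q(\cdot))$ is a positive definite quadratic form, the map $g_f$ is a positive definite quadratic form plus an affine term, hence strictly convex on $\br^n$; in particular $\ce_f=\{x\in\br^n: g_f(x)\le 0\}$ is the (at most) $0$-sublevel set of a strictly convex function, so it is convex (a solid ellipsoid when nonempty), and therefore $\ce_n(Q,A,b)=\bigcap_{f\in K'_Q}\ce_f$ is convex. First I would record the trivial inclusion $X_n(Q,A,b)\subset \ce_n(Q,A,b)$: if $x^*$ solves \eqref{GeneralQOE}, i.e. $Q(x^*)+Ax^*+b=0$, then $g_f(x^*)=f(0)=0\le 0$ for every $f\in K'_Q$, so $x^*$ lies in every $\ce_f$ and hence in $\ce_n(Q,A,b)$.

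Next, fix a solution $x^*\in X_n(Q,A,b)$ and suppose, towards a contradiction, that $x^*$ is \emph{not} an extreme point of $\ce_n(Q,A,b)$. Then there exist $y,z\in\ce_n(Q,A,b)$ with $y\neq z$ and $\lambda\in(0,1)$ such that $x^*=\lambda y+(1-\lambda)z$. Choose any $f\in K'_Q$ (this set is nonempty). Restricting the strictly convex function $g_f$ to the nondegenerate segment $[y,z]$ yields a strictly convex function of one real variable, so Jensen's inequality is strict:
\begin{equation*}
g_f(x^*)=g_f\big(\lambda y+(1-\lambda)z\big)<\lambda\, g_f(y)+(1-\lambda)\, g_f(z).
\end{equation*}
Since $y,z\in\ce_n(Q,A,b)\subset\ce_f$ we have $g_f(y)\le 0$ and $g_f(z)\le 0$, so the right-hand side is $\le 0$; but the left-hand side equals $0$ because $x^*$ is a solution of \eqref{GeneralQOE}. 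This contradiction forces $x^*$ to be an extreme point, which gives $X_n(Q,A,b)\subset\mathbf{Extr}\big(\ce_n(Q,A,b)\big)$.

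Two remarks round off the argument. The hypothesis that \eqref{GeneralQOE} is solvable is used only to make the statement non-vacuous: when it fails, $X_n(Q,A,b)=\emptyset$ and the inclusion is trivial; when it holds, $\ce_n(Q,A,b)$ is nonempty (it contains $X_n(Q,A,b)$), which is exactly the content of the preceding theorem. There is no serious technical obstacle here — the only step deserving a line of justification is the strict Jensen inequality above, and the real content is the conceptual observation that the solutions of \eqref{GeneralQOE} are precisely the common zeros of the family $\{g_f\}_{f\in K'_Q}$, while an arbitrary point of $\ce_n(Q,A,b)$ merely satisfies $g_f\le 0$ for all $f$; strict convexity of each $g_f$ then prevents a common zero from lying in the relative interior of any segment contained in $\ce_n(Q,A,b)$.
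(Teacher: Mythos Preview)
Your proof is correct and follows essentially the same approach as the paper's: both argue by contradiction that if a solution $x_0$ were a nontrivial convex combination of points in $\ce_n(Q,A,b)$, then for any $f\in K'_Q$ one would get $f(P(x_0))<0$, contradicting $P(x_0)=0$. The only cosmetic difference is that the paper invokes the uniform convexity of $\ce_f$ (established in Lemma~\ref{l1}) to conclude $x_0\in\mathrm{int}\,\ce_f$, whereas you apply the strict Jensen inequality directly to the strictly convex function $g_f$; these are two phrasings of the same fact.
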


The following theorem gives a solvability criterion for the elliptic operator equation \eqref{GeneralQOE}.


\begin{theorem}
The elliptic operator equation \eqref{GeneralQOE} is solvable if and only if $\bigcap\limits_{f\in K'_Q}\partial \ce_f\neq \emptyset.$  Moreover, if the elliptic operator equation \eqref{GeneralQOE} is solvable  then $X_n(Q,A,b)= \bigcap\limits_{f\in K'_Q}\partial \ce_f$.
\end{theorem}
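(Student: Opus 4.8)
The plan is to split the ``if and only if'' into its two one-line implications, and to establish the nontrivial direction by exploiting that, by the Proposition above, $K'_Q$ is an \emph{open} convex cone, so that the only vector of $\br^n$ annihilated by every $f\in K'_Q$ is the zero vector.

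First I would record the algebraic description of the boundaries $\partial\ce_f$. Fixing $f\in K'_Q$ and writing $g_f(x):=f(Q(x)+Ax+b)$, the quadratic part $f(Q(x))$ is positive definite because $f\in K'_Q$, so $g_f$ is a strictly convex quadratic function on $\br^n$; hence the closed sublevel set $\ce_f=\{x:g_f(x)\le0\}$ is either empty (when $\min g_f>0$), a single point (when $\min g_f=0$), or a solid ellipsoid with nonempty interior (when $\min g_f<0$), and in each of these cases its topological boundary is exactly the zero level set. Thus I would first justify
$$\partial\ce_f=\{x\in\br^n:\ f(Q(x)+Ax+b)=0\}\qquad(f\in K'_Q),$$
the empty case being consistent since then $\{g_f=0\}=\emptyset$ too.

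The easy inclusion comes next: if $x_0$ solves \eqref{GeneralQOE} then $Q(x_0)+Ax_0+b=0$, so $f(Q(x_0)+Ax_0+b)=f(0)=0$ for every $f\in K'_Q$, whence $x_0\in\bigcap_{f\in K'_Q}\partial\ce_f$. In particular solvability of \eqref{GeneralQOE} forces this intersection to be nonempty, and gives $X_n(Q,A,b)\subseteq\bigcap_{f\in K'_Q}\partial\ce_f$. For the reverse, which simultaneously delivers the remaining half of the equivalence and of the set equality, I would take $x_0\in\bigcap_{f\in K'_Q}\partial\ce_f$, put $y_0:=Q(x_0)+Ax_0+b$, and note that $f(y_0)=0$ for all $f\in K'_Q$. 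Identifying each $f\in(\br^n)^*$ with the vector $c_f$ for which $f(z)=(c_f,z)$, the Proposition says $\{c_f:f\in K'_Q\}$ is a nonempty open subset of $\br^n$, hence contains a ball and therefore spans $\br^n$; since $(c_f,y_0)=0$ for all these $c_f$, it follows that $y_0=0$, i.e.\ $x_0\in X_n(Q,A,b)$. This gives $\bigcap_{f\in K'_Q}\partial\ce_f\subseteq X_n(Q,A,b)$, hence equality when \eqref{GeneralQOE} is solvable, and it shows that any point of a nonempty $\bigcap_{f\in K'_Q}\partial\ce_f$ solves \eqref{GeneralQOE}; combined with the easy direction this is the whole statement.

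I do not expect a serious obstacle: the argument rests on two elementary facts — that $\partial\ce_f$ equals the quadric $\{f(Q(x)+Ax+b)=0\}$, and that a nonempty open set of linear functionals spans the dual. The only place wanting care is the first of these, where the topological boundary must be matched with the algebraic level set uniformly, including the degenerate cases in which $\ce_f$ reduces to a point or is empty; once that is in place, the openness assertion in the Proposition does all the remaining work.
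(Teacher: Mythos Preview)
Your proposal is correct and follows essentially the same approach as the paper: both directions hinge on the identity $\partial\ce_f=\{x:f(P(x))=0\}$ together with the fact that $K'_Q$ is an open cone, so that $f(y)=0$ for every $f\in K'_Q$ forces $y=0$. The only difference is that you take more care to justify the boundary identity (including the degenerate single-point and empty cases), whereas the paper treats this as understood.
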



Let $\bk$ be the set of extreme rays of the closed cone
${\overline K}'_Q.$ We define a set
\begin{equation*}
\Pi_f(\bk)=\{x\in\br^n: f(Q(x)+Ax+b)\leq 0, \ \ f\in \bk\}.
\end{equation*}

\begin{proposition} One has that $\bigcap\limits_{f\in\bk}\Pi_f(\bk)=\ce_n(Q,A,b).$
\end{proposition}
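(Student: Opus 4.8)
The plan is to establish $\bigcap_{f\in\bk}\Pi_f(\bk)=\bigcap_{f\in\overline{K}'_Q}\ce_f=\bigcap_{f\in K'_Q}\ce_f=\ce_n(Q,A,b)$, the last equality being the definition of $\ce_n(Q,A,b)$ and the middle one being settled by a continuity argument: for a fixed $x\in\br^n$ the map $f\mapsto f(Q(x)+Ax+b)$ is linear, hence continuous, on the dual space, so if $f(Q(x)+Ax+b)\le 0$ for all $f\in K'_Q$ the inequality passes to the closure and holds for all $f\in\overline{K}'_Q$; the reverse inclusion there is trivial since $K'_Q\subseteq\overline{K}'_Q$.

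It remains to prove $\bigcap_{f\in\bk}\Pi_f(\bk)=\bigcap_{f\in\overline{K}'_Q}\ce_f$. Since $\Pi_f(\bk)=\ce_f$ and $\bk\subseteq\overline{K}'_Q$, the inclusion $\bigcap_{f\in\overline{K}'_Q}\ce_f\subseteq\bigcap_{f\in\bk}\Pi_f(\bk)$ is immediate. For the reverse inclusion I would use that $\overline{K}'_Q$ is a pointed closed convex cone in $\br^n$; being pointed it has a compact base $B$, and by the Krein--Milman theorem together with Carath\'eodory's theorem in $\br^n$ every point of $B$ is a finite convex combination of extreme points of $B$, which lie on the extreme rays of $\overline{K}'_Q$. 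Hence every $g\in\overline{K}'_Q$ can be written as $g=\sum_{i=1}^{k}\lambda_i f_i$ with $\lambda_i\ge 0$ and $f_i\in\bk$. Then, for $x$ with $f(Q(x)+Ax+b)\le 0$ for all $f\in\bk$, linearity gives $g(Q(x)+Ax+b)=\sum_{i}\lambda_i\,f_i(Q(x)+Ax+b)\le 0$, so $x\in\ce_g$ for every $g\in\overline{K}'_Q$, as required.

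The main obstacle is the structural input that $\overline{K}'_Q$ is pointed: this is exactly what guarantees the existence of a compact base and hence that $\overline{K}'_Q$ is the conical hull of $\bk$. It amounts to the linear independence of the structural matrices $A_1,\dots,A_n$ (equivalently, the injectivity of $f\mapsto f(Q(\cdot))$); without it $K'_Q$ would contain a line, $\bk$ would be empty, and the asserted identity would fail. Granting this, the remainder is routine — only continuity of the pairing $(f,v)\mapsto f(v)$ and the finite-dimensional Krein--Milman/Carath\'eodory machinery are used, and none of the earlier convexity facts about $R_{n,m}(Q)$ or $W_{n,m}(Q)$ are needed.
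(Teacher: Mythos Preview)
Your proof is correct and follows essentially the same route as the paper: both arguments use a continuity/approximation step to pass from $K'_Q$ to its closure (the paper packages this as a separate lemma showing $\ce_n(Q,A,b)\subset\Pi_f$ for each $f\in\bk$ via $f_n\to f$ with $f_n\in K'_Q$), and then invoke Krein--Milman together with Carath\'eodory to write an arbitrary functional as a finite nonnegative combination of elements of $\bk$, after which linearity of $f\mapsto f(P(x))$ finishes the inclusion. As for the obstacle you flag, the paper makes the pointedness condition $\overline{K}'_Q\cap(-\overline{K}'_Q)=\{0\}$ a standing hypothesis in the section where this result is proved (with a remark explaining how to reduce to it by eliminating variables), so you may simply invoke it rather than treat it as a gap.
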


\begin{theorem}
Every vertex  of  $\ce_n(Q,A,b)$ is a solution of the elliptic operator equation \eqref{GeneralQOE}.
\end{theorem}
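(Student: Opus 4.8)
Set $P(x):=Q(x)+Ax+b$ and $g_f(x):=f(P(x))$; then $\ce_f=\{x:g_f(x)\le0\}$, $g_f$ is strongly convex for $f\in K'_Q$ and convex for $f\in\overline{K'_Q}$, and by continuity of $f\mapsto g_f$ one has $\ce_n(Q,A,b)=\bigcap_{f\in K'_Q}\ce_f=\bigcap_{f\in\overline{K'_Q}}\ce_f$, so $v\in\ce_n(Q,A,b)$ forces $g_f(v)=f(P(v))\le0$ for every $f\in\overline{K'_Q}$. Recall that $v$ is a vertex precisely when its normal cone $N(v):=\{p:\langle p,x-v\rangle\le0\ \forall x\in\ce_n(Q,A,b)\}$ is $n$-dimensional. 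The plan is to prove $P(v)=0$: this already gives $v\in X_n(Q,A,b)$, and since then $g_f(v)=0$ (i.e. $v\in\partial\ce_f$) for every $f\in K'_Q$, it shows $v\in\bigcap_f\partial\ce_f$, so by the solvability criterion the equation \eqref{GeneralQOE} is solvable and $v\in X_n(Q,A,b)=\bigcap_{f\in K'_Q}\partial\ce_f$. One elementary reduction I would isolate first is the \emph{opening lemma}: if $v\in\ce_n(Q,A,b)$ and $g_{f_0}(v)=0$ for some $f_0$ in the \emph{open} cone $K'_Q$, then $P(v)=0$. Indeed, openness gives $f_0\pm h\in K'_Q$ for small $h$, and $g_{f_0\pm h}(v)=g_{f_0}(v)\pm h(P(v))=\pm h(P(v))\le0$ forces $h(P(v))=0$ for all such $h$, hence $P(v)=0$. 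So it is enough to rule out $P(v)\ne0$.

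Suppose then $y:=P(v)\ne0$. Since $g_f(v)=f(y)\le0$ for all $f\in\overline{K'_Q}$, the hyperplane $y^{\perp}$ supports the closed, full-dimensional cone $\overline{K'_Q}$, so the active face $F:=\overline{K'_Q}\cap y^{\perp}$ is a proper exposed face, lies in $\partial\overline{K'_Q}$ (whence $F\cap K'_Q=\emptyset$), and has $\mathrm{span}(F)\subseteq y^{\perp}$ of dimension $\le n-1$. Moreover $F\ne\{0\}$: otherwise $g_f(v)<0$ for every $f\in\overline{K'_Q}\setminus\{0\}$, hence $\max\{g_f(v):f\in\overline{K'_Q},\ \|f\|_2=1\}<0$ by compactness, and then the expansion $g_f(x)=g_f(v)+\langle\nabla g_f(v),x-v\rangle+f(Q(x-v))$ with uniform-in-$f$ bounds on the last two terms produces $B(v,\delta)\subseteq\ce_f$ for all $f$, i.e. $v\in\mathrm{int}\,\ce_n(Q,A,b)$ — impossible for a vertex. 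The ellipsoids active at $v$ are exactly $\{\ce_f:f\in F\}$; their outward normals at $v$ are the gradients $\nabla g_f(v)$, and since $f\mapsto\nabla g_f(v)$ is linear and $F\subseteq y^{\perp}$, they all lie in a fixed subspace $V:=\mathrm{span}\{\nabla g_f(v):f\in F\}$ with $\dim V\le n-1$.

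The conclusion now rests on the normal-cone identity
\[
N(v)=\overline{\mathrm{cone}}\{\nabla g_f(v):f\in F\}:
\]
it would give $N(v)\subseteq V$, contradicting $\dim N(v)=n$, so $y=0$ and we are done. The inclusion ``$\supseteq$'' is automatic ($N_{\ce_f}(v)\subseteq N(v)$ for each active $f$), so the only real content is ``$\subseteq$'' — a constraint qualification for the infinite intersection $\bigcap_f\ce_f$, and this is exactly where I expect the difficulty. Before that I would clear two degeneracies: (i) if $\nabla g_f(v)=0$ for some $f\in F$, then $g_f\equiv0$ on the affine subspace $v+\ker(\sum_k f_kA_k)$ and $\ce_n(Q,A,b)$ lies in it; (ii) if $\ce_n(Q,A,b)$ is not full-dimensional, it lies in its affine hull. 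In either case restrict the whole equation to that subspace — this keeps $Q$ elliptic, as any $f\in K'_Q$ still induces a positive definite form there — and induct on $n$, with $v$ still a vertex of the restricted set and $P(v)=0$ the inductive conclusion.

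In the remaining case $\ce_n(Q,A,b)$ is full-dimensional and every active $\ce_f$ is a genuine bounded ellipsoid; then Slater's condition holds (an interior point of $\ce_n(Q,A,b)$ has $g_f<0$ for every relevant $f$), and the identity above is the Karush--Kuhn--Tucker/normal-cone formula for convex programs over the compact index set $\{f\in\overline{K'_Q}:\|f\|_2=1\}$. Concretely, given $p\in N(v)$ I would use the finite-intersection property of the compact ellipsoids $\ce_f$, $f\in K'_Q$ (together with the Proposition reducing the intersection to the extreme rays $\bk$), to localize near $v$ to finitely many of them, and then apply finite-dimensional KKT to write $p$ as a nonnegative combination of active gradients $\nabla g_f(v)$, $f\in F$. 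This localization-plus-CQ step is the crux; everything else is the bookkeeping above together with the earlier theorems identifying $X_n(Q,A,b)$ with $\bigcap_{f}\partial\ce_f$.
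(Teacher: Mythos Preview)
Your strategy is sound and essentially dual to the paper's: you phrase everything through the normal cone $N(v)$ and aim for the KKT identity $N(v)=\overline{\mathrm{cone}}\{\nabla g_f(v):f\in F\}$, whereas the paper works with supporting hyperplanes and proves (Theorem~\ref{t5}) that for \emph{each} supporting hyperplane $H$ to $\ce_n(Q,A,b)$ at $x_0$ there is an $f\in\bk$ with $f(P(x_0))=0$ and $H$ tangent to $\Pi_f$ at $x_0$; linear independence of the hyperplanes then forces linear independence of the $f_i$'s, and for a vertex this gives $n$ independent linear relations $f_i(P(x_0))=0$, hence $P(x_0)=0$. The two arguments are really the same statement seen from opposite sides: your ``$N(v)\subseteq V$ with $\dim V\le n-1$'' is exactly the contrapositive of ``$n$ independent supporting normals $\Rightarrow$ $n$ independent active $f_i$''.

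The difference is in how the crux is handled. You defer to a semi-infinite KKT/CQ argument that you sketch but do not carry out; the paper instead supplies a bespoke separation lemma (Lemma~\ref{l5}): if a hyperplane misses $\bigcap_{i=1}^m\ce_{f_i}$ then it already misses a single $\ce_f$ with $f\in co(f_1,\dots,f_m)$. This, combined with a compactness-and-limit argument along $H_\varepsilon\to H$, manufactures the active $f_0$ with matching tangent direction \emph{without} invoking general KKT theory. Your route would work too---the index set $\{f\in\overline{K'_Q}:\|f\|_2=1\}$ is compact, $(f,x)\mapsto g_f(x)$ is jointly continuous, Slater holds once you have reduced to ${\rm int}\,\ce_n(Q,A,b)\ne\emptyset$, and then the normal-cone formula for semi-infinite convex systems applies---but as written the ``localization-plus-CQ step'' is an appeal rather than a proof. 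The paper's Lemma~\ref{l5} is precisely the concrete ingredient that fills that gap, and it buys a fully self-contained argument exploiting the one-parameter structure of the ellipsoid family; your abstract approach buys generality (it would apply to any compact convex-constrained system) at the cost of importing machinery.
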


A solution of the elliptic operator equation \eqref{GeneralQOE} which is the vertex of $\ce_n(Q,L,b)$ has a special property among other solutions. We know that the set of all elliptic quadratic operators are closed under the small perturbation.

\begin{definition}
A solution $x_0$ of the elliptic operator equation \eqref{GeneralQOE}
is called {\it stable} if for any $\e>0$ there exists $\d>0$ such that the
perturbed elliptic operator equation ${\tilde Q}(x)+{\tilde A}x+{\tilde b}=0$
has a solution ${\tilde x}_0$ such that
$\|{\tilde x}_0-x\|<\e$ whenever $\|{\tilde Q}-Q\|<\d, \ \|{\tilde A}-A\|<\d,$ $\|{\tilde b}-b\|<\d.$
\end{definition}

\begin{theorem}
A solution of the elliptic operator equation \eqref{GeneralQOE} is stable if and only if it is a vertex of $\ce_n(Q,L,b)$.
\end{theorem}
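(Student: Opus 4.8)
The plan is to prove both implications by connecting stability to the local geometry of $\ce_n(Q,A,b)$ near a solution. By the previous theorems we already know that $X_n(Q,A,b)=\bigcap_{f\in K'_Q}\partial\ce_f\subset\textbf{\textup{Extr}}(\ce_n(Q,A,b))$, so every solution lies on the boundary of the ellipsoid-intersection; the point is to distinguish genuine vertices (zero-dimensional faces) from other extreme points, and to see that only vertices survive perturbation. First I would recall that the set of elliptic quadratic operators is open: if $Q$ is elliptic with $f\in K'_Q$ and $f(Q(x))\ge\alpha\|x\|_2^2$, then $\tilde Q$ close to $Q$ satisfies $f(\tilde Q(x))\ge(\alpha-\|\tilde Q-Q\|)\|x\|_2^2>0$, so $f\in K'_{\tilde Q}$ and $\tilde Q$ is elliptic; moreover $\overline{K'_{\tilde Q}}$ varies upper-semicontinuously. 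Hence a perturbed equation $\tilde Q(x)+\tilde Ax+\tilde b=0$ still has its solution set described as $\bigcap_{f\in K'_{\tilde Q}}\partial\ce^{\tilde{}}_f=X_n(\tilde Q,\tilde A,\tilde b)$, and $\ce_n(\tilde Q,\tilde A,\tilde b)$ is a compact convex body that depends Hausdorff-continuously on the data (each $\ce_f$ is an ellipsoid with continuously varying center and shape, and one reduces the intersection to the finitely many extreme rays $\bk$ via the Proposition $\bigcap_{f\in\bk}\Pi_f(\bk)=\ce_n(Q,A,b)$).

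For the direction ``vertex $\Rightarrow$ stable'': let $x_0$ be a vertex of $\ce_n(Q,A,b)$. Being a vertex (a zero-dimensional face of a compact convex body cut out by the ellipsoids $\ce_f$, $f\in\bk$) means $x_0$ is the unique point of $\ce_n$ satisfying a suitable supporting condition, and the defining constraints $f(Q(x)+Ax+b)\le0$ are ``active and nondegenerate'' at $x_0$ in the sense that their gradients positively span $\br^n$. I would make this precise by showing that at a vertex, for every direction $v\ne0$ there is some $f\in\bk$ with $\langle\nabla(f(Q(\cdot)+A\cdot+b))(x_0),v\rangle>0$ (otherwise $x_0+tv$ would stay in $\ce_n$ for small $t>0$, contradicting vertexhood after also checking the second-order term cannot save it, which it cannot since the constraint value is exactly $0$ at $x_0$). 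This is a Slater-type / Mangasarian–Fromovitz transversality condition, and it is stable under small perturbations of the finitely many active constraints. Then a degree-theoretic or implicit-function argument: the map sending $(\tilde Q,\tilde A,\tilde b)$ to the (nonempty, by the solvability criterion applied to the perturbed data, once we check the perturbed intersection of boundaries is still nonempty near $x_0$) nearby vertex is continuous; concretely, the perturbed body $\ce_n(\tilde Q,\tilde A,\tilde b)$ is within $\delta'$ in Hausdorff distance of $\ce_n(Q,A,b)$, and transversality guarantees it still has a vertex $\tilde x_0$ within $\varepsilon$ of $x_0$, which by the earlier theorem is a solution of the perturbed equation.

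For ``stable $\Rightarrow$ vertex'': suppose $x_0\in X_n(Q,A,b)$ is a solution but \emph{not} a vertex of $\ce_n(Q,A,b)$. Then $x_0$ lies in a face of positive dimension, or more to the point, since $x_0\in\partial\ce_f$ for all $f\in\bk$, the active gradients at $x_0$ fail to positively span $\br^n$: there is a direction $v$ with $\langle\nabla(f(Q(\cdot)+A\cdot+b))(x_0),v\rangle\le0$ for all active $f\in\bk$. I would then construct an explicit perturbation — e.g. replace $b$ by $b+\eta w$ for a cleverly chosen $w$ (in the ``dual'' direction to $v$), of norm $\delta$ — that pushes all the ellipsoid boundaries $\partial\ce_f$ slightly apart in that degenerate direction, so that $\bigcap_{f\in K'_Q}\partial\ce^{\tilde{}}_f$ has no point within $\varepsilon$ of $x_0$ (indeed the perturbed intersection of boundaries may move by a bounded-below amount, or even become empty near $x_0$), contradicting stability. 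The main obstacle is precisely this last construction: one must show a non-vertex solution can always be destroyed by an arbitrarily small perturbation, which requires carefully exploiting the degeneracy direction $v$ together with the strict convexity of each individual ellipsoid $\ce_f$ to guarantee the perturbed boundaries genuinely separate rather than merely sliding along each other; handling the case where $x_0$ is an extreme point but not a vertex (so the body is ``pointed but curved'' there) is the subtle sub-case, and it is where strict convexity of the $\ce_f$'s, as opposed to mere convexity of $\ce_n$, does the real work.
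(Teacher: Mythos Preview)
Your approach has a genuine error in the ``vertex $\Rightarrow$ stable'' direction. You claim that at a vertex $x_0$, for every $v\ne 0$ there is some $f\in\bk$ with $f(P'(x_0)v)>0$, i.e.\ that the gradients of the active constraints positively span $\br^n$. This is false: those gradients generate the \emph{normal cone} to $\ce_n(Q,A,b)$ at $x_0$, which at a vertex is full-dimensional but is still a proper pointed cone, never all of $\br^n$. For any $v$ pointing from $x_0$ into ${\rm int}\,\ce_n(Q,A,b)$ one has $f(P'(x_0)v)\le 0$ for every $f\in\overline{K'_Q}$. Your attempted justification (``otherwise $x_0+tv$ would stay in $\ce_n$'') also breaks at the second-order term: if $f(P'(x_0)v)=0$ but $f(Q(v))>0$ then $f(P(x_0+tv))=t^2f(Q(v))>0$, so $x_0+tv$ leaves $\ce_n$ even though your first-order inequality is not strict. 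As a side issue, $\bk$ need not be finite (it is infinite whenever ${\rm rg}\,Q\ge 2$), so the reduction you invoke to ``finitely many extreme rays'' is not available in general.

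The paper bypasses all of this by working directly with the Fr\'echet derivative $P'(x_0)$ and proving the chain: $x_0$ is a vertex $\Longleftrightarrow$ $P'(x_0)$ is invertible $\Longleftrightarrow$ $x_0$ is stable. For the first equivalence, suppose $P'(x_0)h=0$ with $h\ne 0$; then for every $f\in\overline{K'_Q}$ and every $\l\in\br$,
\[
f(P(x_0+\l h))=f(P(x_0))+\l f(P'(x_0)h)+\l^2 f(Q(h))=\l^2 f(Q(h))\ge 0,
\]
so the entire line $\{x_0+\l h\}_{\l\in\br}$ lies in every supporting hyperplane to $\ce_n(Q,A,b)$ at $x_0$, forcing the order of $x_0$ to be at least $1$ --- not a vertex. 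Conversely, if $x_0$ is not a vertex then the intersection of its supporting hyperplanes contains a line $\{x_0+\l h\}$, which must be tangent to each $\ce_f$; the inequality $\l f(P'(x_0)h)+\l^2 f(Q(h))\ge 0$ for all $\l\in\br$ then forces $f(P'(x_0)h)=0$ for every $f\in K'_Q$, hence $P'(x_0)h=0$ since $K'_Q$ is solid. Once $P'(x_0)$ is known to be invertible, the inverse function theorem makes $P$ a local homeomorphism near $x_0$, and stability is immediate without any Hausdorff-continuity analysis of $\ce_n$ or explicit perturbation construction. The derivative $P'(x_0)$ is the organising object your proposal is missing.
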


We could speak more about the stable solutions of the elliptic operator equation \eqref{GeneralQOE}.

\begin{theorem}
An elliptic operator equation \eqref{GeneralQOE} has an even (possibly, zero) number of stable solutions.
\end{theorem}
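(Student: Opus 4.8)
The statement to prove is that an elliptic operator equation \eqref{GeneralQOE} has an even (possibly zero) number of stable solutions. By the preceding theorem, stable solutions are precisely the vertices (zero-dimensional faces) of the convex body $\ce_n(Q,A,b)=\bigcap_{f\in K'_Q}\ce_f$, so it suffices to show that this convex set has an even number of vertices. My first step is to recall the structure established earlier: $\ce_n(Q,A,b)$ is an intersection of ellipsoids $\ce_f$, each $\ce_f=\{x: f(Q(x)+Ax+b)\le 0\}$ being a genuine solid ellipsoid since $f\in K'_Q$ forces $f\circ Q$ to be positive definite; hence $\ce_n(Q,A,b)$ is a compact convex set, and (by an earlier theorem) when \eqref{GeneralQOE} is solvable its solution set equals $\bigcap_{f\in K'_Q}\partial\ce_f$, i.e. every solution lies on the boundary of every ellipsoid simultaneously. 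A vertex of $\ce_n(Q,A,b)$ is therefore a point of $\bigcap_f\partial\ce_f$ at which the tangent cone is pointed (the supporting hyperplanes span $\br^n$).

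**Main line of argument.** I would prove the parity statement by a continuity/degree or Euler-characteristic argument. The cleanest route: exploit that $\ce_n(Q,A,b)$ is a compact convex body and consider a generic linear functional $\ell$ on $\br^n$. Each vertex $v$ of a convex body is either a local max or not of $\ell$ restricted to the body in a suitable Morse-theoretic sense; more precisely, following the classical combinatorial fact, for a simple convex polytope the number of vertices has the same parity as determined by the $h$-vector, but here the body is not a polytope. Instead I would use the following: the boundary $\partial\ce_n(Q,A,b)$ is a topological $(n-1)$-sphere; its vertices are isolated singular points, and away from them the boundary is smooth (being locally the boundary of one of the smooth ellipsoids $\ce_f$). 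Pick a generic direction $\ell$; the set of smooth boundary points where the outer normal equals $\pm\ell/\|\ell\|$ consists of exactly two points (north and south pole of the body in direction $\ell$) unless one of these poles is itself a vertex. One then runs a Morse-theory / Poincaré–Hopf count of the gradient vector field of $\ell$ on $\partial\ce_n$: the smooth critical points (the two poles, if smooth) contribute indices summing to $\chi(S^{n-1})$, and each vertex contributes a correction. Because the body is convex, each vertex, being a ``corner'', contributes an index of a definite parity; reconciling the global count $\chi(S^{n-1})=1+(-1)^{n-1}$ with the contributions forces the number of vertices to be even. I expect the honest version will instead go through a deformation argument: perturb $(Q,A,b)$ slightly to a generic position in which $\ce_n$ is a simple polytope-like body (all vertices nondegenerate, i.e. exactly $n$ active constraints with linearly independent gradients), show by the stability theorem that the number of stable solutions is locally constant under such perturbations except when two vertices collide and annihilate (or are born) in pairs along the boundary of an ellipsoid, and conclude the parity is invariant and equals the parity in some reference configuration where it is manifestly even (e.g. zero).

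**The key obstacle.** The delicate point is the ``pair creation/annihilation'' claim: I must show that as the data $(Q,A,b)$ vary continuously, vertices of $\ce_n(Q,A,b)$ can only appear or disappear in pairs. This is where the ellipticity is essential — it guarantees $\ce_n$ stays a bounded convex body with nonempty interior throughout a neighborhood (no escape to infinity, by the coercivity estimate $f(Q(x))\ge\alpha\|x\|_2^2$), so vertices cannot ``run off to infinity''. The local analysis of a degenerate vertex — where more than $n$ ellipsoid-boundaries meet, or where $n$ of them meet non-transversally — requires a careful singularity/stratification argument to verify that the generic nearby picture has an even number of vertices near the degenerate one, and that the count changes by an even number as one crosses the degeneracy. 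I would handle this by reducing, via the supporting-hyperplane description, to a local model: near a degenerate vertex the body looks like the intersection of finitely many half-spaces/ellipsoids tangent at a point, and a normal-form computation shows the vertex figure is a lower-dimensional instance of the same problem, letting one induct on $n$ (the base case $n=1$ being trivial: $\ce_1$ is a closed interval with $0$ or $2$ endpoints as vertices, or a point which is not a stable — hence not counted — solution after the generic perturbation). Assembling the induction with the continuity invariance of the parity is the crux of the proof.
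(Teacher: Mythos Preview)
Your topological instinct is right---a degree/index argument is what is needed---but you are working much too hard, and the ``key obstacle'' you flag (pair creation/annihilation under perturbation, local normal-form analysis at degenerate vertices) is a genuine gap that you never close. The Morse sketch on $\partial\ce_n(Q,A,b)$ is also shaky: the boundary need not be smooth away from the vertices (there can be $k$-boundaries of every intermediate dimension), so the ``two smooth poles'' count for a generic linear functional is unjustified as stated.

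The paper bypasses the convex body $\ce_n(Q,A,b)$ entirely and applies degree theory directly to the map $P(x)=Q(x)+Ax+b$. Fix any $f\in K'_Q$. Since $f(Q(x))\ge\alpha\|x\|^2$, one has $f(P(x))>0$ for all $x$ with $\|x\|=r$ sufficiently large; hence on this sphere the normalized field $P/\|P\|$ takes values in the open half-sphere $\{y\in S^{n-1}:f(y)>0\}$ and so omits a point of $S^{n-1}$. This forces the index $\gamma(P,\partial\Omega)=0$ on $\Omega=\{\|x\|\le r\}$. Stable solutions are exactly the zeros of $P$ at which $P'$ is invertible (this was established in the proof of the preceding theorem), hence each has local index $\pm1$; since the local indices sum to $0$, their number is even.

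The moral: instead of dissecting the vertex structure of the intersection of ellipsoids, look at $P$ itself. Ellipticity says precisely that $P$ eventually points into a fixed open half-space, and that single observation gives degree zero in one line.
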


We can also approximate the stable solutions of elliptic operator equation \eqref{GeneralQOE} by the Newton-Kantorovich method. It is easy to check that the set
\begin{equation*}
\bd=\br^n\setminus \bigg(\bigcup_{f\in \bk}\Pi_f(\bk)\bigg)
\end{equation*}
is an open set. Let $\bd_0$ be a connected component of $\bd$ and ${\overline
\bd}_0$ be its closure.

Let $P:\br^n\to\br^n$ be a mapping defined as $P(x):=Q(x)+Ax+b$ for any $x\in\br^n$.

\begin{theorem} If there exists $x_0\in \bd_0$ such that ${\overline \bd}_0$ does
not contain any straight line passing through $x_0$ then there
exists a stable solution $x_{*}$ of the elliptic operator equation \eqref{GeneralQOE} which belongs to ${\overline \bd}_0.$ Moreover, the inverse $[P'(x_0)]^{-1}$ of the mapping $P'(x_0)$ exists and the sequence $\{x_k\}_{k=1}^\infty$ defined as follows
\begin{equation*}
x_{k+1}=x_k-[P'(x_k)]^{-1}P(x_k), \ \ k=0,1,\dots
\end{equation*}
converges to the stable solution $x_{*}$.
\end{theorem}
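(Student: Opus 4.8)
\noindent\emph{Strategy.} The plan is to run the Newton--Kantorovich iteration from $x_0$ and to show, in turn, that it is well defined, that it stays inside $\overline{\bd}_0$, that it converges, and that its limit is a stable solution of \eqref{GeneralQOE}.

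\smallskip
\noindent\emph{Preliminaries.} For a continuous linear functional $f$ I would set $g_f(x):=f(P(x))=f(Q(x)+Ax+b)$; this is a quadratic polynomial whose quadratic part is $(A_fx,x)$, where $A_f=A_f^{T}$ represents $f\circ Q$, so $A_f$ is positive semidefinite for $f\in{\overline K}'_Q$ and positive definite for $f\in K'_Q$, and $f\mapsto g_f$ is linear. The first thing to note is that a solution $x_\ast$ of \eqref{GeneralQOE} lying in $\overline{\bd}_0$ is automatically a vertex of $\ce_n(Q,A,b)$, hence by the theorems already proved a stable solution: at $x_\ast$ every $g_f$ vanishes, and if a segment $x_\ast+(-\varepsilon,\varepsilon)w$ lay in $\ce_n(Q,A,b)=\bigcap_{f\in K'_Q}\ce_f$, then $g_f(x_\ast+tw)=t\,(\nabla g_f(x_\ast),w)+t^2(A_fw,w)\le 0$ for $|t|<\varepsilon$ would force $(\nabla g_f(x_\ast),w)=0$ and $(A_fw,w)\le 0$ for every $f\in K'_Q$, whence $w=0$ since $A_f$ is positive definite there. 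So it is enough to produce a solution of \eqref{GeneralQOE} inside $\overline{\bd}_0$.

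\smallskip
\noindent\emph{Invertibility of $P'(x_0)$.} Suppose $P'(x)v=0$ with $v\ne 0$ at a point $x\in\bd_0$. Then $(\nabla g_f(x),v)=f(P'(x)v)=0$ for every $f$, so the linear term of $g_f$ vanishes along $x+\br v$ and
\[
g_f(x+tv)=g_f(x)+t^2(A_fv,v)\ \ge\ g_f(x)>0\qquad (f\in\bk),
\]
using $(A_fv,v)\ge 0$ and $g_f(x)>0$ (the latter because $x\in\bd$). Thus $x+\br v\subseteq\bd$, and being connected and meeting $\bd_0$ it lies in $\bd_0\subseteq\overline{\bd}_0$. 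Taking $x=x_0$ contradicts the hypothesis that $\overline{\bd}_0$ contains no line through $x_0$; hence $[P'(x_0)]^{-1}$ exists. The same computation shows $P'$ is invertible at every point of $\bd_0$ through which $\overline{\bd}_0$ carries no line.

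\smallskip
\noindent\emph{The Newton orbit and convergence.} Write $x_{k+1}=x_k-[P'(x_k)]^{-1}P(x_k)$ and $\Delta_k=x_{k+1}-x_k$. From $P'(x_k)\Delta_k=-P(x_k)$ one gets $(\nabla g_f(x_k),\Delta_k)=-g_f(x_k)$, and since $g_f$ is quadratic this yields the exact identity
\[
g_f(x_{k+1})=(A_f\Delta_k,\Delta_k)\qquad\text{for every }f\in{\overline K}'_Q .
\]
In particular $g_f(x_{k+1})\ge 0$ for all $f\in\bk$; and on $[x_k,x_{k+1}]$ one has $g_f(x_k+s\Delta_k)=g_f(x_k)(1-s)+g_f(x_{k+1})s^2\ge 0$ for $s\in[0,1]$, so the orbit stays in $\overline{\bd}_0$. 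Fixing $f_\ast\in K'_Q$ gives $g_{f_\ast}(x_{k+1})=(A_{f_\ast}\Delta_k,\Delta_k)\ge c\|\Delta_k\|_2^2$ with $c>0$. Since each $g_f$ ($f\in{\overline K}'_Q$) is convex, $P$ is convex relative to ${\overline K}'_Q$ and the scheme is Newton's method for a convex operator; I would combine this with the no-line hypothesis to show that $\ce_n(Q,A,b)\ne\emptyset$ (a barrier the orbit cannot cross) and that the orbit is bounded, hence convergent to some $x_\ast\in\overline{\bd}_0$. Then $\Delta_k\to 0$ and, $P'$ being bounded on the orbit, $P(x_k)=-P'(x_k)\Delta_k\to 0$, so $P(x_\ast)=0$. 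Granting that the no-line property is inherited by $x_\ast$, $P'(x_\ast)$ is invertible, so the iteration is quadratically convergent near $x_\ast$ and the whole sequence converges to $x_\ast$; by the Preliminaries, $x_\ast$ is a stable solution of \eqref{GeneralQOE} lying in $\overline{\bd}_0$.

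\smallskip
\noindent\emph{Main obstacle.} The technical heart is everything governed by the no-line hypothesis: that $\ce_n(Q,A,b)\ne\emptyset$ and the orbit is bounded (so the iteration converges), and that no line survives in $\overline{\bd}_0$ through the iterates or through $x_\ast$ (so $P'$ stays invertible and $x_\ast$ is a genuine vertex). All of this should reduce to a careful study, for each $f\in\bk$, of the one-variable convex quadratics $t\mapsto g_f(z+tv)$: one uses that their discriminants $(\nabla g_f(z),v)^2-4(A_fv,v)g_f(z)$ are concave in $z$ (a consequence of the Cauchy--Schwarz inequality $(A_fv)(A_fv)^{T}\preceq(A_fv,v)\,A_f$), that $(A_fv,v)=0$ forces $A_fv=0$ for $A_f$ positive semidefinite, and that $\bigcap_{f\in\bk}\ker A_f=\{0\}$ (automatic, since any $f\in K'_Q$ has $A_f$ positive definite). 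The algebraic Newton identity above and the reduction to vertices of $\ce_n(Q,A,b)$ via the previously established theorems are, by comparison, routine.
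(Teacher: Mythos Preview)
Your argument matches the paper's through the invertibility of $P'(x_0)$ and the Newton identity $g_f(x_{k+1})=(A_f\Delta_k,\Delta_k)$ (these are exactly the paper's Steps~1--2). The substantive divergence, and the genuine gap, is in the convergence mechanism.

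You write ``the orbit is bounded, hence convergent''. In $\br^n$ boundedness alone does not give convergence, and nothing you have proved forces the full sequence (rather than a subsequence) to converge. The paper's device here is not an analytic estimate but an \emph{order-theoretic} one: from the tangent hyperplanes to the sets $\Pi_f$, $f\in\bk$, passing through $x_0$ with tangent point in $\overline{\bd}_0$, it builds a closed solid cone $C_0$ (using $\mathrm{int}\,\ce_n(Q,A,b)\ne\emptyset$ for solidity and the no-line hypothesis to ensure $C_0\cap(-C_0)=\{0\}$). One then checks $x_1\in x_0-C_0$, the analogous cone $C_1$ at $x_1$ satisfies $C_1\subset C_0$, and inductively $x_{k+1}\in x_k-C_0$. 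Thus the iterates are monotone decreasing for the partial order induced by $C_0$; together with the boundedness of $(x_0-C_0)\cap\overline{\bd}_0$ (obtained by a compactness argument on unit directions in $C_0$), this yields convergence because every solid cone in a finite-dimensional space is regular (bounded monotone sequences converge). Your proposal has no substitute for this monotonicity, and the discriminant/Cauchy--Schwarz heuristics in your ``Main obstacle'' paragraph do not produce one.

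Two smaller points, which the cone construction also handles and your sketch leaves open: (i) your Preliminaries show only that a solution $x_\ast\in\overline{\bd}_0$ is an \emph{extreme point} of $\ce_n(Q,A,b)$, not a \emph{vertex}; stability is equivalent to the vertex property, which in turn is equivalent to invertibility of $P'(x_\ast)$, so you still need the latter; (ii) the inheritance of the no-line property by the iterates and by $x_\ast$ is exactly what the nested-cone relation $C_{k+1}\subset C_k$ delivers, and you have not provided an independent route to it.
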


We are aiming to classify the set of elliptic operators based on their ranks.

Let $Q:\br^n\to\br^n$ be an elliptic quadratic operator.  Let $\bk$
be the set of extremal rays of ${\overline K}'_Q.$ Then, due to
Krein-Milman theorem, we have that
$\overline{conv(\bk)}={\overline K}'_Q,$ where $conv(\bk)$ is a convex
hull of $\bk.$ Let ${\rm rg}_fQ$ stand for the rank of the quadratic
form $f(Q(x))$. It is clear that the rank ${\rm rg}_fQ$ of the quadratic form  $f(Q(x))$ is equal to the rank of the associated symmetric matrix $A$. Due to the construction of the set $ K'_Q$, one has that ${\rm rg}_fQ=n$ whenever $f\in K'_Q$ and ${\rm rg}_fQ<n$ whenever $f\in
\partial K'_Q$.

\begin{definition} The
number
$${\rm rg} Q=\max_{f\in \bk}{\rm rg}_fQ$$ is called a {\it rank} of the elliptic quadratic operator $Q:\br^n\to\br^n$.
\end{definition}

It is clear  that $1\leq {\rm rg} Q\leq n-1$ for any elliptic quadratic operator. Moreover, if $A,B$ are invertible matrices such that $AQ(B(\cdot))$ also is an elliptic quadratic operator then ${\rm rg}(AQ(B))={\rm rg Q}.$

\begin{definition} An elliptic quadratic operator
$Q:\br^n\to\br^n$ is called {\it homogeneous}
of rank $k$, if one has that ${\rm rg}_fQ=k$ for any $f\in \bk$.
\end{definition}

We can describe the cone ${\overline K}'_Q$ for a homogeneous elliptic quadratic operators of order $k$.

\begin{theorem} Let $Q$ be a homogeneous elliptic quadratic operator. One has that ${\rm rg}Q=1$ if and
only if ${\overline K}'_Q$ is a miniedral cone, i.e,  $\bk$ contains
exactly $n$ extremal rays. Moreover, if ${\rm rg}Q=1$ then there exist invertible matrices $A, B$ such that $AQ(Bx)=(x_1^2,x^2_2,\dots,x_n^2)$.
 \end{theorem}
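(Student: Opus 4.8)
The plan is to prove the two implications separately, and then derive the normal-form statement from the "only if" direction together with the earlier Proposition describing minihedral cones.

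First I would treat the direction "${\rm rg}\,Q=1$ $\Rightarrow$ ${\overline K}'_Q$ is minihedral with exactly $n$ extremal rays." Since $Q$ is homogeneous of rank $1$, every extremal ray $f\in\bk$ has ${\rm rg}_f Q=1$, i.e. the symmetric matrix $A_f$ associated to the positive-semidefinite form $f(Q(x))$ has rank exactly $1$, so $A_f=v_f v_f^{T}$ for some nonzero $v_f\in\br^n$ (the sign is fixed because $f(Q(x))\ge 0$). The key observation is that the positive-semidefinite matrices in $H_{n,n}(Q)$ of rank $1$ correspond, after fixing a coordinate frame, to the extreme points of ${\overline K}'_Q$, and rank-$1$ PSD matrices are precisely the extreme rays of the PSD cone itself. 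Because $Q$ is elliptic, ${\overline K}'_Q$ is a full-dimensional closed convex cone in $\br^n$ (its interior is $K'_Q\ne\emptyset$); I would show that it is in fact the intersection of $H_{n,n}(Q)\cong\br^n$ with the PSD cone $\cs_+^n$, realized via the trace pairing and the fact (from the Dines-type theorem quoted above) that ellipticity forces $H_{n,n}(Q)$ to meet $\cs_+^n$ only in a pointed full-dimensional subcone. A full-dimensional closed convex cone in $\br^n$ that is generated by rank-one (hence extremal-in-$\cs_+^n$) matrices and whose ambient span is $n$-dimensional must have exactly $n$ extremal rays: indeed, each generator $v_f v_f^T$ determines its ray uniquely, $n$ of them are needed to span an $n$-dimensional cone, and if there were more than $n$ the cone would fail to be simplicial — but a rank-one-generated cone that spans $\br^n$ is forced to be simplicial because any rank-one PSD matrix lying in the cone is automatically an extreme point, so no generator can be written as a nontrivial combination of the others. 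This simpliciality is exactly the statement that ${\overline K}'_Q$ is minihedral with $n$ extremal rays.

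For the converse, "${\overline K}'_Q$ minihedral with $n$ extremal rays $\Rightarrow$ ${\rm rg}\,Q=1$": if $\bk=\{f_1,\dots,f_n\}$ and ${\overline K}'_Q$ is simplicial, then each $f_i$ lies on the boundary of $K'_Q$, so ${\rm rg}_{f_i}Q\le n-1$; the homogeneity hypothesis says all these ranks coincide, call the common value $k$. I would argue that $k$ cannot exceed $1$: the matrices $A_{f_1},\dots,A_{f_n}$ are linearly independent (they generate the $n$-dimensional cone along distinct extreme rays), they are all PSD, and an arbitrary interior functional $f=\sum \lambda_i f_i$ with $\lambda_i>0$ has $A_f=\sum\lambda_i A_{f_i}$ of full rank $n$; a rank count using the fact that $\sum\lambda_i A_{f_i}$ has rank $n$ for every choice of positive weights forces each $A_{f_i}$ to have rank exactly $1$ (if some $A_{f_i}$ had rank $\ge 2$, then since the $A_{f_i}$ are simultaneously "aligned" as the generators of a simplicial cone one can find positive weights making the sum degenerate — this is where the simplicial/minihedral structure is essential). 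Hence $k=1$ and ${\rm rg}\,Q=\max_i{\rm rg}_{f_i}Q=1$.

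Finally, for the normal form: once we know ${\overline K}'_Q$ is minihedral with extreme rays carrying rank-one PSD matrices $v_1v_1^T,\dots,v_nv_n^T$, I would show the $v_i$ are linearly independent (else the $A_{f_i}$ would not span) and set $B^{-1}$ to be the matrix with rows $v_i^T$, so that in the new variable $y=B^{-1}x$ one has $f_i(Q(By))=(v_i^T B y)^2$... rather, I would choose $B$ so that $v_i^T B = e_i^T$, giving $f_i(Q(Bx))=x_i^2$; then choosing $A$ to be the change-of-basis matrix on the target sending the functional data $(f_1,\dots,f_n)$ to the coordinate functionals yields $AQ(Bx)=(x_1^2,\dots,x_n^2)$, using that $Q$ is determined by the collection $\{f(Q(\cdot)):f\in\bk\}$ since $\bk$ spans the dual. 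I expect the main obstacle to be the rank-count step in the converse — making rigorous why homogeneity plus a simplicial cone forces rank one rather than merely rank $\le n-1$; the cleanest route is probably to diagonalize on each extreme ray and exploit that a positive combination of PSD matrices drops rank iff the matrices share a common null vector, which the simplicial structure controls. A secondary subtlety is verifying that $AQ(Bx)$ is again elliptic so that the rank invariance statement ${\rm rg}(AQ(B))={\rm rg}\,Q$ (quoted earlier) legitimately applies.
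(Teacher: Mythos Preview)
Your forward direction is circular. You assert that a rank-one-generated cone spanning $\br^n$ is forced to be simplicial because each rank-one PSD matrix in the cone is automatically extreme, but extremality of all generators does not imply simpliciality: the full PSD cone in $\cs^2\cong\br^3$ is generated by rank-one matrices, every one of them extreme, yet it has infinitely many extremal rays. What must be ruled out is the existence of more than $n$ extremal rays, and nothing in your argument does that. The paper proceeds in the opposite order: it first picks any $n$ linearly independent extremal rays $f_1,\dots,f_n$, writes each $f_i(Q(x))=(\varphi_i(x))^2$ (rank one), checks that the $\varphi_i$ are linearly independent (otherwise some nontrivial $\sum c_i f_i(Q(x))$ would vanish identically, contradicting ellipticity), and uses the $f_i$ and $\varphi_i$ to build invertible $A,B$ with $AQ(Bx)=(x_1^2,\dots,x_n^2)$. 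For this normal form the cone is visibly the closed positive orthant, hence minihedral with exactly $n$ extremal rays; since ${\overline K}'_{AQ(B\cdot)}=(A^t)^{-1}{\overline K}'_Q$, the original cone is minihedral as well. So the normal form is not a corollary of minihedrality but the device that establishes it.

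For the converse, your acknowledged gap is genuine and your proposed salvage (``a positive combination of PSD matrices drops rank iff they share a common null vector'') does not obviously interact with simpliciality in the way you need. The paper instead proves the contrapositive: if $Q$ is homogeneous with ${\rm rg}\,Q\ge 2$, then $\bk$ is infinite. Assuming $\bk$ finite (so ${\overline K}'_Q$ is polyhedral), it selects $n$ extremal rays $f_1,\dots,f_n$ with the extra property that the convex hull of any $n-1$ of them misses $K'_Q$, writes $f_i(Q(x))=(\varphi_i(x))^2+(\psi_i(x))^2$, and after a change of variables obtains $AQ(Bx)=(x_1^2+\alpha_1(x)^2,\dots,x_n^2+\alpha_n(x)^2)$. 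The face condition is then used: the functional with $0$ in slot $i$ and $1$ elsewhere lies in $co(f_1,\dots,\widehat{f_i},\dots,f_n)\subset\partial{\overline K}'_Q$, so the associated form is not positive definite, forcing every $\alpha_j$ with $j\ne i$ to be independent of $x_i$. Running this over all $i$ gives $\alpha_j(x)=\lambda_j x_j$, hence $AQ(Bx)=((1+\lambda_1^2)x_1^2,\dots,(1+\lambda_n^2)x_n^2)$, which has rank $1$ --- a contradiction. This combinatorial exploitation of the polyhedral face structure is precisely the missing idea in your sketch.
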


\begin{theorem} Let $Q$ be a homogeneous elliptic quadratic operator. If ${\rm rg}Q\geq 2$ then $\bk$ is an infinite set. Moreover, if ${\rm rg}Q=n-1$
then $\bk=\partial {\overline K}'_Q$.
\end{theorem}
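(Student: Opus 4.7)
The plan is to prove the two assertions separately, exploiting in the first part the characterisation of rank $1$ via minihedrality from the preceding theorem.

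For the first assertion I argue by contrapositive: if $\bk$ is finite then ${\rm rg}Q = 1$. Finiteness of $\bk$ makes ${\overline K}'_Q$ a polyhedral cone; let its facets lie in the hyperplanes $\{L_j = 0\}$, $j = 1,\ldots,M$, for linear forms $L_1,\ldots,L_M$. Since every matrix $A_f$ on a facet is positive semidefinite and singular, the polynomial $\det A_f$ (of degree $n$ in $f$) vanishes on a relatively open subset of each hyperplane $\{L_j = 0\}$, hence on the whole hyperplane by Zariski density. Therefore each $L_j$ divides $\det A_f$, and
\begin{equation*}
\det A_f \;=\; P(f)\prod_{j=1}^{M} L_j(f)^{a_j}, \qquad a_j \geq 1,
\end{equation*}
for some polynomial $P$. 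Degree comparison yields $M \leq \sum_j a_j \leq n$. On the other hand, ${\overline K}'_Q$ is pointed and full-dimensional, hence so is its dual cone; duality identifies the $M$ facets of ${\overline K}'_Q$ with the extreme rays of the dual, of which there must be at least $n$ in order to span $\br^n$. Thus $M = n$, all $a_j = 1$, $\deg P = 0$, and ${\overline K}'_Q$ is a minihedral (simplicial) cone. The preceding theorem then forces ${\rm rg}Q = 1$, contradicting ${\rm rg}Q \geq 2$.

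For the second assertion, assume ${\rm rg}Q = n-1$ and fix $f_0 \in \partial{\overline K}'_Q$. By Carath\'eodory applied to the pointed closed cone ${\overline K}'_Q$, write $f_0 = \sum_i \lambda_i f_i$ with $f_i \in \bk$ and $\lambda_i > 0$. For any $v \in \ker A_{f_0}$ the identity $0 = v^T A_{f_0} v = \sum_i \lambda_i\, v^T A_{f_i} v$ together with the positive semidefiniteness of each $A_{f_i}$ yields $v \in \ker A_{f_i}$ for every $i$. Since every $\ker A_{f_i}$ is one-dimensional by the homogeneity hypothesis ${\rm rg}_{f_i}Q = n-1$, all these kernels coincide with ${\rm span}(v)$; consequently $\ker A_{f_0} = {\rm span}(v)$ as well, and the minimal face of ${\overline K}'_Q$ containing $f_0$ is
\begin{equation*}
F \;:=\; {\overline K}'_Q \cap\{g\in\br^n : A_g v = 0\}.
\end{equation*}
If $\dim F \geq 2$, then the pointed cone $F$ admits extreme rays of its own, each being automatically an extreme ray of ${\overline K}'_Q$ (faces of faces are faces). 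Any such extreme ray lies on the relative boundary of $F$, where $A_g$ is PSD with $\ker A_g \supsetneq {\rm span}(v)$ and hence ${\rm rank}(A_g) \leq n-2$; this contradicts the hypothesis that every element of $\bk$ has rank $n-1$. Therefore $\dim F = 1$, so $f_0$ lies on an extreme ray of ${\overline K}'_Q$, i.e., $f_0 \in \bk$. Since $f_0 \in \partial{\overline K}'_Q$ was arbitrary and $\bk \subseteq \partial{\overline K}'_Q$ trivially, the equality $\bk = \partial{\overline K}'_Q$ follows.

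The main technical obstacle is the first assertion, where one must distil enough polyhedral structure from the mere finiteness of $\bk$ to trigger the preceding minihedrality theorem. The determinant-factorisation device is the crux: a combinatorial lower bound on the number of facets (from pointedness of the dual) meets an algebraic upper bound (from $\deg \det A_f = n$) and pins the number of facets down to exactly $n$. Once polyhedrality has been eliminated, the face-theoretic argument for part two is essentially forced by the PSD-cone facial structure.
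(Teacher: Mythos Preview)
Your approach to both halves differs from the paper's. For the second assertion your face-theoretic argument is correct and arguably cleaner than the paper's route, which instead picks two extreme rays $f_1,f_2$ whose conical hull lies entirely in $\partial\overline{K}'_Q$, simultaneously diagonalises $f_1(Q)$ and $f_2(Q)$ on their common $(n{-}1)$-dimensional range, and exhibits $f_1-f_2\in\partial\overline{K}'_Q$ with rank at most $n-2$, contradicting the preliminary observation that every nonzero boundary element has rank exactly $n-1$.

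For the first assertion, however, your argument is circular as written. You reduce to ``$\overline{K}'_Q$ is minihedral'' and then invoke ``the preceding theorem'' to force ${\rm rg}Q=1$. But the preceding result (Theorem~\ref{t9}) only proves the implication ${\rm rg}Q=1\Rightarrow\overline{K}'_Q$ minihedral; the converse appears in the paper as a corollary \emph{of the very statement you are proving} (minihedral $\Rightarrow\bk$ finite $\Rightarrow{\rm rg}Q<2$). The repair is already latent in your own factorisation $\det A_f=c\prod_j L_j(f)$: each extreme ray $f_i$ of the simplicial cone lies on exactly $n-1$ facets, so for $g\in K'_Q$ the polynomial $t\mapsto\det A_{f_i+tg}$ vanishes to order exactly $n-1$; on the other hand, for $A$ positive semidefinite of rank $r$ and $B$ positive definite one always has $\det(A+tB)\sim t^{\,n-r}$ (diagonalise $A$ against $B$). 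Equating exponents gives ${\rm rg}_{f_i}Q=1$ for every $i$, hence ${\rm rg}Q=1$ without appealing to the unproved converse. The paper sidesteps this issue by a different, constructive argument: assuming ${\rm rg}Q=2$ and $\bk$ finite, it chooses $n$ independent extreme rays, builds invertible $A,B$ with $AQ(Bx)=(x_1^2+\alpha_1(x)^2,\dots,x_n^2+\alpha_n(x)^2)$, and then shows each $\alpha_i$ can depend only on $x_i$ by testing functionals $f=(1,\dots,1,0,1,\dots,1)$, forcing rank~$1$.
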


%
%
%

In general, it is a tedious work to describe the cone $\overline{K}'_Q$ of homogeneous elliptic quadratic operators with rank $2\leq{\rm rg}Q\leq n-2$.  It can be observed in some examples.

%
%

We can provide some explicit sufficient conditions for the solvability of elliptic rank-1 operator equation \eqref{GeneralQOE}. Let $Q:\br^n\to \br^n$ be an elliptic operator of the rank-1. Then, the elliptic operator equation \eqref{GeneralQOE} can be written as follows

\begin{equation}\label{rank1}
x_k^2=\sum^n_{i=1}a_{ki}x_i + b_k; \quad k=\overline{1,n}.
\end{equation}

\begin{theorem}\label{twostablesolution1}
Let $A=(a_{ij})_{i,j=1}^n$ be a matrix such that
$a_{i_1j}\cdot a_{i_2j}\geq 0$ for all $i_1,i_2,j=\overline{1,n}.$
If one has that
\begin{equation}\label{condforrank1}
\bigg(\sum\limits_{j=1}^n\min\limits_{i=\overline{1,n}}|a_{ij}|\bigg)^2+4 \min\limits_{i=\overline{1,n}} b_i> 0
\end{equation}
then the elliptic operator equation \eqref{rank1} has at least two stable solutions.
\end{theorem}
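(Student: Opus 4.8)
The plan is to produce one stable solution by a Brouwer fixed-point argument combined with an $M$-matrix nondegeneracy estimate, and then conclude from the parity theorem that there must be a second one. First I would normalize: since the entries of each column of $A=(a_{ij})$ all have one sign, for each $j$ choose $\e_j\in\{-1,1\}$ equal to the sign of the $j$-th column and substitute $x_j\mapsto\e_j x_j$. Because $x_j^2=(\e_j x_j)^2$, this leaves $Q(x)=(x_1^2,\dots,x_n^2)$ and the constants $b_k$ untouched and replaces $A$ by the entrywise nonnegative matrix $(\e_j a_{ij})$; it also leaves $\min_i|a_{ij}|$ and $\min_i b_i$ (hence \eqref{condforrank1}) unchanged, sends an elliptic rank-$1$ equation to an elliptic rank-$1$ equation, and—being a fixed orthogonal isomorphism that conjugates the equation and all its perturbations—preserves the notion of a stable solution. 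So from now on $a_{ij}\ge 0$ for all $i,j$.

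Set $m_j=\min_i a_{ij}\ge 0$, $S=\sum_{j=1}^n m_j$, $\mu=\min_i b_i$; then \eqref{condforrank1} says $S^2+4\mu>0$, so $t_+=\tfrac12\bigl(S+\sqrt{S^2+4\mu}\bigr)$ is well defined, strictly positive, and satisfies $t_+^2=St_++\mu$. Next I would prove solvability via Brouwer's theorem on the cube $C=[t_+,R]^n$ for $R$ large. For $x\in C$ one has $(Ax)_k+b_k\ge St_++\mu=t_+^2>0$, so $\Phi(x)_k:=\sqrt{(Ax)_k+b_k}$ is well defined and continuous on $C$; the same bound gives $\Phi(x)_k\ge t_+$, and picking $R$ with $R^2\ge R\max_k\sum_j a_{kj}+\max_k b_k$ gives $\Phi(x)_k\le R$. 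Hence $\Phi:C\to C$ has a fixed point $x^*\in C$, which is a solution of \eqref{rank1} with $x^*_k\ge t_+>0$ for every $k$; in particular \eqref{rank1} is solvable.

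The crucial point is that $x^*$ is automatically nondegenerate. Writing $F(x)=\bigl(x_k^2-(Ax)_k-b_k\bigr)_{k=1}^n$ for the left side of \eqref{rank1}, its Jacobian is $F'(x)=2D_x-A$ with $D_x=\mathrm{diag}(x_1,\dots,x_n)$. Evaluating at $x^*$ and using $(Ax^*)_k={x^*_k}^2-b_k$ gives $\bigl(F'(x^*)x^*\bigr)_k={x^*_k}^2+b_k\ge t_+^2+\mu$, and since $2t_+-S=\sqrt{S^2+4\mu}$ one gets $t_+^2+\mu=St_++2\mu=(2t_+-S)t_+=\sqrt{S^2+4\mu}\,t_+>0$, the strict inequality being exactly \eqref{condforrank1}. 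Thus $F'(x^*)$ has nonpositive off-diagonal entries $-a_{kj}$ and sends the strictly positive vector $x^*$ to a strictly positive vector, so it is a nonsingular $M$-matrix and $\det F'(x^*)\neq 0$. By the implicit function theorem $x^*$ then persists under small perturbations of $(Q,A,b)$ (equivalently, $x^*$ is a vertex of $\ce_n(Q,A,b)$), so $x^*$ is a stable solution.

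Finally, by the theorem that an elliptic operator equation has an even (possibly zero) number of stable solutions, the existence of $x^*$ forces at least two, which is the claim. The only delicate points are the $\Phi$-invariance of $C$ and, above all, the identity $t_+^2+\mu=\sqrt{S^2+4\mu}\,t_+$, which is precisely what converts the strict hypothesis \eqref{condforrank1} into strict positivity of $F'(x^*)x^*$ and hence into nondegeneracy (compare the case $n=1$, where equality in \eqref{condforrank1} gives a double root); the column-sign hypothesis is used only for the reduction to $A\ge 0$, and everything else—Brouwer, the $M$-matrix criterion, the implicit function theorem, and the parity theorem—is off the shelf.
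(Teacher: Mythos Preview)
Your proof is correct and takes a genuinely different route from the paper's.  The paper also first normalizes to $a_{ij}\ge 0$, but then works entirely inside its convex-geometric framework: it shows directly that every ellipsoid $\ce_f$ is nonempty (using Lemma~\ref{le2}), that $\ce_n(Q,A,b)$ is therefore nonempty with nonempty interior, and then that $\ce_n(Q,A,b)$ is closed under the coordinatewise maximum $x\vee y$.  From this lattice property the point $x^*=\sup\ce_n(Q,A,b)$ is a vertex of $\ce_n(Q,A,b)$, hence a stable solution by Theorem~\ref{t6}; the second stable solution then comes from the parity Theorem~\ref{t7}, exactly as you use it.

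What your approach buys is that it is entirely self-contained once one accepts the parity theorem: Brouwer on the cube $[t_+,R]^n$ replaces the whole discussion of $\ce_f$, $\ce_n(Q,A,b)$, and its interior, and the $M$-matrix criterion replaces the vertex characterization of stability.  The algebraic identity $t_+^2+\mu=\sqrt{S^2+4\mu}\,t_+$ is a nice way to see precisely where the strict inequality in \eqref{condforrank1} enters.  What the paper's approach buys is a canonical description of the solution it finds---literally $\sup\ce_n(Q,A,b)$---which it then exploits in the follow-up Theorem~\ref{t13} to show that the Newton--Kantorovich iteration from a specific initial point converges to that particular solution; your $x^*$ lies in the same orthant $x_k\ge t_+>0$ but is not a priori identified with a distinguished geometric point of $\ce_n(Q,A,b)$.
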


\begin{remark} In the case $n=1$, the condition \eqref{condforrank1} coincides
with the positivity of the discriminant of the quadratic equation $x^2=ax+b$. Hence, the condition \eqref{condforrank1} is a necessary and sufficient condition for the existence of two stable
solutions whenever $n=1$.
\end{remark}

%
%
%
%
%
%

\section{A classification of quadratic operators}


In this section we are going to classify quadratic operators into three classes and to study their properties. In what follows, we shall consider quadratic operators on the finite dimensional Euclidian space $\br^n$.

Let $B:\br^n\times \br^n\to \br^n$ be  a symmetric
bilinear operator. A quadratic operator $Q:\br^n\to \br^n$ is defined as follows
\begin{equation*}
    Q(x)=B(x,x),\quad \forall \ x\in \br^n.
\end{equation*}
It is well-known that every quadratic operator $Q:\br^n\to \br^n$  uniquely defines the symmetrical
bilinear operator $B:\br^n\times \br^n\to \br^n$ associated with the given quadratic operator $Q$
\begin{equation*}
    B(x,y)=\frac{1}{4}[Q(x+y)-Q(x-y)]=\frac{1}{2}[Q(x+y)-Q(x)-Q(y)].
\end{equation*}
Moreover, every quadratic operator $Q:\br^n\to\br^n$ can be
written in the coordinate form as follows
\begin{equation*}
    Q(x)=\bigg(\sum_{i,j=1}^na_{ij,1}x_ix_j,\sum_{i,j=1}^na_{ij,2}x_ix_j,\cdots,\sum_{i,j=1}^na_{ij,n}x_ix_j\bigg),
\end{equation*}
where $a_{ij,k}\in \br$ are structural coefficients and $x=(x_1,\dots,x_n)\in \br^n$. Without loss any generality, one can assume that $a_{ij,k}=a_{ji,k}.$ We denote the set of all quadratic operators acting on $\br^n$ by ${\mathfrak{Q}}_n.$

Any quadratic operator $Q:\br^n\to \br^n$ is {\it bounded}, i.e., there exists a positive number $M>0$ such that
\begin{equation*}
    \|Q(x)\|\leq M\cdot \|x\|^2,  \ \forall  \ x\in \br^n,
\end{equation*}
and it is {\it continuous.} Let us define the norm of the quadratic operator $Q$ by
$$\|Q\|=\sup_{\|x\|\leq
    1}\|Q(x)\|. $$
It is clear that $\|Q(x)\|\leq \|Q\|\cdot\|x\|^2$ for any $x\in\br^n.$

One can see that the set ${\mathfrak{Q}}_n$ forms the $\frac{n^2(n+1)}{2}-$ dimensional normed space
with the quadratic operator norm. We are going to classify quadratic operators into three classes.

\begin{definition} A quadratic operator $Q:\br^n\to \br^m$ is called
    \begin{enumerate}
        \item[(i)] {\it elliptic} (in short EQO) if there exists a linear continuous functional $f:\br^m\to\br$
        such that  $f(Q(x))$ is a positive definite quadratic form;
        \item[(ii)] {\it parabolic} (in short PQO) if there exists
        a nonzero  linear continuous functional $f:\br^m\to\br$ such that
        $f(Q(x))$ is a positive semidefinite but no positive definite quadratic form;
        \item[(iii)] {\it hyperbolic} (in short HQO) if for any nonzero  linear continuous functional $f:\br^m\to\br$ the quadratic form $f(Q(x))$ is indefinite.
    \end{enumerate}
\end{definition}

We denote the sets of all elliptic quadratic, parabolic quadratic, and hyperbolic quadratic operators acting on $\br^n$ by ${\mathfrak{EQ}}_n,$ ${\mathfrak{PQ}}_n,$ and ${\mathfrak{HQ}}_n,$ respectively.

We know that, the quadratic form $f(Q(x))$ is
positive defined in a finite dimensional vector space if and only if there exists a positive number $\alpha >0$ such that
\begin{equation*}
    f(Q(x))\geq \alpha\cdot \|x\|^2,
\end{equation*}
for any $x\in \br^n$. Thus $Q:\br^n\to \br^n$ is an EQO if and only if there exist a continuous linear
functional $f:\br^n\to\br$ and a number $\alpha >0$ such that
\begin{equation*}
    f(Q(x))\geq \alpha\cdot \|x\|^2, \quad \forall \ x\in \br^n.
\end{equation*}

First of all, we shall study some basic properties of quadratic operators.

Let $Q:\br^n\to \br^n$  be an EQO and  $K'_Q$ be a set of all continuous linear functionals $f:\br^n\to\br$ such that the quadratic form $f(Q(x))$ is positive defined, i.e.,
\begin{equation}\label{K'Q}
    K'_Q=\left\{f: \ \exists \ \alpha_f>0 ,\  f(Q(x))\geq \alpha_f\|x\|^2, \ \forall \ x\in \br^n\right\}.
\end{equation}
Due to the definition of EQO we have that $K'_Q\ne \emptyset$.

We recall that a set $K\subset\br^n$ is called a cone if $\lambda K\subset K$ for any $\lambda >0$ and $K\cap (-K)=\emptyset.$

\begin{proposition}
    If $Q:\br^n\to \br^n$  is an EQO and $K'_Q$ is defined by \eqref{K'Q} then $K'_Q$ is an open convex cone.
\end{proposition}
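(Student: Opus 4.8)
The plan is to reduce everything to one reformulation: since $Q(\lambda x)=\lambda^2 Q(x)$, the map $x\mapsto f(Q(x))$ is a homogeneous degree-two form for every functional $f$, so $f\in K'_Q$ if and only if
$$m(f):=\min_{\|x\|=1}f(Q(x))>0.$$
The minimum exists because the unit sphere is compact and $f\circ Q$ is continuous ($Q$ is continuous, $f$ linear), and rescaling by $\|x\|^2$ transfers a positive bound on the sphere to all of $\br^n$ with $\alpha_f=m(f)$. With this in hand the three defining properties are checked in turn.

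\emph{Cone property.} If $f\in K'_Q$ with constant $\alpha_f$ and $\lambda>0$, then $(\lambda f)(Q(x))\geq\lambda\alpha_f\|x\|^2$, so $\lambda f\in K'_Q$. For $K'_Q\cap(-K'_Q)=\emptyset$ (which also gives $0\notin K'_Q$): if both $f,-f\in K'_Q$ then $0=f(Q(x))+(-f)(Q(x))\geq(\alpha_f+\alpha_{-f})\|x\|^2$ for all $x$, impossible for $x\neq0$. \emph{Convexity} then follows from closure under addition, $(f+g)(Q(x))\geq(\alpha_f+\alpha_g)\|x\|^2$, so $f+g\in K'_Q$; combined with the cone property this yields convexity.

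\emph{Openness} is the only point requiring a (mild) compactness argument, and is where I would be most careful. I would show $m(\cdot)$ is continuous on the dual space — in fact Lipschitz: for functionals $f,g$,
\[
|m(f)-m(g)|\leq\sup_{\|x\|=1}|(f-g)(Q(x))|\leq\|f-g\|\,\|Q\|,
\]
using $\|Q(x)\|\leq\|Q\|\,\|x\|^2$ and that the norm on functionals is the dual/operator norm. Since $K'_Q=\{f:m(f)>0\}=m^{-1}\big((0,\infty)\big)$ is the preimage of an open set under a continuous map, it is open. Equivalently, without invoking $m$: if $f\in K'_Q$ and $\|g-f\|<\alpha_f/(2\|Q\|)$, then $g(Q(x))\geq f(Q(x))-\alpha_f/2\geq\alpha_f/2$ on $\|x\|=1$, so $g\in K'_Q$. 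Here $\|Q\|>0$, since $\|Q\|=0$ would force $K'_Q=\emptyset$, contradicting $K'_Q\neq\emptyset$. There is no genuine obstacle; the only thing to keep straight is in which space the functionals live and that $|(f-g)(Q(x))|\leq\|f-g\|\,\|Q(x)\|$ is the legitimate bound.
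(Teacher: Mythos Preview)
Your proof is correct and follows essentially the same approach as the paper: the openness argument via the perturbation estimate $g(Q(x))\geq f(Q(x))-\|f-g\|\,\|Q\|\,\|x\|^2$ is exactly the paper's, and the convexity and cone verifications are likewise direct and identical in substance. The packaging through the function $m(f)=\min_{\|x\|=1}f(Q(x))$ and its Lipschitz continuity is a tidy addition, but it encodes the same estimate the paper uses.
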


\begin{proof} Let us prove that $K'_Q$ is an open set. If $f_0\in K'_Q$ then  there exists $\a_{0}>0$ such that $f_0(Q(x))\geq \alpha_{0}\|x\|^2$ for any $x\in \br^n$. Since $Q$ is bounded, we have
    \begin{equation*}
        \|Q(x)\|\leq M\cdot \|x\|^2,
    \end{equation*}
    for some $M>0$ and for any $x\in \br^n.$ If we take $\varepsilon=\frac{\a_{0}}{2M}$ then for any linear functional $f:\br^n\to\br$ with $\|f-f_0\|<\varepsilon$ we get
    \begin{eqnarray*}
        f(Q(x))&=&f_0(Q(x))+(f-f_0)(Q(x))\geq \a_{0}\|x\|^2-
        \|f-f_0\|\cdot \|Q(x)\|\\
        &\geq&  \a_{0}\|x\|^2-\frac{\a_{0}}{2M}\cdot M\cdot \|x\|^2=\frac{\a_{0}}{2}\|x\|^2.
    \end{eqnarray*}
    This means that $f\in K_Q',$ and $K'_Q$ is an open set.

    Let us show that $K'_Q$ is a convex set. If  $f_1,f_2\in K'_Q$ then one has $f_i(Q(x))\geq \a_i
    \|x\|^2 $ for any $x\in \br^n$ where $\a_i>0, \ i=1,2.$ Let $f_\l=\l f_1+(1-\l) f_2, \ 0\leq
    \l \leq 1$. We then get
    \begin{eqnarray*}
        f_\l(Q(x))&=&\l f_1(Q(x))+(1-\l)f_2(Q(x))\geq(\l \a_1+(1-\l) \a_2)\cdot \|x\|^2 \geq \min\{\a_1, \a_2\}\cdot
        \|x\|^2.
    \end{eqnarray*}
    Hence, $ f_\l\in K'_Q$  for any $0\leq
    \l \leq 1,$ i.e., $K'_Q$ is a convex set.

    It immediately follows from the definition of the set $K'_Q$ that $\lambda K'_Q\subset K'_Q$ for any $\lambda >0$ and $K'_Q\cap (-K'_Q)=\emptyset.$ This means that $K'_Q$ is a cone.
\end{proof}

\begin{remark}
    It is worth mentioning that a closure ${\overline K}'_Q$ of $K'_Q$  may  not be a cone. For
    example, let us consider the following EQO on $\br^2$
    $$Q(x)=(x^2_1+x^2_2, x^2_1+x^2_2).$$
    Then $K'_Q=\{(\a,\b): \a+\b>0\}.$ However, the set  ${\overline
        K}'_Q=\{(\a,\b): \a+\b\geq 0\}$ is a semi-plane which is not a cone.
\end{remark}

Recall, given a cone $K\subset \br^{n}$, we can define a partial ordering $\leq_K$ with respect to $K$ by $x\leq_{K} y$ if $y-x\in K.$ The cone $K$ is called {\it minihedral} if $\sup\{x,y\}$ exists for any $x,y\in  \br^{n}$, where the supremum is taken with respect to the partial ordering $\leq_{K}.$
It is well-known that $K\subset\br^n$ is a minihedral cone if and only if it is a conical hull of $n$
linear independent vectors, i.e.,
\begin{equation*}
    K=cone\{z_1,\dots,z_n\}=\bigg\{x: x=\sum^n_{i=1}\l_i z_i, \
    \l_i\geq 0\bigg\}.
\end{equation*}

\begin{proposition}\label{11ell2} For any given minihedral cone $K\subset \br^{n^{*}}$
    there exists an EQO such that ${\overline K}'_Q=K.$
\end{proposition}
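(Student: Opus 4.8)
The strategy is to reverse-engineer an elliptic quadratic operator from the prescribed minihedral cone. Since $K$ is minihedral, by the characterization quoted just above the statement we may write $K = \mathrm{cone}\{z_1,\dots,z_n\}$ for linearly independent vectors $z_1,\dots,z_n \in \br^{n^*}$. The plan is to identify $\br^{n^*}$ with $\br^n$ via the standard inner product so that each $z_k$ represents the linear functional $f_{z_k}(y) = (z_k, y)$, and then to build $Q$ so that $f_{z_k}(Q(x))$ is exactly (or at least proportional to) a sum of squares that is positive definite, while for a general functional $f = \sum_k \lambda_k z_k$ the form $f(Q(x))$ is positive definite precisely when all $\lambda_k > 0$, and merely positive semidefinite on the boundary where some $\lambda_k = 0$.

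The cleanest way to arrange this is first to handle the "model" minihedral cone $K_0 = \mathrm{cone}\{e_1,\dots,e_n\}$, the nonnegative orthant. For that cone take $Q_0(x) = (x_1^2, x_2^2, \dots, x_n^2)$: then for $f = (\lambda_1,\dots,\lambda_n)$ we have $f(Q_0(x)) = \sum_k \lambda_k x_k^2$, which is positive definite iff every $\lambda_k > 0$, so $K'_{Q_0} = \{f : \lambda_k > 0 \ \forall k\}$ and $\overline{K'_{Q_0}} = K_0$. To pass from $K_0$ to a general minihedral $K = \mathrm{cone}\{z_1,\dots,z_n\}$, let $C$ be the invertible matrix whose columns are $z_1,\dots,z_n$, so that $C e_k = z_k$ and $K = C K_0$. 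I would then define $Q(x) := (C^{-1})^{T} Q_0(x)$ (composition with a fixed invertible linear map on the target side only; no change of variable in $x$ is needed). For a functional $f = C^{-T}\!\cdot$? — more carefully: writing the action of a functional as a vector $v$ via $f(y)=(v,y)$, one has $f(Q(x)) = (v, (C^{-1})^T Q_0(x)) = (C^{-1} v, Q_0(x)) = f_{C^{-1}v}(Q_0(x))$. Hence $f \in K'_Q$ iff $C^{-1} v \in K'_{Q_0}$ iff $v \in C K'_{Q_0}$, so $K'_Q = C K'_{Q_0}$ and $\overline{K'_Q} = C\,\overline{K'_{Q_0}} = C K_0 = K$, as required. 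One must also check $Q$ is genuinely elliptic, but that is immediate: $K'_Q \neq \emptyset$ since $K'_{Q_0} \neq \emptyset$ and $C$ is invertible.

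The only genuinely delicate point is bookkeeping about whether functionals live in $\br^{n^*}$ or $\br^n$ and making sure the transpose lands on the correct side; the statement deliberately writes $K \subset \br^{n^*}$, so the identification of $\br^{n^*}$ with $\br^n$ (and the corresponding transpose-inverse twist) has to be stated precisely so that $C K'_{Q_0}$ really is the given $K$ and not its dual or its image under $C^{-T}$. Once that identification is fixed, everything else is the routine verification above; there is no analytic obstacle, since positive definiteness of $\sum_k \lambda_k x_k^2$ under an invertible linear substitution is elementary. I would present the model case $Q_0$ first, then the conjugation by $C$, and close by noting that this construction also shows every minihedral cone arises as $\overline{K'_Q}$ for a rank-one homogeneous $Q$, consistent with the later theorem on rank-$1$ operators.
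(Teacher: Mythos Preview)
Your proof is correct and follows essentially the same approach as the paper: both use the model operator $Q_0(x)=(x_1^2,\dots,x_n^2)$, whose cone $\overline{K'_{Q_0}}$ is the nonnegative orthant, and then transfer to a general minihedral cone via an invertible linear map. The only cosmetic difference is that the paper passes through the dual cone of $K$ and invokes a ``without loss of generality'' coordinate change to reduce to the orthant case, whereas you compose explicitly with $(C^{-1})^{T}$ on the target side and track how $K'_Q$ transforms; your version is the more transparent of the two.
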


\begin{proof}
    Let
    $$C=\{x\in\br^n: f(x)\geq 0, \
    \forall f\in K\}$$
    be a dual cone to the given minihedral cone $K.$ It is known that $C$  is also a minihedral cone. Without loss of
    generality, we may suppose that
    $$C=cone\{e_1,\cdots,e_n\},$$
    where $e_i=(\d_{1i}, \dots ,\d_{ni}) $ and
    \begin{equation*}
        \d_{ij}=\left\{\begin{array}{ll}
            1 \ \mbox{if}\ \ i=j\\
            0 \ \mbox{if} \ \ i\ne j.
        \end{array}\right.
    \end{equation*}

    We define the quadratic operator $Q:\br^n\to\br^n$ as follows
    $$Q(x)=(x^2_1,\dots,x^2_n),$$
    where $x=\sum\limits^n_{i=1}x_ie_i.$

    Let $f=(\l_1,\dots,\l_2)$ be a linear functional. Then a quadratic
    form
    \begin{equation*}
        f(Q(x))=\sum^n_{i=1}\l_i x_i^2,
    \end{equation*}
    is positive defined if and only if  $\l_1>0,\dots,\l_n>0.$ Consequently,
    \begin{equation*}
        K'_Q=\{(\l_1,\dots,\l_n): \l_1>0,\dots,\l_n>0\}
    \end{equation*} and
    \begin{equation*}
        {\overline K}'_Q=\{(\l_1,\dots,\l_n): \l_1\geq 0,\dots,\l_n\geq
        0\}=cone\{f_1,\dots,f_n\},
    \end{equation*}
    where $f_i=(\d_{1i},\dots,\d_{ni}).$ Since $f_i(e_j)=\d_{ij}$ we hence have ${\overline K}'_Q=K.$
\end{proof}

\begin{lemma}\label{l0} If $K_1$ and $K_2$ are open cones in $\br^n,$ $n\geq 2$
    then there exists a minihedral cone $K$ such that
    \begin{equation*}
        K_1\cap intK\ne \emptyset, \quad K_2\cap intK\ne \emptyset,
    \end{equation*}
    where $intK$ is an interior of $K.$
\end{lemma}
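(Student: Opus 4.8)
The plan is to reduce the statement to an elementary fact about simplices: that inside an affine hyperplane of $\br^n$ not passing through the origin, a sufficiently large simplex can be made to contain any two prescribed points in its relative interior. First I would fix nonzero vectors $x_1\in K_1$ and $x_2\in K_2$ (note $0\notin K_1\cup K_2$, since $K_i\cap(-K_i)=\emptyset$). If it happens that $x_2$ is a nonpositive multiple of $x_1$, I replace it: the half-line $\{-tx_1:t\ge 0\}$ has empty interior in $\br^n$ because $n\ge 2$, so the nonempty open set $K_2$ is not contained in it, and I may pick a new $x_2\in K_2$ that is not a nonpositive multiple of $x_1$. After this adjustment the unit vectors $\widehat{x}_1=x_1/\|x_1\|$ and $\widehat{x}_2=x_2/\|x_2\|$ are not antipodal, i.e. $\widehat{x}_1\ne-\widehat{x}_2$.

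Next I would produce a functional that is strictly positive on both rays. Set $u=\widehat{x}_1+\widehat{x}_2$; since the two unit vectors are not antipodal, $u\ne 0$ and the Cauchy--Schwarz inequality gives $\langle\widehat{x}_1,\widehat{x}_2\rangle>-1$, whence $\langle u,x_i\rangle=\|x_i\|(1+\langle\widehat{x}_1,\widehat{x}_2\rangle)>0$ for $i=1,2$. Thus $f(\cdot)=\langle u,\cdot\rangle$ is a linear functional with $f(x_1),f(x_2)>0$. Replacing each $x_i$ by $x_i/f(x_i)$ — a positive multiple, so still a point of the cone $K_i$ — I may assume $x_1,x_2$ both lie in the affine hyperplane $H=\{z\in\br^n:f(z)=1\}$, which does not pass through $0$.

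Then I would choose inside the $(n-1)$-dimensional affine space $H$ an affine simplex with vertices $p_1,\dots,p_n\in H$ whose relative interior contains both $x_1$ and $x_2$; this is routine (take any simplex in $H$ large enough to enclose a ball around $\tfrac{1}{2}(x_1+x_2)$ containing the two points). Because $p_1,\dots,p_n$ are affinely independent points lying in a hyperplane that avoids the origin, they are linearly independent in $\br^n$: applying $f$ to a relation $\sum c_ip_i=0$ yields $\sum c_i=0$, and then $\sum_{i\ge 2}c_i(p_i-p_1)=0$ forces all $c_i=0$ by affine independence. Hence $K:=cone\{p_1,\dots,p_n\}$ is a minihedral cone, and its interior is $\{\sum\lambda_ip_i:\lambda_i>0\}$. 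Since each $x_i$ is in the relative interior of the simplex, it is a convex combination $\sum_j\mu^{(i)}_jp_j$ with every $\mu^{(i)}_j>0$, so $x_i\in intK$; together with $x_i\in K_i$ this gives $K_1\cap intK\ne\emptyset$ and $K_2\cap intK\ne\emptyset$, proving the lemma.

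I expect the only genuine subtlety to be the antipodal case handled in the first paragraph: this is precisely where the hypothesis $n\ge 2$ enters (an open cone in $\br^n$, $n\ge2$, cannot be confined to a single ray), and one must check that the replacement of $x_2$ is legitimate. Everything else is routine linear algebra together with the standard description of the interior of a simplicial cone as the set of combinations with strictly positive coefficients.
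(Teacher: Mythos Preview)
Your proof is correct, but it is considerably more elaborate than the paper's argument. The paper simply picks $y_1\in K_1$ and $y_2\in K_2$ that are linearly independent (this is where $n\ge 2$ is used, since an open set cannot lie in a single line), completes $\{y_1,y_2\}$ to a basis $\{y_1,\dots,y_n\}$ of $\br^n$, and sets $K=cone\{y_1,\dots,y_n\}$. Then each $y_i$ lies on $\partial K$ and in the closure of $intK$, so the open set $K_i$ containing $y_i$ must meet $intK$. Your approach instead goes through a separating functional, a normalization to an affine hyperplane, and an enclosing simplex so that the chosen points $x_i$ land \emph{directly} in $intK$ rather than on its boundary. What this buys you is that the final verification is immediate (no appeal to the openness of $K_i$ at the end), at the cost of the extra machinery in the construction; the paper's version is a two-line argument that trades that machinery for a short implicit closure step.
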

\begin{proof}
    Let $K_1$ and $K_2$ be open cones. Then we can take  $y_i\in K_i,$ $i=1,2$ such that $y_1$ and $y_2$ are
    linear independent. We complete these vectors $\{y_1,y_2\}$ up to a base $\{y_1,y_2,\cdots,y_n\}$ of $\br^n.$
    Then, it is easy to see that the minihedral cone $K=cone\{y_1,y_2,\cdots,y_n\}$ satisfies
    all conditions of the lemma.
\end{proof}

\begin{proposition}\label{p3} The set ${\mathfrak{EQ}}_n$ is a path connected subset of ${\mathfrak{Q}}_n$ whenever $n\geq 2$.
\end{proposition}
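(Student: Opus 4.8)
The plan is to show that any two elliptic quadratic operators $Q_0,Q_1\in{\mathfrak{EQ}}_n$ can be joined by a continuous path inside ${\mathfrak{EQ}}_n$. The naive attempt — the straight segment $Q_t=(1-t)Q_0+tQ_1$ — fails in general, because ellipticity is \emph{not} a convex condition: even if $f_0(Q_0(x))$ and $f_1(Q_1(x))$ are positive definite, there need be no single functional making $f\big(((1-t)Q_0+tQ_1)(x)\big)$ positive definite for intermediate $t$. So the path must be built more carefully, routing through a ``universal'' elliptic operator.

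First I would fix a convenient base point, namely $Q_*(x)=(\,\|x\|^2,\|x\|^2,\dots,\|x\|^2\,)$ (equivalently $A_k=\mathrm{Id}$ for all $k$), which is elliptic with $f=(1,0,\dots,0)\in K'_{Q_*}$. The core claim is: for \emph{any} $Q_0\in{\mathfrak{EQ}}_n$, the segment $t\mapsto (1-t)Q_0+tQ_*$ stays in ${\mathfrak{EQ}}_n$ for all $t\in[0,1]$. Indeed, pick $f_0\in K'_{Q_0}$ with $f_0(Q_0(x))\ge\alpha_0\|x\|^2$; writing $f_0=(\lambda_1,\dots,\lambda_n)$ and $\sigma=\sum_k\lambda_k$, a direct computation gives
\begin{equation*}
f_0\big(((1-t)Q_0+tQ_*)(x)\big)=(1-t)f_0(Q_0(x))+t\,\sigma\,\|x\|^2.
\end{equation*}
If $\sigma\ge 0$ this is $\ge(1-t)\alpha_0\|x\|^2>0$ for $t<1$ and $=\sigma\|x\|^2$ at $t=1$; a small subtlety is the case $\sigma\le 0$, which is handled by instead using $g=(1,\dots,1)\in K'_{Q_*}$ together with the fact (or by first perturbing $f_0$ within the open cone $K'_{Q_0}$, which is possible since $K'_{Q_0}$ is open by the Proposition proved above) to arrange $\sigma>0$; alternatively one notes that $K'_{Q_0}$ open implies it meets the open halfspace $\{\sum_k\lambda_k>0\}$, so $f_0$ may be chosen with $\sigma>0$ from the outset. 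Continuity of $t\mapsto(1-t)Q_0+tQ_*$ in the operator norm is immediate.

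With this core claim, the path-connectedness follows: given $Q_0,Q_1\in{\mathfrak{EQ}}_n$, concatenate the segment from $Q_0$ to $Q_*$ with the (reversed) segment from $Q_1$ to $Q_*$, each of which lies in ${\mathfrak{EQ}}_n$ by the claim; the concatenation is a continuous path in ${\mathfrak{EQ}}_n$ from $Q_0$ to $Q_1$. The hypothesis $n\ge 2$ enters only to guarantee ${\mathfrak{EQ}}_n\ne\emptyset$ and that the statement is non-vacuous (for $n=1$ ellipticity of $Q(x)=ax^2$ requires $a\ne 0$, and $\{a\ne0\}\subset\br$ is disconnected). The main obstacle is precisely the non-convexity of ${\mathfrak{EQ}}_n$ noted above — the whole point is to realize that although the set is not convex, it is \emph{star-shaped with respect to $Q_*$} up to the endpoint-sign bookkeeping, and that star-shapedness plus path-connectivity of segments to a common center yields path-connectivity of the whole set. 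A secondary technical point to get right is the degenerate behaviour at the endpoint $t=1$ (where $f_0$ may cease to be in $K'$): this is why one selects $f_0$ with strictly positive coordinate sum, using the openness of $K'_{Q_0}$ established earlier, so that $f_0$ (or $g$) certifies ellipticity along the \emph{entire closed} segment.
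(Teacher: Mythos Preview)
Your argument has a genuine gap: the claim that one can always choose $f_0\in K'_{Q_0}$ with $\sigma=\sum_k\lambda_k>0$ is false, and none of the three fixes you sketch actually works. Openness of $K'_{Q_0}$ does \emph{not} force it to meet the half-space $\{\sigma>0\}$; an open convex cone can lie entirely in $\{\sigma<0\}$. Concretely, take $n=2$ and $Q_0(x)=(-x_1^2,-x_2^2)$. This is elliptic (use $f=(-1,-1)$), and $K'_{Q_0}=\{(\lambda_1,\lambda_2):\lambda_1<0,\ \lambda_2<0\}$, which is disjoint from $\{\lambda_1+\lambda_2>0\}$. Your proposed base point $Q_*(x)=(\|x\|^2,\|x\|^2)$ then gives, at $t=\tfrac13$,
\[
(1-t)Q_0+tQ_*\;=\;\tfrac13\big(x_2^2-x_1^2,\ x_1^2-x_2^2\big),
\]
and for every functional $f=(\lambda_1,\lambda_2)$ one gets $f(\cdot)=\tfrac13(\lambda_1-\lambda_2)(x_2^2-x_1^2)$, which is indefinite or zero. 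So the straight segment from $Q_0$ to $Q_*$ leaves ${\mathfrak{EQ}}_2$; the set is \emph{not} star-shaped about $Q_*$. Using $g=(1,\dots,1)$ does not help either, since $g(Q_0(x))=-\|x\|^2$ is negative definite, and perturbing $f_0$ inside $K'_{Q_0}$ keeps $\sigma<0$.

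The paper's proof sidesteps exactly this obstruction by \emph{not} routing through a fixed center. Given $Q_0,Q_1$, it first invokes a lemma (valid for $n\ge2$) producing a minihedral cone $K$ with $K'_{Q_0}\cap\mathrm{int}\,K\neq\emptyset$ and $K'_{Q_1}\cap\mathrm{int}\,K\neq\emptyset$, then builds an elliptic $Q$ with $\overline{K'_Q}=K$, and runs two straight segments through this $Q$. The point is that the intermediate operator is tailored to the pair $(Q_0,Q_1)$: one picks a \emph{common} functional $f_0\in K'_{Q_0}\cap K'_Q$ to certify the first half and $f_1\in K'_{Q_1}\cap K'_Q$ for the second. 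Your approach can be repaired along these lines, but it genuinely needs that extra geometric input; the hypothesis $n\ge2$ is used precisely to guarantee such a common cone exists, not merely to avoid the disconnected $n=1$ case.
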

\begin{proof}
    Let $Q_i,$ $i=0,1$ be elliptic operators and $K'_{Q_i},$ $i=0,1$ be
    the corresponding open cones. Due to Lemma \ref{l0} we can choose
    the minihedral cone $K$ such that $K'_{Q_i}\cap intK\ne \emptyset, \
    i=0,1.$ According to Proposition \ref{11ell2} we can construct an
    elliptic operator $Q$ such that ${\overline K}'_Q=K$. We define a quadratic operator $Q_\l:\br^n\to\br^n$ as follows
    \begin{equation*}
        Q_\l=\left\{\begin{array}{ll}
            2\l Q+(1-2\l)Q_0 \ \  \ \ \ \textrm{if}\ \ 0\leq \l\leq \frac{1}{2}\\
            2(1-\l) Q+(2\l-1)Q_1 \ \textrm{if}\ \ \frac{1}{2}< \l\leq 1.\\
        \end{array}\right.
    \end{equation*}

    Let us show that $Q_\l$ is elliptic for any $\l\in [0,1]$. If
    $f_0\in K'_{Q_0}\cap intK$ and $0\leq \l \leq
    \frac{1}{2}$ then one has $f_0(Q_0(x))\geq \a_0 \|x\|^2,\ \a_0>0$
    and $f_0(Q(x))\geq \b_0 \|x\|^2, \ \b_0>0.$ Hence
    \begin{eqnarray*}
        f_0(Q_\l(x))&=&f_0(2\l Q(x)+(1-2\l)Q_0(x))\geq
        (2\l\a_0+(1-2\l)\b_0)\cdot \|x\|^2\geq \min \{\a_0, \b_0\}\cdot \|x\|^2.
    \end{eqnarray*}

    Similarly for $f_1\in K'_{Q_1}\cap intK$ and
    $\frac{1}{2}< \l \leq 1$ we get that $f_1(Q_\l(x))\geq \min \{\a_1,\b_1\}\cdot \|x\|^2$ where $\a_1>0, \b_1>0$ such that $f_1(Q_1(x))\geq \a_1 \|x\|^2,$ $f_1(Q(x))\geq \b_1 \|x\|^2.$ This completes the proof.
\end{proof}

\begin{remark} It is worth noting that in the case $n=1,$ Lemma \ref{l0} and Proposition \ref{p3}
    are not true. Indeed, in this case any quadratic operator has a
    form $Q(x)=ax^2,$ and the elipticity means that $a\ne 0.$ Thus,
    the set of all elliptical operators in one dimensional setting is
    $\br^1\setminus \{0\}$, which is not connected.
\end{remark}

\begin{proposition}\label{p4} The set ${\mathfrak{EQ}}_n$ is an open subset of ${\mathfrak{Q}}_n.$
\end{proposition}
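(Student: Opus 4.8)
The plan is to show that $\mathfrak{EQ}_n$ is open by exhibiting, around any elliptic operator $Q_0$, a norm-ball of elliptic operators. The idea is a direct perturbation estimate in the spirit of the proof that $K'_Q$ is open: if $Q_0$ is elliptic, pick a witnessing functional $f_0$ and a constant $\alpha_0>0$ with $f_0(Q_0(x))\ge \alpha_0\|x\|^2$ for all $x\in\br^n$, and then show that the \emph{same} functional $f_0$ still witnesses ellipticity for every quadratic operator $Q$ that is close to $Q_0$ in the operator norm on $\mathfrak{Q}_n$.

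First I would fix $f_0$ and $\alpha_0$ as above, and let $\|f_0\|$ denote the norm of the linear functional $f_0:\br^n\to\br$ (finite, since $\br^n$ is finite-dimensional). For an arbitrary $Q\in\mathfrak{Q}_n$ and any $x\in\br^n$ I would estimate
\begin{equation*}
f_0(Q(x)) = f_0(Q_0(x)) + f_0\big(Q(x)-Q_0(x)\big) \ge \alpha_0\|x\|^2 - \|f_0\|\cdot\|(Q-Q_0)(x)\|.
\end{equation*}
Since $Q-Q_0$ is itself a quadratic operator, $\|(Q-Q_0)(x)\|\le \|Q-Q_0\|\cdot\|x\|^2$, so the right-hand side is at least $\big(\alpha_0 - \|f_0\|\cdot\|Q-Q_0\|\big)\|x\|^2$. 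Choosing $\delta = \dfrac{\alpha_0}{2\|f_0\|}$ (and noting $\|f_0\|>0$ since $\alpha_0>0$ forces $f_0\ne 0$), any $Q$ with $\|Q-Q_0\|<\delta$ satisfies $f_0(Q(x))\ge \tfrac{\alpha_0}{2}\|x\|^2$ for all $x$, hence $f_0(Q(x))$ is a positive definite quadratic form and $Q$ is elliptic. This shows the $\delta$-ball about $Q_0$ lies in $\mathfrak{EQ}_n$, so $\mathfrak{EQ}_n$ is open.

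There is essentially no serious obstacle here; the only point requiring a moment's care is the elementary fact that $\|f_0\|$ is a well-defined positive number and that the bound $\|(Q-Q_0)(x)\|\le\|Q-Q_0\|\cdot\|x\|^2$ holds — both of which are already recorded in the preceding paragraphs of Section 2 (every quadratic operator is bounded, and $\|Q(x)\|\le\|Q\|\cdot\|x\|^2$). One should also observe that the argument degenerates in the case $n=1$ in the same way the connectedness statement does: there $\mathfrak{EQ}_1=\br\setminus\{0\}$, which is indeed open, so openness (unlike path-connectedness) survives in dimension one as well, and no exception need be flagged.
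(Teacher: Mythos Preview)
Your proof is correct and essentially identical to the paper's own argument: both pick a witnessing functional $f_0$ with $f_0(Q_0(x))\ge\alpha_0\|x\|^2$, take the radius $\delta=\dfrac{\alpha_0}{2\|f_0\|}$, and verify via the estimate $f_0(Q(x))\ge \alpha_0\|x\|^2-\|f_0\|\cdot\|Q-Q_0\|\cdot\|x\|^2\ge \tfrac{\alpha_0}{2}\|x\|^2$ that the same $f_0$ witnesses ellipticity for every $Q$ in the $\delta$-ball.
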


\begin{proof} Let  $Q_0:\br^n\to\br^n$ be EQO, then there is a linear functional $f_0:\br^n\to\br$ such that
    $f_0(Q_0(x))\geq \a_0 \|x\|^2$ for some $\a_0>0$.  We then want to show that
    $$\{Q:\|Q-Q_0\|<\frac{\a_0}{2\|f_0\|}\}\subset {\mathfrak{EQ}}_n.$$
    Indeed, if $Q:\br^n\to\br^n$ is a quadratic operator with
    \begin{equation*}
        \|Q(x)-Q_0(x)\|< \frac{\a_0}{2\|f_0\|}\cdot \|x\|^2, \quad \forall \ x\in\br^2,
    \end{equation*}
    then one has that
    \begin{eqnarray*}
        f_0(Q(x))&=& f_0(Q_0(x))+f_0(Q(x)-Q_0(x))\geq  \a_0 \|x\|^2-\|f_0\|\cdot
        \frac{\a_0}{2\|f_0\|}\cdot \|x\|^2= \frac{\a_0}{2}\cdot \|x\|^2.
    \end{eqnarray*}
    This means that  $Q$ is the EQO and ${\mathfrak{EQ}}_n$ is the open subset of ${\mathfrak{Q}}_n.$
\end{proof}

Analogously, one can prove the following statement.

\begin{proposition}\label{p5} The following statements hold true:
    \begin{itemize}
        \item [(i)] The set ${\mathfrak{PQ}}_n$ is a closed and path connected subset of
        ${\mathfrak{Q}}_n$ with empty interior;
        \item [(ii)] The set ${\mathfrak{HQ}}_n$ is an open subset of ${\mathfrak{Q}}_n.$
    \end{itemize}
\end{proposition}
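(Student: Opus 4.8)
The plan is to mirror the proofs of Propositions~\ref{p3} and~\ref{p4}. Recall that the three classes partition ${\mathfrak Q}_n$, so that $Q$ is parabolic iff it is not elliptic but $f(Q(x))\ge 0$ for all $x$ for some nonzero $f$, and hyperbolic iff $f(Q(x))$ is indefinite for every nonzero $f$. Put
\[
C:=\{Q\in{\mathfrak Q}_n:\ f(Q(x))\ge0\ \ \forall x,\ \ \text{for some } f\neq0\},
\]
so that ${\mathfrak{HQ}}_n={\mathfrak Q}_n\setminus C$ and ${\mathfrak{PQ}}_n=C\cap({\mathfrak Q}_n\setminus{\mathfrak{EQ}}_n)$. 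First I would show that $C$ is closed. If $Q_k\to Q$ with $Q_k\in C$ and witnesses $f_k\neq0$ normalized to $\|f_k\|=1$, then by compactness of the unit sphere of $(\br^n)^{*}$ we may assume $f_k\to f_{*}$ with $\|f_{*}\|=1$; since $|f_k(Q_k(x))-f_{*}(Q(x))|\le\|Q_k-Q\|\,\|x\|^2+\|f_k-f_{*}\|\,\|Q(x)\|\to0$, we get $f_{*}(Q(x))=\lim_kf_k(Q_k(x))\ge0$ for all $x$, whence $Q\in C$. Thus $C$ is closed; consequently ${\mathfrak{HQ}}_n={\mathfrak Q}_n\setminus C$ is open, which is~(ii), and ${\mathfrak{PQ}}_n=C\cap({\mathfrak Q}_n\setminus{\mathfrak{EQ}}_n)$ is closed because ${\mathfrak{EQ}}_n$ is open by Proposition~\ref{p4}.

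For the empty interior of ${\mathfrak{PQ}}_n$, fix $Q_0\in{\mathfrak{PQ}}_n$ with a witness $f_0\neq0$, normalized so that $f_0(v)=1$ for some $v\in\br^n$, and set $Q_v(x):=\|x\|^2 v$ (a quadratic operator with coordinate matrices $v_kI$). For every $\varepsilon>0$,
\[
f_0\big((Q_0+\varepsilon Q_v)(x)\big)=f_0(Q_0(x))+\varepsilon\|x\|^2\ge\varepsilon\|x\|^2,
\]
so $Q_0+\varepsilon Q_v$ is elliptic, while $\|Q_0+\varepsilon Q_v-Q_0\|=\varepsilon\|v\|\to0$. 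Since ${\mathfrak{EQ}}_n\cap{\mathfrak{PQ}}_n=\emptyset$, no ball about $Q_0$ is contained in ${\mathfrak{PQ}}_n$, so ${\mathfrak{PQ}}_n$ has empty interior.

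For path-connectedness I would use that the zero operator lies in ${\mathfrak{PQ}}_n$ (for any nonzero $f$, the form $f(0)\equiv0$ is positive semidefinite but not positive definite, and no $f$ makes it positive definite) together with the fact that $sQ$ has the same pencil $H_{n,n}(sQ)=H_{n,n}(Q)$, hence the same type, for every $s>0$. Then $s\mapsto sQ$, $s\in[0,1]$, is a continuous path lying inside ${\mathfrak{PQ}}_n$ and joining the zero operator to an arbitrary $Q\in{\mathfrak{PQ}}_n$, so ${\mathfrak{PQ}}_n$ is star-shaped about $0$, in particular path connected. (If one insists on excluding the degenerate operator, then for $n\ge2$ one joins $Q$ to $Q^{*}(x)=(x_1^2,0,\dots,0)$ in three stages, each staying in ${\mathfrak{PQ}}_n$: move, along a path in the group of invertible matrices of positive determinant acting on the target, to an operator whose first coordinate is $(A_gx,x)$ for a nonzero positive semidefinite $A_g\in H_{n,n}(Q)$ --- such $A_g$ exists since $Q$ is not hyperbolic; then rescale the remaining coordinate matrices to $0$, which leaves the span unchanged for $t<1$ and leaves the pencil $\br A_g$ at $t=1$; finally deform $A_g$, through singular nonzero positive semidefinite matrices, to $e_{11}$.)

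The one genuinely delicate point is path-connectedness: unlike ${\mathfrak{EQ}}_n$, the parabolic class is \emph{not} preserved along line segments --- for instance $(x_1^2,0)$ and $(0,x_2^2)$ are parabolic but their midpoint $\tfrac12(x_1^2,x_2^2)$ is elliptic --- so two parabolic operators cannot simply be connected by the straight line through them. What rescues the argument is the scaling invariance of the pencil $H_{n,n}(Q)$ (and, if needed, its invariance under the action of $GL^{+}(n)$ on the target). The closedness of $C$, which drives both~(ii) and the closedness of ${\mathfrak{PQ}}_n$, is obtained here from compactness of the unit sphere of functionals rather than from the single-witness perturbation used for ${\mathfrak{EQ}}_n$ in Proposition~\ref{p4}.
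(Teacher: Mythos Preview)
Your argument is correct. The paper omits its own proof of this proposition, saying only that it follows ``analogously'' to Propositions~\ref{p3} and~\ref{p4}, so there is no detailed argument to compare against. Your treatment of~(ii) and of the closedness of ${\mathfrak{PQ}}_n$ via the closedness of $C$ (using compactness of the unit sphere of functionals) is the natural complement to the single-witness perturbation of Proposition~\ref{p4}; the empty-interior perturbation $Q_0+\varepsilon Q_v$ is likewise exactly in the spirit of Proposition~\ref{p4}. For path-connectedness, your observation that ${\mathfrak{PQ}}_n$ is star-shaped about the zero operator (since $H_{n,n}(sQ)=H_{n,n}(Q)$ for $s>0$ and the zero operator is parabolic) is cleaner than any direct adaptation of the two-stage construction in Proposition~\ref{p3}; as you correctly point out, the segment between two parabolic operators can pass through ${\mathfrak{EQ}}_n$, so the literal analogue of that proof would not go through without extra work. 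The parenthetical alternative avoiding $Q=0$ is only sketched, but it is not needed once the star-shapedness argument is accepted.
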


\section{Examples}

We are going to provide some examples for quadratic operators.

\begin{example}[The classification of quadratic operators on $\br^2$]

    Let us consider the quadratic operator acting on $\br^2,$ i.e.,
    \begin{equation*}
    Q(x)=(a_1x^2_1+2b_1x_1x_2+c_1x^2_2, a_2x_1^2+2b_2x_1x_2+c_2x_2^2),
    \end{equation*}
    where $x=(x_1,x_2)\in \br^2.$ We denote by
    \begin{equation*}
    \Delta = \left|
    \begin{array}{cc}
    a_1 & c_1\\[2mm]
    a_2 & c_2\\
    \end{array}
    \right|^2- 4\left|
    \begin{array}{cc}
    a_1 & b_1\\[2mm]
    a_2 & b_2\\
    \end{array}
    \right|\cdot \left|
    \begin{array}{cc}
    b_1 & c_1\\[2mm]
    b_2 & c_2\\
    \end{array}
    \right|.
    \end{equation*}

    If we avoid the case
    $\frac{a_1}{a_2}=\frac{b_1}{b_2}=\frac{c_1}{c_2}$ then we have the following:
    \begin{enumerate}
        \item[(i)] $Q$ is elliptic if and only if $\Delta >0;$

        \item[(ii)] $Q$ is parabolic if and only if $\Delta =0;$

        \item[(iii)] $Q$ is hyperbolic if and only if $\Delta <0.$
    \end{enumerate}

    Using this argument one can construct concrete examples:
    \begin{enumerate}
        \item[(a)] $Q_1(x)=(x^2_1, x^2_2)$ is elliptic;

        \item[(b)] $Q_2(x)=(x^2_1, x_1x_2)$ is parabolic;

        \item[(c)] $Q_3(x)=(x^2_1-x^2_2, x_1x_2)$ is hyperbolic.
    \end{enumerate}
\end{example}

\begin{example}[The Stein--Ulam operator]

    Let us consider the following quadratic operator acting on $\br^3$:
    \begin{equation*}
    Q(x)=(x^2_1+2x_1x_2, x^2_2+2x_2x_3, x^2_3+2x_1x_3).
    \end{equation*}

    For a linear functional $f(x)=\l_1x_1+\l_2x_2+\l_3x_3$ we have the following quadratic form
    \begin{equation}\label{UlamLinearfunc}
    f(Q(x))=\l_1x^2_1+2\l_1x_1x_2+\l_2 x^2_2+2\l_2x_2x_3+\l_3
    x^2_3+2\l_3x_1x_3.
    \end{equation}

    The matrix of this quadratic form is
    \begin{equation*}
    A= \left(
    \begin{array}{ccc}
    \l_1 & \l_1 & \l_3\\[2mm]
    \l_1 & \l_2 & \l_2\\[2mm]
    \l_3 & \l_2 & \l_3\\
    \end{array}
    \right).
    \end{equation*}
    Due to Silvester's criterion, the quadratic form \eqref{UlamLinearfunc} is positive defined if and only if
    \begin{equation*}
    \l_1>0, \ \l_2>0, \ \l_3>0, \ \l_2>\l_1, \ \l_3>\l_2, \l_1>\l_3.
    \end{equation*}
    However, this system of inequalities has no solutions. Therefore, this quadratic operator is not elliptic.

    On the other hand if we take the linear functional $f:\br^3\to\br$ as $f(x)=x_1+x_2+x_3$  then we have $f(Q(x))=(x_1+x_2+x_3)^2\geq 0.$ Consequently, $Q:\br^3\to \br^3$ is the parabolic quadratic operator.
\end{example}

\begin{example} Let us consider the following quadratic operator $Q:\br^n\to \br^n$
    \begin{equation*}
    Q(x)=(x^2_1+x^2_n, x^2_2+x^2_n,\cdots, x^2_{n-1}+x^2_n,
    2x_n(x_1+\cdots+x_{n-1})),
    \end{equation*}
    where $x=(x_1,\cdots,x_n).$ Then for the linear functional $f(x)=\l_1x_1+\cdots+\l_nx_n$ we get the following quadratic form
    \begin{equation}\label{ExampleonR^n}
    f(Q(x))=\sum_{i=1}^{n-1}\l_ix^2_i+\left(\sum_{i=1}^{n-1}\l_i\right)x^2_n+
    2\l_nx_n(x_1+\cdots+x_{n-1}).
    \end{equation}

    The matrix of this quadratic form \eqref{ExampleonR^n} is
    \begin{equation*}
    A= \left(
    \begin{array}{cccccc}
    \l_1 & 0 & 0 & \cdots & 0 & \l_n\\[2mm]
    0    & \l_2 & 0 &\cdots  & 0 & \l_n\\[2mm]
    \cdot & \cdot & \cdot & \cdots  & \cdot & \cdot\\[2mm]
    0 & 0 & 0 & \cdots & \l_{n-1} & \l_n\\[2mm]
    \l_n & \l_n & \l_n & \cdots & \l_n & \sum\limits_{i=1}^{n-1}\l_i\\
    \end{array}
    \right).
    \end{equation*}
    Due to Silvester's criterion, the quadratic form \eqref{ExampleonR^n} is positive defined if and only if
    \begin{eqnarray*}
        \sum_{i=1}^{n-1}\l_i-\l_n^2\sum_{i=1}^{n-1}\frac{1}{\l_i}>0,
        \quad \l_i>0, \ i=\overline{1,n-1}.\
    \end{eqnarray*}
    Therefore, we obtain that
    \begin{equation*}
    K'_Q=\bigg\{f=(\l_1,\cdots,\l_n):\l_i>0, \ i=\overline{1,n-1}, \
    \l_n^2<\frac{\l_1+\l_2+\cdots+\l_{n-1}}
    {\frac{1}{\l_1}+\cdots+\frac{1}{\l_{n-1}}}\bigg\}.
    \end{equation*}
    Consequently, the given quadratic operator $Q$ is elliptic.
\end{example}

One can easily prove the following statement.

\begin{proposition} Let $Q:\br^n\to\br^n$ be a quadratic operator. Then the following assertions hold true:
    \begin{itemize}
        \item[(i)] If one of matrices $A_1,\cdots ,A_n$ is positive defined then $Q$ is an EQO.
        \item[(ii)] If matrices $A_1,\cdots,A_n$ are linear independent in the matrix algebra and
        commute each other then $Q$ is an EQO and $K'_Q$ is a minihedral cone.
    \end{itemize}
\end{proposition}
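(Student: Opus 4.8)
The plan is to prove the two assertions separately, each by exhibiting an explicit linear functional $f$ and using the basic characterizations already established in the excerpt.

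For part (i), suppose $A_{k_0}$ is positive definite for some index $k_0$. The idea is simply to pick the coordinate functional $f(y)=y_{k_0}$ for $y=(y_1,\dots,y_n)\in\br^n$. Then $f(Q(x))=(A_{k_0}x,x)$, which is a positive definite quadratic form by hypothesis. By the definition of an elliptic quadratic operator, this already shows $Q\in{\mathfrak{EQ}}_n$. (Equivalently, one may note $f\in K'_Q$, so $K'_Q\neq\emptyset$ and $Q$ is an EQO.) This part is immediate and requires no real work.

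For part (ii), assume $A_1,\dots,A_n$ are linearly independent in the matrix algebra and pairwise commute. Since they are symmetric and commute, by the spectral theorem for commuting families of self-adjoint operators there is an orthogonal matrix $U$ that simultaneously diagonalizes them: $U^{T}A_kU=D_k$ where each $D_k=\mathrm{diag}(\l_{1}^{(k)},\dots,\l_{n}^{(k)})$ is diagonal. Replacing $Q$ by the equivalent operator $\widetilde Q$ with matrices $D_k$ (the change of variables $x\mapsto Ux$ does not affect whether $Q$ is elliptic, nor the structure of $K'_Q$ up to the linear isomorphism on the domain — note the proof of Proposition \ref{11ell2} uses exactly this kind of reduction), we may assume each $A_k$ is diagonal. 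Now for $f=(\mu_1,\dots,\mu_n)$ the quadratic form $f(Q(x))=\sum_{k=1}^n\mu_k(A_kx,x)$ has the diagonal matrix $\sum_{k=1}^n\mu_k D_k=\mathrm{diag}\big(\sum_k\mu_k\l_1^{(k)},\dots,\sum_k\mu_k\l_n^{(k)}\big)$, so it is positive definite precisely when the linear system $\sum_{k=1}^n\mu_k\l_i^{(k)}>0$ holds for all $i=\overline{1,n}$. Writing $v_i=(\l_i^{(1)},\dots,\l_i^{(n)})\in\br^n$ for $i=\overline{1,n}$, this says $f$ lies in the open cone $\{f:(f,v_i)>0,\ i=\overline{1,n}\}$.

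The key point — and the only place where a small argument is needed — is that the vectors $v_1,\dots,v_n$ are linearly independent: the matrix $(\l_i^{(k)})_{i,k}$ has as its columns the diagonals of $D_1,\dots,D_n$, and since $D_1,\dots,D_n$ (being orthogonally equivalent to $A_1,\dots,A_n$) are linearly independent as matrices, their diagonals are linearly independent as vectors in $\br^n$; hence this $n\times n$ matrix is invertible. Therefore the cone $K'_{\widetilde Q}=\{f:(f,v_i)>0,\ i=\overline{1,n}\}$ is nonempty (one can solve the invertible linear system for any prescribed positive right-hand side), so $\widetilde Q$ — and hence $Q$ — is an EQO. Moreover $\overline{K'_{\widetilde Q}}=\{f:(f,v_i)\geq 0,\ i=\overline{1,n}\}$ is the intersection of $n$ half-spaces whose bounding hyperplanes have linearly independent normals, which is exactly the dual of $\mathrm{cone}\{w_1,\dots,w_n\}$ where $\{w_i\}$ is the basis dual to $\{v_i\}$; by the characterization of minihedral cones recalled before Proposition \ref{11ell2}, $\overline{K'_{\widetilde Q}}$ is minihedral, and the linear change of variables carries this through to $K'_Q$. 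Thus $K'_Q$ is a minihedral cone. I do not expect any genuine obstacle here; the single delicate step is the transfer "linearly independent matrices $\Rightarrow$ linearly independent diagonals," which is immediate once the simultaneous diagonalization is in place.
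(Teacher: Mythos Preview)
The paper does not actually supply a proof of this proposition; it is stated with the preamble ``One can easily prove the following statement'' and left to the reader. So there is no original argument to compare against.

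Your proposal is correct and is the natural proof. Part (i) is indeed immediate from the definition. For part (ii), simultaneous orthogonal diagonalization of the commuting symmetric family is exactly the right tool, and your one nontrivial observation---that linear independence of $A_1,\dots,A_n$ forces linear independence of the diagonals of $D_1,\dots,D_n$, hence invertibility of the matrix $(\lambda_i^{(k)})$---is clean and correct. Two minor remarks: (1) since $U$ is orthogonal, the substitution $x\mapsto Ux$ leaves $K'_Q$ \emph{exactly} unchanged (not merely up to an isomorphism), so you can simply say $K'_Q=K'_{\widetilde Q}$; (2) strictly speaking $K'_Q$ is open, so what you are showing is that $\overline{K'_Q}$ is minihedral, which is clearly what the proposition intends (and is consistent with the paper's own usage elsewhere).
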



\section{The necessary condition for an existence of solutions}


In this section, we will consider elliptic operator equation and we will provide some necessary conditions for the existence of its solution.

The following equation
\begin{equation}\label{1}
    P(x)\equiv Q(x)+Ax+b=0, \quad x\in \br^n,
\end{equation}
is called \textit{an elliptic quadratic operator equation}, where $Q:\br^n\to \br^n$ is an elliptic quadratic operator,
$A:\br^n\to \br^n$ is a linear operator and $b\in \br^n$ is a given vector.

Let $K'_Q$ be an open convex cone given by \eqref{K'Q} associated with an elliptical operator
$Q$. For every $f\in K'_Q$ we denote by
\begin{equation*}
    \ce_f=\{x\in\br^n: f(Q(x)+Ax+b)\leq 0\},
\end{equation*}
and  it is  called an {\it ellipsoid} corresponding to $f\in K'_Q.$ It is obvious that if $\ce_f=\emptyset$ for some linear functional $f\in K'_Q$ then the elliptic operator equation \eqref{1} does not have any solutions.

Therefore,  the necessary condition for the solvability of the elliptic operator equation \eqref{GeneralQOE} is that $\ce_f\ne \emptyset$ for any $f\in K'_Q.$ Throughout this paper, we always assume that $\ce_f\ne \emptyset$ for any $f\in K'_Q.$

We define the following set  $$\ce_n(Q,A,b)\equiv\bigcap\limits_{f\in K'_Q}\ce_f.$$

\begin{theorem}\label{t1}
    If the equation \eqref{1} is solvable then  $\ce_n(Q,A,b)\ne \emptyset.$
\end{theorem}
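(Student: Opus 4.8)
The plan is to show that any solution $x_0$ of \eqref{1} lies in $\ce_n(Q,A,b)$; this immediately gives nonemptiness. So suppose $x_0\in\br^n$ satisfies $P(x_0)=Q(x_0)+Ax_0+b=0$ in $\br^n$. Then for \emph{every} linear functional $f:\br^n\to\br$ — in particular for every $f\in K'_Q$ — we have $f(Q(x_0)+Ax_0+b)=f(0)=0\leq 0$, so $x_0\in\ce_f$ by the very definition of the ellipsoid $\ce_f$. Since this holds for all $f\in K'_Q$, we conclude $x_0\in\bigcap_{f\in K'_Q}\ce_f=\ce_n(Q,A,b)$, hence $\ce_n(Q,A,b)\neq\emptyset$.

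The key step is simply the observation that a vector equation $P(x_0)=0$ in $\br^n$ forces $f(P(x_0))=0$ for \emph{any} functional $f$, and in particular the inequality $f(P(x_0))\leq 0$ that defines membership in $\ce_f$ is satisfied with equality. No ellipticity, boundedness, or convexity is actually needed for this direction — those properties were used to establish that each $\ce_f$ is a genuine (nonempty, bounded) ellipsoid, which we have standing assumed, but the containment $X_n(Q,A,b)\subset\ce_f$ is formal.

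There is essentially no obstacle here: the statement is the ``easy half'' of the solvability criterion, and the only thing to be careful about is that the intersection defining $\ce_n(Q,A,b)$ is over the (nonempty, by the definition of EQO) index set $K'_Q$, so that the containment of a single solution $x_0$ in every $\ce_f$ does in fact land $x_0$ in the intersection rather than in an empty object. One could optionally remark that this shows more: $X_n(Q,A,b)\subset\ce_n(Q,A,b)$, which is the form in which the result will be strengthened in the subsequent theorems (where the containment is sharpened to the boundary $\bigcap_{f\in K'_Q}\partial\ce_f$, using that $f(P(x_0))=0$, not merely $\leq 0$).
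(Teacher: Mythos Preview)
Your argument is correct for the theorem as literally stated: any solution $x_0$ satisfies $f(P(x_0))=0\leq 0$ for every $f\in K'_Q$, so $x_0\in\ce_n(Q,A,b)$ and the intersection is nonempty. This is indeed the ``easy half,'' and nothing more is needed to establish the claim.

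The paper, however, takes a genuinely different and far more elaborate route. Rather than using the hypothesized solution $x_0$ at all, the paper proves (via Lemmas \ref{l1}--\ref{l5}) that under the standing assumption $\ce_f\neq\emptyset$ for all $f\in K'_Q$, \emph{any} finite collection of ellipsoids $\ce_{f_1},\dots,\ce_{f_k}$ has nonempty intersection, and then invokes the finite-intersection property of compact sets to conclude $\bigcap_{f\in K'_Q}\ce_f\neq\emptyset$. The key ingredients are the uniform convexity of each $\ce_f$, the continuity of the map $f\mapsto\ce_f$, and a connectedness argument on $[0,1]$ showing that two ellipsoids must intersect. What this buys is precisely the content of Remark \ref{efimplye}: the conclusion $\ce_n(Q,A,b)\neq\emptyset$ holds under the much weaker hypothesis that each individual ellipsoid is nonempty, with no solution assumed to exist. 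This stronger fact is what is actually invoked later (e.g.\ in the proof of Theorem \ref{twostablesolution}), where one first verifies $\ce_f\neq\emptyset$ for all $f$ and then needs $\ce_n(Q,A,b)\neq\emptyset$ without yet knowing a solution exists. Your proof, while perfectly valid for the stated theorem, does not deliver this, so the machinery of Lemmas \ref{l1}--\ref{l5} would still be required downstream.
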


We will prove this theorem after several auxiliary lemmas.


\subsection{Auxiliary results}


Recall that a set $M\subset \br^n$ is called {\it uniformly convex}
if for any $\e>0$ there exists $\d>0$ such that it follows from $x,y\in M,$
$\|x-y\|\geq \e$ and $\|z-\frac{x+y}{2}\|\leq \d$  that
$z\in M.$

\begin{lemma}\label{l1}
    $\ce_f$ is a closed bounded and uniformly convex set.
\end{lemma}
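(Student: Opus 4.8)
The plan is to show each of the three properties — closedness, boundedness, uniform convexity — by unpacking the definition $\ce_f=\{x\in\br^n: f(Q(x)+Ax+b)\le 0\}$ and using that $f\in K'_Q$, i.e. $f(Q(x))\ge\alpha_f\|x\|^2$ for some $\alpha_f>0$. Write $g(x):=f(Q(x)+Ax+b)=f(Q(x))+f(Ax)+f(b)$; this is a real-valued quadratic polynomial in $x$ whose purely quadratic part $x\mapsto f(Q(x))$ is a positive definite quadratic form. So $g$ is (after completing the square) of the form $g(x)=(S(x-x_0),x-x_0)-c$ for a positive definite symmetric matrix $S$, some center $x_0\in\br^n$, and some constant $c\in\br$, and $\ce_f$ is exactly the sublevel set $\{x:(S(x-x_0),x-x_0)\le c\}$ — a genuine solid ellipsoid (possibly empty, but we have assumed $\ce_f\neq\emptyset$, so $c\ge 0$).

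First I would establish closedness: $g$ is continuous (it is a polynomial, or: $Q$ is continuous by the boundedness estimate recalled in Section 2, and $A$, $f$ are linear hence continuous), so $\ce_f=g^{-1}((-\infty,0])$ is the preimage of a closed set and is therefore closed. Next, boundedness: from $f(Q(x))\ge\alpha_f\|x\|^2$ and $|f(Ax)+f(b)|\le \|f\|(\|A\|\,\|x\|+\|b\|)$ we get, for $x\in\ce_f$,
\begin{equation*}
0\ge g(x)\ge \alpha_f\|x\|^2-\|f\|\,\|A\|\,\|x\|-\|f\|\,\|b\|,
\end{equation*}
and since $\alpha_f>0$ this quadratic-in-$\|x\|$ inequality forces $\|x\|$ to be bounded by the larger root of the corresponding quadratic, i.e. $\ce_f\subset\{x:\|x\|\le R_f\}$ for an explicit $R_f$. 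Hence $\ce_f$ is bounded.

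For uniform convexity I would use the parallelogram-type identity for the quadratic form $(S\cdot,\cdot)$. For any $x,y$ and $z=\tfrac{x+y}{2}$,
\begin{equation*}
\tfrac12 g(x)+\tfrac12 g(y)=g\!\left(\tfrac{x+y}{2}\right)+\tfrac14\,(S(x-y),x-y)\ \ge\ g\!\left(\tfrac{x+y}{2}\right)+\tfrac{\alpha_f}{4}\,\|x-y\|^2,
\end{equation*}
where I used that the quadratic part of $g$ is $(S\cdot,\cdot)$ with $(Sv,v)\ge\alpha_f\|v\|^2$ (the same lower bound as $f(Q(v))\ge\alpha_f\|v\|^2$, since $S$ represents $x\mapsto f(Q(x))$). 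Thus if $x,y\in\ce_f$ with $\|x-y\|\ge\e$, then $g\big(\tfrac{x+y}{2}\big)\le -\tfrac{\alpha_f}{4}\e^2$, so the midpoint lies comfortably in the interior. Now for a nearby point $z$ with $\|z-\tfrac{x+y}{2}\|\le\d$, estimate $g(z)-g\big(\tfrac{x+y}{2}\big)$: on the bounded set containing $\ce_f$ enlarged by $1$, $g$ is Lipschitz with some constant $L$ (its gradient $\nabla g(x)=2Sx+\ldots$ is bounded on bounded sets), so $g(z)\le g\big(\tfrac{x+y}{2}\big)+L\d\le -\tfrac{\alpha_f}{4}\e^2+L\d$; choosing $\d=\tfrac{\alpha_f}{4L}\e^2$ gives $g(z)\le 0$, i.e. $z\in\ce_f$. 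This is precisely uniform convexity.

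The only mildly delicate point — and the one I would be most careful about — is the uniform convexity step: one must make sure the Lipschitz constant $L$ used to pass from the midpoint to the perturbed point $z$ is taken on a fixed bounded neighbourhood (e.g. the ball of radius $R_f+1$), not on all of $\br^n$ where $g$ is only locally Lipschitz; since we may clearly assume $\d\le 1$, both $\tfrac{x+y}{2}$ and $z$ stay in that ball, so this causes no trouble. Everything else (closedness, boundedness) is routine once positive definiteness of the quadratic part is in hand.
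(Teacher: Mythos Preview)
Your proof is correct and follows essentially the same route as the paper: closedness by continuity, boundedness from the quadratic-in-$\|x\|$ inequality $0\ge g(x)\ge \alpha_f\|x\|^2-C_1\|x\|-C_2$, and uniform convexity by first deriving the midpoint estimate $g\big(\tfrac{x+y}{2}\big)\le -\tfrac{\alpha_f}{4}\varepsilon^2$ and then absorbing a small perturbation using local Lipschitz control on a fixed bounded set. The only cosmetic difference is that the paper obtains the midpoint inequality via a convexity-of-$\varphi(\lambda)=f(P(\lambda x+(1-\lambda)y))+\alpha\varepsilon^2\lambda(1-\lambda)$ argument (second derivative $\ge 0$, then Jensen), whereas you invoke the parallelogram identity for the quadratic part directly --- these are equivalent formulations of the same computation.
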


\begin{proof} Since $Q, A, f$ are continuous mappings, $\ce_f$ is a closed set.

    {\textsf{Boundedness.}} Note that if we show that $$f(Q(x)+Ax+b) >0$$
    for all $x\in\br^n$ such that $\|x\|\geq C$ for some $C>0$ then one has that $\ce_f\subset\bb(\theta,C)$. This means that $\ce_f$ is bounded, where
    $$
    \bb(a,r)=\{x\in\br^n:\ \|x-a\|\leq r\}, \ \ a\in\br^n, \ r>0.
    $$

    Without loss of generality, we may assume that $\|f\|=1.$ Then, we have that
    \begin{eqnarray*}
        f(Q(x)+Ax+b)&\geq& f(Q(x))-\|A\|\cdot \|x\|-\|b\|\geq \a \|x\|^2-\|A\|\cdot \|x\|-\|b\|.
    \end{eqnarray*}
    If
    \begin{equation*}
    \|x\|>\frac{\|A\|+\sqrt{\|A\|^2+4\a\cdot\|b\|}}{2\a},
    \end{equation*}
    then $\a \|x\|^2-\|A\|\cdot \|x\|-\|b\|>0$, i.e. $f(Q(x)+Ax+b)>0.$ Therefore, $\ce_f\subset\bb(\theta,C)$ where $C=\frac{\|A\|+\sqrt{\|A\|^2+4\a\cdot\|b\|}}{2\a}.$ Consequently, $\ce_f$ is bounded.

    {\sf Uniformly convexity}. We denote by  $P(x)\equiv Q(x)+Ax+b.$ Let $x,y\in \ce_f$
    with $\|x-y\|\geq \e >0$

    We consider a function
    \begin{equation*}
    \varphi(\l)=f(P(\l x+(1-\l)y))+\a\e^2\l(1-\l),
    \end{equation*}
    where $0\leq \l \leq 1.$ Since
    \begin{equation*}
    Q(\l x+(1-\l)y)=\l^2Q(x)+2\l(1-\l)B(x,y)+(1-\l)^2Q(y),
    \end{equation*}
    where $B(\cdot,\cdot)$ is a symmetric bilinear operator generated by $Q$
    then one gets that
    \begin{equation*}
    \frac{d^2\varphi}{d\l^2}(\l)=2f(Q(x-y))-2\a\e^2\geq
    2\a\|x-y\|^2-2\a\e^2\geq 0.
    \end{equation*}

    Thus $\varphi(\l)$ is a convex function. Due to Jensen's inequality
    it follows that
    \begin{equation*}
    \varphi\bigg(\frac{1}{2}\bigg)\leq
    \frac{1}{2}(\varphi(0)+\varphi(1)),
    \end{equation*}
    which yields that
    \begin{equation*}
    f\bigg(P\bigg(\frac{x+y}{2}\bigg)\bigg)+\frac{\a\e^2}{4}\leq
    \frac{1}{2}[f(P(x))+f(P(y))].
    \end{equation*}

    Since $x,y\in \ce_f,$ one has that $f(P(x))\leq 0$ and $f(P(y))\leq 0.$
    Hence, we obtain that
    \begin{equation}\label{2}
    f\bigg(P\bigg(\frac{x+y}{2}\bigg)\bigg)\leq -\frac{\a\e^2}{4}<0.
    \end{equation}

    On the other hand, we have that
    \begin{equation}\label{3}
    P\bigg(\frac{x+y}{2}+h\bigg)=P\bigg(\frac{x+y}{2}\bigg)+2B\bigg(\frac{x+y}{2},
    h\bigg)+Q(h)+Ah.
    \end{equation}
    Since $\ce_f$ is a bounded set one can choose $C>0$ such that
    \begin{equation*}
    \bigg\|B\bigg(\frac{x+y}{2},h\bigg)\bigg\|\leq C\cdot \|h\|, \ \
    \forall x,y\in \ce_f.
    \end{equation*}
    Therefore, there exists $\d>0$ (which does not depend on $x,y\in
    \ce_f$) such that
    \begin{equation}\label{4}
    f\bigg(2B\bigg(\frac{x+y}{2}, h\bigg)+Q(h)+Ah\bigg)\leq
    \frac{\a\e^2}{4},
    \end{equation}
    for all $\|h\|\leq \d.$ Then it follows from \eqref{2}, \eqref{3}, and \eqref{4} that
    \begin{equation*}
    f\bigg(P\bigg(\frac{x+y}{2}+h\bigg)\bigg)\leq 0,
    \end{equation*}
    whence
    \begin{equation*}
    \bb \bigg(\frac{x+y}{2},\d\bigg)\subset \ce_f
    \end{equation*}
    which follows from $x,y\in \ce_f,$
    $\|x-y\|\geq \e$ and $\|z-\frac{x+y}{2}\|\leq \d$ that
    $z\in \ce_f.$
\end{proof}

\begin{lemma}\label{l2}
    The map $f\to \ce_f $  is continuous in $K'_Q$, i.e. if $f_0\in
    K'_Q$ then for any $\e>0$ there exists $\d=\d(\e, f_0)>0$ such that
    from $\|f-f_0\|\leq \d$ and $f\in K'_Q$ it follows that
    $\ce_f\subset U_\e(\ce_{f_0}),$ where
    \begin{equation*}
    U_\e(\ce_{f_0})=\{x\in\br^n: \inf_{y\in \ce_{f_0}}\|x-y\|<\e\}.
    \end{equation*}
\end{lemma}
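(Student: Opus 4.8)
The plan is to fix $f_0\in K'_Q$, recall from Lemma \ref{l1} that $\ce_{f_0}$ is bounded — say $\ce_{f_0}\subset\bb(\theta,C_0)$ — and then to argue by a contradiction/compactness argument that if the conclusion fails there is a sequence $f_k\to f_0$ in $K'_Q$ together with points $x_k\in\ce_{f_k}$ that stay at distance $\geq\e$ from $\ce_{f_0}$; passing to a convergent subsequence $x_k\to x_*$ one derives $x_*\in\ce_{f_0}$, a contradiction. The first technical point I would establish is a uniform bound: since $f_0(Q(x))\geq\alpha_0\|x\|^2$, for $f$ close to $f_0$ (with $\|f-f_0\|$ small relative to $\alpha_0$ and $M=\|Q\|$, exactly as in the proof of Proposition \ref{p4}) one still has $f(Q(x))\geq\tfrac{\alpha_0}{2}\|x\|^2$, so by the boundedness computation in Lemma \ref{l1} all the sets $\ce_f$ with $\|f-f_0\|\leq\d_0$ are contained in one common ball $\bb(\theta,C_1)$, where $C_1$ depends only on $\alpha_0$, $\|A\|$, $\|b\|$. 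This makes the sequence $\{x_k\}$ automatically bounded and legitimizes extracting a convergent subsequence.

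Next I would carry out the limiting step. Write $P(x)=Q(x)+Ax+b$. For each $k$ we have $f_k(P(x_k))\leq 0$. Since $P$ is continuous, $\{P(x_k)\}$ is bounded (as $\{x_k\}$ lies in a fixed ball), and $x_k\to x_*$ gives $P(x_k)\to P(x_*)$. From
\[
|f_0(P(x_*))| \leq |f_0(P(x_*)) - f_0(P(x_k))| + |f_0(P(x_k)) - f_k(P(x_k))| + |f_k(P(x_k))|
\]
the first term tends to $0$ by continuity of $f_0\circ P$, the second is bounded by $\|f_0-f_k\|\cdot\|P(x_k)\|\to 0$ since $\|P(x_k)\|$ is uniformly bounded and $f_k\to f_0$, and the third is $\leq 0$. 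Hence $f_0(P(x_*))\leq 0$, i.e.\ $x_*\in\ce_{f_0}$. But $\operatorname{dist}(x_k,\ce_{f_0})\geq\e$ for all $k$ forces $\operatorname{dist}(x_*,\ce_{f_0})\geq\e>0$, contradicting $x_*\in\ce_{f_0}$. Therefore the assumed counterexample cannot exist, and for every $\e>0$ there is $\d\in(0,\d_0]$ with $\ce_f\subset U_\e(\ce_{f_0})$ whenever $\|f-f_0\|\leq\d$ and $f\in K'_Q$.

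The main obstacle — and the reason the uniform-ball step cannot be skipped — is that a priori the sets $\ce_{f_k}$ could run off to infinity as $f_k$ approaches the boundary behaviour of $K'_Q$; the ellipticity constant is what we must control uniformly to pin everything inside a single compact set. Once that uniform boundedness is in hand, the rest is a routine sequential continuity argument for the (one-sided) continuous function $f\circ P$, and no convexity or uniform convexity of $\ce_f$ is actually needed here, only closedness and the common bound from Lemma \ref{l1}.
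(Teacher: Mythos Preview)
Your argument is correct but follows a genuinely different route from the paper's. Both proofs begin the same way: you and the paper exploit the ellipticity of $Q$ to show that for $\|f-f_0\|\leq\d_1=\a_0/(2M)$ one has $f(Q(x))\geq\tfrac{\a_0}{2}\|x\|^2$, hence all ellipsoids $\ce_f$ with $f\in\bb(f_0,\d_1)$ sit in one common ball. From there you diverge. You run a sequential compactness argument: if the conclusion fails, produce $f_k\to f_0$ and $x_k\in\ce_{f_k}$ with $\operatorname{dist}(x_k,\ce_{f_0})\geq\e$, pass to a subsequential limit $x_*$, and conclude $f_0(P(x_*))\leq 0$ by continuity, contradicting $\operatorname{dist}(x_*,\ce_{f_0})\geq\e$. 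The paper instead proves a quantitative lemma: for $x\notin U_\e(\ce_{f_0})$ it projects $x$ onto $\ce_{f_0}$, expands $f_0(P(x))$ by Taylor at the projection point, and shows $f_0(P(x))>\a_0\e^2$; hence the ``thickened'' set $\{x:f_0(P(x))\leq\eta\}$ with $\eta=\a_0\e^2$ is already inside $U_\e(\ce_{f_0})$, and one can take $\d=\min(\d_1,\a_0\e^2/L)$ explicitly. Your approach is shorter and avoids the projection/Taylor machinery; the paper's approach yields an explicit $\d$ as a function of $\e$, $\a_0$, and the uniform bound $L$ on $\|P\|$, which is occasionally useful but not needed downstream here.

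One small correction: your displayed triangle inequality bounds $|f_0(P(x_*))|$, and then asserts the third term $|f_k(P(x_k))|$ is $\leq 0$, which is not what you mean. What you want is the identity
\[
f_0(P(x_*)) = \bigl[f_0(P(x_*))-f_0(P(x_k))\bigr] + \bigl[f_0(P(x_k))-f_k(P(x_k))\bigr] + f_k(P(x_k)),
\]
with the first two brackets tending to $0$ and the last term $\leq 0$, giving $f_0(P(x_*))\leq 0$. This is purely cosmetic; the argument is sound.
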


\begin{proof} Without loss of generality, we may assume that
    $\|f\|=\|f_0\|=1.$ Let
    \begin{equation}\label{1e1}
    f_0(Q(x))\geq \a\|x\|^2, \ \|Q(x)\|\leq M\cdot \|x\|^2, \ \
    \d_1=\frac{\a}{2M}.
    \end{equation}
    Then, for any $f\in K_Q'$ with $\|f-f_0\|\leq \d_1$ it follows from
    \eqref{1e1} that
    \begin{equation}\label{1e2}
    f(Q(x))=f_0(Q(x))+(f-f_0)(Q(x))\geq \frac{\a}{2}\|x\|^2.
    \end{equation}

    The inequality \eqref{1e2} implies that
    \begin{equation*}
    f(P(x))\geq \frac{\a}{2}\|x\|^2-\|A\|\cdot \|x\|-\|b\|.
    \end{equation*}
    Therefore, if $\|x\|>C,$ where $C=\frac{\|A\|+\sqrt{\|A\|^2+2\a\cdot\|b\|}}{\a},$ then $f(P(x))>0$ for
    any $f\in \bb(f_0,\d_1)$ This means that
    $\bigcup\limits_{f\in \bb(f_0,\d_1)}\ce_f$ is bounded. Consequently, there exists $L>0$ such that $\|P(x)\|\le L$ for any
    $x\in \bigcup\limits_{f\in\bb(f_0,\d_1)}\ce_f.$ Now we show that there is  $\eta>0$ such that
    \begin{equation}\label{5}
    \{x\in\br^n: f_0(P(x))\leq\eta\} \subset U_\e(\ce_{f_0}).
    \end{equation}
    Indeed, let $x\notin U_\e(\ce_{f_0}).$ Since $\ce_{f_0}$ is a closed convex set, there exists a projection $x_0$ of $x$ onto $\ce_{f_0}.$ Then $x_0$ should be a boundary point of $\ce_{f_0}$. Hence $f_0(P(x_0))=0$.

    By letting $h=x-x_0$ and using Taylor formula, one can get that
    \begin{equation}\label{6}
    f_0(P(x))=f_0(P(x_0))+f_0(P'(x_0)(h))+f_0(Q(h)),
    \end{equation}
    where $P'(x_0)=2B(x_0,h)+Ah$ is the Frechet derivative of the operator $P$ at
    $x_0.$ Since $x_0$ is a projection of $x$ onto $\ce_{f_0}$ one has that
    $f_0(P(x_0))=0.$ Therefore, we have that
    \begin{equation}\label{65}
    f_0(P(x))=f_0(P'(x_0)(h))+f_0(Q(h)).
    \end{equation}

    Since $f_0\in K_Q^{'}$, one obtains that $f_0(P(x))\geq0$ and $f_0(Q(h))\geq0$ for any $x\notin\ce_{f_0}$ and for all
    $h\in\br^n.$  Let $x'=x_0+th$ and $0<t<1.$ Then, it follows from \eqref{65} and $x'\notin\ce_{f_0}$ that
    \begin{equation*}
    f_0(P(x'))=tf_0(P'(x_0)(h))+t^2f_0(Q(h))>0,
    \end{equation*}
    for any $0<t<1.$ This yeilds that $f_0(P'(x_0)(h))\geq 0.$

By means of $\ f_0(P'(x_0)(h))\geq 0$ and
    $\|x-x_0\|=\|h\|>\e$, we obtain from \eqref{65} that
    \begin{equation*}
    f_0(P(x))\geq f_0(Q(h))>\a\e^2.
    \end{equation*}
    Therefore, if $0<\eta\leq\a\e^2$ then it
    follows from $x\notin U_\e(\ce_{f_0})$  that $f_0(P(x))>\a\cdot \e^2\geq \eta.$ In other words \eqref{5}
    holds true. We denote by $\d=\min(\d_1, \frac{\a\e^2}{L}).$ We now check that $\ce_f \subset U_\e(\ce_{f_0})$ for any $f\in \bb(f_0,\d).$ Indeed, for $\eta=L\d$ and $x\in
    \ce_f$ we have that
    \begin{eqnarray*}
        f_0(P(x))&=&f(P(x))+(f_0-f)(P(x))\leq (f_0-f)(P(x))\leq \|f-f_0\|\cdot \|P(x)\|\leq \d L=\eta.
    \end{eqnarray*}
    Hence
    \begin{equation*}
    \ce_f\subset \{x: f_0(P(x))\leq \eta\} \subset U_\e(\ce_{f_0}),
    \end{equation*}
    and this completes the proof.
\end{proof}

\begin{lemma}\label{l3} Let  $f_0,f_1\in K'_Q$ and $\ce_{f_0}, \ce_{f_1}$ be the corresponding ellipsoids. Then
    $\ce_{f_0}\cap\ce_{f_1}\ne \emptyset.$
\end{lemma}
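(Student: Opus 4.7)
The plan is to argue by contradiction. Suppose $\ce_{f_0} \cap \ce_{f_1} = \emptyset$. By Lemma \ref{l1}, both sets are closed and bounded, hence are separated by a strictly positive distance $d>0$. For $\lambda \in [0,1]$, set $f_\lambda = \lambda f_0 + (1-\lambda) f_1$; since $K'_Q$ is a convex cone, $f_\lambda \in K'_Q$, and by the standing hypothesis of the paper $\ce_{f_\lambda} \neq \emptyset$.

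The key observation is that $\ce_{f_\lambda} \subset \ce_{f_0} \cup \ce_{f_1}$ for every $\lambda \in [0,1]$: indeed, for $x \in \ce_{f_\lambda}$ the inequality $\lambda f_0(P(x)) + (1-\lambda) f_1(P(x)) \leq 0$ forces at least one of $f_0(P(x))$ or $f_1(P(x))$ to be non-positive. Since $\ce_{f_\lambda}$ is convex by Lemma \ref{l1}, and the two disjoint closed sets $\ce_{f_0}$, $\ce_{f_1}$ yield a separation of $\ce_{f_0} \cup \ce_{f_1}$ in the subspace topology, connectedness of $\ce_{f_\lambda}$ then gives the dichotomy: either $\ce_{f_\lambda} \subset \ce_{f_0}$ or $\ce_{f_\lambda} \subset \ce_{f_1}$.

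This partitions $[0,1]$ into two disjoint sets $\Lambda_0 = \{\lambda : \ce_{f_\lambda} \subset \ce_{f_0}\}$ and $\Lambda_1 = \{\lambda : \ce_{f_\lambda} \subset \ce_{f_1}\}$, with $0 \in \Lambda_0$ and $1 \in \Lambda_1$. The main obstacle is to show that both $\Lambda_0$ and $\Lambda_1$ are closed in $[0,1]$; this is precisely where the continuity Lemma \ref{l2} enters. Suppose $\lambda_n \in \Lambda_0$ converges to $\lambda$; if instead $\lambda \in \Lambda_1$, then $\ce_{f_\lambda} \subset \ce_{f_1}$, so Lemma \ref{l2} applied to $f_\lambda$ with $\e = d/2$ yields, for $n$ large, $\ce_{f_{\lambda_n}} \subset U_{d/2}(\ce_{f_\lambda}) \subset U_{d/2}(\ce_{f_1})$, which is disjoint from $\ce_{f_0}$, contradicting $\ce_{f_{\lambda_n}} \subset \ce_{f_0}$. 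Hence $\Lambda_0$ is closed, and by symmetry so is $\Lambda_1$. But $[0,1]$ is connected and cannot be partitioned into two nonempty disjoint closed sets, yielding the desired contradiction and thus $\ce_{f_0} \cap \ce_{f_1} \neq \emptyset$.
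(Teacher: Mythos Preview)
Your proof is correct and follows essentially the same approach as the paper: both argue by contradiction, use the inclusion $\ce_{f_\lambda} \subset \ce_{f_0} \cup \ce_{f_1}$ together with convexity to get the dichotomy, and then invoke Lemma~\ref{l2} and the connectedness of $[0,1]$. The only cosmetic differences are that the paper shows the two index sets are \emph{open} rather than closed (equivalent, since each is the complement of the other), and that with your convention $f_\lambda = \lambda f_0 + (1-\lambda) f_1$ one actually has $1 \in \Lambda_0$ and $0 \in \Lambda_1$ --- a harmless labeling slip.
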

\begin{proof} We assume that $\ce_{f_0}\cap\ce_{f_1}=\emptyset.$ Let
    $f_\l=\l f_1+(1-\l)f_0,$ $0\leq \l\leq 1$ and $\ce_{f_\l}$ be the corresponding
    ellipsoid  to $f_\l\in K'_Q$. It is worth noting that
    $\ce_{f_\l}\subset \ce_{f_0}\cup\ce_{f_1}.$ Indeed, if $x\in
    \ce_{f_\l}$ then
    \begin{equation}\label{17}
    f_\l(P(x))=\l f_1(P(x))+(1-\l)f_0(P(x))\leq 0.
    \end{equation}
    It follows from \eqref{17} that
    $f_1(P(x))\leq 0$ or $f_0(P(x))\leq 0$ which means $x\in
    \ce_{f_0}\cup\ce_{f_1}$.

Since $\ce_{f_\l}$ is a convex (connected) set and $\ce_{f_\l}\subset \ce_{f_0}\cup\ce_{f_1}$
    with  $\ce_{f_0}\cap\ce_{f_1}=\emptyset$, One has that either $\ce_{f_\l}\subset \ce_{f_0}$ or $\ce_{f_\l}\subset \ce_{f_1}.$ We denote by $I_0=\{\l: \ce_{f_\l}\subset \ce_{f_0}\}$ and  $I_1=\{\l:
    \ce_{f_\l}\subset \ce_{f_1}\}.$ It is clear that $I_0\cup
    I_1=[0,1],$ $I_0\cap I_1=\emptyset.$ Since $0\in I_0,$ $1\in I_1,$ one has $I_0\ne \emptyset$ and $I_1\ne
    \emptyset.$

    We know that  $\ce_{f_0}$ and  $\ce_{f_1}$  compact sets and $\ce_{f_0}\cap\ce_{f_1}=\emptyset$. Then there exists $\e >0$  such that   $U_\e(\ce_{f_0})\cap U_\e(\ce_{f_1})=\emptyset.$ It follows from Lemma \ref{l2}  that $I_0$ and $I_1$ are open subsets of $[0,1].$ Therefore, $[0,1]$ is a union of two disjoint nonempty open sets $I_0,I_1$ which is a contradiction. Consequently, any
    two ellipsoids have a nonempty intersection.
\end{proof}

\begin{lemma}\label{l4} Let  $f_0,f_1\in K'_Q$ and  $\ce_{f_0}, \ce_{f_1}$
    be the corresponding ellipsoids and $\Delta:=\ce_{f_0}\cap\ce_{f_1}\neq\emptyset.$ If a hyperplane $H$ does not
    intersect the set $\Delta$, i.e. $H\cap\Delta=\emptyset$, then there exists $f\in co(f_0,f_1)$ such
    that $\ce_f\cap H=\emptyset,$ where $\ce_f$ is the ellipsoid
    corresponding to $f$.
\end{lemma}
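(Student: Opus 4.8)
The plan is to look at the one-parameter family $f_\lambda:=(1-\lambda)f_0+\lambda f_1\in K'_Q$, $\lambda\in[0,1]$, and understand how the planar slice $\ce_{f_\lambda}\cap H$ moves as $\lambda$ increases. Two structural facts are available without extra work: since $f_\lambda(P(x))$ is a convex combination of $f_0(P(x))$ and $f_1(P(x))$, on one hand $\Delta=\ce_{f_0}\cap\ce_{f_1}\subseteq\ce_{f_\lambda}$ for every $\lambda$, and on the other hand $\ce_{f_\lambda}\subseteq\ce_{f_0}\cup\ce_{f_1}$ for every $\lambda$ (this is precisely the inclusion established in the course of proving Lemma \ref{l3}). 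If $\ce_{f_0}\cap H=\emptyset$ or $\ce_{f_1}\cap H=\emptyset$ we may take $f=f_0$ or $f=f_1$, so assume $A_0:=\ce_{f_0}\cap H$ and $A_1:=\ce_{f_1}\cap H$ are nonempty; by Lemma \ref{l1} they are compact convex subsets of $H$, and they are disjoint because $A_0\cap A_1=\Delta\cap H=\emptyset$.

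Intersecting $\ce_{f_\lambda}\subseteq\ce_{f_0}\cup\ce_{f_1}$ with $H$ gives $\ce_{f_\lambda}\cap H\subseteq A_0\cup A_1$. For $x\in A_0$ one has $x\notin\ce_{f_1}$ (otherwise $x\in\Delta\cap H$), so $f_0(P(x))\le 0<f_1(P(x))$, and the affine function $\lambda\mapsto f_\lambda(P(x))$ is nonpositive exactly for $\lambda\le\mu(x):=-f_0(P(x))/(f_1(P(x))-f_0(P(x)))\in[0,1)$; symmetrically, for $x\in A_1$ it is nonpositive exactly for $\lambda\ge\nu(x):=f_0(P(x))/(f_0(P(x))-f_1(P(x)))\in(0,1]$. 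The maps $\mu$ and $\nu$ are continuous on the compacta $A_0$ and $A_1$ (their denominators are bounded away from $0$ there), so set $a:=\max_{A_0}\mu<1$ and $b:=\min_{A_1}\nu>0$. Then $\ce_{f_\lambda}\cap H=\{x\in A_0:\mu(x)\ge\lambda\}\cup\{x\in A_1:\nu(x)\le\lambda\}$, which is empty as soon as $a<\lambda<b$. So everything reduces to proving $a<b$.

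That inequality is the step I expect to be the crux. I would argue it by contradiction: if $a\ge b$, pick $\lambda\in[b,a]$; the point where $\mu$ attains its maximum then lies in $\ce_{f_\lambda}\cap H\cap A_0$ and the point where $\nu$ attains its minimum lies in $\ce_{f_\lambda}\cap H\cap A_1$, so the convex set $\ce_{f_\lambda}\cap H$ contains a point of $A_0$ and a point of $A_1$; the segment joining them lies in $\ce_{f_\lambda}\cap H\subseteq A_0\cup A_1$, which is impossible since that segment is connected while $A_0,A_1$ are disjoint closed sets. Hence $a<b$, and $f:=f_\lambda$ for any $\lambda\in(a,b)$ — which lies in $co(f_0,f_1)$, in fact in $K'_Q$ by convexity of $K'_Q$ — satisfies $\ce_f\cap H=\emptyset$. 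This route avoids Lemma \ref{l2} entirely; alternatively one can show, using Lemma \ref{l2}, that $\{\lambda:\ce_{f_\lambda}\cap H\subseteq A_0\}$ and $\{\lambda:\ce_{f_\lambda}\cap H\subseteq A_1\}$ are relatively open in $[0,1]$ and conclude by connectedness, but the threshold computation above is shorter. The only points requiring care are the continuity of $\mu,\nu$ and checking that all thresholds land in the stated subintervals of $[0,1]$, both of which follow from the sign data $f_0(P(x))\le 0<f_1(P(x))$ on $A_0$ and $f_1(P(x))\le 0<f_0(P(x))$ on $A_1$.
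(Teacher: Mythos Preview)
Your proof is correct and shares its core with the paper's argument: both use that $\ce_{f_\lambda}\subseteq\ce_{f_0}\cup\ce_{f_1}$ so that $\ce_{f_\lambda}\cap H\subseteq A_0\cup A_1$, and both invoke the segment argument to show that the convex set $\ce_{f_\lambda}\cap H$ cannot meet both $A_0$ and $A_1$ simultaneously. The paper then defines $I_i=\{\lambda:\ce_{f_\lambda}\cap A_i\neq\emptyset\}$, uses Lemma~\ref{l2} (continuity of $f\mapsto\ce_f$) to show $I_0,I_1$ are relatively open in $[0,1]$, and derives a contradiction from connectedness of $[0,1]$ --- exactly the alternative you sketch at the end. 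Your primary route, by contrast, introduces the explicit threshold functions $\mu,\nu$ and extracts $a=\max_{A_0}\mu$, $b=\min_{A_1}\nu$ by compactness, then deploys the same segment argument once to force $a<b$. The payoff of your version is that it is entirely self-contained and does not rely on Lemma~\ref{l2}; the paper's version is slightly less computational but imports the continuity lemma. Substantively the two are the same proof.
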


\begin{proof} We set that $\Delta_i=\ce_{f_i}\cap H,$ $i=0,1$ and assume that  $\Delta_i\ne
    \emptyset,$ $i=0,1,$ otherwise the proof is trivial. Suppose the contrary, i.e.
    $\ce_{f_\l}\cap H\ne \emptyset$ for all $f_\l=\l f_1+(1-\l)f_0,$
    $\l\in [0,1].$ By definition we have that
    \begin{equation*}
    \ce_{f_\l}=\{x\in\br^n: \l f_1(P(x))+(1-\l)f_0(P(x))\leq 0\}.
    \end{equation*}

    Hence, for any $\l\in[0,1]$, one can get that
    \begin{equation}\label{18}
    \Delta=\ce_{f_0}\cap\ce_{f_1}\subset\ce_{f_\l}\subset\ce_{f_0}\cup\ce_{f_1}.
    \end{equation}

    Now we will prove that one of the sets $\ce_{f_\l}\cap\Delta_0$,
    $\ce_{f_\l}\cap\Delta_1$ is empty and another one is nonempty. We assume that both sets are simultaneously either nonempty or empty. If they are nonempty we then consider a closed segment $[x,y],$ where $x\in \ce_{f_\l}\cap\Delta_0,$
    $y\in \ce_{f_\l}\cap\Delta_1,$ and $[x,y]=\{\m
    x+(1-\m)y:\ \m\in[0,1]\}.$ It follows from \eqref{18} that
    \begin{equation}\label{19}
    [x,y]\subset \ce_{f_\l}\subset\ce_{f_0}\cup\ce_{f_1}, \ [x,y]
    \subset\Delta_0\cup\Delta_1.
    \end{equation}
    On the other hand
    \begin{equation*}
    \Delta_0\cap\Delta_1=(\ce_{f_0}\cap H)\cap (\ce_{f_1}\cap
    H)=\Delta\cap H=\emptyset.
    \end{equation*}
    It yields that $\Delta_0\cup\Delta_1$ cannot contain the segment $[x,y]$ which
    contradicts \eqref{19}.

    Moreover,  it follows from \eqref{18} that
    \begin{eqnarray*}
    \ce_{f_\l}\cap H\subset(\ce_{f_0}\cap H)\cup(\ce_{f_1}\cap H)&=&\Delta_0\cup\Delta_1, \\
    (\ce_{f_\l}\cap\Delta_0)\cup(\ce_{f_\l}\cap\Delta_1)&=&\ce_{f_\l}\cap(\Delta_0\cup\Delta_1)\supset\ce_{f_\l}\cap H\neq\emptyset,
    \end{eqnarray*}
    for any $\l\in [0,1].$ This shows that both sets $\ce_{f_\l}\cap\Delta_0$, $\ce_{f_\l}\cap\Delta_1$ cannot also be empty.

    Thus, for any $\l\in [0,1]$ we have either
    $\ce_{f_\l}\cap\Delta_0\ne\emptyset$ or
    $\ce_{f_\l}\cap\Delta_1\ne\emptyset.$ We denote by
    \begin{equation*}
    I_0=\{\l: \ce_{f_\l}\cap\Delta_0\ne \emptyset\}, \
    I_1=\{\l: \ce_{f_\l}\cap\Delta_1\ne \emptyset\}.
    \end{equation*}
    It is clear that $I_0\cup I_1=[0,1],$ $I_0\cap I_1=\emptyset.$ Since $0\in I_0,$ $1\in I_1,$ one has $I_0\ne \emptyset$ and $I_1\ne
    \emptyset.$

    Since  $\ce_{f_0}$ and  $\ce_{f_1}$ are compact sets and $\Delta_0\cap\Delta_1=\emptyset,$ there exists $\e >0$  such that
    $U_\e(\Delta_0)\cap U_\e(\Delta_1)=\emptyset.$
    Then due to Lemma \ref{l2}, $I_0$ and $I_1$ are open subsets of $[0,1].$
    Therefore, $[0,1]$ is a union of two disjoint nonempty open sets $I_0,I_1$ which is a contradiction. Consequently,  for some $f\in co(f_0,f_1)$ we must have that $\ce_f\cap
    H=\emptyset.$
\end{proof}

\begin{lemma}\label{l5} Let $f_i\in K'_Q,$ $i=\overline{1,n}$ and $\ce_{f_i},\
    i=\overline{1,n}$ be the corresponding ellipsoids. Let
    $\Delta=\bigcap\limits_{i=1}^n\ce_{f_i}\ne\emptyset$ and $H\cap
    \Delta=\emptyset$ for some hyperplane $H$. Then there exists
    $f\in co(f_1,\dots,f_n)$ such that $\ce_f\cap H=\emptyset.$
\end{lemma}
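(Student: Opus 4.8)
The natural strategy is induction on $n$, using Lemma \ref{l4} as the base case ($n=2$) and as the engine that drives each inductive step. The idea is to peel off one functional at a time, repeatedly replacing a pair of functionals by a single convex combination whose ellipsoid still misses the hyperplane $H$, while never losing the property that the intersection of the surviving ellipsoids contains $\Delta$ (hence is nonempty).

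First I would set up the induction. For $n=2$ the statement is exactly Lemma \ref{l4}. Assume the claim holds for $n-1$ functionals. Given $f_1,\dots,f_n\in K'_Q$ with $\Delta=\bigcap_{i=1}^n\ce_{f_i}\neq\emptyset$ and $H\cap\Delta=\emptyset$, consider the intersection $\Delta':=\bigcap_{i=1}^{n-1}\ce_{f_i}$, which contains $\Delta$ and is therefore nonempty. If $H\cap\Delta'=\emptyset$, the inductive hypothesis applied to $f_1,\dots,f_{n-1}$ immediately produces $g\in co(f_1,\dots,f_{n-1})\subset co(f_1,\dots,f_n)$ with $\ce_g\cap H=\emptyset$, and we are done. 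So the interesting case is when $H\cap\Delta'\neq\emptyset$ while $H\cap(\Delta'\cap\ce_{f_n})=\emptyset$.

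In that case, the plan is to find a convex combination $g$ of $f_1,\dots,f_{n-1}$ so that the pair $\{g,f_n\}$ is amenable to Lemma \ref{l4}. The key observation is that, since $g\in co(f_1,\dots,f_{n-1})$ forces $\Delta'\subset\ce_g$ (by the same averaging argument as in \eqref{18}, applied inductively: a point on which all of $f_1(P(\cdot)),\dots,f_{n-1}(P(\cdot))$ are $\le 0$ has $g(P(\cdot))\le 0$ as well), we will have $\ce_g\cap\ce_{f_n}\supset\Delta'\cap\ce_{f_n}\ne\emptyset$ and $H\cap(\ce_g\cap\ce_{f_n})=\emptyset$ once we also know $\ce_g\cap\ce_{f_n}$ is contained in a set $H$ misses — but that containment is not automatic, because $\ce_g$ can be strictly larger than $\Delta'$. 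To handle this I would instead argue as in the proof of Lemma \ref{l4} directly at the level of $n$ functionals: parametrize over the simplex $f_{\bar\lambda}=\sum_{i=1}^n\lambda_i f_i$, $\lambda$ in the standard $(n-1)$-simplex, set $\Delta_i=\ce_{f_i}\cap H$, and run a connectedness/covering argument on the simplex. One shows, exactly as in Lemma \ref{l4}, that whenever $\ce_{f_{\bar\lambda}}\cap H\ne\emptyset$ it meets exactly one of the (pairwise disjoint, by $H\cap\Delta=\emptyset$ and compactness) closed sets among the $\Delta_i$; Lemma \ref{l2} gives that each $I_i=\{\bar\lambda:\ce_{f_{\bar\lambda}}\cap\Delta_i\ne\emptyset\}$ is relatively open in the simplex; the $I_i$ cover the simplex and are disjoint; but the simplex is connected, so some $\bar\lambda$ lies in no $I_i$, i.e. $\ce_{f_{\bar\lambda}}\cap H=\emptyset$.

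The main obstacle is the separation step: verifying that whenever $\ce_{f_{\bar\lambda}}\cap H\neq\emptyset$ it cannot meet two different $\Delta_i,\Delta_j$. In Lemma \ref{l4} this used the segment $[x,y]\subset\ce_{f_\lambda}\subset\ce_{f_0}\cup\ce_{f_1}$ together with $[x,y]\subset\Delta_0\cup\Delta_1$ and $\Delta_0\cap\Delta_1=\emptyset$. For $n>2$ the inclusion $\ce_{f_{\bar\lambda}}\subset\bigcup_i\ce_{f_i}$ still holds (from $\min_i f_i(P(x))\le 0$ whenever the convex combination is $\le 0$), and along the segment $[x,y]\subset\ce_{f_{\bar\lambda}}\cap H$ we get $[x,y]\subset\bigcup_i(\ce_{f_i}\cap H)=\bigcup_i\Delta_i$; but now there are more than two $\Delta_i$'s, so a connected subset of $\bigcup_i\Delta_i$ need not lie in a single $\Delta_i$. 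I would resolve this by first showing $\ce_{f_{\bar\lambda}}\cap H$ is itself connected (it is convex, being the intersection of the convex set $\ce_{f_{\bar\lambda}}$ with the affine set $H$) and then observing that a connected set contained in $\bigcup_i\Delta_i$, whose pieces $\Delta_i$ are pairwise-disjoint compact sets at mutual distance $\ge 2\varepsilon$, must lie entirely in one $\Delta_i$ — this is the standard fact that a connected set cannot be split by a finite separated family, and it is precisely where the compactness of the $\ce_{f_i}$ (Lemma \ref{l1}) and $H\cap\Delta=\emptyset$ are used to produce the uniform gap $\varepsilon$.
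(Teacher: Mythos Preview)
Your simplex-connectedness argument has a genuine gap. You claim the sets $\Delta_i=\ce_{f_i}\cap H$ are pairwise disjoint ``by $H\cap\Delta=\emptyset$ and compactness,'' but this is false in general for $n>2$: we have $\Delta_i\cap\Delta_j=(\ce_{f_i}\cap\ce_{f_j})\cap H$, and the hypothesis $H\cap\Delta=\emptyset$ only says that the \emph{full} intersection $\bigcap_{i=1}^n\ce_{f_i}$ misses $H$, not that pairwise intersections do. (For $n=2$ these coincide, which is exactly why Lemma~\ref{l4} works.) Without pairwise disjointness, the $I_i$ need not be disjoint, and the connectedness argument collapses.

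You actually identified the right reduction earlier --- find $g\in co(f_1,\dots,f_{n-1})$ so that the pair $\{g,f_n\}$ is amenable to Lemma~\ref{l4} --- but abandoned it because $\ce_g\cap\ce_{f_n}$ need not be contained in $\Delta$. The missing idea, and what the paper does, is to introduce an \emph{auxiliary} hyperplane: since $\Delta_{k-1}:=\bigcap_{i=1}^{k-1}\ce_{f_i}$ and $B:=\ce_{f_k}\cap H$ are disjoint compacta (their intersection is $\Delta\cap H=\emptyset$), strictly separate them by a hyperplane $L$. Now apply the inductive hypothesis to $f_1,\dots,f_{k-1}$ \emph{with $L$ in place of $H$}: this yields $\hat f\in co(f_1,\dots,f_{k-1})$ with $\ce_{\hat f}\cap L=\emptyset$. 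Since $\ce_{\hat f}\supset\Delta_{k-1}$ is convex and misses $L$, it lies entirely on the $\Delta_{k-1}$-side of $L$, hence $\ce_{\hat f}\cap B=\emptyset$, i.e.\ $(\ce_{\hat f}\cap\ce_{f_k})\cap H=\emptyset$. Now Lemma~\ref{l4} applied to $\hat f,f_k$ and $H$ finishes the job. The auxiliary hyperplane $L$ is precisely what circumvents the ``$\ce_g$ can be strictly larger than $\Delta'$'' obstacle you noticed.
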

\begin{proof} We will use the mathematical induction with respect to $n$. For $n=1$ the
    assertion is trivial. For $n=2$ the assertion was proven by Lemma
    \ref{l4}. We assume that the assertion of the lemma is true for $n=k-1$ and we prove it for $n=k$ Denote
    \begin{equation*}
    \Delta_{k-1}=\bigcap_{i=1}^{k-1}\ce_{f_i},  \ \ \ B=\ce_{f_k}\cap H.
    \end{equation*}

    Since $\Delta_{k-1},$ $B$ are compact sets  and $\Delta_{k-1}\cap B=\Delta\cap H=\emptyset,$ we can strictly separate the sets
    $\Delta_{k-1}$ and $B$ by some hyperplane $L,$ i.e.,
    \begin{equation*}
    \Delta_{k-1}\cap L=\emptyset, \ B\cap L=\emptyset.
    \end{equation*}
Thus, $\Delta_{k-1}$ and $B$ lie in different semi-spaces   defined by the hyperplane $L$.
    Since $L\cap \Delta_{k-1}=\emptyset$, due to assumption of the mathematical induction,
    there exists ${\hat f}\in co(f_1,\dots,f_{k-1})$ such that
    \begin{equation}\label{10}
    \ce_{{\hat f}}\cap L=\emptyset.
    \end{equation}

    If $x\in \Delta_{k-1}$ then $f_i(P(x))\leq 0$ for any
    $i=\overline{1,k-1}$ and $x\in \ce_{{\hat f}}$, i.e.
    $\Delta_{k-1}\subset \ce_{{\hat f}}.$ Since $\ce_{{\hat f}}$  is a convex set, it follows from \eqref{10} that $\ce_{{\hat f}}$ lies
    in the same semi-space where $\Delta_{k-1}$ is located. Thus,
    $\ce_{{\hat f}}\cap B=\emptyset$ i.e.
    \begin{equation}\label{11}
    \ce_{{\hat f}}\cap B=\ce_{{\hat f}}\cap (\ce_{f_k}\cap
    H)=(\ce_{{\hat f}}\cap \ce_{f_k})\cap H=\emptyset.
    \end{equation}

    According to Lemma \ref{l4} (in the case $n=2$), there exists $f\in co({\hat f},f_k)\subset
    co(f_1,..,f_k)$ such that $\ce_f\cap H=\emptyset.$ This completes the proof.
\end{proof}

\subsection{The proof of Theorem \ref{t1}.}


First of all,  we will show
that  an intersection of any finite numbers of ellipsoids is
nonempty. Let $k$ be a minimal  number such that
an intersection of any $k-1$ ellipsoids is nonempty and there are
$k$ ellipsoids $\ce_{f_1},\cdots,\ce_{f_k}$ with an empty
intersection. Due to Lemma \ref{l3} we have $k\geq 3$.  Let
$\Delta_{k-1}=\bigcap\limits^{k-1}_{i=1}\ce_{f_i}$. Then $\Delta_{k-1}\ne
\emptyset$ and $\Delta_{k-1}\cap \ce_{f_k}=\emptyset.$
Therefore, since $\Delta_{k-1}$ and $\ce_k$ compact sets, there exists some hyperplane $H$ strictly separating
$\Delta_{k-1}$ and $\ce_k.$ Since $\Delta_{k-1}\cap H=\emptyset$, then due to Lemma \ref{5},
there exists $\ce_f$ such that $\ce_f\cap H=\emptyset$, moreover
$\ce_f$ and $\Delta_{k-1}$ lie  in the same semi-space. Hence
$\ce_f$ and $\ce_{f_k}$ are located in different semi-spaces, i.e., $\ce_f\cap \ce_k=\emptyset.$ It contradicts
to the assertion of Lemma \ref{l3}. Therefore, an intersection of any finite number of
ellipsoids is nonempty. Consequently,
\begin{equation*}
    \ca=\{\ce_f: f\in K'_Q\}
\end{equation*}
is a centered family of compact sets. Hence
\begin{equation}\label{12}
\ce_n(Q,A,b)=\bigcap_{f\in K'_Q}\ce_f\ne\emptyset
\end{equation}
which completes the proof of Theorem \ref{t1}.

\begin{remark}\label{efimplye}
    It follows from the proof of Theorem \ref{t1} that if $\ce_f\ne\emptyset$ for any $f\in K'_Q$ then $\ce_n(Q,A,b)=\bigcap\limits_{f\in K'_Q}\ce_f\ne\emptyset.$
\end{remark}

It is clear that $\ce_n(Q,A,b)$ is a convex compact set. Due to Krein-Milman theorem, the set $\rm{{\textbf{Extr}}(\ce_n(Q,A,b))}$ of extreme points of the set $\ce_n(Q,A,b)$ is nonempty. Let $$X_n(Q,A,b)=\{x\in\br^n: P(x)\equiv Q(x)+Ax+b=0\}$$
be a set of solutions of equation \eqref{1}. Since $X_n(Q,A,b)\subset\ce_f$
for any $f\in K'_Q$ we have
\begin{equation}\label{13}
    X_n(Q,A,b)\subset \ce_n(Q,A,b)
\end{equation}

The following theorem gives more precise description of the set $X_n(Q,A,b).$

\begin{theorem}\label{t2}
    Any solution  of the equation \eqref{1} is an
    extremal point of the set $\ce_n(Q,A,b).$
\end{theorem}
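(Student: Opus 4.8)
The plan is to show that any solution $x_0 \in X_n(Q,A,b)$ cannot be written as a nontrivial convex combination of two distinct points of $\ce_n(Q,A,b)$. Suppose, for contradiction, that $x_0 = \frac{1}{2}(u+v)$ with $u,v \in \ce_n(Q,A,b)$ and $u \neq v$; equivalently, write $u = x_0 + h$, $v = x_0 - h$ with $h \neq 0$. The strategy is to exhibit a single functional $f \in K'_Q$ for which $f(P(u)) + f(P(v)) < 0$, which forces either $f(P(u)) < 0$ or $f(P(v)) < 0$ and hence $u \notin \ce_f$ or $v \notin \ce_f$, contradicting $u,v \in \ce_n(Q,A,b) \subset \ce_f$.

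To produce such an $f$, I would expand using the bilinear form $B$ associated with $Q$. Since $P(x) = Q(x) + Ax + b$ and $P(x_0) = 0$, one computes
\begin{equation*}
P(x_0 \pm h) = P(x_0) \pm \big(2B(x_0,h) + Ah\big) + Q(h) = \pm P'(x_0)(h) + Q(h),
\end{equation*}
so that
\begin{equation*}
P(u) + P(v) = 2Q(h).
\end{equation*}
Now pick any $f \in K'_Q$. By definition of $K'_Q$ there is $\alpha_f > 0$ with $f(Q(h)) \geq \alpha_f \|h\|^2 > 0$ since $h \neq 0$. Hence $f(P(u)) + f(P(v)) = 2 f(Q(h)) > 0$. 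This does not immediately give a contradiction, so the argument must be sharpened: instead I would use that $u,v \in \ce_f$ means $f(P(u)) \leq 0$ and $f(P(v)) \leq 0$, so $f(P(u)) + f(P(v)) \leq 0$, contradicting $f(P(u)) + f(P(v)) = 2f(Q(h)) > 0$ directly. Thus no such decomposition exists, and $x_0$ is an extreme point.

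The main point to be careful about — and the step I expect to be the genuine content rather than routine bookkeeping — is the identity $P(u) + P(v) = 2Q(h)$, which uses crucially that $x_0$ is an \emph{exact} solution ($P(x_0) = 0$), so that the linear term $P'(x_0)(h)$ cancels when averaging $u$ and $v$ and only the purely quadratic part $Q(h)$ survives. The ellipticity of $Q$ then guarantees that $Q(h) \neq 0$ is detected strictly positively by every $f \in K'_Q$, while membership of $u,v$ in $\ce_n(Q,A,b)$ forces the average $f(P(u)) + f(P(v))$ to be nonpositive for every such $f$ — an outright contradiction. The inclusion $X_n(Q,A,b) \subset \ce_n(Q,A,b)$ from \eqref{13} ensures $x_0 \in \ce_n(Q,A,b)$ in the first place, so the statement ``$x_0$ is an extreme point of $\ce_n(Q,A,b)$'' is meaningful.
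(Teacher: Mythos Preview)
Your argument is correct and rests on the same key inequality as the paper's proof: for $f\in K'_Q$ and $h\neq 0$, the identity $P(x_0+h)+P(x_0-h)=2P(x_0)+2Q(h)=2Q(h)$ together with $f(Q(h))>0$ contradicts $f(P(u)),f(P(v))\le 0$. The paper packages this same computation differently, invoking the uniform convexity of $\ce_f$ (Lemma~\ref{l1}) to conclude that any proper convex combination of $x_1,x_2\in\ce_f$ lies in ${\rm int}\,\ce_f$, hence $f(P(x_0))<0$; your direct algebraic route bypasses that lemma and is slightly more self-contained.

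One small point worth making explicit: you only treat the midpoint case $x_0=\tfrac12(u+v)$, whereas extremality requires ruling out every $x_0=\lambda u+(1-\lambda)v$ with $\lambda\in(0,1)$. This is harmless since $\ce_n(Q,A,b)$ is convex (an intersection of the convex sets $\ce_f$), so any such representation with $u\neq v$ yields a midpoint representation $x_0=\tfrac12(u'+v')$ with $u',v'$ on the segment $[u,v]\subset\ce_n(Q,A,b)$ and $u'\neq v'$; but a sentence to that effect would make the reduction airtight.
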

\begin{proof} We suppose the contrary, i.e.,  $P(x_0)=0$ and  $x_0=\l
    x _1+(1-\l)x_2,$ where $x_1,x_2\in \ce_n(Q,A,b),$ $x_0\neq x_1,$ $x_0\neq x_2,$ and $\l\in (0,1).$ If
    $f\in K'_Q$ then $f(P(x_1))\leq 0$ and $f(P(x_2))\leq 0.$ Since
    $0<\l<1$ and $\ce_f$ is a uniformly convex set, we have that $x_0\in {\rm
        int}\ce_f,$ i.e. $f(P(x_0))<0.$ We then get that $P(x_0)\ne 0$ and it contradicts to $x_0\in X_n(Q,A,b).$ Therefore,
$X_n(Q,A,b)\subset \textbf{Extr}(\ce_n(Q,A,b)).$ This completes the proof.
\end{proof}

\subsection{Some examples}


In general, the condition $\ce_n(Q,A,b)\ne\emptyset$ does not imply an existence of the solutions of the equation \eqref{1}.

\begin{example}
    Let us consider the following equation in $\br^3$
    \begin{equation}\label{15}
    \left\{\begin{array}{lll}
    x^2_1-x_2=0\\
    x^2_2-2=0\\
    x^2_3+x_2-1=0.
    \end{array}\right.
    \end{equation}
    It is easy to see if $Q(x)=(x_1^2, x^2_2,x^2_3),$ $Ax=(-x_2,0,x_2),$ $b=(0,-2,-1)$ then equation \eqref{15} is elliptic operator equation.
    We have that $K'_Q=\{(\xi_1,\xi_2,\xi_3): \ \xi_1>0, \ \xi_2>0, \ \xi_3>0\}$. The condition $\ce_n(Q,A,b)\ne\emptyset$ is also satisfied because of $(0,0,0)\in \ce_f$ for any
    $f\in K'_Q.$  On the other hand, it is evident that equation
    \eqref{15} has no solutions.
\end{example}


\section{The sufficient condition for an existence of solutions}


\begin{theorem}\label{criterion}
    The elliptic operator equation \eqref{1} is solvable if and only if $\bigcap\limits_{f\in K'_Q}\partial \ce_f\neq \emptyset.$  Moreover, if the elliptic operator equation \eqref{1} is solvable  then $X_n(Q,A,b)= \bigcap\limits_{f\in K'_Q}\partial \ce_f$.
\end{theorem}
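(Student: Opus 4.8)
The plan is to reduce the whole statement to one elementary fact: for each $f\in K'_Q$ the topological boundary of the ellipsoid $\ce_f$ is exactly the zero set of the scalar function $x\mapsto f(P(x))$. This is where the (uniform, hence strict) convexity from Lemma \ref{l1} is used, and it is the only genuinely non-formal step. To prove it, observe that $f(P(x))=f(Q(x))+f(Ax)+f(b)$ is a quadratic polynomial whose quadratic part $f(Q(x))\ge\a_f\|x\|^2$ is positive definite, so $f\circ P$ is strictly convex; hence $\ce_f=\{x\in\br^n:\ f(P(x))\le 0\}$, which is nonempty by our standing assumption, is either a single point or a convex body with nonempty interior. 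In either case
\[
\partial\ce_f=\{x\in\br^n:\ f(P(x))=0\}.
\]
Indeed, the inclusion $\partial\ce_f\supset$ is clear since $\{f\circ P<0\}$ is open and contained in $\ce_f$, so no point with $f(P(x))=0$ can be interior once we rule out interior zeros; and to rule them out, suppose $f(P(x_0))=0$ with $x_0\in\mathrm{int}\,\ce_f$, choose a ball $\bb(x_0,r)\subset\ce_f$, and for any $y\in\bb(x_0,r)$ with $y\ne x_0$ note that both $y$ and its reflection $2x_0-y$ lie in $\ce_f$, so strict convexity gives $f(P(x_0))<\tfrac12 f(P(y))+\tfrac12 f(P(2x_0-y))\le 0$, a contradiction.

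Granting this, I would first dispose of the ``only if'' implication together with the inclusion $X_n(Q,A,b)\subset\bigcap_{f\in K'_Q}\partial\ce_f$. If $x_0$ solves \eqref{1}, i.e. $P(x_0)=0$, then $f(P(x_0))=0$ for every $f\in K'_Q$, so by the boundary description $x_0\in\partial\ce_f$ for every such $f$; hence $x_0\in\bigcap_{f\in K'_Q}\partial\ce_f$ and this set is nonempty. (This also refines Theorem \ref{t2}: a solution is not merely an extreme point of $\ce_n(Q,A,b)$, it lies on the boundary of every ellipsoid.)

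Next I would handle the ``if'' implication together with the reverse inclusion $\bigcap_{f\in K'_Q}\partial\ce_f\subset X_n(Q,A,b)$. Suppose $x_0\in\bigcap_{f\in K'_Q}\partial\ce_f$; by the boundary description this means $f(P(x_0))=0$ for every $f\in K'_Q$. Now the assignment $f\mapsto f(P(x_0))$ is a linear functional on the dual space $(\br^n)^*$ that vanishes on $K'_Q$; since $K'_Q$ is a nonempty open cone, it spans $(\br^n)^*$, so this linear functional vanishes identically, which forces $P(x_0)=0$. Thus $x_0\in X_n(Q,A,b)$ and \eqref{1} is solvable. Combining the two inclusions gives $X_n(Q,A,b)=\bigcap_{f\in K'_Q}\partial\ce_f$ whenever \eqref{1} is solvable (in fact the equality holds unconditionally, with both sides empty in the unsolvable case).

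I expect the main obstacle to be purely the first paragraph: one must be careful that $f(P(x))=0$ cannot occur at an interior point of $\ce_f$, which rests on strict/uniform convexity and the positive definiteness built into the definition of $K'_Q$; the degenerate case in which $\ce_f$ is a single point also has to be noted separately (there the minimum of the strictly convex $f\circ P$ equals $0$, so the point lies on the zero set). Everything after that — the two set inclusions — is immediate from linearity of $f\mapsto f(P(x_0))$ and the openness of $K'_Q$, so no further difficulty is anticipated.
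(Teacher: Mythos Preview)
Your proof is correct and follows essentially the same route as the paper: both directions reduce to the identity $\partial\ce_f=\{x:f(P(x))=0\}$, after which the inclusion $X_n(Q,A,b)\subset\bigcap_f\partial\ce_f$ is immediate from $P(x_0)=0$, and the reverse inclusion follows because $K'_Q$ is an open cone (hence spans the dual). The paper's proof simply asserts ``It is clear that $X_n(Q,A,b)\subset\partial\ce_f$'' and then uses the openness of $K'_Q$, whereas you supply the strict-convexity argument that rules out interior zeros of $f\circ P$; this extra care is warranted, since without it the first inclusion is not actually obvious.
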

\begin{proof}
    It is clear that $X_n(Q,A,b)\subset \partial \ce_f$ and $X_n(Q,A,b)\subset \bigcap\limits_{f\in K'}\partial \ce_f.$ Since $K'_Q$ is an open
    cone, it follows from $f(P(x))=0$ for all $f\in K'_Q$ that $P(x)=0.$
    Hence $\bigcap\limits_{f\in K'}\partial \ce_f\subset X_n(Q,A,b).$ Thus, we obtain that
$X_n(Q,A,b)=\bigcap_{f\in K'_Q}\partial \ce_f.$
\end{proof}

In this section, we provide some sufficient conditions to insure an existence of solutions of the equation \eqref{1}.

\subsection{The lower dimensional space}


For small dimensions ($n=1,2$), the condition $\ce_n(Q,A,b)\ne\emptyset$ remains to be
the sufficient condition for the existence of solutions of the equation \eqref{1}.

\begin{theorem}\label{t3}
    Let $n\leq 2$. If $\ce_n(Q,A,b)\ne\emptyset$ is satisfied then $X_n(Q,A,b)\ne \emptyset.$
\end{theorem}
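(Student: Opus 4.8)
The plan is to reduce the claim to a statement about the boundary of the convex body $\ce_n(Q,A,b)$ and to exploit the low-dimensional geometry together with the fact, established in Theorem \ref{criterion}, that solvability is equivalent to $\bigcap_{f\in K'_Q}\partial\ce_f\neq\emptyset$. By Remark \ref{efimplye} (or Theorem \ref{t1}) the hypothesis $\ce_n(Q,A,b)\neq\emptyset$ gives a nonempty convex compact set $\cc:=\ce_n(Q,A,b)=\bigcap_{f\in K'_Q}\ce_f$. If $\mathrm{int}\,\ce_f=\emptyset$ for some $f\in K'_Q$ then, since $\ce_f$ is a uniformly convex body (Lemma \ref{l1}), $\ce_f$ is a single point $x_0$ with $f(P(x_0))=0$; but then $x_0\in\cc\subset\ce_g$ for every $g\in K'_Q$, and as in the proof of Theorem \ref{criterion} one concludes $P(x_0)=0$, so $X_n(Q,A,b)\neq\emptyset$. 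Hence we may assume every $\ce_f$ has nonempty interior.

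First I would treat $n=1$ directly: here $P(x)=ax^2+bx+c$ with $a\neq0$, the sets $\ce_f$ are closed bounded intervals (degenerate or not) for $f=\pm1$, and $\ce_1(Q,A,b)\neq\emptyset$ forces a point $x_0$ with $f(P(x_0))\le 0$ for both signs of $f$, i.e. $P(x_0)=0$. For $n=2$ I would argue by contradiction: suppose $X_2(Q,A,b)=\emptyset$, so by Theorem \ref{criterion} $\bigcap_{f\in K'_Q}\partial\ce_f=\emptyset$. The idea is that $\cc$ is a planar convex body (we may assume $\dim\cc=2$, otherwise a separation argument as above isolates a point again), and for each boundary point $z\in\partial\cc$ there is some $f_z\in K'_Q$ with $z\in\mathrm{int}\,\ce_{f_z}$; by continuity of $f\mapsto\ce_f$ (Lemma \ref{l2}) and compactness of $\partial\cc$, finitely many functionals $f_1,\dots,f_N\in K'_Q$ suffice so that $\partial\cc\subset\bigcup_j\mathrm{int}\,\ce_{f_j}$. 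Since $\cc\subset\ce_{f_j}$ and $\cc=\bigcap_j(\ce_{f_j}\cap\cc)$, this would give $\partial\cc\subset\mathrm{int}\Big(\bigcap_{j=1}^N\ce_{f_j}\Big)$.

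The geometric heart of the argument is then the following: in the plane, if $D_1,\dots,D_N$ are uniformly convex bodies whose intersection $D:=\bigcap_j D_j$ is nonempty and whose interiors cover $\partial D$, one derives a contradiction — intuitively, $\partial D$ would lie in the interior of $D$ itself, which is impossible for a nonempty compact convex set. More precisely, pick $p\in\mathrm{int}\,D$ (nonempty since each $D_j$ has nonempty interior and we arranged $\dim\cc=2$) and any boundary direction; the extreme point of $D$ in that direction lies on $\partial D_j$ for some $j$, contradicting $\partial D\subset\bigcup_j\mathrm{int}\,D_j$. This contradicts $\bigcap_{f\in K'_Q}\partial\ce_f=\emptyset$ and hence proves $X_2(Q,A,b)\neq\emptyset$.

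The main obstacle I anticipate is making the covering/compactness step fully rigorous: one must show that "$z\in\partial\ce_n(Q,A,b)$ but $z\notin\bigcap_{f}\partial\ce_f$" genuinely yields an $f_z$ with $z\in\mathrm{int}\,\ce_{f_z}$ — i.e. that $f(P(z))<0$ for some $f\in K'_Q$ — and that this openness is stable under the Lemma \ref{l2} perturbation so the cover by finitely many interiors is legitimate. Handling the degenerate cases (some $\ce_f$ a point, or $\dim\cc\le 1$) cleanly, and confirming that the dimension restriction $n\le 2$ is exactly what makes the planar "boundary-in-interior" contradiction work (it fails for $n\ge 3$, consistent with the earlier counterexample), are the delicate points; the rest is routine convexity.
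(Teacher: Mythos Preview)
Your argument for $n=2$ has a genuine gap at the ``geometric heart.'' From $X_2(Q,A,b)=\emptyset$ you correctly deduce that every $z\in\partial\cc$ lies in $\mathrm{int}\,\ce_{f_z}$ for \emph{some} $f_z\in K'_Q$, and compactness then gives finitely many $f_1,\dots,f_N$ with $\partial\cc\subset\bigcup_j\mathrm{int}\,\ce_{f_j}$. But the inference ``hence $\partial\cc\subset\mathrm{int}\big(\bigcap_j\ce_{f_j}\big)$'' does not follow: a union of interiors is not the interior of the intersection. Worse, your contradiction step is stated for $D=\bigcap_j\ce_{f_j}$, not for $\cc$, and the claim that $\partial D\subset\bigcup_j\mathrm{int}\,D_j$ is impossible is simply false---take $D_1$ the unit disk and $D_2$ a concentric disk of radius $2$; then $D=D_1$ and $\partial D\subset\mathrm{int}\,D_2\subset\bigcup_j\mathrm{int}\,D_j$. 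Since in general $\cc\subsetneq D$ (the finite intersection can be strictly larger than the full intersection over $K'_Q$), nothing you have established about $\partial\cc$ transfers to $\partial D$. The obstacle you flag at the end is therefore not merely ``delicate'' but fatal to the scheme as written; no amount of Lemma~\ref{l2} perturbation repairs it. (A minor point: for $n=1$ only one sign of $f$ lies in $K'_Q$, not both, though the case is trivial regardless.)

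The paper's proof exploits the dimension in a much sharper way that you have not used: when $n=2$, any two linearly independent $f_1,f_2\in K'_Q$ already span $(\br^2)^*$, so $\partial\ce_{f_1}\cap\partial\ce_{f_2}=\{x:f_1(P(x))=f_2(P(x))=0\}=X_2(Q,A,b)$. Assuming this is empty, the two ellipse boundaries are disjoint while the ellipses themselves meet (Lemma~\ref{l3}), forcing one ellipse inside the other, say $\ce_{f_1}\subset\ce_{f_2}$. One then slides along the segment $f_1-\e f_2$ inside $K'_Q$, showing $\ce_{f_1-\e f_2}\subset\ce_{f_1}$ throughout, and pushes $\e$ to the first value $\e_0$ where $f_1-\e_0 f_2\in\partial K'_Q\setminus\{0\}$. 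The sublevel set $\{x:(f_1-\e_0 f_2)(P(x))\le 0\}$ is then a nonempty, non-elliptic planar conic region and hence unbounded, contradicting the uniform diameter bound inherited from $\ce_{f_1}$. The two-functional reduction is precisely where $n\le 2$ enters, and it is what your covering approach is missing.
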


\begin{proof} Let $n=1.$ Then the equation \eqref{1} has the following form
    \begin{equation*}
    ax^2+bx+c=0,\qquad a\ne 0.
    \end{equation*}
    Note that the condition $\ce_n(Q,A,b)\ne\emptyset$ is equivalent to the condition $b^2-4ac\geq 0.$ One
    can see that
    \begin{equation*}
\ce_n(Q,A,b)=\{x: {\rm sign} (a(ax^2+bx+c))\leq 0\},\qquad X_n(Q,A,b)=\textbf{\textup{Extr}}\ce_n(Q,A,b)\ne
    \emptyset.
    \end{equation*}

    Let $n=2$. We assume the contrary, i.e. $X_n(Q,A,b)=\emptyset$. If
    $f_1,f_2\in K'_Q$ are linearly independent and $\ce_{f_1},
    \ce_{f_2}$ are the corresponding ellipsoids, then
    \begin{equation*}
    X_n(Q,A,b)=\partial \ce_{f_1}\cap\partial \ce_{f_2}=\emptyset.
    \end{equation*}
    On the other hand it follows from Theorem \ref{t1} that
    $\ce_{f_1}\cap\ce_{f_2}\ne \emptyset$. Hence, one ellipsoid
    lies inside another one. Without loss any generality, we suppose that $\ce_{f_1}\subset \ce_{f_2}$.
    Since $K'_Q$ is an open cone, there exists a sufficiently small number $\e>0$ such that $f_1-\e f_2\in K_Q'.$ Let
    $\ce_\e$ be the ellipsoid corresponding to $f_1-\e f_2$.    Now we will show that $\ce_\e\subset \ce_{f_1}$. Indeed, if $x\in
    \ce_{f_2}\setminus\ce_{f_1}$ then $f_1(P(x))>0$ and $f_2(P(x))\leq
    0$. Hence, we obtain that
    \begin{equation*} (f_1-\e f_2)(P(x))=f_1(P(x))-\e
    f_2(P(x))>0,
    \end{equation*}
    i.e. the ellipsoid $\ce_\e$ lies either inside of $\ce_{f_1}$ or outside
    of $\ce_{f_2}$. The later one is impossible, since Lemma
    \ref{l3} implies that $\ce_{f_2}\cap \ce_{\e}\neq\emptyset$. Thus, we have that $\ce_\e\subset \ce_{f_1}.$ Therefore, there exists a number $\e_0>0$ such that
    \begin{equation*}
    f_1-\e_0 f_2\notin K'_Q, \ f_1-\e_0 f_2\in \partial K'_Q, \ f_1-\e_0
    f_2\ne 0,
    \end{equation*}
    and  one has that $\ce_\e\subset \ce_{f_1}$ for  any $\e\in [0,\e_0)$. It this case, there exists a number $d>0$ such that
    \begin{equation}\label{165}
    {\rm diam}\{x: (f_1-\e f_2)(P(x))\leq 0\}\leq d,
    \end{equation}
    for any $\e\in
    [0,\e_0).$ By taking limit from \eqref{165} whenever $\e\to\e_0$ one has that
    \begin{equation}\label{170}
    {\rm diam}\{x: (f_1-\e_0 f_2)(P(x))\leq 0\}\leq d.
    \end{equation}
    Since $\ce_\e\supset\bigcap\limits_{f\in K'}\ce_f=\ce_n(Q,A,b)\ne\emptyset$, the set $\{x: (f_1-\e_0 f_2)(P(x))\leq 0\}\supset\ce_\e$ is not empty.

    On other hand, since $f_1-\e_0 f_2\notin K'_Q,$ the set $\{x: (f_1-\e_0
    f_2)(P(x))\leq 0\}$ cannot be an ellipsoid. We know that  all the second order curves except the ellipsoid are  unbounded. Therefore, we obtain that
    \begin{equation}\label{180}
    {\rm diam}\{x: (f_1-\e_0 f_2)(P(x))\leq 0\}=+\infty,
    \end{equation}
    and this contradicts to \eqref{170}. This completes the proof.
\end{proof}

\subsection{The higher dimensional space}


In what follows, we will consider the case when $n\geq 3.$ In order to avoid some technical calculations, we always suppose that the following conditions are satisfied
\begin{equation}\label{190}
{\overline K}'_Q\cap (-{\overline K}'_Q)=\{0\}, \qquad {\rm int}\ce_n(Q,A,b)\ne
\emptyset,
\end{equation}
where, ${\overline K}'_Q$ is a closure of the cone $K'_Q$ in a norm topology and ${\rm{int}}\ce_n(Q,A,b)$ is an interior of the set $\ce_n(Q,A,b).$

\begin{remark}  By lowering the degree  $n$ of the space, one can always achieve to the condition \eqref{190}. For example, if $f\in {\overline K}'_Q\cap (-{\overline K}'_Q)$ and $f\ne 0$ then $f(Q(x))\geq 0$
    and $-f(Q(x))\geq 0,$ i.e. $f(Q(x))\equiv 0,$ and $ \ f(P(x))=f(Ax+b)=0.$
    By means of the last linear equation, one can exclude one variable from the elliptic operator equation and we get an elliptic operator equation in $\br^{n-1}.$  This procedure can be repeated until we get the condition
    ${\overline K}'_Q\cap (-{\overline K}'_Q)=\{0\}.$ Similarly, if ${\rm{int}}\ce_n(Q,A,b)=\emptyset$ then $\ce_n(Q,A,b)\subset \{x:\varphi(x)=c\},$ where $\varphi$ is a linear continuous
    functional and  $c$ is a number. Since all solutions of the elliptic operator equation \eqref{1} lie
    in $\ce_n(Q,A,b),$ the equation $P(x)=0$ is equivalent to system of
    equations $P(x)=0, \ \varphi(x)=c.$ By means of the linear equation
    $\varphi(x)=c,$ one can again obtain an elliptic operator equation in $\br^{n-1}.$ We can repeat this process until  we get the condition ${\rm{int}}\ce_n(Q,A,b)\ne \emptyset.$
\end{remark}

We recall some notions from the convex analysis. Let $K$ be a closed convex cone. \textit{A face} of a cone $K$ is a convex subset $K'$ of $K$ such that every (closed) segment in $K$ with a relative interior point in $K'$ has both end points in $K'.$ \textit{An extremal ray} is a face which a half-line emanating from the origin.

Let $\bk$ be the set of extremal rays of the closed cone
${\overline K}'_Q.$ We denote by
\begin{equation}\label{20}
\Pi_f=\{x: f(P(x))\leq 0, \quad f\in \bk\}
\end{equation}

\begin{lemma}\label{l}
    The set $\Pi_f$ is a closed convex unbounded set and
    $\ce_n(Q,A,b)\subset \Pi_f.$
\end{lemma}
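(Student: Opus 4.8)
The plan is to establish three things about $\Pi_f$ separately: convexity, closedness, and the inclusion $\ce_n(Q,A,b)\subset\Pi_f$, and then to argue unboundedness by a degeneracy argument exploiting that $f\in\bk$ lies on an extremal ray of ${\overline K}'_Q$, hence on $\partial K'_Q$.

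For convexity and closedness, note first that for a fixed extremal ray $f\in\bk$, each set $\{x: f(P(x))\le 0\}$ is the sublevel set of the quadratic form $x\mapsto f(P(x))=f(Q(x))+f(Ax)+f(b)$. Since $f\in\partial K'_Q$ (because an extremal ray is a face contained in the boundary of the cone), the quadratic part $f(Q(x))$ is positive \emph{semi}definite but not positive definite; as discussed before Definition of rank in the excerpt, $\mathrm{rg}_fQ<n$. A quadratic function whose leading form is positive semidefinite is convex, so its sublevel set $\{x: f(P(x))\le 0\}$ is convex; being the preimage of $(-\infty,0]$ under a continuous map it is closed. Thus $\Pi_f$ is a closed convex set.

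For the inclusion, I would simply observe that $\bk\subset{\overline K}'_Q=\overline{K'_Q}$, so every $f\in\bk$ is a limit of functionals $f_j\in K'_Q$. If $x\in\ce_n(Q,A,b)=\bigcap_{g\in K'_Q}\ce_g$, then $f_j(P(x))\le 0$ for all $j$, and passing to the limit gives $f(P(x))\le 0$; since this holds for every $f\in\bk$ appearing in the definition \eqref{20} of $\Pi_f$, we get $x\in\Pi_f$. Hence $\ce_n(Q,A,b)\subset\Pi_f$. (One should double-check the notational convention: $\Pi_f$ in \eqref{20} is indexed by a single $f\in\bk$, so the argument is applied to that one $f$.)

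The main obstacle is unboundedness. Here the key point is that $f\in\bk$ forces $f\in\partial K'_Q$, so $f(Q(\cdot))$ is positive semidefinite but \emph{degenerate}: there is a nonzero $v\in\br^n$ with $f(Q(v))=0$, and by positive semidefiniteness $v$ lies in the kernel of the associated symmetric matrix, so also the bilinear term vanishes, i.e. $f(B(v,x))=0$ for all $x$. Then for any fixed $x_0\in\Pi_f$ (the set is nonempty since it contains $\ce_n(Q,A,b)\neq\emptyset$ by Theorem \ref{t1}) and any $t\in\br$,
\begin{equation*}
f\big(P(x_0+tv)\big)=f(Q(x_0))+2tf(B(x_0,v))+t^2f(Q(v))+f(A(x_0+tv))+f(b)=f(P(x_0))+t\,f(Av).
\end{equation*}
If $f(Av)=0$ this equals $f(P(x_0))\le 0$ for all $t$, so the whole line $x_0+\br v\subset\Pi_f$ and $\Pi_f$ is unbounded; and if $f(Av)\neq 0$ then choosing the sign of $t$ appropriately keeps $f(P(x_0+tv))\le 0$ for all $t$ of one sign, giving a ray inside $\Pi_f$, again unbounded. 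The only thing to be careful about is the corner case where no such degeneracy direction contributes — but positive semidefiniteness together with $\mathrm{rg}_fQ<n$ guarantees a nonzero kernel vector $v$, so this case cannot occur, and the proof goes through.
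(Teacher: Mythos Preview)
Your proof is correct. For closedness, convexity, and the inclusion $\ce_n(Q,A,b)\subset\Pi_f$ it essentially matches the paper's argument (the paper verifies convexity by the same direct computation implicit in your ``convex function, sublevel set'' observation, and proves the inclusion by the identical limit argument with $f_j\in K'_Q$, $f_j\to f$).

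The one genuine difference is in the unboundedness step. The paper argues by contradiction: assuming $\Pi_f$ bounded, for every unit vector $x_0$ the quadratic $\lambda\mapsto f(P(\lambda x_0))=\lambda^2 f(Q(x_0))+\lambda f(Ax_0)+f(b)$ must eventually be positive for all large $|\lambda|$, which forces $f(Q(x_0))>0$; then compactness of the unit sphere yields $f(Q(x))\ge\alpha\|x\|^2$ for some $\alpha>0$, i.e.\ $f\in K'_Q$, contradicting $f\in\bk\subset\partial K'_Q$. Your approach is instead constructive: you pick a nonzero $v$ in the kernel of the semidefinite form $f(Q(\cdot))$, use that $f(B(\cdot,v))\equiv 0$, and exhibit an explicit ray (or line) $x_0+t v$ lying in $\Pi_f$. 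Your route is a bit more informative, since it actually names the unbounded directions of $\Pi_f$; the paper's route avoids the PSD kernel fact but has to pass through the compactness argument on the sphere.

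One minor correction: your appeal to Theorem \ref{t1} for $\ce_n(Q,A,b)\neq\emptyset$ is not quite right, since that theorem is conditional on solvability of \eqref{1}. The nonemptiness you need is guaranteed instead by the standing hypothesis \eqref{190} (or by Remark \ref{efimplye} together with the blanket assumption $\ce_f\neq\emptyset$ for all $f\in K'_Q$).
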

\begin{proof} {\sf Closeness.} Since $Q,A, f$ are continuous mappings, $\Pi_f$ is a closed set.

    {\sf Convexity}. Let $x_1,x_2\in \pi_f$ and $0\leq \l \leq 1.$ Since $f(P(x_1))\leq 0, \ f(P(x_2))\leq 0$ and  $f(Q(x_1-x_2)) \geq 0,$ we obtain that
    \begin{eqnarray*}
        f(P(\l x_1+(1-\l)x_2)) &=& f(Q(\l x_1+(1-\l)x_2))+f(A(\l
        x_1+(1-\l)x_2))+b \\
        & = & \l f(P(x_1))+(1-\l)f(P(x_2))-\l(1-\l)f(Q(x_1-x_2))\leq 0.
    \end{eqnarray*}
    Consequently, $\Pi_f$ is a convex set.

    {{ Unboundedness.}} We assume the contrary i.e. $\Pi_f$ is bounded.

    Then for any $x_0$ with $\|x_0\|=1$ there exists $\lambda_0>0$, such that for all
    $|\l|>\l_0$ one has $\l x\notin \pi_f,$ i.e.,
    \begin{equation*}
    f(P(\l x_0))=\l^2f(Q(x_0))+\l f(Ax_0)+f(b)>0,
    \end{equation*}
    which implies that $f(Q(x_0)) > 0$. Hence, $f(Q(\cdot))$ is continuous and
    positive defined on the unit sphere. Since the unit sphere is a compact set, then $f(Q(x))\geq \a>0$ for any $x$ with $\|x\|=1$, i.e., $f(Q(x))\geq \a\|x\|^2$. This means $f\in K'_Q,$ which contradicts
    to $f\in \bk.$

    Now, we are going to prove that $\ce_n(Q,A,b)\subset \Pi_f.$ Let  $x\in \ce_n(Q,A,b)$. Since $f\in\bk\subset \overline{K}'_Q,$ we can choose a sequence $\{f_n\}\subset K'_Q$ such that  $f_n\to f$ in a norm topology. Let $\ce_{f_n}$ be the corresponding ellipsoids.
    It is clear that $\ce_n(Q,A,b)\subset \ce_{f_n},$ therefore,
    $f_n(P(x))\leq 0$. Consequently, $f(P(x))\leq 0$, it implies that $x\in \Pi_f.$ This completes the proof.
\end{proof}

\begin{theorem}\label{t4} One has that $\bigcap\limits_{f\in\bk}\Pi_f=\ce_n(Q,A,b).$
\end{theorem}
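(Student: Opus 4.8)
The goal is to establish the equality $\bigcap_{f\in\bk}\Pi_f=\ce_n(Q,A,b)$. One inclusion, namely $\ce_n(Q,A,b)\subset\bigcap_{f\in\bk}\Pi_f$, is immediate: Lemma \ref{l} already gives $\ce_n(Q,A,b)\subset\Pi_f$ for every $f\in\bk$, so intersecting over all $f\in\bk$ preserves this. Thus the entire content of the theorem lies in the reverse inclusion $\bigcap_{f\in\bk}\Pi_f\subset\ce_n(Q,A,b)=\bigcap_{f\in K'_Q}\ce_f$. So it suffices to take a point $x$ with $f(P(x))\leq 0$ for all $f$ in the extremal rays $\bk$ of $\overline{K}'_Q$, and show that $g(P(x))\leq 0$ for every $g\in K'_Q$.

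The plan is to exploit the Krein--Milman / Minkowski structure of the closed convex cone $\overline{K}'_Q$: by the Krein--Milman theorem applied to a compact base of the cone (or directly to finitely generated sub-cones via Minkowski--Carath\'eodory), every $g\in K'_Q$ can be approximated by, or written as, a nonnegative combination of elements of $\bk$. Concretely, I would first argue that $\overline{conv(\bk)}=\overline{K}'_Q$ (this is recalled in the introduction of the paper as a consequence of Krein--Milman, using the standing hypothesis $\overline{K}'_Q\cap(-\overline{K}'_Q)=\{0\}$ from \eqref{190}, which guarantees the cone is pointed and hence has a compact base). Then, given $g\in K'_Q\subset\overline{K}'_Q$, there is a sequence $g_k\to g$ with each $g_k=\sum_{i=1}^{m_k}\lambda_i^{(k)} f_i^{(k)}$, $\lambda_i^{(k)}\geq 0$, $f_i^{(k)}\in\bk$. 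For the chosen $x$ we have $f_i^{(k)}(P(x))\leq 0$, hence by linearity $g_k(P(x))=\sum_i\lambda_i^{(k)} f_i^{(k)}(P(x))\leq 0$, and letting $k\to\infty$ gives $g(P(x))\leq 0$, i.e. $x\in\ce_g$. Since $g\in K'_Q$ was arbitrary, $x\in\bigcap_{g\in K'_Q}\ce_g=\ce_n(Q,A,b)$, which is exactly Remark \ref{efimplye}'s description of that set.

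The main obstacle, and the point requiring care, is justifying that $\overline{K}'_Q$ is the closed convex hull of its extremal rays — in other words that $\bk$ is rich enough. A closed convex cone need not be the closed conical hull of its extremal rays if it is not pointed (a half-space is the standard counterexample, as the paper's own Remark after Proposition 2.2 illustrates), which is precisely why the standing assumption \eqref{190} that $\overline{K}'_Q\cap(-\overline{K}'_Q)=\{0\}$ is invoked: a closed pointed convex cone in $\br^n$ admits a compact convex base $B$, and then $\textup{Extr}(B)\neq\emptyset$ by Krein--Milman with $B=\overline{conv(\textup{Extr}(B))}$; translating back, $\bk$ generates a dense sub-cone of $\overline{K}'_Q$. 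A secondary technical point is the passage to the limit $g_k\to g$: since $P(x)$ is a fixed vector in $\br^m$ once $x$ is fixed, the map $h\mapsto h(P(x))$ is a continuous linear functional on the (finite-dimensional) dual space, so $g_k(P(x))\to g(P(x))$ is automatic and the closed half-line condition $\leq 0$ passes to the limit. I would also note that only finitely many extremal rays are needed at each stage (Carath\'eodory's theorem in the cone form), so one may equally phrase the argument without a limit when $g$ lies in the interior, but the density argument above is the cleanest uniform way to cover all of $K'_Q$.
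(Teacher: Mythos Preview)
Your proposal is correct and follows essentially the same route as the paper: both rest on Lemma \ref{l} for one inclusion and on the Krein--Milman/Carath\'eodory structure of the pointed cone $\overline{K}'_Q$ (guaranteed by \eqref{190}) for the other. The only cosmetic difference is that the paper writes each $f\in K'_Q$ as an \emph{exact} finite nonnegative combination $f=\sum_{i=1}^m\alpha_i f_i$ with $f_i\in\bk$ and then argues the reverse inclusion by contrapositive, rather than via your approximation $g_k\to g$---a shortcut you yourself flag at the end of the proposal.
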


\begin{proof} It is clear that due to Lemma \ref{l}, one has that $\ce_n(Q,A,b)\subset \bigcap\limits_{f\in\bk}\Pi_f.$ We
    will prove the inverse inclusion. If $f\in K'_Q$ then according to the Krein-Milman and the Caratheodory theorems we have that
    \begin{equation}\label{121}
    f=\sum^m_{i=1}\a_if_i, \quad \a_i>0, \quad f_i\in \bk, \ m\leq n,
    \end{equation}
    where $n$ is a dimension of the space $\br^n.$

    If $x\notin \ce_n(Q,A,b)$ then $x\notin \ce_f$ for some $f\in K'_Q$.
    By means of \eqref{121}, one can obtain
    $f(P(x))=\sum_{i=1}^m\a_if_i(P(x))>0.$ Since $\a_i>0,$ $i=\overline{1,n},$  we get that
    $f_{i_0}(P(x))>0$  for some $1\leq i_0\leq n$ i.e. $x\notin \Pi_{f_{i_0}}$ for some $i_0.$
    Therefore, $x\notin \ce_n(Q,A,b)$ implies that $x\notin \bigcap\limits_{f\in
        \bk}\Pi_f$ and it means that $\bigcap\limits_{f\in \bk}\Pi_f\subset \ce_n(Q,A,b).$ This completes the proof.
\end{proof}

Let us recall some notions. Let $x_0$ be a boundary point of $\ce_n(Q,A,b)$ and $\gl_{x_0}$ be the
set of supporting hyperplanes  to $\ce_n(Q,A,b)$ at point $x_0.$  A point
$x_0$ is called {\it a boundary point of order} $k$ if the affine
dimension of $\bigcap\limits_{H\in \gl_{x_0}}H$ is equal to $k.$ In
particular, a boundary point of order 0 is called {\it a vertex} of
$\ce_n(Q,A,b).$ A connected component of the boundary points of order
$k$ is called a $k-$\textit{boundary} of the set $\ce_n(Q,A,b).$

\begin{theorem}\label{t5}
    If $x_0$ is a boundary point of order $k$ of the set $\ce_n(Q,A,b)$ then there
    are at least $n-k$ linear independent functionals
    $f_1,\cdots,f_{n-k}\in \bk$ such that $f_i(P(x_0))=0, \
    i=\overline{1,n-k}.$
\end{theorem}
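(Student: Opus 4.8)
The plan is to argue by contradiction, combining the convexity of $\ce_n(Q,A,b)$ established in Theorem~\ref{t4} with a counting/dimension argument on the extremal rays of ${\overline K}'_Q$. Suppose $x_0$ is a boundary point of order $k$, so that the affine subspace $L:=\bigcap_{H\in\gl_{x_0}}H$ has dimension $k$. First I would show that for every supporting hyperplane $H$ of $\ce_n(Q,A,b)$ at $x_0$ there is some $f\in\bk$ with $f(P(x_0))=0$ and whose ellipsoid $\ce_f$ (or rather the set $\Pi_f$) supports $\ce_n(Q,A,b)$ along $H$: indeed, $x_0\in\partial\ce_n(Q,A,b)=\partial\bigcap_{f\in\bk}\Pi_f$, so by Theorem~\ref{t4} the point $x_0$ lies on the boundary of at least one $\Pi_f$, i.e. $f(P(x_0))=0$ for some $f\in\bk$; and since $\Pi_f\supseteq\ce_n(Q,A,b)$, the supporting hyperplane of $\Pi_f$ at $x_0$ (which exists because $\Pi_f$ is closed convex and $x_0$ is on its boundary, by Lemma~\ref{l}) also supports $\ce_n(Q,A,b)$ at $x_0$.

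Next I would collect \emph{all} the functionals that vanish at $x_0$: let $F=\{f\in\bk : f(P(x_0))=0\}$ and let $V=\mathrm{span}\,\{\,\nabla(f(P))(x_0) : f\in F\,\}$, where $\nabla(f\circ P)(x_0)=2B^{*}(x_0)f+A^{*}f$ is the gradient of the (convex, since $f\in\overline{K}'_Q$) function $x\mapsto f(P(x))$ at $x_0$. The key geometric claim is that the affine hull $L$ of $\bigcap_{H\in\gl_{x_0}}H$ is exactly the translate of $V^{\perp}$: a hyperplane $H$ through $x_0$ supports $\ce_n(Q,A,b)$ if and only if its normal lies in the normal cone of $\ce_n(Q,A,b)=\bigcap_{f\in\bk}\Pi_f$ at $x_0$, and this normal cone is the conical hull of the gradients $\{\nabla(f\circ P)(x_0):f\in F\}$ (standard formula for the normal cone of an intersection of convex sets, using that each $f(P(x))\le 0$ is active precisely for $f\in F$). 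Taking the intersection of all such supporting hyperplanes $H$ gives $L=x_0+V^{\perp}$, hence $\dim V^{\perp}=k$ and so $\dim V=n-k$.

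Finally, since $V$ is spanned by the gradients $\nabla(f\circ P)(x_0)$ for $f\in F$ and has dimension $n-k$, I can extract $n-k$ functionals $f_1,\dots,f_{n-k}\in F\subset\bk$ for which these gradients are linearly independent; a fortiori the functionals $f_1,\dots,f_{n-k}$ themselves are linearly independent in $(\br^n)^{*}$ (if some nontrivial combination $\sum c_i f_i$ vanished, then — since each $f_i\in\overline{K}'_Q$ and $\overline{K}'_Q\cap(-\overline{K}'_Q)=\{0\}$ by \eqref{190} — we would be forced into a contradiction, or alternatively linear dependence of the $f_i$ forces linear dependence of their images under the fixed linear map $f\mapsto 2B^{*}(x_0)f+A^{*}f$). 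And by construction $f_i(P(x_0))=0$ for each $i$, which is exactly the assertion.

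I expect the main obstacle to be the identification of the normal cone of $\ce_n(Q,A,b)$ at $x_0$ with the conical hull of the active gradients: one must check that no ``constraint qualification'' issue arises, i.e. that the supremum defining each $\Pi_f$ is genuinely exposed at $x_0$ and that the finitely many active constraints (finiteness via Caratheodory, as in the proof of Theorem~\ref{t4}, since any supporting functional is a positive combination of at most $n$ elements of $\bk$) already generate all supporting hyperplanes. The passage from linear independence of gradients to linear independence of the functionals $f_i$ themselves is the second delicate point, and is where the nondegeneracy hypothesis \eqref{190} on ${\overline K}'_Q$ is used.
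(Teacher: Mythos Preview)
Your approach is genuinely different from the paper's, and the difference is instructive.

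\textbf{What the paper does.} For each of $n-k$ linearly independent supporting hyperplanes $H_i=\{\varphi_i=c_i\}$ at $x_0$, the paper \emph{constructs} an $f_i\in\bk$ with $f_i(P(x_0))=0$ and with $H_i$ tangent to $\Pi_{f_i}$ at $x_0$. The construction is: push $H_i$ outward to a parallel $H_{i,\e}$ disjoint from $\ce_n(Q,A,b)$; by compactness find finitely many ellipsoids whose intersection already misses $H_{i,\e}$; apply Lemma~\ref{l5} to replace them by a single $\ce_{f_\e}$ with $\ce_{f_\e}\cap H_{i,\e}=\emptyset$; then let $\e\to 0$ and pass to a subsequential limit $f_0\in\overline{K}'_Q$. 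One checks $f_0(P(x_0))=0$ and that $H_i$ is the tangent hyperplane to $\Pi_{f_0}$; finally decompose $f_0$ along extremal rays to land in $\bk$. Linear independence of the $f_i$ is then read off from linear independence of the $\varphi_i$ via the tangent-hyperplane correspondence.

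\textbf{What you do.} You package the same geometry as a normal-cone computation: $N_{\ce_n(Q,A,b)}(x_0)$ should equal the conical hull of the active gradients $\{\nabla(f\circ P)(x_0):f\in\bk,\ f(P(x_0))=0\}$, whence its span has dimension $n-k$ and you can extract $n-k$ functionals with independent gradients (and hence, since $f\mapsto (P'(x_0))^{*}f$ is linear, independent $f_i$).

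\textbf{Where the gap is.} The step you flag as the ``main obstacle'' is in fact the entire content of the theorem. The formula $N_{\bigcap_{f\in\bk}\Pi_f}(x_0)=\mathrm{cone}\{\nabla(f\circ P)(x_0):f\in F\}$ is \emph{not} standard when $\bk$ is infinite (and by Theorem~\ref{t10} it typically is, once $\mathrm{rg}\,Q\ge 2$). The easy inclusion is $\supseteq$; the hard inclusion $\subseteq$---that every supporting normal at $x_0$ is realized by some active $f\in\bk$---is exactly what the paper proves via Lemma~\ref{l5} and the limiting argument. Invoking Caratheodory as in Theorem~\ref{t4} does not help here: Caratheodory lets you write any $f\in K'_Q$ as a finite positive combination of elements of $\bk$, but it does not by itself produce, for a given supporting hyperplane $H$, an $f\in\bk$ whose level set $\Pi_f$ is tangent to $H$ at $x_0$. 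If you want to salvage your route, you must either (i) reproduce the paper's compactness-plus-Lemma~\ref{l5} argument to establish the normal-cone identity, or (ii) cite a semi-infinite constraint-qualification theorem (e.g.\ for a compactly parametrized family of smooth convex inequalities with a Slater point, furnished by $\mathrm{int}\,\ce_n(Q,A,b)\neq\emptyset$ from \eqref{190}) and verify its hypotheses. Either way the work is comparable to the paper's proof; as written, your plan stops short precisely at the crux.

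A minor remark: your first attempt at deducing linear independence of the $f_i$ from \eqref{190} does not work (a signed combination of extremal rays need not lie in $\pm\overline{K}'_Q$), but your second argument---linear dependence of the $f_i$ forces linear dependence of their images under the linear map $f\mapsto (P'(x_0))^{*}f$---is correct and matches the paper's reasoning.
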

\begin{proof}
    Let $H=\{x: \varphi(x)=c\}$ be a supporting hyperplane to $\ce_n(Q,A,b)$
    at the point $x_0$ and $\varphi(x)\leq c$ for any $x\in \ce_n(Q,A,b).$ Then for
    any arbitrary $\e>0$ we have that $\ce_n(Q,A,b)\cap H_\e=\emptyset$ where
    $H_\e=\{x: \varphi(x)=c+\e\}.$

    Since  $\ce_n(Q,A,b)$ is a compact set, there  exists $\d>0$ such that
    $U_\d(\ce_n(Q,A,b))\cap H_\e=\emptyset$ where $U_\d(\ce_n(Q,A,b))=\{x\in\br^n:
    \rho(x,\ce_n(Q,A,b))\leq \d\}.$

    Obviously, $\partial U_\d(\ce_n(Q,A,b))=\{x\in\br^n: \rho(x,\ce_n(Q,A,b))=\d\}$ is a compact set
    and a family of open sets $\{\br^n\setminus \ce_f\}, \
    f\in K'_Q$ is its open cover. Then it has a finite open subcover $\{\br^n\setminus \ce_{f_i}\}, \ i=\overline{1,r}.$ This implies that
    $\bigcap\limits^r_{i=1}\ce_{f_i}\subset U_\d(\ce_n(Q,A,b))$. Therefore, one has that
    \begin{equation}\label{122}
    \bigg(\bigcap^r_{i=1}\ce_{f_i}\bigg)\cap H_\e =\emptyset.
    \end{equation}
    for any $\e >0.$ It follows from \eqref{122} and Lemma \ref{l5}  that there exists
    $f\in K'_Q$ such that $\ce_f\cap H_\e=\emptyset,$ moreover, one can
    assume that $\|f\|=1.$ Hence, for any $\e>0$ there exists
    $f_\e\in K'_Q,$ $\|f_\e\|=1$ such that
    \begin{equation}\label{123}
    \ce_{f_\e}\cap H_\e = \emptyset.
    \end{equation}

    We take any sequence $\{\e_n\}\subset\br_+$ such that $\e_n \to 0$.
    Let $f_0\in \overline{K}'_Q$ be a limit point of $\{f_{\e_n}\}$. Since $x_0\in
    \ce_n(Q,A,b)$, it implies that $f_0(P(x))\leq 0.$ We will prove that $f_0(P(x_0))=0.$ Indeed, if $f_0(P(x_0))<0$ then $x_0$ belongs to
    an interior of the set $\Pi_{f_0}=\{x: f_0(P(x))\leq 0\}.$ Hence, the intersection
    $\Pi_{f_0}\cap H$ is stable with respect to small perturbations of $f_0$
    and $H.$ This contradicts to \eqref{122}. Therefore, we obtain that
    $f_0(P(x_0))=0$ and $H$ is a tangent hyperplane to $\Pi_{f_0}$
    at point $x_0.$

    Without loss of generality, we may assume that $f_0\in \bk$.
    Otherwise, we decompose $f_0$ as follows
    $$
    f_0=\sum_{i=1}^m\a_if_0^{(i)},
    $$
    where $\a_i>0,$ $f_0^{(i)}\in \bk$. Since
    $f^{(i)}_0(P(x_0))\leq 0,$ it follows from
    $$
    f_0(P(x_0))=\sum_{i=1}^m\a_if_0^{(i)}(P(x_0))=0
    $$
    that $f^{(i)}_0(P(x_0))=0, \ i=\overline{1,m}$. We then take one
    of $f_0^{(i)}$ instead of $f_0$.

    Let $H_i=\{x: \varphi_i(x)=c_i\}, i=\overline{1,n-k}$ be  supporting planes to $\ce_n(Q,A,b)$ at $x_0$ and
    $\varphi_i, \ i=\overline{1,n-k}$ be linearly independent functionals. By similar arguments discussed above,
    we can conclude that for each hyperplane $H_i$ there exists $f_i\in
    \bk$ such that $f_i(P(x_0))=0$ and $H_i$ is a tangent hyperplane
    to $\Pi_{f_i}=\{x\in\br^n: f_i(P(x))\leq 0\}$ at the point $x_0$. The linear
    independency of the functionals $\{f_i\},$ $i=\overline{1,n-k}$ follows from the linear
    independency  of the functionals $\{\varphi_i\},$ $i=\overline{1,n-k}.$ Indeed, if
    \begin{equation*}
    f_1=\sum\limits_{i=2}^{n-k}\l_if_i,
    \end{equation*}
    then the equation of a tangent hyperplane to $\Pi_{f_1}$ at the point $x_0$ has the form
    \begin{equation*}
    \sum\limits_{i=2}^{n-k}\l_i\varphi_i(x)=\sum\limits_{i=2}^{n-k}\l_ic_i,
    \end{equation*}
    i.e., $\varphi_1=\sum\limits_{i=2}^{n-k}\l_i\varphi_i$ and this contradicts to the linear independency of the functionals $\{\varphi_i\},$ $i=\overline{1,n-k}.$ This completes the proof.
\end{proof}

\begin{corollary}\label{c1}
    For any $x_0\in \partial \ce_n(Q,A,b),$ there exists $f\in \bk$ such that
    $f(P(x_0))=0.$
\end{corollary}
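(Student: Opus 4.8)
The plan is to read this off directly from Theorem~\ref{t5}. Fix $x_0\in\partial\ce_n(Q,A,b)$. Since we are working under the standing assumption~\eqref{190}, the convex compact set $\ce_n(Q,A,b)$ has nonempty interior, hence by the supporting hyperplane theorem it admits at least one supporting hyperplane at the boundary point $x_0$; that is, the set $\gl_{x_0}$ of supporting hyperplanes to $\ce_n(Q,A,b)$ at $x_0$ is nonempty. Each such hyperplane is an affine subspace of dimension $n-1$, so the affine dimension $k$ of $\bigcap_{H\in\gl_{x_0}}H$ satisfies $0\le k\le n-1$. In other words, $x_0$ is a boundary point of some well-defined order $k$ with $k\le n-1$.

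Now I would simply invoke Theorem~\ref{t5} with this $x_0$ and this $k$: it produces at least $n-k$ linearly independent functionals $f_1,\dots,f_{n-k}\in\bk$ with $f_i(P(x_0))=0$ for $i=\overline{1,n-k}$. Because $k\le n-1$, we have $n-k\ge 1$, so this list is nonempty. Setting $f=f_1$ yields a functional $f\in\bk$ with $f(P(x_0))=0$, which is exactly the assertion of the corollary.

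There is essentially no obstacle here; the only thing worth spelling out is the elementary remark that every boundary point of a convex body has finite order $k\le n-1$, which is what guarantees that the count $n-k$ delivered by Theorem~\ref{t5} is strictly positive. Everything else is a verbatim application of the preceding theorem.
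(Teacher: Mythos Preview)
Your argument is correct and matches the paper's own proof, which simply observes that any boundary point has order at most $n-1$ and then invokes Theorem~\ref{t5}. You have just spelled out in more detail why the order is well defined and bounded by $n-1$, but the approach is identical.
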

The proof immediately follows from the fact that any boundary
point has an order at most $n-1.$

\begin{corollary}\label{1c2}
    Every vertex  of  the set $\ce_n(Q,A,b)$ is a solution of the elliptic operator equation \eqref{1}.
\end{corollary}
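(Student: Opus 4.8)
The plan is to read the conclusion off directly from Theorem \ref{t5}. Recall that a vertex of $\ce_n(Q,A,b)$ is, by definition, a boundary point of order $0$; that is, if $x_0$ is a vertex of $\ce_n(Q,A,b)$ then the affine dimension of $\bigcap_{H\in\gl_{x_0}}H$ equals $0$. So the first (and essentially only) step is to invoke Theorem \ref{t5} with $k=0$ applied to the point $x_0$.

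Applying that theorem yields at least $n$ linearly independent functionals $f_1,\dots,f_n\in\bk$ such that $f_i(P(x_0))=0$ for every $i=\overline{1,n}$. Since $\br^n$ is $n$-dimensional, any $n$ linearly independent linear functionals form a basis of the dual space $(\br^n)^{*}$, and hence the only vector of $\br^n$ annihilated by all of $f_1,\dots,f_n$ is the zero vector. As $P(x_0)$ is annihilated by each $f_i$, we conclude that $P(x_0)=Q(x_0)+Ax_0+b=0$, i.e. $x_0\in X_n(Q,A,b)$ is a solution of equation \eqref{1}. This is exactly the assertion of the corollary.

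There is no real obstacle here: the statement is an immediate consequence of Theorem \ref{t5}, and all the genuine work — extracting, from the supporting hyperplanes at a boundary point of order $k$, the $n-k$ independent extremal-ray functionals that vanish on $P(x_0)$ — has already been carried out in the proof of that theorem. If one wished to make the corollary completely self-contained, one could additionally note that $\ce_n(Q,A,b)$ is a nonempty convex compact set (by Theorem \ref{t1} together with Remark \ref{efimplye}), so that its vertices, when they exist, are genuine boundary points of $\ce_n(Q,A,b)$ to which Theorem \ref{t5} legitimately applies; but this observation is not needed for the stated implication.
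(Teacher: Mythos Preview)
Your proof is correct and essentially identical to the paper's own argument: both invoke Theorem~\ref{t5} at a vertex (boundary point of order $0$) to obtain $n$ linearly independent functionals $f_1,\dots,f_n\in\bk$ with $f_i(P(x_0))=0$, and then conclude $P(x_0)=0$. The additional remarks you make about the dual-space basis and about nonemptiness of $\ce_n(Q,A,b)$ are accurate but not present in (nor needed for) the paper's one-line proof.
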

\begin{proof} If $x_0$ is a vertex then there are $n$  linear independent
    functionals $f_1,\cdots,f_n\in \bk$ for which $f_i(P(x_0))=0,$
    $i=1,\cdots,n.$ Hence, we obtain that $P(x_0)=0.$ This completes the proof.
\end{proof}

We have shown in the previous section that the ellipticity of the elliptic operator equation \eqref{1} is stable under
small perturbations of the quadratic elliptic operator $Q$ as well as so do the conditions ${\overline K}'_Q\cap (-{\overline K}'_Q)=\{0\}, \ {\rm int}\ce_n(Q,A,b)\neq\emptyset$.

\begin{definition}\label{d}
    A solution $x_0$ of the equation
    $$P(x)\equiv Q(x)+Ax+b=0$$
    is called {\it stable} if for any $\e>0$ there exists $\d>0$ such that the
    perturbed  equation
    $${\tilde P}(x)\equiv{\tilde Q}(x)+{\tilde A}x+{\tilde b}=0$$
    has a solution ${\tilde x}_0$ such that
    $\|{\tilde x}_0-x\|<\e$ whenever $\|{\tilde Q}-Q\|<\d, \|{\tilde A}-A\|<\d$
    and $\|{\tilde b}-b\|<\d.$
\end{definition}

\begin{theorem}\label{t6}
    A solution of the elliptic operator equation \eqref{1} is stable if and only if it is
    a vertex of the set $\ce_n(Q,A,b)$.
\end{theorem}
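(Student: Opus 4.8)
The plan is to prove, for a solution $x_0$ of \eqref{1}, the equivalence of three properties: (i) $x_0$ is a vertex of $\ce_n(Q,A,b)$; (ii) the derivative $P'(x_0)\colon h\mapsto 2B(x_0,h)+Ah$ is invertible; (iii) $x_0$ is a stable solution. The equivalence (i)$\Leftrightarrow$(ii) is read off from Theorem \ref{t5} and the geometry of $\ce_n(Q,A,b)$; (ii)$\Rightarrow$(iii) is a Newton--Kantorovich contraction estimate; and the contrapositive of (iii)$\Rightarrow$(ii) is obtained by exhibiting a destabilising perturbation.

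\textbf{Step 1: (i)$\Leftrightarrow$(ii).} If $x_0$ is a vertex, Theorem \ref{t5} with $k=0$ yields $n$ linearly independent functionals $f_1,\dots,f_n\in\bk$ with $f_i(P(x_0))=0$ for which the hyperplane $\{x:\ f_i(P'(x_0)(x-x_0))=0\}$ is tangent to $\Pi_{f_i}$ at $x_0$; these $n$ hyperplanes have linearly independent normals, namely the functionals $x\mapsto f_i(P'(x_0)x)$. Were $P'(x_0)$ singular these would all vanish on $\ker P'(x_0)\neq\{0\}$ and hence be dependent, a contradiction, so $P'(x_0)$ is invertible. Conversely, suppose $P'(x_0)$ is invertible. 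For each $f\in\bk$ we have $f(P(x_0))=0$, and $f\in{\overline K}'_Q$ implies $f(Q(h))\ge 0$, so from $f(P(x))=f(P'(x_0)(x-x_0))+f(Q(x-x_0))$ together with $\ce_n(Q,A,b)\subset\Pi_f$ we get $\ce_n(Q,A,b)\subset\{x:\ f(P'(x_0)(x-x_0))\le 0\}$; hence each such hyperplane supports $\ce_n(Q,A,b)$ at $x_0$. Since $\bk$ spans the space of linear functionals on $\br^n$ and $P'(x_0)$ is invertible, the normals $x\mapsto f(P'(x_0)x)$ span it too, so $\bigcap_{H\in\gl_{x_0}}H=\{x_0\}$ and $x_0$ is a vertex.

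\textbf{Step 2: (ii)$\Rightarrow$(iii).} Write $\tilde P(x)=\tilde Q(x)+\tilde Ax+\tilde b$. If $\|\tilde Q-Q\|,\|\tilde A-A\|,\|\tilde b-b\|<\d$ then, using $P(x_0)=0$, one has $\|\tilde P(x_0)\|=O(\d)$ and $\|\tilde P'(x_0)-P'(x_0)\|=O(\d)$, so for $\d$ small $\tilde P'(x_0)$ is invertible with uniformly bounded inverse. From $\tilde P(x_0+h)=\tilde P(x_0)+\tilde P'(x_0)h+\tilde Q(h)$ we see that $x_0+h$ solves $\tilde P=0$ precisely when $h$ is a fixed point of $T(h):=-[\tilde P'(x_0)]^{-1}\bigl(\tilde P(x_0)+\tilde Q(h)\bigr)$; since $\tilde Q(h)-\tilde Q(h')=\tilde B(h-h',h+h')$, the map $T$ contracts the ball of radius $O(\d)$ into itself. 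Its fixed point $\tilde x_0$ obeys $\|\tilde x_0-x_0\|=O(\d)<\e$ once $\d$ is small enough; this is the familiar Newton--Kantorovich fixed-point scheme applied at $x_0$.

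\textbf{Step 3: (iii)$\Rightarrow$(ii) --- the main obstacle.} By Steps 1--2 it is enough to show that a solution $x_0$ with $P'(x_0)$ singular is not stable. Set $W=\mathrm{Im}\,P'(x_0)$, $U=\ker P'(x_0)\neq\{0\}$, and let $\pi'$ be the orthogonal projection onto $W^{\perp}$. A Lyapunov--Schmidt reduction of $\tilde P(x_0+h)=0$, solving the $W$-component for the $U^{\perp}$-part of $h$, leaves a reduced equation $g_{\tilde P}(h'')=0$ on $U$ with $g_{\tilde P}(0)=\pi'\tilde P(x_0)$ and leading term the quadratic operator $h''\mapsto\pi'Q(h'')$. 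One then perturbs so that $g_{\tilde P}$ stays bounded away from $0$ on a fixed ball about $0$: replacing $b$ by $b+tv$ with $0<t<\d$ and $v\in W^{\perp}$ sends $g_P$ to $g_P+tv$, while if $\dim U\ge 2$ one may instead replace $A$ by $A+E$ with $E$ generic and small, keeping $\tilde P'(x_0)$ invertible but with $\|[\tilde P'(x_0)]^{-1}\|$ of order $\d^{-\dim U}$, so that $\|[\tilde P'(x_0)]^{-1}\tilde P(x_0)\|$ escapes every fixed ball. The delicate point, and the step I would concentrate on, is to choose the perturbation direction $v$ so that the reduced map misses $0$ on a \emph{fixed} neighbourhood of $x_0$ for \emph{all} sufficiently small $t$ --- this requires extracting enough definiteness of the reduced quadratic operator $\pi'Q|_U$ from a functional $f\in K'_Q$ and controlling the higher-order terms of the reduction uniformly. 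Granting this, the perturbed equation has no solution within a fixed $\e_0>0$ of $x_0$, so $x_0$ is not stable, which finishes the proof.
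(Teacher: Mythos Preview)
Your Steps 1 and 2 are essentially the paper's argument. The paper's ``If part'' shows, by contradiction, that a vertex $x_0$ must have $P'(x_0)$ invertible (using the line $\{x_0+\l h\}$ in every tangent hyperplane to $\Pi_f$), and then invokes the inverse function theorem to get local homeomorphism, hence stability. Your route through Theorem~\ref{t5} for (i)$\Rightarrow$(ii) and your supporting-hyperplane argument for (ii)$\Rightarrow$(i) are equivalent, and your Newton--Kantorovich contraction in Step~2 is just an explicit version of ``$P$ is a local homeomorphism near $x_0$''.

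The real point of comparison is Step~3. The paper's ``Only if part'' runs: assume $x_0$ is stable but not a vertex, deduce geometrically that $P'(x_0)h=0$ for some $h\ne 0$, and then writes ``$P'(x_0)$ does not have inverse, which contradicts the stability of the solution $x_0$.'' That last implication---singular derivative $\Rightarrow$ not stable---is simply asserted, not proved. So you have correctly identified the only nontrivial gap in the argument, and the paper does not fill it.

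Your own Step~3 is, as you say, incomplete, and parts of the sketch are off. The Lyapunov--Schmidt reduction is the right framework, but your second option (perturb $A$ by a generic small $E$ so that $\tilde P'(x_0)$ is invertible with $\|[\tilde P'(x_0)]^{-1}\|\sim\d^{-\dim U}$) does not show instability: a large Newton step $[\tilde P'(x_0)]^{-1}\tilde P(x_0)$ does not preclude a genuine zero of $\tilde P$ close to $x_0$; indeed, with $\tilde P(x_0)=0$ the zero is exactly $x_0$. The first option (shift $b$ by $tv$ with $v\in W^\perp$) is closer to what is needed, but the delicate issue you flag is genuine: you must choose $v$ so that the reduced map $h''\mapsto \pi' Q(h'')+tv+\text{h.o.t.}$ avoids $0$ on a \emph{fixed} ball for all small $t>0$, and nothing in the ellipticity of $Q$ forces $\pi'Q|_U$ to omit a direction in $W^\perp$. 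You will need either a degree argument (show the local Brouwer degree of $P$ at $x_0$ vanishes when $P'(x_0)$ is singular, using the ellipticity to keep $P(x_0+\cdot)$ from surjecting onto a small sphere) or a sharper analysis of the reduced quadratic map. The paper offers no help here; it treats the implication as folklore.
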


\begin{proof} \texttt{If part.}  Let $x_0$ be a vertex of the set $\ce_n(Q,A,b).$ Then due to Corollary \ref{1c2}, it is
    a solution of the elliptic operator equation \eqref{1}. In order to prove its stability,  it is
    enough to show that the mapping $P:\br^n\to\br^n$ which is given by
    \begin{equation}\label{235}
    P(x)\equiv Q(x)+Ax+b
    \end{equation}
    is a local homeomorphism on
    some neighborhood of the point $x_0.$

    Let $P'(x_0)$ be the Frechet derivative of the mapping $P$ at the
    point $x^0$.  Now we will prove that there exists $[P'(x_0)]^{-1}$.
    We assume the contrary, i.e. it  does not exist.  We then have that
    $P'(x_0)h=0$ for some $h\ne 0$. Hence for any  $ f\in {\overline
        K}'_Q$ and $\l\in \br$ one has that
    \begin{equation*}
    f(P(x_0+\l h))=f(P(x_0))+\l
    f(P'(x_0)h)+\frac{1}{2}\l^2f(P''(\xi)(h,h))
    \end{equation*}
    where $P''(\xi)$ is the second  derivative of the mapping $P$ at the point $\xi\in [x_0,
    x_0+\l h].$

    The equalities $P(x_0)=0$, $P'(x_0)h=0$ and $P''(\xi)(h,h)=2Q(h)$
    imply that
    \begin{equation}\label{124}
    f(P(x_0+\l h))=\l^2f(Q(h))\geq 0.
    \end{equation}
    Consequently, the straight line $\{x_0+\l h\}_{\l\in\br}$ is a subset of
    the tangent hyperplane to $\Pi_f$ at $x_0$ for every $f\in \bk.$
    As we showed in the proof of Theorem \ref{t5}that any
    tangent hyperplane to $\ce_n(Q,A,b)$ at $x_0$ contains the straight line
    $\{x_0+\l h\}_{\l\in \br}$. So, the boundary point $x_0$ having
    an order more than 1 can not be a vertex.   Hence, there does exist  $[P'(x_0)]^{-1}$. Consequently $P:\br^n\to\br^n$ is
    a local homeomorphism on some neighborhood of $x_0,$ i.e. $x_0$ is a
    stable solution.

    \texttt{Only if part.} Let $x_0$ be a stable solution. We assume that
    $x_0$ is not a vertex of the set $\ce_n(Q,A,b).$ Then intersections of all
    supporting  hyperplanes to $\ce_n(Q,A,b)$ at the point $x_0$ contains at least
    one line, say  $\{x_0+\l h\}_{\l\in \br}, \ h\ne 0$. It is clear that $\{x_0+\l h\}_{\l\in \br}$ must be a tangent line
    to each ellipsoid $\ce_f$ at the point $x_0$. Thus, we have that $f(P(x_0+\l
    h))\geq 0, \ \forall \l\in\br, \ \forall f\in K'_Q$. The equality
    $f(P(x_0+\l h))=\l f(P'(x_0)h)+\l^2 f(Q(h))$ yields that $\l
    f(P'(x_0)h)+\l^2f(Q(x))\geq 0$ for any $\l\in \br$ and every
    $f\in K'_Q.$ One can then see that $f(P'(x_0)h)=0$ for all $f\in
    K'_Q.$ Since $K'_Q$ is a  solid cone, we obtain that $P'(x_0)h=0, \ h\ne 0.$ Therefore, $P'(x_0)$ does not have
 inverse, which contradicts to the stability of the solution $x_0.$ This completes the proof.
\end{proof}

\begin{remark} It is clear that each stable solution of the elliptic operator equation \eqref{1} is isolated and
an intersection of boundaries of $n$ ellipsoids in $\br^n$ can have at most $2^n$ isolated points. Thus the set of solutions $X_n(Q,A,b)$ contains at most $2^n$ stable solutions.
\end{remark}

Let us recall several notions from the vector field theory
which would be needed for our further study. Let $\O$ be a bounded domain in $\br^n$ and $\partial\O$ be its
boundary. We assume that a continuous regular vector field $\Phi(x)$ is given
on $\partial\O,$ i.e., $\Phi(x)$ is a nonzero vector filed for any $x\in \partial\O.$ We define a
continuous mapping $\frac{\Phi(x)}{\|\Phi(x)\|}: \partial\O\to S^{n-1}$ where $S^{n-1}=\{x\in \br^n:
\|x\|=1\}$ is a unit sphere. The degree of this mapping is called {\it an index} of the vector
field $\Phi$ on $\partial\O$ and it is denoted by $\g(\Phi, \partial\O).$ Note that
the index of the vector field $\Phi$ is an integer number. In particular, if the image of $\partial\O$ under the mapping ${\Phi(\cdot)\over \|\Phi(\cdot)\|}$ does not cover the
unit sphere $S^{n-1}$ then $\g(\Phi,\partial\O)=0.$ It is well known that if a number of zeros of the vector field $\Phi$
inside of $\O$ is finite and $\g(\Phi, \partial\O)=0$ then that number must be even.

\begin{theorem}\label{t7}
    An elliptic operator equation \eqref{1} has an even  (possibly, zero) number of stable solutions.
\end{theorem}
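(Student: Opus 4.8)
The plan is to run a topological‑degree argument for the vector field $\Phi(x):=P(x)=Q(x)+Ax+b$ on a large ball $B_R:=\bb(\theta,R)$, using the rotation/index facts recalled just before the statement. The first step is to show that $\g(\Phi,\partial B_R)=0$ for $R$ large. Fix $f\in K'_Q$ with $\|f\|=1$ and $f(Q(x))\ge\a\|x\|^2$, and homotope $\Phi_t(x):=Q(x)+t(Ax+b)$, $t\in[0,1]$; on $\partial B_R$ one has $f(\Phi_t(x))\ge\a R^2-\|A\|\,R-\|b\|>0$ once $R$ exceeds the bound of Lemma~\ref{l1}, so $\{\Phi_t\}$ is a nonvanishing homotopy on $\partial B_R$ and $\g(\Phi,\partial B_R)=\g(Q,\partial B_R)$. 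By the homogeneity of $Q$ this equals the degree of the map $x\mapsto Q(x)/\|Q(x)\|$ on $S^{n-1}$, and since $f(Q(x)/\|Q(x)\|)\ge\a/\|Q\|>0$ on $S^{n-1}$, the image lies in the open hemisphere $\{y\in S^{n-1}:f(y)>0\}$ and does not cover $S^{n-1}$; hence $\g(\Phi,\partial B_R)=0$.

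Next I would reduce to the case where $P$ has finitely many zeros. Since $X_n(Q,A,b)$ may well be infinite, perturb $b$ to a generic $b'$ close to $b$ that is a regular value of the polynomial map $x\mapsto-(Q(x)+Ax)$ (a dense condition by Sard's theorem); for such $b'$ every solution of $Q(x)+Ax+b'=0$ is non‑degenerate, hence — by the inverse function theorem together with Theorem~\ref{t6} — stable, and all solutions lie in the bounded set $\ce_n(Q,A,b')$, so there are finitely many of them. Choosing $R$ uniformly for $b'$ near $b$, the first paragraph gives $0=\g(\Phi',\partial B_R)=\sum_{x_0}\mathrm{sign}\det P'(x_0)$ over these non‑degenerate solutions, so their number — which is precisely the number of stable solutions of the perturbed equation — is even.

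It then remains to transfer the parity back to the original equation. Each stable solution $x_0$ of \eqref{1} is isolated with $P'(x_0)$ invertible, so by the local homeomorphism property used in the proof of Theorem~\ref{t6} it persists to exactly one (stable) solution of the perturbed equation nearby, distinct stable solutions giving distinct ones; the remaining perturbed solutions accumulate only on the compact set $S$ of non‑stable solutions of \eqref{1}. The crux is that the total index of $\Phi$ over $S$ is even, so the count of stable solutions of \eqref{1} differs from an even number by an even number. When $x_*\in S$ is an isolated (necessarily degenerate) solution this is a local computation: writing $P(x_*+h)=Lh+Q(h)$ with $L=P'(x_*)$ singular and $d:=\dim\ker L$, the standard reduction of the index along $\operatorname{Im}L$ expresses $\mathrm{ind}(\Phi,x_*)$ through the Brouwer degree of a homogeneous quadratic self‑map of $\br^{d}$, and such a degree is always even — it vanishes when $d$ is odd because the antipodal identity $\widetilde Q(-u)=\widetilde Q(u)$ forces the degree to equal its negative, and it is divisible by $2$ when $d$ is even because the normalized map then factors through the double cover $S^{d-1}\to\br P^{d-1}$, which induces multiplication by $\pm2$ on top homology. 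The main obstacle is exactly the analogue of this for a positive‑dimensional component of $S$: there one must localize a further generic perturbation near that component, keep the resulting regular solutions inside a controlled neighborhood, and check that their signed count (equal to the fixed index $\g(\Phi,\partial U)$ over a small neighborhood $U\supset S$) is even; modulo that step the evenness of the total index over $S$, combined with the first two paragraphs, finishes the argument.
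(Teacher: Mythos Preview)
Your first paragraph reaches the same conclusion as the paper but takes a detour: the homotopy to $Q$ is unnecessary, since the very estimate you quote, $f(\Phi(x))\ge\a\|x\|^2-\|A\|\,\|x\|-\|b\|$, already gives $f(\Phi(x))>0$ on $\partial B_R$ for $R$ large, so the image of $\Phi/\|\Phi\|$ lies in the open half--sphere $\{f>0\}$ and $\g(\Phi,\partial B_R)=0$. That one--line observation is exactly the paper's argument.

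Where you diverge is afterwards. The paper simply invokes its stated ``well known'' fact (finitely many zeros and degree $0$ imply an even number of zeros) and stops; it does not separate stable from non--stable zeros at all. You correctly sense that something is being glossed over --- the quoted principle is false for general vector fields (take $\Phi(x)=x^2$ on $\br$, degree $0$, one zero), and Example~\ref{c} shows that degenerate solutions genuinely occur for \eqref{1} --- and you try to repair this via a Sard perturbation and a parity--transfer argument. That is a reasonable programme, but as written it is incomplete in two places. First, you yourself flag the positive--dimensional components of the non--stable locus as unfinished. Second, even for an \emph{isolated} degenerate zero your reduction is not quite right: after Lyapunov--Schmidt, the reduced map on $\ker P'(x_*)$ is $\pi_{\mathrm{coker}}Q(u)+O(\|u\|^3)$, not a purely homogeneous quadratic map; when the leading quadratic part $\pi_{\mathrm{coker}}Q|_{\ker L}$ itself has a non--isolated zero at the origin (which is compatible with $x_*$ being an isolated zero of $P$), the cubic corrections govern the index and your even--degree argument for homogeneous quadratics no longer applies. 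So compared with the paper you are doing strictly more work, aimed at a genuine soft spot in the paper's presentation, but the extra work does not yet close the gap.
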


\begin{proof} Let $f\in K'_Q.$  Since an ellipsoid
    $\ce_f=\{x\in\br^n: f(P(x))\leq 0\}$  is bounded and $X_n(Q,A,b)\subset \ce_f,$ there exists $r>0$ such that $f(P(x))>0$ for all $x$ with
    $\|x\|=r.$ We denote by $\O=\{x: \|x\|\leq r\}$. Then we have that $ \partial\O=\{x: \|x\|=r\}$.  Therefore,
we get for $x\in \partial\O$ that
    $$f\bigg(\frac{P(x)}{\|P(x)\|}\bigg)>0.$$
    Hence, the image of $\partial\O$ under the mapping
    $\frac{P(\cdot)}{\|P(\cdot)\|}$ does not coincide with the unit
    sphere $S^{n-1}$. Therefore, $\g(P, \partial\O)=0.$ Consequently, since stable solutions of the elliptic operator equation \eqref{1} are finite, it must be even. This completes the proof.
\end{proof}

\subsection{Some examples}


Now we are going to provide some examples.

\begin{example}\label{a}
    Let us consider the following equation in $\br^2$
    \begin{equation}\label{125}
    \left\{ \begin{array}{ll}
    x^2_1=x_1,\\[2mm]
    x_2^2=x_2.\\
    \end{array}
    \right.
    \end{equation}
    It is clear that $Q(x)=(x_1^2, x_2^2)$, $Ax=(-x_1,-x_2),$ $b=0.$
    One can find that
    \begin{eqnarray*}
        K'_Q=\{(\xi_1,\xi_2): \ \ \xi_1>0, \quad \xi_2>0\}, \quad \bk=\{(\a, 0), (0,\b): \ \ \a>0,\quad \b>0\},\\
        \Pi_1=\{(x_1,x_2): x_1^2\leq x_1\},\quad  \Pi_2=\{(x_1,x_2): x^2_2\leq x_2\}, \quad \ce_n(Q,A,b)=\Pi_1\cap\Pi_2.
    \end{eqnarray*}
    In this instance, $\ce_n(Q,A,b)$ is a
    unit square with vertices $(0,0), (0,1), (1,0), (1,1)$ which are
    only stable solutions of the elliptic operator equation \eqref{125}.

\end{example}

\begin{example}\label{b}
    Let us consider the following equation in $\br^2$
    \begin{equation}\label{126}
    \left\{ \begin{array}{ll}
    x^2_1=x_2,\\[2mm]
    x_2^2=x_1. \\
    \end{array}
    \right.
    \end{equation}
    In this case, we have that  $Q(x)=(x_1^2, x_2^2),$ $Ax=(-x_2,-x_1),$ $b=0.$ Similarly, one can find that
    \begin{eqnarray*}
        K'_Q=\{(\xi_1,\xi_2): \ \ \xi_1>0, \quad \xi_2>0\}, \quad \bk=\{(\a, 0), (0,\b): \ \ \a>0,\quad \b>0\},\\
        \Pi_1=\{(x_1,x_2): x_1^2\leq x_2\}, \quad  \Pi_2=\{(x_1,x_2): x^2_2\leq x_1\}, \quad \ce_n(Q,A,b)=\Pi_1\cap\Pi_2.
    \end{eqnarray*}
    One can see that $\ce_n(Q,A,b)$ has only vertices
    $(0,0), (1,1)$ which are stable solutions of the elliptic operator equation \eqref{126}.

\end{example}

In general, all extreme points of the set $\ce_n(Q,A,b)$ are not necessary to be solutions of the elliptic operator equation \eqref{1}.

\begin{example}\label{c} Let $X=(x_{ij})_{i,j=1}^n$ is a symmetric matrix of an order
    $n\times n.$ We consider the following equation in $\br^{\frac{n(n+1)}{2}}.$
    \begin{equation}\label{127}
    X^2=X.
    \end{equation}
The solution of the equation \eqref{127} is an orthogonal projector in the space
    $\br^n.$ As a linear functional in $\br^{\frac{n(n+1)}{2}},$ we
    consider $f=(\xi_{ij})_{i,j=1}^n$ such that $\xi_{ij}=\xi_{ji}.$
    Consequently, $f(X)$ is defined by
    $$f(X)=\sum_{i,j=1}^n\xi_{ij}x_{ij}.$$
    In particular, if
    $$f_0 =
    \left(
    \begin{array}{cccc}
    1 & 0 & \cdots & 0\\[2mm]
    0 & 1 & \cdots & 0\\[2mm]
    \cdot & \cdot & \cdots & \cdot\\[2mm]
    0 & 0 & \cdots & 1\\
    \end{array}
    \right),
    $$
    then $f_0(X)=tr(X)$ is the trace of the matrix $X$. For this linear functional $f_0,$ we have that
    $$ f_0(Q(X))=f_0(X^2)=\sum_{i,j=1}^nx_{ij}^2\geq \|X\|^2,$$
    where $\|X\|$ is a norm of the matrix $X$ associated with the
    Euclidean norm in $\br^n.$

    Therefore, the equation \eqref{127} is an elliptic operator equation.
One can find that
    \begin{eqnarray*}
        && K'_Q=\{f: f=(\xi_{ij})_{i,j=1}^n-{\rm \ a \ positive \ defined
            \ symmetric \ matrix}\},\\
        && \ce_n(Q,A,b)=\bigcap_{f\in K'_Q}\ce_f=\{X: X-X^2 -{\rm a \ nonnegative  \ defined \ symmetric \ matrix}\}.
    \end{eqnarray*}

Due to Theorem \ref{t2}, the set of solutions $X_n(Q,A,b)$ of the elliptic operator equation \eqref{c} (in other words, the set of orthogonal projectors of the space $\br^n$)  is  a subset of the set ${\textbf{\textup{Extr}}}\ce_n(Q,A,b)$ and a convex independent in the following sense if
    $X$ is a solution then it cannot be represented by
    $$ X=\sum^k_{i=1}\a_i X^{(i)},$$
    where $X^{(i)}\in X_n(Q,A,b)$ are solutions and $\a_i\geq 0, \ \sum^k_{i=1}\a_i=1, \ k\geq 2$ such that at least two coefficients $\a_1,\dots,\a_k$ are nonzero.

    We want to prove that $X_n(Q,A,b) = \textbf{\textup{Extr}}\ce_n(Q,A,b).$ Let $T$ be a real orthogonal matrix in
    $\br^n,$ i.e. $T^{-1}=T^{t}.$ Let us consider a map $\mathcal{T}:X\to T^tXT.$ It is obvious that
    \begin{enumerate}
        \item[(i)] If $X\geq 0$ then $T^tXT\geq 0;$

        \item[(ii)] If $X$ is an orthogonal projector then $T^tXT$ is also
        an orthogonal projector;

        \item[(iii)] If $X=\a Y+\b Z$ then $T^tXT=\a T^tYT+\b T^tZT.$
    \end{enumerate}

    This means that $\mathcal{T}(X)=T^tXT$ is a convex mapping which maps $\ce_n(Q,A,b)$ into
    itself.  We assume that  $X_0\in \textbf{\textup{Extr}}\ce_n(Q,A,b)$ is not an orthogonal
    projector. Let us reduce $X_0$ to a diagonal form:
    $$T^tX_0T={\rm diag}(\l_1,..,\l_n).$$
    Since $T^tX_0T\in \ce_n(Q,A,b),$ we get that $(T^tX_0T)^2\leq T^tX_0T.$ Consequently, we obtain that
    $\l^2_1\leq \l_1,\cdots, \l_n^2\leq \l_n$ and $\l_1,\cdots,\l_n\in
    [0,1].$ Since $X_0$ is not an orthogonal projection, $T^tX_0T$ is not an orthogonal
    projector. Thus, at least one of numbers $\l_i,$ $i=1,\dots,n$, say $\l_1,$ is
    different from $0$ and $1.$ Then
    \begin{equation}\label{128}
    T^tX_0T=\l_1Y+(1-\l_1)Z,
    \end{equation}
    where $Y={\rm diag}(1, \l_1,\dots,\l_n), \ Z={\rm diag}(0,
    \l_2,\dots,\l_n),$ with $Y,Z\in \ce_n(Q,A,b).$  Since $\mathcal{T}$ and ${\mathcal{T}}^{-1}$ are linear mapping, it follows from \eqref{128} that
    \begin{eqnarray*}
        X_0={\mathcal{T}}^{-1}(T^tX_0T)={\mathcal{T}}^{-1}(\l_1Y+(1-\l_1)Z)=\l_1TYT^t+(1-\l_1)TZT^t
    \end{eqnarray*} with $TYT^t\in \ce_n(Q,A,b),$ $TZT^t\in \ce_n(Q,A,b).$ This contradicts to $X_0\in \textbf{\textup{Extr}} \ce_n(Q,A,b).$ Hence, we have that $X_n(Q,A,b)={\textbf{\textup{Extr}}}\ce_n(Q,A,b).$

    One can show that $\ce_n(Q,A,b)$ has only two vertices $\Theta={\rm diag}(0,\dots,0)$ and $I={\rm
        diag}(1,\dots,1).$ Consequently, the equation \eqref{127} has two
    stable solutions. All other solutions are not stable.

    For instance, in the case $n=2,$ the set $X_2(Q,A,b)$ has three connected components: $\{\Theta\}, \ \{I\}$
    and one dimensional projectors
    $$
\left(
    \begin{array}{cc}
    \a & \pm \sqrt{\a(1-\a)}\\[2mm]
    \pm \sqrt{\a(1-\a)} & 1-\a\\
    \end{array}
    \right), \ \a\in[0,1].
    $$
In the general case, $X_n(Q,A,b)$ has $n+1$ connected components which are
    null dimensional, one dimensional, two  dimensional, and so on $n-$dimensional projectors.
\end{example}

\section{An iterative method for stable solutions}


We are going to show an existence of stable solutions under the mild condition by means of the Newton-Kantorovich method.

\subsection{The Newton-Kantorovich method}


\begin{proposition}
    The set
    \begin{equation}\label{28}
        \bd=\br^n\setminus \bigg(\bigcup_{f\in \bk}\Pi_f\bigg)
    \end{equation}
    is an open set.
\end{proposition}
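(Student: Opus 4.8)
The plan is to show that the complement of $\bd$, namely $\bigcup_{f\in\bk}\Pi_f$, is closed, which is equivalent to showing $\bd$ is open. The natural way to do this is to prove that if a sequence $\{x_m\}\subset\bigcup_{f\in\bk}\Pi_f$ converges to some point $x_*\in\br^n$, then $x_*\in\bigcup_{f\in\bk}\Pi_f$ as well. For each $m$ choose $f_m\in\bk$ with $f_m(P(x_m))\le 0$; since $\bk$ is a set of extremal rays, we may normalize so that $\|f_m\|=1$. By compactness of the unit sphere in the (finite-dimensional) dual space, a subsequence $f_{m_j}$ converges to some unit functional $f_*$. The closedness of $\bk$ as a subset of $\partial\overline K'_Q$ (each extremal ray is a face of the closed cone $\overline K'_Q$, and the normalized extremal directions form a compact set) should give $f_*\in\bk$ after re-extracting, so that we can pass to the limit.

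First I would set up the sequential characterization carefully: it suffices to show $\bigcup_{f\in\bk}\Pi_f$ is sequentially closed, and since we are in $\br^n$ sequential closedness is closedness. Next I would invoke compactness twice: once to extract a convergent subsequence of normalized functionals $f_{m_j}\to f_*$ with $\|f_*\|=1$, and implicitly the fact that $x_{m_j}\to x_*$. Then, using that $P=Q+A(\cdot)+b$ is continuous (indeed $Q$ is continuous as noted after the definition of the norm of a quadratic operator, and $A$ is linear) together with the joint continuity of the pairing $(f,y)\mapsto f(y)$ on $\br^{n*}\times\br^n$, I would pass to the limit in the inequality $f_{m_j}(P(x_{m_j}))\le 0$ to obtain $f_*(P(x_*))\le 0$, i.e.\ $x_*\in\Pi_{f_*}$. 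Finally I would argue that $f_*\in\bk$: the normalized extremal rays form a closed subset of the unit sphere because each extremal ray is a face of $\overline K'_Q$ and the union of these faces (intersected with the sphere) is closed in the norm topology; hence $x_*\in\bigcup_{f\in\bk}\Pi_f$.

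The main obstacle I anticipate is the claim that $\bk$ (as a set of normalized functionals) is closed. Extremal rays of a general closed convex cone need not have their normalized generators forming a closed set when the cone is not pointed or not polyhedral — a limit of extremal rays can fail to be extremal. However, under the standing assumption \eqref{190} that $\overline K'_Q\cap(-\overline K'_Q)=\{0\}$, the cone $\overline K'_Q$ is pointed, and in a pointed closed cone in finite dimensions the set of normalized extremal directions is indeed compact; this is where I would spend the real work, citing or reproving the relevant fact from convex analysis. If instead one prefers to sidestep this, an alternative is to note that by Theorem~\ref{t4} the union $\bigcup_{f\in\bk}\Pi_f$ differs from $\br^n\setminus\mathrm{int}\,\ce_n(Q,A,b)$ only in how the boundary is distributed among the $\Pi_f$'s; but since each $\Pi_f$ is closed (Lemma~\ref{l}) and one can show $\bigcup_{f\in\bk}\Pi_f=\br^n\setminus\mathrm{int}\,\ce_n(Q,A,b)$ by combining Lemma~\ref{l} with Corollary~\ref{c1}, the set $\bd=\mathrm{int}\,\ce_n(Q,A,b)$... no — that is false in general since $\bd$ should be the \emph{exterior}-type region, so I would not pursue that shortcut and would instead commit to the compactness-of-$\bk$ route sketched above.
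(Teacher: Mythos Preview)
Your approach is exactly the paper's: show that $\bigcup_{f\in\bk}\Pi_f$ is closed by taking $x_m\to x_0$ in the union, choosing normalized $f_m\in\bk$ with $f_m(P(x_m))\le 0$, extracting a subsequential limit $f_0\in\bk$, and passing to the limit in the inequality via continuity of $P$ and of the pairing. The paper handles the step you correctly flagged as the crux by simply asserting that $\bk\cap\bb(0,1)$ is compact, with no further argument. One caution on your proposed justification: pointedness alone does \emph{not} guarantee that the normalized extremal directions of a closed cone form a closed set (there are standard counterexamples already for pointed closed cones in $\br^4$), so if you intend to fill this gap rigorously you will need more than the standing hypothesis $\overline K'_Q\cap(-\overline K'_Q)=\{0\}$; the paper does not address this.
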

\begin{proof}
    We will show that $\bigcup\limits_{f\in\bk}\Pi_f$ is a closed set. Let
    $\{x_n\}\in \bigcup\limits_{f\in \bk}\Pi_f$ such that  $x_n\to x_0.$ Then
    there exists $f_n\in \bk,$ $\|f_n\|=1$ such that $f_n(P(x_n))\leq 0.$
    Since  $\bk\cap \bb(0,1)$ is a compact set, without loss any generality, we may assume that
$f_m\to f_0,\   f_0\in \bk. $   Then It follows from $x_n\to x_0,$ $f_n\to f_0,$ and $f_n(P(x_n))\leq 0$ that $f_0(P(x_0))\leq 0.$ Since $f_0\in \bk,$ one has that $x_0\in
    \bigcup\limits_{f\in\bk}\Pi_f.$ Consequently, $\bigcup\limits_{f\in\bk}\Pi_f$ is a
    closed set and $\bd$ is an open set. This completes the proof.
\end{proof}

\begin{theorem}\label{t8} Let $\bd_0$ be a connected component of $\bd$ and ${\overline
        \bd}_0$ be its closure. If there exists $x_0\in \bd_0$ such that ${\overline \bd}_0$ does
    not contain any straight line passing through $x_0$ then there
    exists a stable solution of the equation \eqref{1} belonging to the set
    ${\overline \bd}_0.$
\end{theorem}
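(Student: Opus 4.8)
The plan is to locate a vertex of the compact convex set $\ce_n(Q,A,b)$ inside $\overline{\bd}_0$, and then invoke Corollary \ref{1c2} (every vertex is a solution) together with Theorem \ref{t6} (every vertex is a stable solution). The geometric picture is that $\bd$ is the complement of $\bigcup_{f\in\bk}\Pi_f$, each $\Pi_f$ is a closed convex unbounded set containing $\ce_n(Q,A,b)$ (Lemma \ref{l}), and $\ce_n(Q,A,b)=\bigcap_{f\in\bk}\Pi_f$ (Theorem \ref{t4}); so $\overline{\bd}_0$ meets $\ce_n(Q,A,b)$ only along its boundary, and the vertices of $\ce_n(Q,A,b)$ are exactly the points where "enough" of the $\Pi_f$'s touch. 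First I would fix the point $x_0\in\bd_0$ supplied by the hypothesis and consider, for a generic direction $h$, the ray (or line) $\{x_0+\l h\}$. Because $x_0\notin\Pi_f$ for every $f\in\bk$, we have $f(P(x_0))>0$ for all $f\in\bk$; and since $P$ is quadratic, $f(P(x_0+\l h))=\l^2 f(Q(h))+\l f(P'(x_0)h)+f(P(x_0))$, whose behaviour as $\l\to\pm\infty$ is governed by the sign of $f(Q(h))\ge 0$.

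The key step is to show $\overline{\bd}_0$ is bounded, or at least that it contains a vertex of $\ce_n(Q,A,b)$. I would argue by contradiction: if $\overline{\bd}_0$ were unbounded, take a sequence $y_k\in\bd_0$ with $\|y_k\|\to\infty$ and pass to a limit direction $h=\lim (y_k-x_0)/\|y_k-x_0\|$. For each $f\in\bk$, $y_k\notin\Pi_f$ gives $f(P(y_k))>0$, and dividing by $\|y_k-x_0\|^2$ and passing to the limit forces $f(Q(h))\ge 0$ for every $f\in\bk$. I then want to upgrade "$\ge 0$ for all $f\in\bk$" to a contradiction with the hypothesis that $\overline{\bd}_0$ contains no line through $x_0$: if along the half-line $\{x_0+\l h:\l\ge 0\}$ one stays in $\bd_0$ for all large $\l$, connectedness of $\bd_0$ and convexity-type arguments on each $\Pi_f$ (each $\Pi_f$ is convex, so its complement is "star-shaped away from bounded directions") should let me run the line $\{x_0+\l h:\l\in\br\}$ entirely inside $\overline{\bd}_0$, contradicting the hypothesis. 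Hence $\overline{\bd}_0$ is bounded. Being the closure of a connected component of an open set and bounded, $\overline{\bd}_0$ is a compact connected set whose boundary lies in $\bigcup_{f\in\bk}\partial\Pi_f$; I would then check that any exposed/extreme point $x_*$ of the convex hull $\operatorname{conv}\overline{\bd}_0$ that lies on $\partial\ce_n(Q,A,b)$ is in fact a vertex of $\ce_n(Q,A,b)$: at such a point at least $n$ linearly independent functionals from $\bk$ must satisfy $f_i(P(x_*))=0$ (if fewer did, a whole line through $x_*$ would remain in every relevant $\Pi_f$ and hence in $\overline{\bd}_0$, using formula \eqref{124} in the proof of Theorem \ref{t6}), so by Theorem \ref{t5} $x_*$ has boundary order $0$, i.e.\ is a vertex.

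Once a vertex $x_*\in\overline{\bd}_0$ is produced, Corollary \ref{1c2} gives $P(x_*)=0$ and Theorem \ref{t6} gives stability, completing the existence part. For the convergence of the Newton iteration $x_{k+1}=x_k-[P'(x_k)]^{-1}P(x_k)$ (stated in Theorem \ref{t8} of the summary), I would note first that $[P'(x_*)]^{-1}$ exists by the argument inside the proof of Theorem \ref{t6} (non-existence of the inverse would force $x_*$ to have order $\ge 1$), hence by continuity $[P'(x_0)]^{-1}$ exists on a neighbourhood; then standard Newton–Kantorovich estimates on the region $\overline{\bd}_0$, using that $P$ is quadratic (so $P''$ is constant, $\|P''\|=2\|Q\|$) and that $\overline{\bd}_0$ contains no line through $x_0$ (which controls $\|[P'(x_0)]^{-1}\|$ away from degeneracy), give the quadratic convergence to $x_*$. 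The main obstacle I anticipate is the boundedness of $\overline{\bd}_0$: translating "$\overline{\bd}_0$ contains no line through $x_0$" into a usable statement about the directions $h$ with $f(Q(h))\ge 0$ for all $f\in\bk$, and showing these recession directions cannot escape to infinity while staying in a single connected component of $\bd$, is the delicate combinatorial-convexity heart of the argument.
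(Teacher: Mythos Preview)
Your approach has a genuine gap: the claim that $\overline{\bd}_0$ is bounded is false. Already in Example~\ref{i} (the system $\xi_1^2=1,\ \xi_2^2=1$), the component $\bd_1=\{\xi_1>1,\ \xi_2>1\}$ satisfies the hypothesis of the theorem at every one of its points---no straight line through $x_0\in\bd_1$ lies in $\overline{\bd}_1$---yet $\overline{\bd}_1$ is an unbounded quadrant. The same phenomenon occurs in Example~\ref{ii}. Your contradiction argument breaks down precisely at the step you flagged as ``delicate'': from $f(Q(h))\ge 0$ for all $f\in\bk$ you cannot conclude that the full line $\{x_0+\l h:\l\in\br\}$ stays in $\overline{\bd}_0$, because the quadratic $\l\mapsto f(P(x_0+\l h))$ may well dip below zero for some range of $\l$ and some $f\in\bk$, cutting the line off from $\bd_0$. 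Unboundedness of $\overline{\bd}_0$ in a direction $h$ is perfectly compatible with the absence of a full line through $x_0$ in that direction.

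The fallback plan---locating an extreme point of $\operatorname{conv}\overline{\bd}_0$ lying on $\partial\ce_n(Q,A,b)$---is also not justified: $\bd_0$ and $\ce_n(Q,A,b)$ are disjoint, and you give no mechanism forcing their closures to meet, let alone at a vertex of $\ce_n(Q,A,b)$. The paper's proof avoids these issues entirely. It does not try to bound $\overline{\bd}_0$ or to locate a vertex of $\ce_n(Q,A,b)$ a priori. Instead it first shows directly from the no-line hypothesis that $[P'(x_0)]^{-1}$ exists (your route to this via continuity from $x_*$ is circular, since $x_*$ has not yet been found), then builds a solid cone $C_0$ from the tangent half-spaces $H_f^+$ to the $\Pi_f$ through $x_0$, proves the much weaker boundedness statement that $(x_0-C_0)\cap\overline{\bd}_0$ is bounded, and finally shows the Newton--Kantorovich iterates form a sequence which is monotone decreasing with respect to $C_0$ and trapped in this bounded set, hence convergent; the limit inherits invertibility of $P'$ and is therefore stable.
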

\begin{proof} We prove this theorem in a few steps.

    {\textsf{1 Step.}}

    Let us prove that the inverse $[P'(x_0)]^{-1}$ of the mapping $P'(x_0)$ exists. We assume the contrary i.e. the inverse $[P'(x_0)]^{-1}$ does not exist. We then have that $P'(x_0)h=0$ for some $0\ne h\in \br^n.$
    From which for any  $ f\in {\overline K}'_Q$ and $\l\in \br$, one
    can find that
    \begin{equation*}
        f(P(x_0+\l h))=f(P(x_0))+\l
        f(P'(x_0)h)+\l^2f(Q(h))=f(P(x_0))+\l^2 f(Q(h)).
    \end{equation*}
Since $x_0\in \bd_0$ and $ f\in {\overline K}'_Q,$  we obtain that $f(P(x_0))>0$, $f(Q(h))\geq 0.$
    Therefore, $f(P(x_0+\l h))>0$ for any $\l\in\br.$ This means that $\bd_0$ contains a straight line
    $\{x_0+\l h\}_{\l\in \br}$ which contradicts to the condition of theorem. Consequently, there exists $[P'(x_0)]^{-1}.$

    {\textsf{2 Step.}}

    We setup $x_\l=x_0-\l [P'(x_0)]^{-1}(P(x_0))$ where $0\leq \l\leq 1.$ We are going to show that $x_\l\in \bd_0$ for $0\leq \l<1$ and $x_1\in {\overline \bd}_0.$ Indeed, by expanding $f(P(x_\l))$ in the Taylor series at $x_0,$ we obtain that
    \begin{eqnarray*}
        f(P(x_\l)&=&f(P(x_0))-\l f(P'(x_0)[P'(x_0)]^{-1}(P(x_0)))+\l^2 f(Q(h_0))\\
        &=&(1-\l)f(P(x_0))+\l^2 f(Q(h_0)),
    \end{eqnarray*}
    where $h_0=[P'(x_0)]^{-1}(P(x_0))$ and $f\in {\overline K}'_Q.$
    It follows from $f(P(x_\l))>0,$ $0\leq \l<1$ that $x_\l\in \bd_0.$ Consequently, we get that $x_1\in {\overline \bd}_0.$

    {\textsf{3 Step.}}

    We draw all possible tangent hyperplanes to
    $\Pi_f=\{x: f(P(x))\leq 0\},$ where $f\in \bk,$ passing through the point $x_0$   such
    that the tangent point belongs to ${\overline \bd}_0$. Let $H^+_f$ be a closed semi-space defined by a tangent hyperplane to
    $\Pi_f,$ containing $\Pi_f.$ It is obvious that an intersection of all hyperplanes $H^+_f$ contains $\ce_n(Q,A,b)=\bigcap\limits_{f\bk}\Pi_f.$
    Since ${\rm{\textbf int}}\ce_n(Q,A,b)\ne \emptyset,$ the set $\bigcap\limits_{f\in \bk}H^+_f$ is
    a nonempty  closed solid convex set. We define a set $C_0=x_0-\bigcap\limits_{f\in \bk}H^+_f.$ We are going to show that $C_0$ is a cone. Indeed, $x_0-C_0=\bigcap\limits_{f\in \bk}H^+_f$ and $x_0+C_0\subset{\overline \bd}_0.$ Since ${\overline \bd}_0$ does not contain any straight line passing through the point $x_0,$ the set $C_0$ is a cone.

    {\textsf{4 Step.}}

    Let us now prove that $(x_0-C_0)\cap {\overline \bd}_0$ is a
    bounded set. Let $h\in C_0$ with $\|h\|=1.$ Since the straight line $x_0+\l h,$ $\l\in\br$ is contained in ${\overline
        \bd}_0,$ we then get that
    $$f(P(x_0+\l h))\geq
    0,$$ for all $\l\geq 0,$ $f\in \bk$ and there exists $f_0\in
    {\overline K}'_Q$ and $\l_0<0$ such that
    \begin{equation}\label{129}
        f_0(P(x_0+\l_0 h))<0.
    \end{equation}
     The inequality \eqref{129} means that $(x_0-C_0)\cap {\overline
        \bd}_0$ is bounded in the direction $-h$ from the point $x_0.$ Since $P, f_0$ are continuous mapping, the inequality \eqref{129} is satisfied in a sufficiently small neighborhood of $h,$ namely, it follows from $\|h-h'\|<\e, \|h'\|=1$ that
    $f_0(P(x_0+\l_0 h'))<0.$

    We know that $\{h: h\in C_0, \|h\|=1\}$ is compact set. Then
    there exist $h_1,\cdots,h_l\in C_0,$ $\|h_i\|=1,$
    functionals $f_1,\cdots,f_l\in K'$ and numbers $\l_1,\cdots,\l_l<0$ such that
    \begin{equation}\label{130}
        f_i(P(x_0+\l_i h_i))<0, i=\overline{1,l}.
    \end{equation}
    Moreover, for any $h\in C_0$ with $\|h\|=1$ one can choose $i$
    such that
    \begin{equation}\label{131}
        f_i(P(x_0+\l_i h))<0.
    \end{equation}
    Consequently $(x_0-C_0)\cap {\overline \bd}_0$ is bounded within the ball $\bb(\Theta,\max\limits_{1\leq i\leq l}|\l_i|)$
    in an any direction from $x_0.$

    {\textsf{5 Step.}}

    It is clear from the previous steps that  $x_1=x_0-[P'(x_0)]^{-1}P(x_0)\in
    x_0-C_0$ and the point $x_1$ satisfies the condition of Theorem. If we can construct a solid cone $C_1$ corresponding to the point $x_1$ by the similar way presented in the previous steps. We then have that $C_1\subset C_0.$ By continuing this process one can construct the Newton-Kantorovich iterations as follows
    \begin{equation}\label{132}
        x_{k+1}=x_k-[P'(x_k)]^{-1}P(x_k), \ \ k=0,1,\dots
    \end{equation}
    It is clear that this is a decreasing sequence along the solid cone $C_0$.
    Since $\{x_k\}\subset x_0-C_0,$ $k=0,1,\cdots$ then $x_k$ is bounded. It is known that in finite dimensional space every solid cone is regular, in other words, a monotone bounded sequence converges with respect to norm. Therefore  $\|x_k-x^*\|\to 0,$ as $k\to \infty$ for some $x^*.$ Finally, $x^*\in x_0-C_0,$ and
    $[P'(x^*)]^{-1}$ exists. Consequently, $x^*$ is a stable solution of equation $P(x)=0.$
\end{proof}

\begin{remark} Let $\Phi(x)=x-[P'(x)]^{-1}P(x).$ As we already showed that if $\Phi(x)$ is well defined in $\overline{\bd}$ then $\Phi(x)\in {\overline \bd}.$
\end{remark}

\subsection{Some examples}


We shall illustrate the Newton-Kantorovich method in several examples.
\begin{example}\label{i}
    Let us consider the following elliptic operator equation in $\br^2.$
    \begin{equation*}
        \left\{ \begin{array}{ll}
            \xi^2_1=1\\[2mm]
            \xi_2^2=1,\\
        \end{array}
        \right.
    \end{equation*}
    where $x=(\xi_1,\xi_2)\in\br^2.$ Then $\bd=\{x\in\mathbb{R}^2: |\xi_1|>1, \ |\xi_2|>1\}$ has four connected components $\bd_1=\{x\in\mathbb{R}^2: \xi_1>1,\ \xi_2>1\},$ $\bd_2=\{x\in\mathbb{R}^2: \xi_1<-1, \ \xi_2>1\},$ $\bd_3=\{x\in\mathbb{R}^2: \xi_1<-1, \ \xi_2<-1\},$ and $\bd_4=\{x\in\mathbb{R}^2: \xi_1>1, \ \xi_2<-1\}.$  Any initial point $x_0\in \bd$ satisfies the condition of Theorem \ref{t8}. Hence, we have that
    \begin{equation*}
        x^*=\left\{\begin{array}{llll}
            (1,1), &  x_0\in \bd_1 \\[2mm]
            (-1,1),&  x_0\in \bd_2 \\[2mm]
            (-1,-1),& x_0\in \bd_3 \\[2mm]
            (1,-1), & x_0\in \bd_4 \\
        \end{array}\right.
    \end{equation*}
Consequently, if $x_0\in\bd$ then the Newton-Kantorovich iteration
    converges to one of the stable solutions.
\end{example}

\begin{example}\label{ii}
    Let us consider the following equation in $\br^2.$
    \begin{equation*}
        \left\{ \begin{array}{ll}
            \xi^2_1=\xi_2\\[2mm]
            \xi_2^2=\xi_1\\
        \end{array}
        \right.
    \end{equation*}
    where $x=(\xi_1,\xi_2)\in\br^2.$ Then $\bd=\{x\in\mathbb{R}^2: \xi_1<\xi_2^2, \ \xi_2<\xi_1^2\}$ has two connected components $\bd_1=\{x\in\mathbb{R}_{+}^2:  \xi_1<\xi_2^2, \ \xi_2<\xi_1^2\}$ and $\bd_2=\bd\setminus \bd_1.$ Any initial point $x_0\in \bd_1$ satisfies the condition of Theorem \ref{t8} and the Newton-Kantorovich iteration converges to $x^{*}=(1,1).$

    Let $\ell$ be the common tangent line to parabolas
    $\Pi_1$ and $\Pi_2.$ It is obvious that
    $\ell=\{(\xi_1,\xi_2):\xi_1+\xi_2=-1\}$ and if
    $x_\circ=(\xi_1^\circ,\xi_2^\circ)\in \bd_2$ with $\xi_1^\circ+\xi_2^\circ>-1$ then
    the condition of Theorem \ref{t8} is satisfied. Therefore, the Newton-Kantorovich
    iteration starting from this point converges to the stable point
    $x^*=(0,0).$ For all the rest points $x\in\bd_2,$ the condition
    of Theorem \ref{t8} is not satisfied.
\end{example}

\begin{example}\label{iii}
    Let us consider the following equation in $\br^2.$
    \begin{equation*}
        \left\{ \begin{array}{ll}
            \xi^2_1=\xi_2\\[2mm]
            \xi_2^2=4\xi_2-3\\
        \end{array}
        \right.
    \end{equation*}
    where $x=(\xi_1,\xi_2)\in\br^2.$ Then $\bd=\{x\in\mathbb{R}^2: \xi_2<\xi_1^2, \  \xi_2^2>4\xi_2-3\}$ has three  connected components $\bd_1=\{x\in\mathbb{R}^2: \xi_2<\xi_1^2, \ \xi_1>\sqrt{3}, \ \xi_2>3\},$ $\bd_2=\{x\in\mathbb{R}^2: \xi_2<\xi_1^2, \ \xi_1<-\sqrt{3}, \ \xi_2>3\},$ and $\bd_3=\{x\in\mathbb{R}^2: \xi_2<\xi_1^2, \ \xi_2<1\}.$ It follows from Theorem \ref{t8} that the Newton-Kantorovich iteration starting from any point $x_0\in \bd_1($resp. $\bd_2)$ converges to a stable solution $(\sqrt{3}, 3)($resp. $(-\sqrt{3}, 3)$$)$.

Let $\bd_3^{(1)}=\{x\in\mathbb{R}^2: \xi_2<\xi_1^2, \ \xi_1>0, \ 0<\xi_2<1\}$ and $\bd_3^{(2)}=\{x\in\mathbb{R}^2: \xi_2<\xi_1^2, \ \xi_1<0, \ 0<\xi_2<1\}.$    These subsets of the set $\bd_3$ have the following property: for any $x_0\in
    \bd_3^{(1)}($resp. $\bd_3^{(2)})$ there is no a straight line passing through
    $x_0$ and lying inside $\bd_3.$ Then for any $x_0\in \bd_3^{(1)}($resp. $\bd_3^{(2)})$
    the Newton-Kantorovich iteration converges to a stable solution $(1,1)($resp. $(-1,1))$. Simple calculations show that for any other points
    of $\bd_3$ the Newton-Kantorovich  iteration does not converge. In
    this example, the set ${\overline \bd}_1$ as well as ${\overline \bd}_2$ contains
    one stable solution and the set ${\overline \bd}_3$ contains two
    stable solutions.
\end{example}

\begin{example}\label{iv}
    Let us consider the following equation in $\br^2.$
    \begin{equation*}
        \left\{ \begin{array}{ll}
            \xi^2_1=\xi_2\\[2mm]
            \xi_2^2=1\\
        \end{array}
        \right.
    \end{equation*}
    where $x=(\xi_1,\xi_2)\in\br^2.$ Then $\bd=\{x\in\mathbb{R}^2: \xi_2<\xi_1^2, \  |\xi_2|>1\}$ has three  connected components $\bd_1=\{x\in\mathbb{R}^2: \xi_2<\xi_1^2, \ \xi_1>1, \  \xi_2>1\},$ $\bd_2=\{x\in\mathbb{R}^2: \xi_2<\xi_1^2, \ \xi_1<-1, \  \xi_2>1\},$ and  $\bd_2=\{x\in\mathbb{R}^2: \xi_2<-1\}.$ It follows from Theorem \ref{t8} that for any initial point taken from $\bd_1($resp. $\bd_2)$,  the Newton-Kantorovich iteration
    converges to the stable solution $(1,1)($resp. $(-1,1))$. For any initial point taken from $\bd_3$, the Newton-Kantorovich iteration does not converge.
\end{example}


\section{The rank of elliptic quadratic operators}

In this section we are going to introduce the concept of \textit{a rank} of the elliptic quadratic operator. By means of this concept, we are going to describe the cone $K'_Q.$

Let $Q:\br^n\to\br^n$ be an elliptic quadratic operator. Recall
that  $K'_Q$ denotes the set of linear continuous functionals $f:\br^n\to\br$
such that $f(Q(x))$ is a positive defined quadratic form. In the
sequel, we study a closed cone ${\overline K}'_Q.$   Let $\bk$
be the set of extremal rays of ${\overline K}'_Q.$ Due to
Krein-Milman theorem, we have that $\overline{co\bk}={\overline K}'_Q$ where $co \bk$ is the convex
hull of $\bk.$
Let ${\rm rg}_fQ$ stand for the rank of the quadratic
form $f(Q(x))$. It is clear that the rank ${\rm rg}_fQ$ of the quadratic form  $f(Q(x))$ is equal to the rank of the associated symmetric matrix $A$. Due to the construction of the set $ K'_Q$, one has that ${\rm rg}_fQ=n$ whenever $f\in K'_Q$ and ${\rm rg}_fQ<n$ whenever $f\in
\partial K'_Q$.

\begin{definition} The
    number
    $${\rm rg} Q=\max_{f\in \bk}{\rm rg}_fQ$$ is called a {\it rank} of the elliptic quadratic operator $Q:\br^n\to\br^n$.
\end{definition}

It is clear  that $1\leq {\rm rg} Q\leq n-1$ for any elliptic quadratic operator.

If $B$ is a linear invertible operator then quadratic operators
$Q(x)$ and $Q_{B}(x)=Q(Bx)$ have the same cone $K'$ since the
quadratic forms $f(Q(x))$ and $f(Q(Bx))$ both are simultaneously either positive defined or not. Let us consider  quadratic operators $Q_1$ and $Q_2$ such that $Q_2(x)=A Q_1(x)$ where $A$ is a linear invertible operator. Let
$K'_i,$ be the corresponding cone to $Q_i,$ $i=1,2.$ Then it follows from
$f(AQ_1(x))=(A^tf)(Q_1(x))$ (where $A^t$ is a transpose operator) that
$$
K_2'=(A^t)^{-1}K'_1.
$$

Let ${\rm \textbf{Isom}}\br^n$ be the set of all
isometries of $\br^n$. If $A,B\in {\rm \textbf{Isom}}\br^n$  and $Q\in\mathfrak{EQ_n}$ is
an elliptic quadratic operator then $AQ(B(\cdot))$ also is an elliptic
quadratic operator with
$${\rm rg}(AQ(B))={\rm rg Q}.$$

\begin{definition} An elliptic quadratic operator
    $Q:\br^n\to\br^n$ is called {\it homogeneous}
    of rank $r$ if one has that ${\rm rg}_fQ=r$ for any $f\in \bk$.
\end{definition}

\begin{example} An elliptic quadratic operator $Q(x)=(x^2_1+x^2_2,
    x^2_2+x^ 2_3, 2x_1x_3)$ is homogeneous of the rank 2 in $\br^3$. Indeed, if
    $f=(\l_1,\l_2,\l_3)$ then $K'_Q=\{f: \l_1>0, \l_2>0,
    \l^2_3<\l_1\cdot\l_2\}.$ Consequently, we get that $\bk=\{f: \l_1\geq 0, \l_2\geq
    0, \l_3=\pm\sqrt{\l_1\cdot\l_2}\}$ and
    \begin{eqnarray*}
        f(Q(x)) &= &\l_1(x^2_1+x^2_2) + \l_2(x^2_2+x^2_3)\pm 2\sqrt{\l_1\l_2}x_1x_2, \nonumber \\
        &= & (\sqrt{\l_1}x_1\pm \sqrt{\l_2}x_3)^2+ (\sqrt{\l_1}x_2\pm
        \sqrt{\l_2}x_3)^2,
    \end{eqnarray*}
for any $f\in \bk,$ i.e. ${\rm rg}_fQ=2$ for any $f\in \bk.$ This means this elliptic quadratic operator is homogeneous of the rank 2.
\end{example}

\begin{example} An elliptic quadratic operator $Q(x)=(x^2_1+x^2_2,
    x^2_2+x^2_3, 2x_1x_2)$ is not homogeneous of the rank 2 in $\br^3$.
    Indeed, one has that
    \begin{eqnarray*}
        &&K'_Q=\{f: \l_1>0, \l_2>0, \l^2_3<\l_1(\l_1+\l_2)\}, \\
        && \bk=\bigg\{f: f\ne 0, \l_1\geq 0, \l_2\geq 0, \l_3=\pm
        \sqrt{\l_1(\l_1+\l_2)}\bigg\},
    \end{eqnarray*}
    and $f(Q(x))=(x_1^2+x_2^2)^2$ for $f=(1,0,1)\in \bk$. This
    means that ${\rm rg}_fQ=1.$
\end{example}


\subsection{The rank of $k$}

Now we are going to describe the cone ${\overline
    K}'_Q$ for a homogeneous elliptic quadratic operators of rank $k$.

\begin{theorem}\label{t9} If ${\rm rg}Q=1$ then there are $A, B\in {\textbf{\textup{Isom}}}\br^n$ such that $AQ(Bx)=(x_1^2,x^2_2,\dots,x_n^2)$.
    Moreover, ${\overline K}'_Q$ is a miniedral cone i.e. $\bk$ contains
    exactly $n$ extremal rays.
\end{theorem}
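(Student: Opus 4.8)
The plan is to work with the decomposition ${\overline K}'_Q = \overline{co\,\bk}$ and exploit the hypothesis that ${\rm rg}_fQ = 1$ for \emph{every} $f\in\bk$. The key observation is that a positive semidefinite symmetric matrix of rank one is necessarily of the form $v v^t$ for some $v\in\br^n$; hence for each $f\in\bk$ the associated symmetric matrix $A_f$ (the matrix of the quadratic form $f(Q(x))$) can be written as $A_f = v_f v_f^t$. Since $Q$ is elliptic, $K'_Q\neq\emptyset$, so there is some $g\in K'_Q$ with $A_g$ positive definite; writing $g=\sum_{i=1}^m \alpha_i f_i$ with $\alpha_i>0$, $f_i\in\bk$ (Krein--Milman plus Carath\'eodory, exactly as in the proof of Theorem~\ref{t4}), we get $A_g=\sum_{i=1}^m \alpha_i v_{f_i} v_{f_i}^t$. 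For this sum of rank-one positive semidefinite matrices to be positive definite (rank $n$), the vectors $v_{f_1},\dots,v_{f_m}$ must span $\br^n$, so $m\geq n$; after discarding redundant terms we may take exactly $n$ of them, say $v_1,\dots,v_n$, linearly independent.

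First I would use these $n$ independent vectors to build the change of coordinates. Let $V$ be the matrix with columns $v_1,\dots,v_n$ and set $B = (V^t)^{-1}$ (or an appropriate normalization thereof). For the quadratic operator $Q_B(x)=Q(Bx)$ one checks, using the rule $f(Q(Bx)) = (B^t A_f B\,x,x)$ recorded in the excerpt, that the functionals $f_1,\dots,f_n$ now correspond to the squared coordinate forms $x_1^2,\dots,x_n^2$. Composing on the left with the linear operator $A$ that sends the $i$-th coordinate functional to $f_i$ (which is invertible precisely because the $f_i$ are independent in the dual), one obtains $A\,Q_B(x) = (x_1^2,\dots,x_n^2)$, after absorbing scalars by a diagonal rescaling. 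One then needs a small remark: the statement asks for $A,B\in\textbf{\textup{Isom}}\,\br^n$, so strictly one should either argue via polar/QR-type factorization that the linear maps can be taken orthogonal up to the harmless diagonal scaling already present in the target form $(x_1^2,\dots,x_n^2)$, or observe that the invariance ${\rm rg}(AQ(B))={\rm rg}\,Q$ and the cone transformation laws $K'_2=(A^t)^{-1}K'_1$ allow replacing general invertible $A,B$ by isometries without changing the normal form.

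Once the normal form $Q\sim(x_1^2,\dots,x_n^2)$ is established, the minihedrality of ${\overline K}'_Q$ is immediate from the computation already done in the proof of Proposition~\ref{11ell2}: for $Q(x)=(x_1^2,\dots,x_n^2)$ the form $f(Q(x))=\sum_i\lambda_i x_i^2$ is positive definite iff all $\lambda_i>0$, so $K'_Q=\{\lambda:\lambda_i>0\ \forall i\}$ and ${\overline K}'_Q=cone\{e_1,\dots,e_n\}$, which is minihedral with exactly $n$ extremal rays; and this property is preserved under the invertible (indeed isometric) transformations above, since $K'_2=(A^t)^{-1}K'_1$ maps minihedral cones to minihedral cones.

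The main obstacle I expect is the passage from ``rank-one PSD matrices spanning the space'' to the clean simultaneous diagonalization: a priori the $v_f$'s for $f$ ranging over all of $\bk$ could be abundant and one must check that choosing $n$ independent ones is \emph{consistent}, i.e. that the rank-one structure of \emph{every} extremal functional forces the whole family $\{v_f : f\in\bk\}$ to lie along the $n$ fixed directions $v_1,\dots,v_n$ (up to scalar) — equivalently, that after the change of coordinates \emph{every} $f\in\bk$ really does become a multiple of some $x_i^2$, not merely the chosen $n$. This is where one must use that ${\overline K}'_Q$ is exactly the conical hull of $\bk$ together with the fact that each $A_f=v_fv_f^t$ must be a nonnegative combination consistent with the face structure; the argument is that an extremal ray of ${\overline K}'_Q$ whose matrix were $vv^t$ with $v$ outside $\mathrm{span}\{e_i\}$-directions would violate extremality once we know ${\overline K}'_Q\supseteq cone\{f_1,\dots,f_n\}$ has full dimension and the $f_i$ exhaust the boundary structure. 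Handling this carefully — rather than the routine linear algebra of the change of variables — is the crux of the proof.
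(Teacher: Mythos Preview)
Your approach is essentially the same as the paper's: pick $n$ linearly independent extremal rays $f_1,\dots,f_n\in\bk$, use ${\rm rg}_{f_i}Q=1$ to write $f_i(Q(x))=(\varphi_i(x))^2$, define $A$ from the $f_i$'s and $B$ from the $\varphi_i$'s, and read off the normal form. The paper simply invokes the fact that a solid cone in $\br^n$ has at least $n$ linearly independent extremal rays, whereas you extract them via Carath\'eodory applied to a positive-definite element; both routes land in the same place.

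Your ``main obstacle'' is a phantom. Once $AQ(Bx)=(x_1^2,\dots,x_n^2)$ is established from the \emph{chosen} $n$ extremal rays, the cone $K'_{AQB}$ is computed directly: $\sum_i\lambda_i x_i^2$ is positive definite iff every $\lambda_i>0$, so $\overline{K}'_{AQB}$ is the closed positive orthant, which has \emph{exactly} $n$ extremal rays. Since $\overline{K}'_Q=(A^t)\,\overline{K}'_{AQB}$, the original cone also has exactly $n$ extremal rays. There is no need to track the remaining elements of $\bk$ individually; the normal form computes the \emph{entire} cone, and the conclusion that $\bk$ has no extra rays comes for free. The paper proceeds exactly this way.

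Your worry about $\textbf{Isom}\,\br^n$ is legitimate but orthogonal to the mathematics: the paper defines $\textbf{Isom}\,\br^n$ as the isometries, yet its own proof only produces invertible $A,B$ (it writes ``the linear independence of $f_1,\dots,f_n$ implies $A\in\textbf{Isom}\,\br^n$'', which of course only gives invertibility). In context the paper appears to use $\textbf{Isom}$ loosely for linear isomorphisms; your proposed fix via rescaling/absorption into the diagonal target is adequate if one insists on genuine isometries.
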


\begin{proof} Since ${\overline K}'_Q$ is a solid cone, it has at least
    $n$ extremal rays. Let $f_1,\cdots,f_n$ be these extremal rays.
    Without loss of generality, we may assume that $f_1,\cdots, f_n$ are
    linearly independent. We define a linear operator $A:\br^n\to\br^n$
    as follows
    $$Ax:=(f_1(x),f_2(x),\cdots,f_n(x)).$$
The linearly independence of $f_1,\cdots,f_n$ implies that  $A\in
    {\rm \textbf{Isom}}\br^n.$ Then
    $$AQ(x)=(f_1(Q(x)),f_2(Q(x)),\cdots,f_n(Q(x))).$$
Since ${\rm rg}Q=1$ and $f_1,\cdots,f_n\in\bk,$ one gets that
    $${\rm rg}_{f_1}Q=\cdots={\rm rg}_{f_n}Q=1,$$
    i.e.
    $$
    f_1(Q(x))=(\varphi_1(x))^2 ,\cdots, f_n(Q(x))=(\varphi_n(x))^2,$$
    where $\varphi_1,\cdots,\varphi_n$ are some linear functionals. Hence,
    $$AQ(x)=((\varphi_1(x))^2,\cdots,(\varphi_n(x))^2).$$
Since $Q$ is the elliptic quadratic operator, the linear functionals
    $\varphi_1(x),\dots,\varphi_n(x)$ are linearly independent. If $B:\br^n\to\br^n$ is a linear operator which maps
    $(\varphi_1(x),\cdots,\varphi_n(x))$  to
    $(x_1,\cdots,x_n).$ Then
    \begin{eqnarray*}
        AQ(Bx)=(x^2_1,\dots,x_n^2),\quad {\overline K}'_1=\{f: \l_1\geq 0,\dots,\l_n\geq 0\},
    \end{eqnarray*}
    and $\bk_1$ exactly consists of $n$ extremal rays. Consequently,
    $\bk_1=(A')^{-1}\bk$ also contains exactly $n$ extremal rays. This completes the proof.
\end{proof}

\begin{theorem}\label{t10} If ${\rm rg}Q\geq 2$ then $\bk$ is an infinite set.
\end{theorem}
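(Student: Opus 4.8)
The plan is to prove the contrapositive: if $\bk$ is finite, then ${\rm rg}_fQ=1$ for every $f\in\bk$, so that ${\rm rg}\,Q=1$, contradicting ${\rm rg}\,Q\ge 2$. I keep the normalizations of Section~4, so that ${\overline K}'_Q$ is pointed. Sending $f=(\l_1,\dots,\l_n)$ to $M_f:=\l_1A_1+\dots+\l_nA_n\in H_{n,n}(Q)$ is then a linear isomorphism from $\br^n$ onto $H_{n,n}(Q)$ (a nonzero $f$ with $M_f=0$ would satisfy $f(Q(x))\equiv 0$, hence $f\in{\overline K}'_Q\cap(-{\overline K}'_Q)=\{0\}$), and under this identification ${\overline K}'_Q=\{f:\ M_f\succeq 0\}$ is the preimage of the cone of positive semidefinite matrices. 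If $\bk$ is finite, then by Krein--Milman ${\overline K}'_Q=\mathrm{cone}(\bk)$ is finitely generated, hence polyhedral; consequently every tangent cone of ${\overline K}'_Q$ is polyhedral.

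The core of the proof is a local computation at a fixed extremal ray $f_0\in\bk$. Let $W=\ker M_{f_0}$, $k=\dim W$, $r={\rm rg}_{f_0}Q=n-k$; the goal is $k=n-1$. First I would show that the tangent cone $T_{f_0}$ to ${\overline K}'_Q$ at $f_0$ equals $\pi^{-1}\bigl(\mathrm{PSD}(W)\bigr)$, where $\pi\colon H_{n,n}(Q)\to\mathrm{Sym}(W)$ is the compression $M\mapsto M|_{W}$: writing $M_{f_0}$ in block form along $W^{\perp}\oplus W$ (positive definite upper block, zero $W$--block) and applying a Schur-complement expansion to $M_{f_0}+\e M_g\succeq 0$, one gets $\{g:\ M_g|_W\succ 0\}\subseteq T_{f_0}\subseteq\{g:\ M_g|_W\succeq 0\}$, and the two sides have the same closure because $M_f|_W\succ 0$ for every $f\in K'_Q$. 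Since $\mathrm{PSD}(W)$ is pointed, the lineality space of $T_{f_0}$ equals $\ker\pi$; since $f_0$ is an extremal ray of the pointed polyhedral cone ${\overline K}'_Q$, the lineality space of $T_{f_0}$ equals $\br f_0$. Hence $\ker\pi=\br f_0$, so the image $V:=\pi(H_{n,n}(Q))\subseteq\mathrm{Sym}(W)$ has dimension $n-1$, and $V\cap\mathrm{PSD}(W)=\pi(T_{f_0})$ is a polyhedral cone with nonempty interior in $V$ (the image of the solid polyhedral cone $T_{f_0}$ under the surjection $\pi$).

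It now suffices to prove the lemma: \emph{if $V$ is a linear subspace of $\mathrm{Sym}(k)$ such that $V\cap\mathrm{PSD}(k)$ is polyhedral with nonempty interior in $V$, then $\dim V\le k$.} Granting it, $n-1=\dim V\le k=n-r$ forces $r\le 1$; and $r=0$ is impossible ($M_{f_0}=0$ would put $f_0\in{\overline K}'_Q\cap(-{\overline K}'_Q)$), so $r=1$, as wanted. I would prove the lemma by induction on $k$, with $k=1$ trivial. For the step, suppose $\dim V\ge k+1$. The solid pointed polyhedral cone $V\cap\mathrm{PSD}(k)$ has an extremal ray $\s$, which cannot be positive definite (an interior point of $\mathrm{PSD}(k)$ is not extremal), so $W'=\ker\s$ has dimension $\le k-1$. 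Repeating the tangent-cone/lineality argument for the polyhedral cone $V\cap\mathrm{PSD}(k)$ at $\s$ shows that $V':=\{M|_{W'}:\ M\in V\}$ has dimension $\dim V-1\ge k$ while $V'\cap\mathrm{PSD}(W')$ is polyhedral with nonempty interior in $V'$; but $\dim W'\le k-1<k$, so the induction hypothesis gives $\dim V'\le\dim W'\le k-1$, contradicting $\dim V'\ge k$.

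I expect the lemma to be the real obstacle, and its content is geometric: for $k\ge 2$ the cone $\mathrm{PSD}(k)$ is round (it has no facets), so a solid polyhedral slice of it forces the slicing subspace to be small, and the induction funnels everything down to the strict convexity of $\mathrm{PSD}(2)$. The other point requiring care is the exact identification $T_{f_0}=\pi^{-1}(\mathrm{PSD}(W))$: the Schur-complement computation must be carried out when the $W$--block of $M_g$ is merely semidefinite, and one must verify that $\overline{\{g:\ M_g|_W\succ 0\}}=\{g:\ M_g|_W\succeq 0\}$, which is where the ellipticity of $Q$ enters.
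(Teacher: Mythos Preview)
Your argument is correct and genuinely different from the paper's. The paper proceeds by explicit coordinates: assuming $\bk$ finite and ${\rm rg}\,Q=2$, it picks $n$ linearly independent extremal functionals $f_1,\dots,f_n$ whose facets span $\partial\overline{K}'_Q$, writes each $f_i(Q(x))=\varphi_i(x)^2+\psi_i(x)^2$, changes variables so that $AQ(Bx)=(x_1^2+\alpha_1(x)^2,\dots,x_n^2+\alpha_n(x)^2)$, and then uses that each ``leave-one-out'' functional $(1,\dots,1,0,1,\dots,1)$ lies outside $K'_Q$ to force $\alpha_i(x)=\lambda_i x_i$, reducing to rank $1$. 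Higher ranks are dismissed with a one-line appeal to induction.

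Your route replaces this coordinate computation by the face geometry of the PSD cone: the Schur-complement identification $T_{f_0}\overline{K}'_Q=\pi^{-1}(\mathrm{PSD}(W))$, the lineality-equals-span-of-minimal-face fact for polyhedral cones, and the clean dimension lemma ``a linear slice of $\mathrm{PSD}(k)$ that is solid and polyhedral has dimension at most $k$''. This handles all ranks at once and makes transparent \emph{why} the result holds (PSD cones have no polyhedral faces of high dimension), whereas the paper's argument is elementary but leaves the general-rank induction unwritten and tacitly assumes every chosen $f_i$ has rank exactly $2$. Two small points worth tightening in your write-up: (i) the standing assumption $\overline{K}'_Q\cap(-\overline{K}'_Q)=\{0\}$ appears in \S5.2, not \S4; (ii) in the inductive lemma you actually use that $V$ meets $\mathrm{int}\,\mathrm{PSD}(k)$ (so the closure step goes through), not merely that $V\cap\mathrm{PSD}(k)$ is solid in $V$---this holds at the start because $M_f|_W\succ 0$ for $f\in K'_Q$, and it propagates because restrictions of positive definite matrices stay positive definite; state it as part of the inductive hypothesis.
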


\begin{proof} Let ${\rm rg}Q= 2$ and $\bk$ be a finite set. Then ${\overline K}'_Q$
    is a polyhedral solid cone. We choose $f_1,\cdots,f_n\in \bk$ such
    that they are linearly independent and a convex hull of any $(n-1)$ of
    them does not intersect with $K'_Q.$ Since ${\rm
        rg}_{f_i}Q=2,$ we have that
    $$f_i(Q(x))=(\varphi_i(x))^2+(\psi_i(x))^2,\quad i=\overline{1,n},$$  where
    $\varphi_i $ and $\psi_i$ are some linear functionals. Without loss of generality, one can assume
    that $\varphi_1,\cdots,\varphi_n$ are linearly independent otherwise we may alternate some functionals
    $\varphi_i(x)$ with $\psi_i(x)$. We define linear operator $A,B$ as follows
    $$Ax:=(f_1(x),\cdots,f_n(x),\quad Bx:=(\varphi_1(x),\cdots,\varphi_n(x)).$$  So, we have that
    $$AQ(Bx)=(x^2_1+(\a_1(x))^2,\cdots,x_n^2+(\a_n(x))^2),$$
    where $\a_1(x),\cdots,\a_n(x)$ are some linear functionals.

By setting $f=(0,1,1,\dots,1),$ one can get
    $$f(AQ(Bx))=x^2_2+x^2_3+\cdots+x_n^2+(\a_2(x))^2+(\a_3(x))^2+\cdots+(\a_n(x))^2.$$
    Since $f\in co(f_2,\dots,f_n),$ the quadratic form $f(AQ(Bx))$
    can not be positive defined. Therefore, we obtain that ${\rm Ker} f(AQ(B))\ne
    \{0\}.$ Thus, the linear functionals $\a_2(x),\dots,\a_n(x)$ do not
    depend on $x_1.$ Analogously, if we let $f=(1,0,1,\dots, 1)$ then one can show that the
    linear functionals $\a_1(x),\a_3(x),\cdots,\a_n(x)$ do not depend on
    $x_2$.

    We continue this process. If we let $f=(1,\cdots, 1, \underbrace{0}_i, 1,\cdots, 1)$ then one can show that the linear functionals $$\a_1(x),\cdots,\a_{i-1}(x), \a_{i+1}(x),\cdots,\a_n(x)$$
    do not depend on $x_i.$ It follows from these arguments that
    $$\a_1(x)=\l_1x_1, \a_2(x)=\l_2x_2,\cdots, \a_n(x)=\l_nx_n.$$
    Hence
    \begin{equation*}
        AQ(Bx)=((1+\l_1^2)x_1^2, (1+\l_2^2)x_2^2,\dots,(1+\l_n^2)x_n^2),
    \end{equation*}
    and ${\rm rg}AQ(B)=1$ which contradicts to ${\rm rg}Q=2.$ By means of the mathematical induction methods one can prove the assertion of the theorem in the case ${\rm rg}Q= k>2.$ This completes the proof.
\end{proof}

\begin{corollary}
One has that ${\rm rg}Q=1$ if and only if ${\overline K}'_Q$ is a miniedral cone.
\end{corollary}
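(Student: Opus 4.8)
The plan is to obtain this corollary as a direct consequence of Theorems \ref{t9} and \ref{t10}, combined with the elementary description of minihedral cones recalled in Section~2: a minihedral cone in $\br^n$ is the conical hull of $n$ linearly independent vectors, hence it has \emph{exactly} $n$ extremal rays, and in particular only finitely many of them.

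For the forward implication, suppose ${\rm rg}Q=1$. Then we are exactly in the hypothesis of Theorem \ref{t9}, which already contains the desired conclusion: there exist $A,B\in{\textbf{\textup{Isom}}}\br^n$ with $AQ(Bx)=(x_1^2,\dots,x_n^2)$, the associated closed cone is $cone\{f_1,\dots,f_n\}$ with $f_i=(\delta_{1i},\dots,\delta_{ni})$, and its set of extremal rays consists of exactly these $n$ rays. Transporting back by $(A')^{-1}$, the set $\bk$ of extremal rays of ${\overline K}'_Q$ also consists of exactly $n$ rays, so ${\overline K}'_Q$ is minihedral. Nothing further is required here.

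For the converse, suppose ${\overline K}'_Q$ is minihedral. Then $\bk$ is a finite set (it has exactly $n$ elements). If we had ${\rm rg}Q\geq 2$, Theorem \ref{t10} would force $\bk$ to be infinite, a contradiction. Since every elliptic quadratic operator satisfies the a priori bound $1\leq {\rm rg}Q\leq n-1$ recorded above, the only remaining possibility is ${\rm rg}Q=1$, which completes the proof.

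I do not expect any genuine obstacle: the entire content has already been invested in Theorems \ref{t9} and \ref{t10}. The one point worth a remark is the legitimacy of the lower bound ${\rm rg}Q\geq 1$, i.e.\ that ${\rm rg}_fQ\geq 1$ for every $f\in\bk$ — equivalently that no nonzero functional in ${\overline K}'_Q$ annihilates $Q(\cdot)$ — which is exactly the standing hypothesis ${\overline K}'_Q\cap(-{\overline K}'_Q)=\{0\}$ from \eqref{190}; this is what guarantees that "${\rm rg}Q\ne 1$" does mean "${\rm rg}Q\geq 2$", so that Theorem \ref{t10} is applicable in the converse direction.
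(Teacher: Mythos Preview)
Your proof is correct and follows exactly the approach the paper intends: the corollary is stated immediately after Theorems \ref{t9} and \ref{t10} with no separate proof, precisely because the forward direction is contained in Theorem \ref{t9} and the converse is the contrapositive of Theorem \ref{t10} together with the a priori bound $1\leq{\rm rg}Q\leq n-1$. Your closing remark on why ${\rm rg}Q\geq 1$ holds (via the standing hypothesis \eqref{190}) is a welcome clarification that the paper leaves implicit.
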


\begin{theorem}\label{t11} If ${\rm rg}Q=n-1$
    then $\bk=\partial {\overline K}'_Q$.
\end{theorem}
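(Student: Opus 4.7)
The inclusion $\bk\subseteq\partial\overline{K}'_Q$ is immediate: $K'_Q$ is the (open, nonempty) interior of $\overline{K}'_Q$, so any extremal ray of the cone necessarily lies on its boundary. The nontrivial content is the reverse inclusion, which I would prove by contradiction, exploiting homogeneity (implicit in the rank hypothesis, as in the neighbouring theorems) to build a small perturbation that disproves extremality.

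Assume $f_0\in\partial\overline{K}'_Q$ with $f_0\notin\bk$. By Krein--Milman together with Carath\'eodory applied to an affine slice of $\overline{K}'_Q$, write $f_0=\sum_{i=1}^m\alpha_i h_i$ with $h_i\in\bk$ and $\alpha_i>0$; non-extremality of $f_0$ forces at least two of the rays $\br_{+}h_i$ to be distinct, so after relabelling $h_1$ and $h_2$ are linearly independent. Since $f_0\in\partial\overline{K}'_Q$, the positive semidefinite quadratic form $f_0(Q(\cdot))$ has a nontrivial kernel; pick $v\ne 0$ in it. The identity $0=f_0(Q(v))=\sum_i\alpha_i h_i(Q(v))$ with each summand nonnegative and each $\alpha_i>0$ forces $h_i(Q(v))=0$ for every $i$, i.e.\ $v\in\ker h_i(Q)$. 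Here the rank-$(n-1)$ hypothesis becomes decisive: each $\ker h_i(Q)$ is exactly one-dimensional, hence coincides with the line $\br v$.

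For a positive semidefinite quadratic form the kernel equals the radical of its associated symmetric bilinear form, so $h_i(B(v,y))=0$ for every $y\in\br^n$, where $B$ is the symmetric bilinear operator with $Q(x)=B(x,x)$. Fix any linear complement $E$ of $\br v$ and decompose $x=\l v+y$ with $y\in E$; the expansion $h_i(Q(\l v+y))=\l^2 h_i(Q(v))+2\l h_i(B(v,y))+h_i(Q(y))$ kills the first two terms and yields $h_i(Q(x))=h_i(Q(y))$ for $i=1,2$. On $E$ the form $h_1(Q(\cdot))$ is positive definite, so $h_1(Q(y))\ge c\|y\|^2$ on $E$ for some $c>0$, whereas $|h_2(Q(y))|\le C\|y\|^2$ on $E$ for some $C>0$. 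For any $0<\varepsilon<c/C$ the combinations $h_1(Q(y))\pm\varepsilon h_2(Q(y))$ are nonnegative on $E$, and by the identity above they are nonnegative on all of $\br^n$. Hence $h_1\pm\varepsilon h_2\in\overline{K}'_Q$, and $h_1=\tfrac{1}{2}\bigl[(h_1+\varepsilon h_2)+(h_1-\varepsilon h_2)\bigr]$ writes $h_1$ as the midpoint of two elements of $\overline{K}'_Q$ not collinear with $h_1$, contradicting $h_1\in\bk$.

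The main obstacle, and the only place where the rank-$(n-1)$ hypothesis is truly essential, is the rigidity statement forcing each $\ker h_i(Q)$ to coincide with the same one-dimensional subspace $\br v$. Without this, one would only know $v\in\ker h_i(Q)$; a larger kernel would leave $E\cap\ker h_1(Q)\ne\{0\}$, destroying the lower bound $h_1(Q(y))\ge c\|y\|^2$ on $E$ and breaking the perturbation. Once the rigidity is in hand, the symmetric perturbation $h_1\pm\varepsilon h_2$ is a routine device.
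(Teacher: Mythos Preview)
Your proof is correct and rests on the same core observation as the paper's: once a non-extremal boundary element is decomposed into extremal rays $h_i\in\bk$, the homogeneity hypothesis forces all the $h_i(Q(\cdot))$ to share the same one-dimensional kernel $\br v$, after which a small perturbation of one $h_i$ by another stays positive semidefinite and destroys extremality. The paper organizes this slightly differently: it first records the intermediate fact that \emph{every} $f\in\partial\overline{K}'_Q$ has ${\rm rg}_f Q=n-1$ (your kernel argument in disguise), then takes two extremal rays $f_1,f_2$ spanning a $2$-face on the boundary, simultaneously diagonalizes $f_1(Q)$ and $f_2(Q)$ on the common $(n-1)$-dimensional complement, and exhibits a suitable combination $f_1-f_2\in\partial\overline{K}'_Q$ of rank $\le n-2$, contradicting that intermediate fact. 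Your route bypasses the simultaneous diagonalization and the auxiliary rank statement, directly writing $h_1=\tfrac12\bigl[(h_1+\varepsilon h_2)+(h_1-\varepsilon h_2)\bigr]$; this is a bit more streamlined, while the paper's version has the side benefit of isolating ${\rm rg}_f Q=n-1$ on all of $\partial\overline{K}'_Q$ as a reusable lemma.
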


\begin{proof} We will prove that $\partial {\overline K}'_Q\subset\bk.$ Indeed, if $f\in \partial {\overline K}'_Q$ then it can be  represented as $f=\sum\limits^r_{i=1}f_i,$ where $f_i\in \bk$ and $r\leq
    n.$ Then we have that
    $$f(Q(x))=\sum^r_{i=1}f_i(Q(x)).$$

    Since $f(Q(x)), \ f_i(Q(x))$ are the nonnegative defined quadratic
    forms, we have that
    $${\rm rg}_fQ\geq \max_{1\leq i\leq r}{\rm rg}_{f_i}Q=n-1.$$
    On the other hand, since $f\in \partial {\overline K}'_Q,$ we get that ${\rm
        rg}_fQ<n.$ Hence
    \begin{equation}\label{133}
        {\rm rg}_fQ=n-1.
    \end{equation}
    for any $f\in \partial {\overline K}'_Q$. This means that $\partial {\overline K}'_Q\subset\bk.$

    We will prove that $\partial {\overline K}'_Q\supset\bk.$ We assume the contrary, i.e., one can choose  $f_1, f_2\in \bk$
    such that
    \begin{equation}\label{134}
        cone\{f_1, f_2\}\cap K'_Q=\emptyset,
    \end{equation}
    where $cone\{f_1, f_2\}$ is a conical hull of $f_1, f_2.$  A linear
    operator $B\in {\rm \textbf{Isom}}\br^n$ can be chosen such that
    \begin{eqnarray*}
        && f_1(Q(Bx))=x^2_1+x^2_2+\cdots+x^2_{n-1}, \\
        &&
        f_2(Q(Bx))=(\varphi_1(x))^2+(\varphi_2(x))^2+\cdots+(\varphi_{n-1}(x))^2.
    \end{eqnarray*}

    It follows from \eqref{134} that the linear functionals
    $\varphi_1(x),\dots,\varphi_{n-1}(x)$ do not depend on $x_n.$
    Since  $f_1(Q(Bx))$ and $f_2(Q(Bx))$ quadratic forms are positive defined with
    respect to $x_1,\dots,x_{n-1}$ they can
    be simultaneously represented in a diagonal form. Namely, there are $A, B_1\in
    {\textbf{Isom}}\br^n$ such that
    \begin{eqnarray*}
        &&f_1(AQ(B_1x))=x^2_1+x^2_2+\cdots.+x^2_{n-1},\\
        &&f_2(AQ(B_1x))=\l_1x_1^2+\l_2x_2^2+\cdots+\l_{n-1}x_{n-1}^2, \
        \l_i>0.
    \end{eqnarray*}
    Without loss of generality one can assume that $\l_i\leq 1$ and
    $\max_i\l_i=1.$ Then $(f_1-f_2)(AQ(B_1x))$ is a nonnegative
    defined quadratic form with rank $\leq n-2.$ Consequently, we obtain that
    $f_1-f_2\in
    \partial {\overline K}'_Q.$ Then, we have that
    $$ {\rm rg}_{f_1-f_2}(AQ(B_1x))\leq n-2$$
    and it contradicts to \eqref{133}. This completes the proof.
\end{proof}

Here is an example for a homogeneous elliptic quadratic operator of order $n-1.$
\begin{example} Let us consider the following elliptic quadratic operator
    \begin{equation*}
        Q(x)=(x^2_1+\cdots+x^2_n, 2x_1x_2, 2x_1x_3,\dots, 2x_1x_n).
    \end{equation*}
    If $f=(\l_1,\dots,\l_n)$ then
    \begin{equation*}
        f(Q(x))=\l_1x^2_1+\cdots+\l_1x^2_n+2\l_2x_1x_2+
        2\l_3x_1x_3+\cdots+ 2\l_nx_1x_n.
    \end{equation*}
    It is easy to see that
    \begin{eqnarray*}
        &&K'_Q=\{f: \l_1>0, \ \l^2_1>\l^2_2+\l_3^2+\cdots+\l_n^2\},
        \\
        &&{\overline K}'_Q=\{f: \l_1\geq 0,\ \l^2_1\geq
        \l^2_2+\l_3^2+\cdots+\l_n^2\}, \\
        && \bk=\{f\ne 0:
        \l_1=\sqrt{\l^2_2+\l_3^2+\cdots+\l_n^2}\}=\partial {\overline K}'.
    \end{eqnarray*}
    If $f\in \bk$ then $f=(t,\l_2,\l_3,\dots,\l_n),$
    where $t=\sqrt{\l_2^2+\l_3^2+\cdots+\l_n^2},$ and
    \begin{eqnarray*}
        f(Q(x))&=&t(x^2_1+\cdots+x^2_n)+2\l_2x_1x_2+ 2\l_3x_1x_3+\cdots+
        2\l_nx_1x_n\\
        &=&\sum_{i=2}^n\bigg(\frac{\l_i}{\sqrt{t}}+\sqrt{t}x_i\bigg)^2.
    \end{eqnarray*}
It follows from the last equality  that ${\rm rg}f(Q(x))=n-1$ for any
    $f\in \bk.$ Therefor, $Q$ is a homogeneous elliptic quadratic operator of the rank $n-1.$
\end{example}

In the two extreme cases, we may describe the cone ${\overline K}'_Q$ for a homogeneous elliptic quadratic operator.

\begin{enumerate}

\item[(a)] If ${\rm rg}Q=1,$ then $\bk$  consists of exactly $n$ extremal rays and ${\overline K}'_Q$ is a miniedral cone which can be represented as a direct sum of one dimensional cone. The elliptic operator $Q$ can be written by
$Q(x)=(x_1^2,\cdots, x^2_n).$ In this case, the corresponding symmetric bilinear
operator is $B(x,y)=(x_1y_1, x_2y_2,...,x_ny_n)$
and $(\br^n, +, \boxdot)$ is a unital associative, commutative algebra
with multiplication $x\boxdot y =B(x,y)$
and with an identity element $e=(1,1,\dots,1).$

\item[(b)] If ${\rm rg}Q=n-1,$ then $\bk=\partial {\overline K}'_Q,$ i.e. $\overline{K}'_Q$ is a rounded cone in which all boundary rays are extreme.
This means that $\overline{K}'_Q$ can not be represented as a direct sum of two cones. One can check that the following elliptic operator $Q(x)=(x_1^2+\cdots+x^2_n, 2x_1x_2, 2x_1x_3,\cdots, 2x_1x_n)$ has the rank $n-1$ and
the corresponding symmetric bilinear operator has a form
$B(x,y)=(x_1y_1+\cdots+x_ny_n, x_1y_2+x_2y_1, x_1y_3+x_3y_1,\cdots,x_1x_n+x_ny_1).$
If we define a multiplication in $\br^n$ by
$x\odot y =B(x,y)$ then we obtain a commutative algebra $(\br^n,+,\odot)$  with an identity element $e=(1,0,\dots,0)$. It turns out that the multiplicative operation has a property $(x^2\odot y)\odot x=x^2\odot (y\odot x).$ This means that $(\br^n,+,\odot)$ is a Jordan algebra.
\end{enumerate}

In general, it is a tedious work to describe the cone $\overline{K}'_Q$ of homogeneous elliptic quadratic operators with rank $2\leq{\rm rg}Q\leq n-2$.  It can be observed in some examples.

The homogeneous  elliptic quadratic operator having the rank $(n-1)$ has the convexity property.

\begin{definition}[\cite{Sheriff}]
A quadratic operator $Q:\br^n\to \br^m$  is called stably convex if its image $R_{n,m}(Q)$ is convex and it remains convex under sufficiently small
perturbations of $Q$.
\end{definition}

\begin{theorem}[\cite{Sheriff}]
An elliptic quadratic operator $Q:\br^n\to \br^n$ is stably convex if and only if it is homogeneous of the rank  $(n-1)$.
\end{theorem}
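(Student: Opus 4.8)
The plan is to convert the statement into one about the cone $C:=\overline{K}'_Q$ and its dual, and then to analyse that reformulation under small perturbations. Write $C^{*}:=\{y\in\br^{n}:\langle f,y\rangle\ge 0\ \forall f\in C\}$ and $R:=R_{n,n}(Q)=Q(\br^{n})$. First I would record three elementary facts: (1) since $Q$ is elliptic, $\mathrm{Ker}_{n,n}(Q)=\{0\}$ and $\|Q(x)\|\ge\alpha\|x\|^{2}$ for a suitable $f\in K'_Q$, so $Q$ is proper and $R$ is the closed cone over the compact set $W_{n,n}(Q)$; in particular $R$ is closed, invariant under positive scalings, and contains $0$. (2) A functional $f$ satisfies $f(Q(\cdot))\ge 0$ on $\br^{n}$ if and only if $f\in C$ (if $f(Q(\cdot))$ is positive semidefinite then $f+tf_{0}\in K'_Q$ for any $f_{0}\in K'_Q$ and $t>0$, hence $f\in C$; the converse is closedness of the semidefinite forms). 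Hence $(\mathrm{cone}\,R)^{*}=C$ and, by the bipolar theorem, $\overline{\mathrm{conv}}\,R=C^{*}$. (3) Combining these: $R$ is convex $\iff R$ is a closed convex cone $\iff R=C^{*}$. Since ellipticity is an open condition (Proposition \ref{p4}), it follows that $Q$ is stably convex exactly when $R_{n,n}(\tilde Q)=(\overline{K}'_{\tilde Q})^{*}$ for every $\tilde Q$ in some neighbourhood of $Q$.

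For the ``if'' direction I would assume $Q$ homogeneous of rank $n-1$. By Theorem \ref{t11} one has $\bk=\partial C$, and by \eqref{133} every $f\in\partial C$ has $\mathrm{rg}_{f}Q=n-1$; equivalently $C$ is strictly convex (its only proper faces are extremal rays) and $C^{*}$ is smooth. I must show $R=C^{*}$, uniformly under perturbation. The inclusion $R\subseteq C^{*}$ is automatic. For a boundary ray of $C^{*}$ spanned by $y$, the unique supporting functional $f$ at $y$ lies in $\partial C$; since $f(Q(\cdot))$ has a one-dimensional kernel $\br v_{f}$, the point $Q(v_{f})$ satisfies $\langle f,Q(v_{f})\rangle=0$, i.e. it lies on that very exposed ray, and a connectedness/dimension argument shows that as $f$ ranges over $\partial C$ the rays $\br_{\ge0}Q(v_{f})$ exhaust $\partial C^{*}$. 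For interior points I would argue as in the proof of Theorem \ref{t6}: if some interior ray of $C^{*}$ were missed by $Q$, one could produce a straight line lying in a tangent hyperplane to some $\Pi_{f}$ with $f\in\partial C$, which is impossible when $\mathrm{rg}_{f}Q=n-1$. Hence $R=C^{*}$. For stability one has to check that these surjectivity statements survive small perturbations of $Q$; this is where the rank-$(n-1)$ hypothesis is doing the heavy lifting, and where I would follow the argument of \cite{Sheriff}.

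For the ``only if'' direction, suppose $Q$ is elliptic but not homogeneous of rank $n-1$. Then some extremal ray $f_{0}\in\bk$ has $\mathrm{rg}_{f_{0}}Q=r\le n-2$, so $f_{0}(Q(\cdot))$ has a kernel $V$ with $\dim V=n-r\ge 2$ and $Q(V)\subset C^{*}\cap f_{0}^{\perp}$. After applying isometries — which, by the remarks preceding the theorem, do not affect stable convexity — to put $f_{0}(Q(\cdot))$ in the form $x_{1}^{2}+\cdots+x_{r}^{2}$ with $V=\{x_{1}=\cdots=x_{r}=0\}$, I would add a small generic quadratic perturbation supported on $V$. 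The restriction of the perturbed operator to $V$ is then a quadratic map of a space of dimension $\ge 2$ whose image, by the classical mechanism responsible for the non-convexity of $W_{2,3}(\cdot)$ (the image of the unit circle under $(x_{1}^{2}-x_{2}^{2},\,2x_{1}x_{2},\,\ast)$ is a circle, not a disc), contains two points whose midpoint falls outside $R_{\tilde Q}$. Thus an arbitrarily small perturbation destroys convexity, so $Q$ is not stably convex.

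The hard part will be the robustness in the ``if'' direction: showing that, for a homogeneous rank-$(n-1)$ operator, the equality $R_{n,n}(\tilde Q)=(\overline{K}'_{\tilde Q})^{*}$ — hence convexity of the image — is preserved by every small perturbation, even though the combinatorial type of the cone $\overline{K}'_{\tilde Q}$ may jump. Mere convexity at $Q$ itself is comparatively easy once $\bk=\partial C$ is in hand; it is the stability that forces the full strength of the rank-$(n-1)$ condition, and for that part I would reconstruct (or simply cite) the analysis of \cite{Sheriff}.
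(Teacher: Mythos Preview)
The paper does not give a proof of this theorem: it is stated with the citation \cite{Sheriff} and no argument is supplied. There is therefore nothing in the paper to compare your proposal against.

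As for the proposal itself, it is a reasonable outline but not a proof. Your reformulation --- that convexity of $R_{n,n}(Q)$ is equivalent to $R_{n,n}(Q)=(\overline{K}'_Q)^{*}$ --- is correct and is a sensible starting point. In the ``only if'' direction your perturbation idea is plausible, though the sentence ``contains two points whose midpoint falls outside $R_{\tilde Q}$'' needs an honest construction: you must exhibit a specific small perturbation for which the image is demonstrably non-convex, and the $2$-dimensional kernel alone does not automatically hand you a non-convex image (a quadratic map $\br^{2}\to\br^{n}$ can have convex image). In the ``if'' direction you correctly identify the crux --- stability of the surjection onto $C^{*}$ under perturbation --- but then explicitly defer to \cite{Sheriff} for that step. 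Since that is the entire content of the theorem, the proposal as written is essentially ``cite Sheriff'', which is exactly what the paper does.
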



\subsection{Some examples: Lower ranks}

In this subsection we are going to consider some concrete examples in the lower dimensional space.

In what follows, we denote by $C_n$ a spherical cone which is an affine similar to the following form
$$C_n=\{f: \l_1\geq 0, \ \l_1^2\geq
\l_2^2+\cdots+\l^2_n\}.$$

\begin{example} Let us consider the following elliptic quadratic operator in $\br^4$
    \begin{equation*}
    Q(x)=(x_1^2+x_2^2, 2x_1x_3, x_2^2+x_3^2, x_2^2+x_4^2).
    \end{equation*}
    Then one can see that
    \begin{equation*}
    K'_Q=\{f: \l_1>0,  \ \l_3>0, \ \l_4>0, \ \l^2_2<\l_1\l_3\},
    \end{equation*}
    and
    \begin{equation*}
    {\overline K}'_Q=C_3\oplus K_1,
    \end{equation*}
    where
    $$
    C_3=\{f: \l_1\geq 0, \  \l_3\geq 0, \ \l_4=0, \
    \l^2_2\leq \l_1\l_3\}
    $$ is a "spherical" cone and $K_1=\{f:
    \l_1=\l_2=\l_3=0; \l_4\geq 0\}$ is a one dimensional cone.
\end{example}

\begin{example} Let us consider the following elliptic quadratic operator in $\br^4$
    \begin{equation*}
    Q(x)=(x_1^2+x_2^2+x^2_3, x_2^2+x_3^2+x_4^2, 2x_1x_3, 2x_2x_4).
    \end{equation*}
    One can check that $Q$ is homogeneous with a rank ${\rm
        rg}Q=2$ and
    $$K'_Q=\{f: \l_1>0, \  \l_2>0, \ \l_3^2<\l_1(\l_1+\l_2), \  \l^2_4<\l_2(\l_1+\l_2)\}.$$
    We denote by
    \begin{eqnarray*}
        && C_3=\{f: \l_1\geq 0, \ \l_2\geq 0, \ \l_3^2\leq \l_1(\l_1+\l_2), \
        \l_4=0\},\\
        && {\widetilde{C}}_3=\{f: \l_1\geq 0, \ \l_2\geq 0, \
        \l_4^2<\l_2(\l_1+\l_2), \  \l_3=0\},\\
        &&R_1=\{f: \l_1=\l_2=\l_3=0\}, \quad {\widetilde R}_1=\{f: \l_1=\l_2=\l_4=0\}.
    \end{eqnarray*}

    Then, we have that
    \begin{equation}\label{137}
    {\overline K}'_Q=(C_3\oplus R_1)\cap ({\widetilde C}_3\oplus {\widetilde R}_1).
    \end{equation}
    $\overline{K}'_Q$ is a intersection of two wedges.
\end{example}

\begin{conjecture} If $\textup{rg}Q=2$ then a cone ${\overline K}'_Q$ is a direct sum of the following cones:
    \begin{enumerate}
        \item[(a)] $C_3$ is a "spherical" cone with dimension three (perhaps, with many copies);

        \item[(b)] $K_r$ is a miniedral cone with dimension $r$ (at most one copy)

        \item[(c)] The cones of type \eqref{137}.
    \end{enumerate}
\end{conjecture}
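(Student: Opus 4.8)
The plan is to reduce the conjecture, by an invertible change of variables, to a statement about one explicit spectrahedral cone, then to split off a direct-sum decomposition, and finally to classify the indecomposable summands.

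First I would normalise $Q$ exactly as in the proof of Theorem \ref{t10}. Choose linearly independent $f_1,\dots,f_n\in\bk$; since ${\rm rg}Q=2$, each $f_i(Q(x))$ is a sum of at most two squares, and after interchanging the two functionals when necessary we may write $f_i(Q(x))=(\varphi_i(x))^2+(\psi_i(x))^2$ with $\varphi_1,\dots,\varphi_n$ linearly independent. Setting $Ax:=(f_1(x),\dots,f_n(x))$ and letting $B$ be the inverse of $x\mapsto(\varphi_1(x),\dots,\varphi_n(x))$, one obtains $AQ(Bx)=\big(x_1^2+(\a_1(x))^2,\dots,x_n^2+(\a_n(x))^2\big)$, an elliptic operator of the same rank whose cone is $(A^{t})^{-1}\overline{K}'_Q$. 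Writing $\a_i(x)=\sum_j\l_{ij}x_j$ and $a_i=(\l_{i1},\dots,\l_{in})$, the cone in these coordinates equals $\{\l=(\l_1,\dots,\l_n):\ \sum_i\l_i(e_ie_i^{t}+a_ia_i^{t})\succeq 0\}$, so it suffices to describe this spectrahedral cone.

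Next I would extract the block decomposition. Declare $i\sim j$ whenever $\l_{ij}\ne0$ or $\l_{ji}\ne0$ and let $B_1,\dots,B_m$ be the classes of the generated equivalence relation. Then each $\a_i$ with $i\in B_s$ involves only the variables indexed by $B_s$, so $AQ(B)$ is a coordinatewise direct sum of operators $Q_s$ on the coordinate subspaces $\br^{B_s}$, each again elliptic (restrict a certifying functional to the block), and hence $\overline{K}'_{AQ(B)}=\overline{K}'_{Q_1}\oplus\cdots\oplus\overline{K}'_{Q_m}$, since a functional is admissible precisely when each of its blocks is. Here $1\le{\rm rg}Q_s\le2$. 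A block with ${\rm rg}Q_s=1$ can, by Theorem \ref{t9}, be further normalised to $(x_1^2,\dots,x_k^2)$, so after this extra normalisation all rank-$1$ blocks are one-dimensional and together contribute exactly one minihedral summand $K_r$. Thus the conjecture reduces to showing that an indecomposable block with ${\rm rg}Q_s=2$ is affinely a spherical cone $C_3$ or a cone of type \eqref{137}.

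The final step, the classification of indecomposable rank-$2$ blocks, is the heart of the matter and the one I expect to be hardest. When $\dim B_s=3$ one has ${\rm rg}Q_s=2=\dim B_s-1$, so Theorem \ref{t11} gives that every boundary ray of $\overline{K}'_{Q_s}$ is extremal; reducing the pencil $\l_1(e_1e_1^{t}+a_1a_1^{t})+\l_2(e_2e_2^{t}+a_2a_2^{t})+\l_3(e_3e_3^{t}+a_3a_3^{t})$ to a normal form then identifies $\overline{K}'_{Q_s}$ with the Lorentz cone $C_3$ (this underlies the example $Q(x)=(x_1^2+x_2^2,x_2^2+x_3^2,2x_1x_3)$). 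When $\dim B_s=4$ one must show the only surviving possibility is the pattern $(C_3\oplus R_1)\cap(\widetilde{C}_3\oplus\widetilde{R}_1)$ of \eqref{137}; for this I would feed in the test functionals $f=(1,\dots,1,\underbrace{0}_{i},1,\dots,1)$ used in the proof of Theorem \ref{t10} to pin down exactly which variables each $\a_i$ may depend on, and then reconstruct the pencil up to isometry. The genuine obstacle is the case $\dim B_s\ge5$: the conjecture implicitly claims that no indecomposable rank-$2$ elliptic operator of dimension $\ge5$ exists (equivalently, the only building blocks are $C_3$, $K_r$, and the four-dimensional cones of type \eqref{137}), and proving this non-existence — that the support restrictions coming from the test functionals, together with the sign structure of the rank-two matrices $e_ie_i^{t}+a_ia_i^{t}$, must force a further splitting once the dimension exceeds four — seems to need a genuinely new combinatorial argument rather than a direct adaptation of Theorems \ref{t9}--\ref{t11}.
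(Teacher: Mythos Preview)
The statement you are attempting to prove is labeled a \emph{Conjecture} in the paper, and the paper gives no proof of it: it is presented as an open problem, motivated by the two preceding examples in $\br^4$. There is therefore no proof in the paper to compare your attempt against.

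Your outline is a reasonable program: normalise as in the proof of Theorem~\ref{t10}, read off a block decomposition from the support pattern of the linear forms $\a_i$, and then classify the indecomposable summands. The first two steps are sound. The third is where the conjecture actually lives, and you correctly identify the real difficulty yourself: for $\dim B_s\ge5$ you would need to prove that no indecomposable rank-$2$ elliptic operator exists in that dimension, and nothing in the paper (or in your sketch) supplies such an argument. Even in dimension~$4$ your treatment is incomplete --- you would need to show that \emph{every} indecomposable $4$-dimensional rank-$2$ block yields a cone affinely equivalent to the intersection pattern \eqref{137}, not merely that the paper's single example does. Until those classification steps are carried out, what you have is a plausible strategy for attacking an open problem, not a proof.
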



\subsection{Some examples: Higher ranks}

Let us consider elliptic quadratic operators with $\textup{rg}Q\geq 3$. In this case, it may
appear new type of cones in the decomposition of the cone ${\overline K}'_Q$.

Let $n=\frac{k(k+1)}{2}$ and $x\in \br^n.$ We then write $x$ in a symmetrical matrix form:
$$x=
\left(
\begin{array}{ccccc}
x_1 & x_2 & \cdots & x_k\\[2mm]
x_2 & x_{k+1} & \cdots & x_{2k-1}\\[2mm]
x_3 & x_{k+2} & \cdots & x_{3k-1}\\[2mm]
\cdot & \cdot & \cdots & \cdot\\[2mm]
x_k & x_{2k-1} & \cdots & x_{\frac{k(k+1)}{2}}\\
\end{array}
\right).$$

If we consider $Q(x):=x^2$ then $Q:\br^n\to\br^n$ is a homogenous elliptic quadratic
operator with the rank $k$. In this case, one has that
$$K'_Q=\left\{f=\left(
\begin{array}{cccc}
\l_1 & \cdots & \l_k\\[2mm]
\l_2 & \cdots & \l_{2k-1}\\[2mm]
\cdot & \cdots & \cdot\\[2mm]
\l_k &  \cdots & \l_n\\
\end{array}\right):
f \ {\rm is \ \ positive\  \ defined\ \  matrix}\right\}.
$$

Consequently, ${\overline K}'_Q$ can be identified with the cone
of non negative defined matrices acting on $\br^k.$ We denote this cone by $S_n$, here as before  $n=\frac{k(k+1)}{2}$. One can see that cones $C_3$ and $S_3$ are affine similar. However, for
$k\geq 3$  those cones $C_n$ and $S_n$ could not be affine similar
because of $\textbf{Extr}(C_n)=\partial C_n$ and $\textbf{Extr}(S_n)\ne \partial S_n.$

It was checked in some examples that  there are cones types of $C_4,$ $S_6,$ and $K_r$ in decomposition of the cone ${\overline K}'_Q$ for $\textup{rg}Q=3$ and  there are cones types of $C_5,$ $S_{10}$ and $K_r$ in decomposition of the cone ${\overline K}'_Q$ for $\textup{rg}Q=4.$

However, in general, the description of the cone $\overline{K}'_Q$ for the homogeneous elliptic quadratic operators with a rank of $k$ is a complicated problem. Here we are going to state one problem which is related to the concept of rank.

Let $K$ be a solid cone in the space $\br^n.$ Then due to the Caratheodory  theorem every point $x\in K$ can be presented as a convex hull of at most $n$ extreme vectors of the set $K.$

\begin{definition}
    A number $c(K)$ is called a Caratheodory number of the cone $K$ if it is the smallest number which satisfies the condition: for any $x\in K$ there exists $c\in\bn$ and $x_1,x_2,\cdots,x_c\in{\textbf{\textup{Extr}}}K$ such that $x\in cone\{x_1,x_2,\cdots,x_c\}.$
\end{definition}

It is clear that for any solid cone $K$ we have that $2\leq c(K)\leq n.$ For example, $c(C_n)=2$ and $c(K_n)=n$ where $C_n$ is a spherical cone and $K_n$ is a miniedral cone. By using a spectra theorem for the symmetric matrix one can get that $c\left(S_{\frac{k(k+1)}{2}}\right)=k$ where $S_{\frac{k(k+1)}{2}}$ is given above.

One can easily check that for any two solid cones $K_1$ and $K_2$ we have
$$c(K_1\oplus K_2)\leq c(K_1)+c(K_2).$$

Therefore, if $c(K)\leq 3$ then the cone $K$ could not be represented the direct sum of two cones.

\begin{problem}
    Let $\textup{rg}Q=k$ and $\overline{K}'_Q$ be a closed solid cone corresponding to $Q.$ Find  lower and upper boundaries of the Caratheodory number $c(\overline{K}'_Q)$ of the cone $\overline{K}'_Q.$
\end{problem}



\section{Elliptic quadratic operator equation of rank 1}


Let $Q:\br^n\to\br^n$ is an elliptic quadratic operator of rank 1. Due to Theorem \ref{t9}, we may assume that an elliptic quadratic operator  has the following form
$$
Q(x)=(x_1^2, x_2^2,\dots,x_n^2).
$$
In this case, the elliptic operator equation takes the following form
\begin{equation}\label{138}
x_k^2=\sum^n_{i=1}a_{ki}x_i + b_k; \ k=\overline{1,n}.
\end{equation}
It is obvious that
\begin{eqnarray*}
    &&K'_Q=\{f: \l_1>0,\dots, \l_n>0\},\\
    &&\bk={\rm\textbf{Extr}}({\overline K}'_Q)=\{f_i: \ f_i=(\d_{i1},\dots,\d_{in}),
    \ i=1,\dots,n \},
\end{eqnarray*}
where $\d_{ij}$ is the Kronecker symbol.

We set that
$$
\Pi_i=\{x: x_i^2\leq \sum_{j=1}^na_{ij}x_j+b_i\}, \ i=\overline{1,n}
$$
is a paraboloid corresponding to the functional $f_i\in \bk,$ i.e.
$\Pi_i=\{x: f_i(Q(x)+Ax+b)\leq 0\}.$ According to Theorem \ref{t1}, if the elliptic operator equation \eqref{138} is solvable then the set $\ce_n{(Q,A,b)}$ is nonempty. Therefore, we have that
$\bigcap\limits^n_{i=1}\Pi_i=\ce_n{(Q,A,b)}\neq\emptyset.$

We prove a general fact which is related to any paraboloid of the space $\br^n.$

\begin{lemma}\label{le1}
    Any finite number of paraboloids does not cover $\br^n.$
\end{lemma}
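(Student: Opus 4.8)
The plan is to find a single straight line in $\br^n$ that the union of finitely many paraboloids fails to cover, by arguing that each individual paraboloid meets a suitably chosen line in only a bounded set.

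First I would fix notation: a paraboloid of $\br^n$ is a set $\Pi=\{x\in\br^n:\ \ell(x)^2\le m(x)+c\}$ with $\ell$ a \emph{nonzero} linear functional, $m$ a linear functional, and $c\in\br$ --- the hypothesis $\ell\ne 0$ being exactly what makes the quadratic part of rank one, which is the defining feature of a paraboloid as opposed to a half-space. In particular, each $\Pi_i$ above is of this type. Given paraboloids $\Pi_1,\dots,\Pi_N$ with data $(\ell_j,m_j,c_j)$, each kernel $\ker\ell_j$ is a proper linear subspace of $\br^n$; since $\br^n$ is not a finite union of proper subspaces, I can choose $v\ne 0$ with $\ell_j(v)\ne 0$ for all $j=\overline{1,N}$ (for $n=1$ any $v\ne 0$ works).

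Next I would restrict to the line $L=\{tv:\ t\in\br\}$. For each $j$ the membership condition $tv\in\Pi_j$ reads
$$\ell_j(v)^2\,t^2-m_j(v)\,t-c_j\le 0,$$
a quadratic inequality in $t$ whose leading coefficient $\ell_j(v)^2$ is strictly positive; hence its solution set, and therefore $\Pi_j\cap L$, is bounded (possibly empty). Consequently $\bigcup_{j=1}^N(\Pi_j\cap L)$ is a finite union of bounded sets, so it is bounded and cannot equal $L$; thus $\bigcup_{j=1}^N\Pi_j\ne\br^n$.

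I do not expect a serious obstacle here: the only point needing any care is the selection of the direction $v$, which rests on the elementary fact that a real vector space is not the union of finitely many proper subspaces, and everything afterwards is a one-variable estimate.
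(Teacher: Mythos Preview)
Your proof is correct and follows essentially the same route as the paper: both pick a line whose direction is transverse to each paraboloid's ``axis'' (the paper phrases this as choosing a line not parallel to the symmetry hyperplane of any $\Pi_j$, which is exactly your condition $\ell_j(v)\ne 0$), and then observe that the trace of each paraboloid on such a line is a bounded segment. Your write-up is in fact more explicit than the paper's about why such a direction exists (finite union of proper subspaces) and about the one-variable quadratic computation.
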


\begin{proof} We know that there exists a hyperplane for every paraboloid such that a paraboloid is symmetric with respect to the hyperplane. We call it \emph{the symmetric hyperplane} of a paraboloid. Let $l$ be a  straight line which is not parallel to any symmetric hyperplane of the given paraboloids. Then the intersection of the straight line $l$  with each symmetric hyperplane is either a finite length segment or an empty set. Since the number of paraboloids is finite the straight line $l$ can not be covered by finite paraboloids.
\end{proof}

\begin{lemma}\label{le2} If $c_i\geq 0, \ i=1,\cdots,n$ then
    \begin{equation*}
    \min_{{\l_i>0\atop
            \sum\l_i=1}}\bigg(\frac{c_1}{\l_1}+...+\frac{c_n}{\l_n}\bigg)=
    (\sqrt{c_1}+...+\sqrt{c_n})^2.
    \end{equation*}
\end{lemma}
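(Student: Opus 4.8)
The plan is to recognize this as essentially a one-line consequence of the Cauchy--Schwarz inequality. First I would dispose of the trivial case in which every $c_i$ equals $0$: both sides are then $0$ and there is nothing to prove. So assume $c_i>0$ for at least one index.

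For the lower bound, take any admissible $(\lambda_1,\dots,\lambda_n)$, i.e. $\lambda_i>0$ and $\sum_{i=1}^n\lambda_i=1$, and write $\sqrt{c_i}=\frac{\sqrt{c_i}}{\sqrt{\lambda_i}}\cdot\sqrt{\lambda_i}$. Cauchy--Schwarz in $\br^n$ then gives
$$\bigg(\sum_{i=1}^n\sqrt{c_i}\bigg)^2=\bigg(\sum_{i=1}^n\frac{\sqrt{c_i}}{\sqrt{\lambda_i}}\cdot\sqrt{\lambda_i}\bigg)^2\le\bigg(\sum_{i=1}^n\frac{c_i}{\lambda_i}\bigg)\bigg(\sum_{i=1}^n\lambda_i\bigg)=\sum_{i=1}^n\frac{c_i}{\lambda_i},$$
so the left-hand side of the asserted identity is at least $(\sqrt{c_1}+\dots+\sqrt{c_n})^2$.

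For attainment, when all $c_i>0$ I would simply exhibit the minimizer $\lambda_i=\sqrt{c_i}/(\sqrt{c_1}+\dots+\sqrt{c_n})$; these are positive, sum to $1$, and substituting back yields $\sum_i c_i/\lambda_i=(\sqrt{c_1}+\dots+\sqrt{c_n})\sum_i\sqrt{c_i}=(\sqrt{c_1}+\dots+\sqrt{c_n})^2$. This is precisely the equality case of Cauchy--Schwarz, where the vectors $(\sqrt{c_i}/\sqrt{\lambda_i})_i$ and $(\sqrt{\lambda_i})_i$ are proportional. One could equally obtain the same identity by Lagrange multipliers on the open simplex, or by induction on $n$ reducing the step to the case $n=2$ (optimizing $A/(1-t)+B/t$ over $t\in(0,1)$), but the Cauchy--Schwarz route is the cleanest and makes the optimizer explicit.

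The only point that needs a word of care --- and it is the nearest thing to an obstacle in an otherwise routine argument --- is the degenerate situation in which some but not all of the $c_i$ vanish. Then the value $(\sqrt{c_1}+\dots+\sqrt{c_n})^2$ is still the infimum but it is not attained with all $\lambda_i>0$: one lets $\lambda_i\to0$ along the indices with $c_i=0$ and splits the remaining mass proportionally to $\sqrt{c_j}$ over the indices with $c_j>0$, producing admissible points whose values converge to the bound. So "$\min$" in the statement should be read as "$\inf$" unless all $c_i$ are strictly positive; in the intended application (where the $c_i$ arise as squares/absolute values attached to the paraboloids) this causes no difficulty.
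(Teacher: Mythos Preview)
Your argument is correct, and it is exactly the standard route: the paper itself omits the proof entirely, declaring it ``straightforward,'' and the Cauchy--Schwarz computation you give is the canonical way to fill that in. Your remark that in the degenerate case (some but not all $c_i=0$) the value is an infimum rather than an attained minimum is a valid refinement that the paper does not mention; as you note, it is harmless for the application in Theorem~\ref{twostablesolution}, where only the inequality $\sum_i c_i/\lambda_i \ge (\sum_i \sqrt{c_i})^2$ is actually used.
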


This proof is straightforward.

\begin{theorem}\label{twostablesolution}
    Let $A=(a_{ij})_{i,j=1}^n$ be a matrix such that $a_{i_1j}\cdot a_{i_2j}\geq 0, \ \forall i_1,i_2,j=\overline{1,n}.$
    If
    \begin{equation}\label{140}
    \bigg(\sum\limits_{j=1}^n\min\limits_{i=\overline{1,n}}|a_{ij}|\bigg)^2+4 \min\limits_{i=\overline{1,n}} b_i> 0,
    \end{equation}
    then equation \eqref{138} has at least two stable solutions.
\end{theorem}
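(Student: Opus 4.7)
The strategy is to exhibit at least one stable solution and appeal to Theorem \ref{t7} to obtain the second. The stable solution will come from Theorem \ref{t8} applied at the diagonal point $x_0=(t_0,\dots,t_0)$ with $t_0$ large, so the real work lies in verifying the no-straight-line hypothesis of that theorem.

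First I reduce to $a_{ij}\ge 0$: the sign condition $a_{i_1j}a_{i_2j}\ge 0$ says every column of $A$ has a common sign $\sigma_j\in\{-1,+1\}$ (or vanishes), and the orthogonal change of coordinates $x_j\mapsto\sigma_j x_j$ preserves \eqref{rank1}, replaces each $a_{ij}$ by $|a_{ij}|$, and leaves \eqref{condforrank1} unchanged. Write $r=\sum_j\min_i a_{ij}$, $b_*=\min_i b_i$, and $R_i=\sum_j a_{ij}$; then $R_i\ge r\ge 0$ and $R_i^2+4b_i\ge r^2+4b_*>0$ for every $i$. Choose $t_0>\tfrac12\max_i R_i$ large enough that $t_0^2-t_0R_i-b_i>0$ for every $i$; then $x_0:=(t_0,\dots,t_0)\in\bd$. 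Let $\bd_0$ be its connected component.

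Now I claim $\overline{\bd_0}$ contains no straight line through $x_0$. Suppose some $v\ne 0$ gives $L=\{x_0+sv:s\in\br\}\subset\overline{\bd}$. Then $Q_i(s)=(t_0+sv_i)^2-\sum_j a_{ij}(t_0+sv_j)-b_i\ge 0$ for all $s$, which for $v_i\ne 0$ rewrites as $|2t_0 v_i-(Av)_i|\le 2|v_i|T_i$, where $(Av)_i=\sum_j a_{ij}v_j$ and $T_i=\sqrt{t_0^2-t_0R_i-b_i}>0$. Replacing $v$ by $-v$ if necessary I assume $M:=\max_i v_i>0$ and pick $i\in I_{\max}=\{i:v_i=M\}$, so $v_i=M>0$. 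The displayed bound gives $(Av)_i\ge 2M(t_0-T_i)$, while $a_{ij}\ge 0$ and $v_j\le M$ give $(Av)_i\le MR_i$. Dividing the resulting $2M(t_0-T_i)\le MR_i$ by $M>0$ yields $T_i\ge t_0-\tfrac12 R_i$, and since $t_0>\tfrac12 R_i$ both sides are nonnegative, so squaring produces $t_0^2-t_0R_i-b_i\ge t_0^2-t_0R_i+\tfrac14 R_i^2$, i.e.\ $R_i^2+4b_i\le 0$, contradicting $R_i^2+4b_i\ge r^2+4b_*>0$. (If instead $\max v\le 0$, then $v\ne 0$ forces $\min v<0$ and the same calculation runs at an $i\in I_{\min}$ using $(Av)_i\ge mR_i$.) Hence no such $v$ exists; Theorem \ref{t8} supplies a stable solution in $\overline{\bd_0}$, and Theorem \ref{t7} (even count) then forces at least two.

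The main obstacle is the chain of inequalities in the no-line step: one must extract the one-dimensional discriminant $R_i^2+4b_i$ out of the multi-dimensional elliptic no-line condition. The key is to pair the lower bound on $(Av)_i$ forced by the elliptic discriminant with the coarse upper bound $(Av)_i\le MR_i$ coming from $a_{ij}\ge 0$ and $v_j\le M$; condition \eqref{condforrank1} is exactly calibrated to make the extracted inequality $R_i^2+4b_i\le 0$ impossible.
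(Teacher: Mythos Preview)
Your route is genuinely different from the paper's. The paper proves this theorem by a lattice argument: after the reduction to $a_{ij}\ge 0$, it first shows (Steps~1--2) that condition~\eqref{140} forces every $\ce_f$ to be nonempty and that ${\rm int}\,\ce_n(Q,A,b)\ne\emptyset$; then (Step~4) it shows $\ce_n(Q,A,b)$ is closed under the coordinatewise maximum $x\vee y$, so $x^*=\sup\ce_n(Q,A,b)$ exists, is a vertex, and hence is stable by Theorem~\ref{t6}. You instead go through Theorem~\ref{t8} at the diagonal point, which is precisely what the paper does \emph{afterwards} in Theorem~\ref{t13} to identify the Newton--Kantorovich limit with that same $x^*$. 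Your no-line computation at an index with $v_i=\max_j v_j$ is actually cleaner than the paper's version in Step~2 of Theorem~\ref{t13}, where the choice of $k_0$ via inequality~\eqref{148} is asserted without justification; your upper bound $(Av)_i\le MR_i$ paired with the discriminant lower bound is transparent, and the extraction of $R_i^2+4b_i\le 0$ is correct.

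There is, however, a real gap. Theorem~\ref{t8} is stated under the standing hypotheses~\eqref{190}, and its proof uses ${\rm int}\,\ce_n(Q,A,b)\ne\emptyset$ in Step~3 to make the cone $C_0$ solid. You verify only that $x_0\in\bd$ and that no line through $x_0$ sits in $\overline{\bd}$; you never show $\ce_n(Q,A,b)\ne\emptyset$, let alone that it has interior. Those are exactly Steps~1--2 of the paper's proof: Step~1 uses Lemma~\ref{le2} and the column-sign hypothesis to show~\eqref{141} holds for all $f\in K'_Q$, whence $\ce_n\ne\emptyset$ by Remark~\ref{efimplye}; Step~2 is a separation-plus-perturbation argument showing ${\rm int}\,\ce_n\ne\emptyset$. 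Until you supply these (or argue that Theorem~\ref{t8} survives without~\eqref{190}), your appeal to Theorem~\ref{t8} is not licensed. Once added, your argument becomes a valid alternative proof; the paper's lattice route has the extra payoff of pinning down the stable solution explicitly as $\sup\ce_n(Q,A,b)$.
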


\begin{remark} In the case $n=1,$ the condition \eqref{140} is nothing but the positivity of the discriminant of the quadratic equation. In this case, it is a necessary and sufficient condition for the existence of two stable solutions.
\end{remark}

\begin{proof} We provide the proof of the theorem in a few steps.

    {\textsf{1 Step.}}

    We will prove that $\ce_f\ne\emptyset$ for any $f\in K'_Q$ as well as  $\ce_n(Q,A,b)\ne \emptyset.$ Indeed, let $f=(\l_1,\dots,\l_n)\in K'_Q$ where $\l_i>0.$ Then, we obtain that
    $$\ce_f=\left\{x: \sum_{i=1}^n\l_i\left(x_i^2-\sum_{j=1}^na_{ij}x_j-b_i\right)\leq
    0\right\},$$ this means
    $$\ce_f=\left\{x:
    \sum_{i=1}^n\l_i\left(x_i-(2\l_i)^{-1}\sum_{j=1}^na_{ji}\l_j\right)^2\leq
    \sum_{i=1}^n\l_i\left(b_i+(2\l_i)^{-2}\left(\sum_{j=1}^na_{ji}\l_j\right)^2\right)\right\}.$$
    Consequently, the set $\ce_f$ is nonempty for any $f\in K'$ if and only
    if
    \begin{equation}\label{141}
    \sum_{i=1}^n\l_i\left(b_i+(2\l_i)^{-2}(\sum_{j=1}^na_{ji}\l_j)^2\right)\geq
    0,
    \end{equation}
    for any $\l_1>0,\cdots,\l_n>0.$ Since $\ce_{\a f}=\ce_f$ for
    $\a>0$, it is sufficient to check \eqref{141} for
    $\l_1>0,\cdots,\l_n>0$ with $\sum\limits^n_{i=1}\l_i=1.$ We denote by
    $$
    c_i=\frac{1}{4}\left(\sum\limits^n_{j=1}a_{ji}\l_j\right)^2.
    $$
    According to Lemma \ref{le2}, we get that
    \begin{equation*}
    \sum_{i=1}^n\frac{\left(\sum\limits^n_{j=1}a_{ji}\l_j\right)^2}{4\l_i}\geq
    \frac{1}{4}\left(\sum_{i=1}^n\left|\sum_{j=1}^na_{ji}\l_j\right|\right)^2.
    \end{equation*}
    Since the signs of elements of each column of a matrix $A$ are the same and $\sum\limits_{i=1}^n\l_i=1,$ one can obtain
    \begin{equation}\label{142}
    \frac{1}{4}\left(\sum_{i=1}^n\left|\sum_{j=1}^na_{ji}\l_j\right|\right)^2\geq
    \frac{1}{4}\left(\sum_{i=1}^n\max_j|a_{ji}|\right)^2
    \end{equation}
    and
    \begin{equation}\label{143}
    \sum_{i=1}^n\l_i b_i\geq \min_i b_i.
    \end{equation}
    It follows from \eqref{142} and \eqref{143} that
    \begin{equation*}
    \frac{1}{4}\sum_{i=1}^n\frac{\left(\sum_{j=1}^na_{ji}\l_j\right)^2}{\l_i}+\sum_{i=1}^n\l_i
    b_i\geq
    \frac{1}{4}\left[\left(\sum_{i=1}^n\min_j|a_{ji}|\right)^2+4\min_i
    b_i\right]>0.
    \end{equation*}
Hence,  we have that $\ce_f\ne\emptyset$ for any
    $f\in K'$ as well as $\ce_n(Q,A,b)\ne \emptyset$ (see Remark \ref{efimplye}).

    {\textsf{2 Step.}}

    We will prove that $\ce_n(Q,A,b)$ is a solid set. To do that we use the following representation of the set $\ce_n(Q,A,b)$
    $$\ce_n(Q,A,b)=\bigcap_{i=1}^n \Pi_i,$$
    where $\Pi_i=\{x: x_i^2\leq a_{i1}x_1+...+a_{in}x_n+b_i\}$ is a
    convex solid set. We assume the contrary i.e. ${\textup{int}}\ce_n(Q,A,b)=\emptyset.$ Let $k$ be a
    number such that an intersection of arbitrary $k$ sets from the
    family of sets $\{\Pi_1,\cdots,\Pi_n\}$ is solid, but there are $k+1$ sets
    from that family such that an intersection of which is not solid.
    Without loss of generality, we may assume that they are $\Pi_1,
    \Pi_2,\dots,\Pi_{k+1},$ with $\bigcap\limits_{i=1}^{k+1}\Pi_i=\emptyset.$ We put $\D_k=\bigcap\limits_{i=1}^k\Pi_i.$ Then, we get that
    \begin{equation}\label{144}
    {\rm int}\bigcap\limits_{i=1}^{k+1}\Pi_i={\rm int}\D_k\cap {\rm int}\Pi_{k+1}=\emptyset.
    \end{equation}
    Since both $\D_k$ and $\Pi_{k+1}$ are solid sets, it follows from \eqref{144}
    that
    \begin{equation}\label{145}
    \D_k\cap {\rm int}\Pi_{k+1}=\emptyset.
    \end{equation}
    Let $H=\{x\in\mathbb{R}^n: \varphi(x)=c\}$ be a hyperplane separating $\D_k$ and $\Pi_{k+1}$. Without loss of generality, we may assume  that $\varphi(x)\geq c$
    for any $x\in \Pi_{k+1}.$ We setup
    $$\Pi_{k+1}^{(\e)}=\{x: x^2_{k+1}\leq
    a_{k+1,1}x_1+\cdots+a_{k+1,n}x_n+b_{k+1}-\e\}.$$ We then
    have that $\pi^{(\e)}_{k+1}\subset {\rm int}\pi_{k+1}$ for any $\e>0.$  Due to inequality \eqref{145},
    one can find that
    $$\D_k\cap \Pi^{(\e)}_{k+1}=\emptyset.$$
    Thus, for a perturbed  elliptic operator equation
    $$\left\{\begin{array}{lllllll}
    x_1^2=a_{11}x_1+\cdots+a_{1n}x_n+b_1\\
    \dots \dots\dots \\
    x_k^2=a_{k1}x_1+\cdots+a_{kn}x_n+b_k\\
    x_{k+1}^2=a_{k+1,1}x_1+\cdots+a_{k+1,n}x_n+b_{k+1}-\e\\
    x_{k+2}^2=a_{k+2,1}x_1+\cdots+a_{k+2,n}x_n+b_{k+2}\\
    \dots \dots\dots\\
    x_n^2=a_{n1}x_1+\cdots+a_{nn}x_n+b_n,\\
    \end{array}\right.
    $$
    we have that
    $$\D_{\e}=\Pi_1\cap \cdots \cap\Pi_k\cap \Pi^{(\e)}_{k+1}\cap\Pi_{k+2}\cap \cdots
    \cap \Pi_n=\emptyset,$$ for any $\e>0.$

    On the other hand, it is clear that the condition
    \eqref{140} is satisfied for a sufficient small $\e,$ i.e.,
    \begin{equation*}
    \bigg(\sum\limits_{j=1}^n\min\limits_{i=\overline{1,n}}|a_{ij}|\bigg)^2+4\min\limits_{i=\overline{1,n}}b_i-\e>0,
    \end{equation*}
    for a sufficiently small $\e>0.$ If we apply the first step to $\D_{\e}$ then we have that $\D_{\e}\ne \emptyset.$ However, this is a contradiction. Hence, $\ce_n(Q,A,b)$ is a solid set.

    {\textsf{3 Step.}}

    For the sake of simplicity we will assume that $a_{ij}\geq 0.$
    In general, this condition can be achieved  by a transformation:
    $$(x_1,x_2,\dots,x_n)\to (\pm x_1, \pm x_2, \dots, \pm x_n),$$
    where we take $"+"$  in front of $x_k$ if $k-$th column of the
    matrix $A$ is not negative and  otherwise $"-"$.

    {\textsf{4 Step.}}

    For $x,y\in \ce_n(Q,A,b),$ we will prove that $z=x\vee y\in \ce_n(Q,A,b),$ here as
    before
    $$z=x\vee y=(\max(x_1,y_1), \max(x_2,y_2),\dots,\max(x_n,y_n)).$$
    Indeed, if $x,y\in \ce_n(Q,A,b)$ then
    \begin{eqnarray*}
        && x_k^2\leq \sum_{i=1}^na_{ki}x_i+b_k, \ k=1,2,\cdots,n,\\
        && y_k^2\leq \sum_{i=1}^na_{ki}y_i+b_k, \ k=1,2,\cdots,n.
    \end{eqnarray*}
    Since $a_{ki}\geq 0$ we have
    \begin{eqnarray*}
        z_k^2&=&(\max(x_k,y_k))^2\leq \max(\sum_ia_{ki}x_i+b_k,
        \sum_ia_{ki}y_i+b_k)\\
        &\leq&\max(\sum_ia_{ki}x_i, \sum_ia_{ki}y_i)+b_k \leq\sum_ia_{ki}\max(x_i,y_i)+b_k=\sum_ia_{ki}z_i+b_k.
    \end{eqnarray*}
This means that $z\in \ce_n(Q,A,b).$

    {\textsf{5 Step.}}

    It follows from the previous steps that $\ce_n(Q,A,b)$ is a closed  solid  convex and bounded set which is closed under the operation $x\vee y$  whenever $x,y\in \ce_n(Q,A,b).$ We setup
    $$
    x^*=\sup\ce_n(Q,A,b)=\left(\max_{x\in \ce_n(Q,A,b)}\{x_1\}, \cdots,\max_{x\in \ce_n(Q,A,b)}\{x_n\}\right).
    $$
    It is obvious that $x^*$ is a vertex of $\ce_n(Q,A,b).$ According to Theorem \ref{t6}, it is a
    stable solution of the elliptic operator equation \eqref{138}. The existence of the
    second stable solution follows from Theorem \ref{t7}. This completes the proof.
\end{proof}

\begin{remark} In general, $\ce_n(Q,A,b)$ is not closed with respect to the operation
    $\wedge$, i.e. from $x, y\in \ce_n(Q,A,b)$ it does not follow that $x\wedge
    y\in\ce_n(Q,A,b)$, where
    $$x\wedge y=(\min(x_1,y_1),\dots, \min(x_n,y_n)).$$
\end{remark}

In conclusion of this section, we will give a constructive method of finding
the stable solution $x^*$ which was shown in Theorem \ref{twostablesolution}.

We assume that the condition \eqref{140} is satisfied and
$a_{ij}\geq 0$ for any $i,j=\overline{1,n}.$ We setup
$$M=\max_i\sum^n_{j=1}a_{ij}, \ b=\max_i b_i,$$
and we choose $\a>0$ such that
\begin{equation}\label{146}
\a> \frac{1}{2}(M+\sqrt{M^2+4b}).
\end{equation}

\begin{theorem}\label{t13} Let the condition \eqref{140} and $a_{ij}\geq 0$ for any $i,j=\overline{1,n}$
    be satisfied. Then the Newton-Kantorovich iteration with an initial
    point $x_0=(\a,\a,\dots,\a)$ converges to a solution $x^*.$
\end{theorem}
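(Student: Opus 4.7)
The plan is to verify the hypotheses of Theorem \ref{t8} at the point $x_0=(\alpha,\ldots,\alpha)$, apply that theorem to obtain convergence of the Newton-Kantorovich iterates to some stable solution $x_\infty\in\overline{\bd}_0$, and then identify $x_\infty$ with the supremum vertex $x^*=\sup\ce_n(Q,A,b)$ constructed in Step~5 of the proof of Theorem \ref{twostablesolution}. First, the choice \eqref{146} of $\alpha$ is equivalent to $\alpha^2-M\alpha-b>0$, which yields, for every $k$,
$$
P_k(x_0)=\alpha^2-\alpha\sum_{i=1}^{n}a_{ki}-b_k\geq\alpha^2-M\alpha-b>0.
$$
Therefore $x_0\in\bd$; let $\bd_0$ denote the connected component of $\bd$ containing $x_0$.

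Next I would verify that $\overline{\bd}_0$ contains no straight line through $x_0$. Since $a_{ij}\geq 0$ and every coordinate of $x_0$ strictly exceeds $(M+\sqrt{M^2+4b})/2$, the component $\bd_0$ is the \emph{upper branch} of $\bd$ in each coordinate: along any continuous path in $\bd$ starting at $x_0$, no coordinate $x_k$ can cross from the larger root of the univariate quadratic $t\mapsto t^2-a_{kk}t-\sum_{i\neq k}a_{ki}x_i-b_k$ to the smaller root without entering the paraboloid $\Pi_k$. Thus each coordinate is uniformly bounded below on $\bd_0$, hence also on $\overline{\bd}_0$. Since a straight line through $x_0$ in any direction $h\neq 0$ drives some coordinate to $-\infty$ as $|\lambda|\to\infty$, such a line cannot lie in $\overline{\bd}_0$. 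By Theorem \ref{t8}, the iterates $\{x_m\}$ are well-defined and converge to a stable solution $x_\infty\in\overline{\bd}_0$.

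To identify $x_\infty=x^*$, I would exploit componentwise monotonicity. Because $a_{ij}\geq 0$, the Jacobian $P'(x)=2\,\mathrm{diag}(x)-A$ is a Z-matrix, and the coordinatewise lower bound from the previous paragraph keeps it strictly diagonally dominant throughout the iteration, hence an M-matrix with entrywise nonnegative inverse. Using $P(x^*)=0$ together with the purely quadratic form of $Q$, a direct algebraic identity produces
$$
x_{m+1}-x^* = [P'(x_m)]^{-1}\bigl((x_{m,k}-x^*_k)^2\bigr)_{k=1}^{n}\geq 0,
$$
so by induction $x_m\geq x^*$ componentwise for all $m$, while simultaneously $x_{m+1}\leq x_m$ (both $[P'(x_m)]^{-1}$ and $P(x_m)$ are entrywise nonnegative). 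Passing to the limit gives $x_\infty\geq x^*$; combined with $x_\infty\in X_n(Q,A,b)\subset\ce_n(Q,A,b)$ and $x^*=\sup\ce_n(Q,A,b)$, this forces $x_\infty=x^*$. The main obstacle is the rigorous justification of the no-straight-line condition in paragraph~2: one must carefully reduce it to a strictly positive coordinatewise lower bound on $\bd_0$, which relies essentially on the sign hypothesis $a_{ij}\geq 0$ together with the strictness in \eqref{140}; once this is in place the M-matrix mechanism driving the monotone descent is routine.
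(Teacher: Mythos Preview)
Your overall plan---show $x_0\in\bd$, verify the no-line hypothesis of Theorem~\ref{t8}, then identify the limit with $x^*=\sup\ce_n(Q,A,b)$---is exactly the paper's three-step proof, and your Step~1 is identical to the paper's.

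The real divergence is Step~2, and the gap you flag there is genuine. Along a path in $\bd_0$ the discriminant $a_{kk}^2+4\bigl(\sum_{i\neq k}a_{ki}x_i+b_k\bigr)$ of your univariate quadratic in $x_k$ can become negative (nothing in \eqref{140} forbids this for an individual index $k$); the two roots then disappear, and with them the barrier that was supposed to confine $x_k$ to the upper branch. So the ``upper branch'' picture does not by itself yield the coordinatewise lower bound you need. The paper bypasses this geometry. It assumes $\{x_0+\lambda h\}_{\lambda\in\br}\subset\overline{\bd}_0$, rewrites each condition $(\alpha+\lambda h_k)^2\geq\sum_i a_{ki}(\alpha+\lambda h_i)+b_k$ as a quadratic in $\lambda$ that must stay nonnegative for all $\lambda$, and extracts the discriminant inequalities \eqref{147}. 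Selecting a suitable index $k_0$ with $h_{k_0}\neq 0$ and estimating, it produces a lower bound of the form $h_{k_0}^2(m^2+4\beta)$ with $m=\sum_j\min_i|a_{ij}|$ and $\beta=\min_i b_i$; by \eqref{140} this is strictly positive, contradicting \eqref{147}. Thus condition \eqref{140} enters Step~2 \emph{directly} through an algebraic estimate on the discriminants, not only indirectly via the existence of $x^*$.

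For Step~3 your identity $x_{m+1}-x^*=[P'(x_m)]^{-1}\bigl((x_{m,k}-x^*_k)^2\bigr)_{k}$ is correct and attractive, but turning it into $x_{m+1}\geq x^*$ requires $[P'(x_m)]^{-1}\geq 0$ entrywise, i.e.\ that $2\,\mathrm{diag}(x_m)-A$ is an M-matrix along the whole iteration, and this again rests on the unestablished coordinatewise lower bound. The paper's Step~3 is shorter: it records that $x\in\bd_0$ forces $x\geq x^*$, hence $x_\infty\geq x^*$, and then $x_\infty\in\ce_n(Q,A,b)$ together with $x^*=\sup\ce_n(Q,A,b)$ gives $x_\infty=x^*$. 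Either route closes once the lower bound is available, but neither substitutes for the algebraic argument the paper uses in Step~2.
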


\begin{proof} It is enough to check the conditions of Theorem \ref{t8}. We prove it in a few steps.

    {\textsf{1 Step.}}

    Due to Lemma \ref{le1}, we have that
    $$\bd=\br^n\setminus \left(\bigcup^n_{i=1}\Pi_i\right)\ne \emptyset.$$
    One can see  that $\bd$ is an open set. Let us check that $x_0=(\a,\a,\dots,\a)\in
    \bd.$ It follows from \eqref{146} that $\a^2>M\a+b$ and
    $$\a^2>M\a+b\geq \sum^n_{j=1}a_{ij}\a+b_i, \ \forall i=\overline{1,n},$$
    i.e. $x_0\notin \Pi_i$ for any $i=\overline{1,n}.$ Hence, we obtain that $x_0\in \bd.$

    Let $\bd_0$ be a connected component of $\bd$ with $x_0\in \bd_0.$

    {\textsf{2 Step.}}

    We will show that there is no a straight line passing through $x_0$
    and containing inside ${\overline \bd}_0.$ We assume the contrary, i.e.,
    $\{x_0+\l h\}_{\lambda\in\mathbb{R}}\subset {\overline \bd}_0,$ $h\ne 0.$ Then one can get that
    $$(\a+\l h_k)^2\geq \sum_ia_{ki}(\a+\l h_i)+b_k,  \ \ i=1,\dots,n$$
    i.e.
    \begin{equation}\label{147}
    \bigg(2\a h_k-\sum_ia_{ki} h_i\bigg)^2-4h^2_k\bigg(\a^2-\a
    \sum_ia_{ki}-b_k\bigg)\leq 0, \ \  i=1,\dots,n.
    \end{equation}
    We denote by
    $$m=\sum_j\min_i a_{ij}, \ \ \b=\min_i b_i.$$
    One can choose $k_0$ such that $h_{k_0}\ne 0$
    and
    \begin{equation}\label{148}
    \left|2\a h_{k_0}-\sum_ia_{k_0i} h_i\right|\geq \left|2h^2_{k_0}-m
    h_{k_0}\right|.
    \end{equation}
    We then have that
    \begin{eqnarray*}
        \bigg(2\a h_{k_0}-\sum_ia_{k_0i}
        h_i\bigg)^2-4h^2_{k_0}\bigg(\a^2-\a
        \sum_ia_{k_0i}-b_{k_0}\bigg)&\geq& h_{k_0}^2\bigg(m^2-4\a m+4\a \sum_ia_{k_0i}+4b_{k_0}\bigg)\\
        &\geq& h_{k_0}^2(m^2+4\b).
    \end{eqnarray*}

    Due to condition \eqref{140}, we find $m^2+4\b>0.$ Hence,
    $$
    \bigg(2\a h_{k_0}-\sum_ia_{k_0i}
    h_i\bigg)^2-4h^2_{k_0}\bigg(\a^2-\a
    \sum_ia_{k_0i}-b_{k_0}\bigg)>0, $$ which contradicts to the inequalities \eqref{147}.

    Thus, there is no a straight line passing through $x_0$ and containing in ${\overline \bd}_0.$ According to Theorem \ref{t8} the Newton-Kantorovich iteration $$x^{(m+1)}=x^{(m)}-[P'(x^{(m)})]^{-1}P(x^{(m)}), \  m=0,1,\dots$$
    exists and lies in $\bd_0$. Moreover, it converges to a stable
    solution of the elliptic operator equation.

    {\textsf{3 Step.}}

    Now, we will prove that the Newton-Kantorovich iteration  $\{x_m\}$ converges to $x^*$ which was found in the proof of Theorem \ref{twostablesolution}. Indeed, it follows from $x\in \bd_0$  that $x\geq x^*$ i.e. $x_i\geq x_i^*$ for all $i=\overline{1,n}.$ In particular, since $\{x^{(m)}\}\subset \bd_0,$ we have
    $x^{(m)}\geq x^*$ for any $m\in\bn.$ If $x^{(m)}\to {\bar{x}},$ then ${\bar{x}}\geq x^*.$
    The relations  ${\bar{x}}\in\ce_n(Q,A,b)$ and $x^*=\sup\limits_{x\in \ce_n(Q,A,b)}\{x\}$ yield
    that ${\bar{x}}=x^*.$ This completes the proof.
\end{proof}

%
%
%
%
%
%
%
%


\begin{thebibliography}{99}

\bibitem{Afriat} S. N. Afriat, The quadratic form positive definite on a linear manifold, \textit{Proc. Cambridge Phil. Soc.} \textbf{47}:1-6 (1951).

\bibitem{Agrachev} A. A. Agrachev, The topology of quadratic mappings and
Hessians of smooth mappings, Itogi Nauki i Tekhniki. Ser. Algebra. Topol. Geom., 26: 85--124 (1988)

\bibitem{Albert} A. A. Albert, A quadratic form problem in the calculus of variations, \textit{Bull. Amer. Math. Soc.} \textbf{44:} 250-253 (1938).

\bibitem{Arutyunov1991} A. V. Arutyunov, Properties of quadratic maps in a Banach space,\textit{ Mathematical Notes}, \textbf{50:4}, 993--999 (1991).

\bibitem{Arutyunov1994} A. V. Arutyunov, On the theory of degenerate quadratic forms in the classical calculus of variations, \textit{Izvestiya Mathematics}, \textbf{45:3}, 433--476 (1995).

\bibitem{Arutyunov2002} A. V. Arutyunov, Positive Quadratic Forms on Intersections of Quadrics, \textit{Mathematical Notes}, \textbf{71:1}, 25--33, (2002).

\bibitem{Arutyunov2005a} A. V. Arutyunov, Solvability conditions for nonlinear equations on a cone in a neighborhood of an anormal point, \textit{Trudy Inst. Mat. Mech.} \textbf{11:1}, 26--31 (2005).

\bibitem{Arutyunov2005b} A. V. Arutyunov, On real quadratic forms annihilating an intersection of quadrics, \textit{Russ. Math. Surveys}, \textbf{60:1}, 157--158 (2005).

\bibitem{Arutyunov2008} A. V. Arutyunov, Nonnegativity of Quadratic Forms on Intersections of Quadrics and Quadratic Maps, \textit{Math Notes}, \textbf{84:2}, 155--165, (2008).

\bibitem{Arutyunov2012} A. V. Arutyunov, Two Problems of the Theory of Quadratic Maps, \textit{Fun. Anal. Appl.} \textbf{46:3}, 225--227 (2012).

\bibitem{Arutyunov2013} A. V. Arutyunov, Properties of the minimum function in the quadratic problem, \textit{Math Notes},  \textbf{94:1-2}, 32--40 (2013).

\bibitem{ArutyunovKaramzin2011} A. V. Arutyunov, D. Yu. Karamzin, Regular zeros of quadratic maps and their application, \textit{Sbornik: Mathematics}, \textbf{202:6}, 783--806, (2011).

\bibitem{ArutyunovRozova} A.V. Arutyunov, N.V. Rozova, Regular zeros of a quadratic mapping and local controllability of nonlinear systems, \textit{Differ. Equation.} \textbf{35:} 723--728 (1999)

\bibitem{Atkinson} K.E. Atkinson, A survey of numerical methods for solving nonlinear integral equations, \textit{J. Integral Equations} \textbf{4 (l)}, 15-46 (1992).

\bibitem{Au-Yeung1969a} Y.-H. Au-Yeung, A theorem on a mapping from a sphere to the circle and the simultaneous diagonalasition of two hermitian matrices, \textit{Proc. Amer. Math. Soc.} \textbf{20:} 545-548 (1969).

\bibitem{Au-Yeung1969b} Y.-H. Au-Yeung, Some theorems on the real pencil and simultaneous diagonalisation of two hermitian bilinear functions, \textit{Proc. Amer. Math. Soc.} \textbf{23:} 248-253 (1969).

\bibitem{Au-Yeung1974} Y.-H. Au-Yeung, Simultaneous diagonalisation of two hermitian matrices into $2\times 2$ blocks, \textit{Linear and Multilinear Algebra} \textbf{2:} 249-252 (1974).

\bibitem{Au-Yeung1975a} Y.-H. Au-Yeung, On the semidefiniteness of the real pencil of two hermitian matrices, \textit{Linear Algebra and Appl.} \textbf{10:} 71-76 (1975).

\bibitem{Au-Yeung1975b}Y.-H. Au-Yeung, A simple proof of the convexity of the field of values defined by two hermitian forms, \textit{Aequationes Math.} \textbf{12:} 82-83 (1975).

\bibitem{BaccariSamet} A. Baccari, B. Samet, An extension of Polyak's theorem in a Hilbert space, \textit{J Optim. Theory Appl,} \textbf{140:3}, 409--418 (2009).

\bibitem{Ballantine} C. S. Ballantine, Numerical range of a matrix: some effective criteria, \textit{Linear Algebra and Appl.} \textbf{19:} 117- 188 (1978).

\bibitem{Bauer} F. L. Bauer, On the field of values subordinate to a norm, Nuw. Math.
4:103-113 (1982).

\bibitem{Barvinok1995} A. I. Barvinok, Problems of distance geometry and convex properties of quadratic maps, \textit{Discrete Comp. Geom.}, \textbf{13:} 189--202 (1995).

\bibitem{Barvinok1998} A. I. Barvinok, Convexity, Duality and Optimization, Lectures Notes, University of Michigan, 1998.

\bibitem{Barvinok1999} A. I. Barvinok, On convex properties of the quadratic image of the sphere, Preprint, University of Michigan, 1999.

\bibitem{Barvinok2014} A. I. Barvinok, Convexity of the image of a quadratic map via the relative entropy distance, \textit{Cont. to Algebra and Geometry,} \textbf{55:2}, 577--593 (2014).



\bibitem{Berger} C. A. Berger, \textit{Normal dilations}, Cornell Univ., Doctoral dissertation, 1963.

%
%
%
%
%
%

\bibitem{Dines1941} L. L. Dines, On the mapping of quadratic forms, \textit{Bull. Amer. Math. Soc.} \textbf{47:} 494-498 (1941).

\bibitem{Dines1942} L. L. Dines, On the mapping of $n$ quadratic forms, \textit{Bull. Amer. Math. Soc.} \textbf{48:} 467-471 (1942).

\bibitem{Dines1943} L. L. Dines, On linear combinations of quadratic forms, \textit{Bull. Amer. Math. Soc.} \textbf{49:} 388--393 (1943).

%
%
%

\bibitem{Finsler1936a} P. Finsler, Uber das Vorkommen definiter und semidefiniter Formen in Scharen quadratischer Formen, \textit{Comment. Math. Helv.} \textbf{9:} 188--192 (1936/37).

\bibitem{Finsler1936b} P. Finsler, Uber eine Klasse algebraischer Gebilde (Freigebilde), \textit{Comment. Math. Helv.} \textbf{9:} 172-187 (1936/37).



\bibitem{Golberg1979} M.A. Golberg, \textit{Solution Methods for Integral Equations}, Springer (1979).

\bibitem{Golberg1990} M. A. Golberg, \textit{Numerical Solution of Integral Equations}, Plenum Press, New York (1990).

\bibitem{Halmos} P.R. Halmos, \textit{A Hilbert Space Problem Book}. Springer-Verlag, New York, 1982.

\bibitem{Hausdorff} F. Hausdorff, Der Wertvorrat einer Bilinearform. \textit{Math. Z.} \textbf{3}, 314--316 (1919).

\bibitem{HestenesMcShane} M. R. Hestenes, E. J. McShane, A theorem on quadratic forms and its application in the calculus of variations, \textit{Trans. Amer. Math. Soc.} \textbf{40:} 501--512 (1940).

\bibitem{Hestenes1951} M. R. Hestenes, Applications of the theory of quadratic forms in Hilbert space to the calculus of variations, \textit{Pacific J. Math.} \textbf{1:} 525-581 (1951).

\bibitem{Hestenes1968} M. R. Hestenes, Pairs of quadratic forms, \textit{Linear Algebra Appl}. \textbf{1:} 397--407 (1968)

\bibitem{Hiriart-UrutyTorki2002} J.-B. Hiriart--Urruty, M. Torki, Permanently going back and forth between the "quadratic world" and the "convexity world" in optimization, \textit{J. Applied Math Optimization} \textbf{45:2}, 169--184 (2002).

\bibitem{Hiriart-Uruty2007} J.-B. Hiriart--Urruty, Potpourri of conjectures and open questions in nonlinear analysis and optimization. \textit{SIAM Review} \textbf{49:2}, 255--273 (2007).

\bibitem{Hiriart-Uruty2009} J.-B. Hiriart--Urruty, A new series of conjectures and open questions in optimization and matrix analysis, \textit{SIAM: Cont. Otim. Calculus Variation}, \textbf{15:} 454--470 (2009).

\bibitem{John} F. John, A note on the maximum principle for elliptic differential equations, \textit{Bull. Amer. Math. Soc.} \textbf{44:} 268--271 (1938).

\bibitem{Krasnoselskii} M.A. Krasnosel'skii, \textit{Topological methods in the theory of nonlinear integral equations}, Pergamon (1964).


\bibitem{McShane} E. J. McShane, The condition of Legendre for double integral problems of the calculus of variations, Abstract 299, \textit{Bull. Amer. Math. Soc.} \textbf{45:} 369 (1939).

\bibitem{Matveev1996} A.S. Matveev, Lagrange duality in nonconvex optimization theory and modifications of the Toeplitz--Hausdorff theorem. \textit{St. Petersbg. Math. J.} \textbf{7:} 787--815 (1996).


\bibitem{Matveev1999} A.S. Matveev, On the convexity of the ranges of quadratic mappings. \textit{St. Petersbg. Math. J.} 10: 343--372 (1999)

\bibitem{Polyak1998} B.T. Polyak, Convexity of quadratic transformations and its use in control and optimization, \textit{J. Optimization Theory and Appl.}, \textbf{99:} 553--583 (1998).

\bibitem{Polyak2001} B.T. Polyak, Convexity of Nonlinear Image of a Small Ball with Applications to Optimization, \textit{Set-Valued Analysis}, 9, 159--168 (2001).

\bibitem{Polyak2003} B.T. Polyak, The convexity principle and its applications, \textit{Bull. Brazilian. Math. Soc.}, \textbf{34(1)}, 59--75 (2003).

\bibitem{Reid} W. T. Reid, A theorem on quadratic forms, \textit{Bull. Amer. Math. Soc.} \textbf{44:} 437-446 (1938).

\bibitem{Sheriff} J. Sheriff, \textit{The Convexity of Quadratic Maps and the Controllability of Coupled Systems}, Ph.D. thesis, Harvard University, 2013.

\bibitem{Some} B. Some, Some recent numerical methods for solving nonlinear Hammerstein integral equations, \textit{Math and Computer Modelling}, 18:9, 55--62 (1993).

\bibitem{Toeplitz} O. Toeplitz, Das algebraische Analogen zu einem Satze von Fejer. \textit{Math. Z.} \textbf{2:} 187--197 (1918).

\bibitem{Uhlig} F. Uhlig, A recurring theorem about pairs of quadratic forms and extension: a survey, \textit{Linear Algebra Appl.,} \textbf{25:} 219--237 (1979).

\bibitem{Vershik} A.M. Vershik, Quadratic forms positive on a cone and quadratic duality. Zap. Nauchn. Semin. LOMI 134, 59--83 (1984) (in Russian).

\bibitem{Westwick} R. Westwick, A theorem on numerical ranges, \textit{Linear and Multilinear Algebra} \textbf{2:} 311--315, (1975).

\bibitem{Yakubovich} V.A. Yakubovich, Non-convex optimization problem: The infinite-horizon linear-quadratic control problem with quadratic constraints. \textit{Sys. Control Let.} \textbf{19:} 13--22 (1992)


\end{thebibliography}
\end{document}